\documentclass[11pt,reqno]{amsart}
\usepackage{amsmath,amssymb,graphicx,mathrsfs,amsopn,stmaryrd,amsthm,color,bm,extarrows}
\usepackage{newtxtext}
\usepackage[dvipsnames]{xcolor}
\usepackage[colorlinks=true,citecolor=blue,linkcolor=blue
]{hyperref}
\usepackage[final]{showlabels}
\usepackage[english]{babel}
\usepackage[mathscr]{euscript}
\usepackage[alphabetic,nobysame]{amsrefs}
\renewcommand{\MR}[1]{} \renewcommand{\PrintDOI}[1]{}

\newcommand{\arxiv}[1]{\href{http://arxiv.org/abs/#1}{\sf arXiv:\nolinkurl{#1}}}
\usepackage{geometry}
\usepackage{dynkin-diagrams}[root radius = 2.5cm]
\usepackage{collectbox}
\DeclareFontFamily{OT1}{pzc}{}
\DeclareFontShape{OT1}{pzc}{m}{it}{<-> s * [1.10] pzcmi7t}{}
\DeclareMathAlphabet{\mathpzc}{OT1}{pzc}{m}{it}
\makeatletter 
\newcommand{\mybox}{%
    \collectbox{%
        \setlength{\fboxsep}{1pt}%
        \fbox{\BOXCONTENT}%
    }%
}
\makeatother

\usepackage{dsfont}
\usepackage{tikz}    
\usetikzlibrary{arrows,matrix,decorations.markings,shapes.geometric}   
\usetikzlibrary{decorations.pathreplacing}

\allowdisplaybreaks

\numberwithin{equation}{section}
\newtheorem{thm}{Theorem}[section]
\newtheorem{prop}[thm]{Proposition}
\newtheorem{lem}[thm]{Lemma}
\newtheorem{cor}[thm]{Corollary}

\theoremstyle{definition} 
\newtheorem{eg}[thm]{Example}
\newtheorem{dfn}[thm]{Definition}
\theoremstyle{remark}
\newtheorem{rem}[thm]{Remark}

\newcommand{\beq}{\begin{equation}}
\newcommand{\eeq}{\end{equation}}
\newcommand{\be}{\begin{equation*}}
\newcommand{\ee}{\end{equation*}}

\newcommand{\bC}{\mathbb{C}}

\newcommand{\bZ}{\mathbb{Z}}

\newcommand{\bN}{\mathbb{N}}

\newcommand{\mc}{\mathcal}

\newcommand{\cR}{\mathcal{R}}

\newcommand{\gl}{\mathfrak{gl}}
\newcommand{\fkn}{\mathfrak{n}}

\newcommand{\fkm}{\mathfrak{m}}

\newcommand{\fks}{\mathfrak{s}}

\newcommand{\rY}{\mathrm{Y}}

\newcommand{\End}{\mathrm{End}}

\newcommand{\gr}{{\mathrm{gr}}}

\newcommand{\pa}{\partial}
\newcommand{\tl}{\tilde}
\newcommand{\wtl}{\widetilde}
\newcommand{\gge}{\geqslant}
\newcommand{\lle}{\leqslant}
    
\newcommand{\ka}{{\mathfrak s}}

\newcommand{\bs}{{\bm\fks}}

\newcommand{\Sym}{{\mathrm{Sym}}}
\newcommand{\Y}{{\mathscr{Y}}}
\newcommand{\scrX}{{\mathscr{X}}}

\newcommand{\SY}{{\mathscr{SY}}}

\newcommand{\rU}{{\mathrm{U}}}
\newcommand{\Yi}{{\mathbf{Y}^\bs_\imath}}

\def \h{\mathfrak{h}}
\def \H{h}
\def \X{b}

\def \I{\mathbb{I}}
\def \N{\mathbb{N}}
\def \Sym{\mathrm{Sym}}

\def \Z{\mathbb{Z}}

\def \C{\mathbb{C}}

\begin{document}
\pagestyle{myheadings}
\setcounter{page}{1}

\title[Twisted super Yangians of quasi-split type A]{Twisted super Yangians of quasi-split type A} 

\date{\today}
\author{Kang Lu}
\address{
Shenzhen International Center for Mathematics and Department of Mathematics, Southern University
of Science and Technology, Shenzhen, China
}\email{kanglu.math@outlook.com,luk@sustech.edu.cn}

\subjclass[2020]{Primary 17B37.}
\keywords{Drinfeld presentation, twisted Yangians, super Yangians, Gauss decomposition}
		
\begin{abstract}
We introduce the twisted super Yangian $\Yi$ of quasi-split type A in the Drinfeld current presentation for an arbitrary symmetric parity sequence $\bm\ka$. We prove via Gauss decomposition that the twisted super Yangian $\Yi$ is isomorphic to the (special) twisted super Yangian $\Y^{\bm\ka}$, previously defined via the R-matrix presentation. As a corollary, we prove that the twisted super Yangians $\Yi$ corresponding to different parity sequences with the same $\mathfrak m$ and $\mathfrak n$ are isomorphic. Additionally, we establish the PBW theorem for $\Yi$ and describe the center of $\Y^{\bm\ka}$ in terms of Gaussian generators, thereby generalizing known results for the nonsuper quasi-split type A case.
\end{abstract}
	
\maketitle
\setcounter{tocdepth}{1}
\tableofcontents

\thispagestyle{empty}
\section{Introduction}
Inspired by Cherednik’s scattering theory for factorized particles on the half-line \cite{Cherednik1984Factorizing}, Sklyanin \cite{Sklyanin1988Boundary} introduced a class of algebras defined by reflection equations in the FRT formalism \cite{FRT89}, leading to the construction of quantum integrable systems with boundary conditions via the quantum inverse scattering (R-matrix) method. The reflection equations are central to constructing the commutative Bethe subalgebra, ensuring the integrability of the associated integrable systems with boundary conditions.

Twisted Yangians are coideal subalgebras of Yangians associated to symmetric pairs. They are in general quotients of reflection algebras by symmetry or unitary relations. A first example of twisted Yangians is due to
Olshanski \cite{Olshanski1992twisted} who constructed twisted Yangians of types AI and AII in R-matrix presentation. These algebras are closely related to representations of classical Lie algebras \cite{Molev2007book}. Recently, Bethe vectors and recurrence relations for open spin chains, whose symmetry is described by Olshanski's twisted Yangians, were uniformly studied in \cite{regelskis2024bethe}; see also references therein. The R-matrix construction of twisted Yangians was later extended to type AIII \cite{Molev2002reflection} and to symmetric pairs of classical types \cite{Guay2016twisted}. Additionally, a construction of twisted Yangians for general symmetric pairs via Drinfeld's J-presentation has emerged as boundary remnants of Yangians in 1+1D integrable field theories \cite{MacKay2002rational,Belliard2017}.

In this article, we continue our study for a special case of reflection superalgebras, referred to as twisted super Yangians of quasi-split type A; cf.~\cite{lu2023twisted,lu2024drinfeld}. These twisted super Yangians are super analogues of the reflection algebra introduced in \cite{Molev2002reflection}; cf. also \cite{Chen2014twisted}. They are associated with supersymmetric pairs of type AIII, while the twisted super Yangians of types AI and AII were previously introduced in \cite{Briot2003twisted} and recently studied in \cite{Lin2024from,Lin2025Ber}. 

The twisted super Yangians of type AIII (also known as \textit{reflection superalgebras}) have appeared in the study of open spin chains with diagonal boundary conditions, employing both analytic and algebraic Bethe ansatz approaches \cite{Ragoucy2007analytical,Belliard2009nested}. Moreover, the (super)trace formula \cite{Tarasov13comb} of Bethe vectors and the Bethe ansatz equations were obtained in \cite{Belliard2009nested}. Twisted super Yangians in standard parity sequence (with specific diagonal boundary matrices) were further investigated in \cite{kettle2023orthosymplectic,bagnoli2023double} where some basic properties of the superalgebras were established. In our prior work \cite{lu2023twisted}, we worked on twisted super Yangians in R-matrix presentation associated to arbitrary parity sequences and arbitrary diagonal boundary matrices, established a highest weight representation theory in this general setting, classified finite-dimensional irreducible representations for certain cases (cf. \cite{Molev2002reflection}), and extended the Schur-Weyl type duality between degenerate affine Hecke algebras of type BC and twisted super Yangians (cf. \cite{Chen2014twisted}).

The goal of this paper is to introduce a Drinfeld type presentation for twisted super Yangians of quasi-split type A, associated with arbitrary symmetric parity sequences, and to generalize the joint work with Weinan Zhang \cite{lu2024drinfeld} to the super case. We establish an explicit isomorphism between twisted super Yangians in R-matrix and Drinfeld presentations via Gauss decomposition; cf. \cite{Brundan2005parabolic,Jing18iso,frassek2023orthosymplectic,Lu2023drinfeld,Molev24drinfeld}. Recently, the Gauss decomposition and Drinfeld presentation for twisted Yangians are used to investigate the classical Slodowy slices \cite{Tappeiner2024shifted}, finite W-algebras \cite{Lu2025shifted-W}, and fixed point loci of affine Grassmannian slices \cite{Lu2025shifted-G}.

A new feature of Lie superalgebras is the existence of multiple nonisomorphic Dynkin diagrams. In type A, these Dynkin diagram can be effectively described by a parity sequence $\bs$. We then consider the general linear Lie superalgebra $\gl_{\mathfrak m|\mathfrak n}^\bs$ where $\bs=(\ka_1,\dots,\ka_{\fkm+\fkn})$ is a parity sequence such that $\ka_i=\pm 1$ and the occurrence of $1$ is exactly $\mathfrak m$. 

A supersymmetric pair $(\gl_{\mathfrak m|\mathfrak n}^\bs, \mathfrak k_{\fkm|\fkn}^\bs)$ is of quasi-split type if $\mathfrak k_{\fkm|\fkn}^\bs$ is the fixed point subalgebra of an involution $\theta=\omega\circ \tau$, where $\tau$ is a nontrivial involution of the underlying Dynkin diagram and $\omega$ is the Cartan involution. In the type A case, a nontrivial $\tau$ is uniquely given by $\tau i=\fkm+\fkn-i$, enforcing symmetry in the Dynkin diagram of $\gl_{\mathfrak m|\mathfrak n}^\bs$, meaning that nodes $i$ and $\fkm+\fkn-i$ must have the same parity. In the present paper, we consider \emph{all} such Dynkin diagrams with one natural restriction: if $\tau i=i$ (which occurs only if $\fkm+\fkn$ is even), then the node $i$ has to be even; see \cite[Def.~2.3~\&~Ex.~4.9]{ShenWang2024quantum}. All such Dynkin diagrams are described by the \textit{symmetric} parity sequences. Here a parity sequence $\bs$ is symmetric if  $\ka_i=\ka_{\fkm+\fkn+1-i}$.

For a symmetric parity sequence $\bs$, we introduce a  superalgebra $\Yi$, referred to as a twisted super Yangian in Drinfeld presentation (see Definition \ref{deftY}). These superalgebras are super analogues of twisted Yangians introduced in \cite{lu2024drinfeld}. We then establish an explicit isomorphism between the new superalgebras and the twisted super Yangians $\Y^\bs$ in R-matrix presentation. Our approach utilizes the well-studied Gauss decomposition method, extensively employed to provide explicit isomorphisms between Yangians and quantum affine algebras of classical types in R-matrix and Drinfeld presentations, including the supersymmetric setting \cite{Brundan2005parabolic,Gow2007gauss,Peng2016parabolic,Jing18iso,frassek2023orthosymplectic,Molev24drinfeld}. 

In contrast to the type AI case, where the relations between Gaussian generators are twisted analogues of \cite{Brundan2005parabolic}, the quasi-split type A (or quasi-split type AIII) closely resembles the classical type BCD cases investigated in \cite{Jing18iso,frassek2023orthosymplectic,Molev24drinfeld}. By carefully modifying the R-matrix presentation, the super Yangian of type A can be naturally regarded as a subalgebra of twisted super Yangians by focusing on the upper left half of the generating matrix of the twisted super Yangians $\Y^\bs$. Consequently, many relations can be directly adapted from \cite{Gow2007gauss,Peng2016parabolic,tsymbaliuk2020shuffle}. However, a nontrivial rank reduction homomorphism is crucial for effectively reducing the calculation of new relations to small rank cases. We establish the rank reduction homomorphism (actually embedding) in Proposition \ref{embedB} by employing techniques from \cite{Jing18iso}; see also \cite[Prop. 6.3]{lu2024drinfeld}. Unlike the orthosymplectic Yangians, this technique works well in the super setting as $R(u)$ (Yang's rational R-matrix) does not have singularities at $\ka_1$ (cf. \cite{frassek2023orthosymplectic,Molev24drinfeld}). Consequently, we achieve our main result, an explicit isomorphism between twisted super Yangians in Drinfeld and R-matrix presentations; see Theorem \ref{main2}. Meanwhile, we obtain a PBW theorem for the twisted super Yangians in current generators for both presentations; see Corollary \ref{thm:pbw} and Theorem \ref{main1}. 

The twisted super Yangians in R-matrix presentation possess nontrivial centers, and constructing these centers (Sklyanin superdeterminant) in terms of R-matrix generators remains open due to the absence of one-dimensional modules in tensor products of natural representations. Nevertheless, we construct a central series $\mathpzc{Ber}^\bs(u)$ (termed quantum Berezinian, cf. \cite{Nazarov1991berezinian}) using the Cartan type generating series obtained from the Gauss decomposition in \eqref{cu} and prove that its coefficients of $u^{-2r-1}$ ($r\in\bN$) are free generators of the center of the twisted super Yangian, generalizing \cite[Thm.~6.19]{lu2024drinfeld}. Interestingly, the shifts and formulas for the central series mirror those in the nontwisted case; see \cite{Gow2007gauss,tsymbaliuk2020shuffle,Huang20duality,Chang23center}.

The twisted super Yangians $\Yi$ associated with different parity sequences $\bs$ that share the same $\fkm$ and $\fkn$ are isomorphic. This fact, though nontrivial in terms of Drinfeld presentation, is naturally apparent for $\Y^\bs$ in R-matrix presentation, as the superalgebras are isomorphic via index permutation. As a corollary of our main result, we show that $\Yi$ for different $\bs$ are indeed isomorphic; see Theorem~\ref{main3} and cf. \cite{tsymbaliuk2020shuffle}. Moreover, the central series $\mathpzc{Ber}^\bs(u)$ remains invariant under the permutation isomorphisms of $\Y^\bs$, extending results from \cite{Huang20duality,Chang23center} to the twisted case; see Theorem~\ref{main4}.

The Gauss decomposition approach should also apply to twisted Yangians of classical type introduced in \cite{Guay2016twisted}, although some modifications in the presentation might be necessary. It is natural to anticipate that these results can be extended to their $q$-analogues (cf. \cite{LuWangZ2024braid}), as has been successfully done for the type AI case in \cite{lu2024isomorphism}. Notably, there are no split types in the supersymmetric case, making the quasi-split type the natural first case to study in the supersymmetric setting.

Let us indicate more explicitly the new features of the super case. The general framework of the proof, including Gauss decomposition, rank reduction, and a suitable choice of the R-matrix presentation, is adapted from the nonsuper quasi-split type A case \cite{lu2024drinfeld}. The super setting, however, introduces several additional ingredients. The presence of odd simple roots leads to a quartic Serre relation, which has no counterpart in the ordinary nonsuper case. Moreover, the center is described by a quantum Berezinian constructed from Gaussian generators. This provides a natural replacement, in the present setting, for the Sklyanin determinant description familiar from the nonsuper R-matrix theory.

For the reader's convenience, we summarize the main algebras used in the paper and the maps relating them. The algebra $\mathbf Y^\bs_\imath$ is the Drinfeld-current presentation introduced in Definition~\ref{deftY}. On the R-matrix side, $\mathscr X^\bs$ denotes the extended twisted super Yangian; see Definition~\ref{eradef}. The algebra $\mathscr Y^\bs$ is its quotient by the unitary relation; see Definition~\ref{def:R-Y} and \eqref{unimat}. Finally, the special twisted super Yangian $\mathscr{SY}^\bs$ is the subalgebra of $\mathscr Y^\bs$ defined in Definition~\ref{defSY}. These algebras are related by
\[
\mathbf Y^\bs_\imath \cong \mathscr{SY}^\bs \hookrightarrow \mathscr Y^\bs \twoheadleftarrow \mathscr X^\bs.
\]
One of the main results of this paper is the first isomorphism, which is established via the Gaussian generators of the R-matrix presentation.

The paper is organized as follows. In Section \ref{tYDrinfeld}, we introduce the twisted super Yangians in Drinfeld presentations and study their basic properties. Section \ref{sec:R} is devoted to a review of super Yangians and twisted super Yangians in R-matrix presentations. We investigate the Gauss decomposition of the generating matrix of twisted super Yangians and establish the rank reduction homomorphism in Section \ref{sec:GDmain}. Basic symmetries for the Gaussian generators are also discussed. We formulate and prove our main results in Section \ref{sec:mainres}. In Section \ref{sec:lowrk}, we verify the relations in small ranks which are crucial to the proof of our main results.

\medskip

\noindent {\bf Acknowledgment.} 
The author thanks Yaolong Shen and Weiqiang Wang for stimulating discussions on quantum supersymmetric pairs, and  Weinan Zhang for collaborations on the nonsuper case.

\section{Twisted super Yangians in Drinfeld presentation}
\label{tYDrinfeld}
\subsection{Lie superalgebra}Throughout the paper, we work over $\bC$. In this subsection, we recall the basics of the general linear Lie superalgebra, see e.g. \cite{Cheng2009dualities} for more detail.

A \emph{vector superspace} $W = W_{\bar 0}\oplus W_{\bar 1}$ is a $\Z_2$-graded vector space. We call elements of $W_{\bar 0}$ \emph{even} and elements of $W_{\bar 1}$ \emph{odd}. We write $|w|\in\{\bar 0,\bar 1\}$ for the parity of a homogeneous element $w\in W$. Set $(-1)^{\bar 0}=1$ and $(-1)^{\bar 1}=-1$.

Fix $\fkm,\fkn\in \Z_{\gge 0}$ and denote $N=\fkm+\fkn$. Set $\I^0:=\{1,2,\dots,N-1\}$ and $\I:=\{1,2,\dots,N\}$. For each $i\in\I$, set $i':=N+1-i$. 

Denote by $S_{\fkm|\fkn}$ the set of all sequences $\bs=(\ka_{1},\ka_2,\dots,\ka_{N})$ where $\ka_i\in\{\pm1\}$ and $1$ occurs exactly $\fkm$ times. Elements of $S_{\fkm|\fkn}$ are called \emph{parity sequences}. We call a parity sequence \emph{symmetric} if $\ka_i=\ka_{i'}$ for all $i\in\I$. There exists at least one symmetric parity sequence in $S_{\fkm|\fkn}$ if and only if $\mathfrak m\mathfrak n$ is even.

Fix a parity sequence $\bs\in S_{\fkm|\fkn}$ and define $|i|\in \Z_2$ for $i\in \I$ by $\ka_i=(-1)^{|i|}$.

It is well known that there are different nonconjugate  root systems of the general linear Lie superalgebra $\gl_{\fkm|\fkn}$ and they are parameterized by parity sequence in $S_{\fkm|\fkn}$.

The Lie superalgebra $\gl_{\fkm|\fkn}^\bs$ is generated by elements $e_{ij}$, $i,j\in \I$, with the supercommutator relations
\[
[e_{ij},e_{kl}]=\delta_{jk}e_{il}-(-1)^{(|i|+|j|)(|k|+|l|)}\delta_{il}e_{kj},
\]
where the parity of $e_{ij}$ is $|i|+|j|$. Since $\fkm$ and $\fkn$ can be determined from $\bs$, we simply write $\gl_{\fkm|\fkn}^\bs$ as $\gl^\bs$. The Lie superalgebra $\mathfrak{sl}^\bs$ is the Lie subalgebra of $\gl^\bs$ spanned by the vectors $e_{ij}$ and $\ka_ie_{ii}-\ka_je_{jj}$ for $i,j\in\I$ such that $i\ne j$.

The \textit{Cartan subalgebra $\h$} of $\gl^\bs$ is spanned by $e_{ii}$, $i \in\I$. Let $\epsilon_i$, $i \in\I$, be a basis of $\h^*$ (the dual space of $\h$) such that $\epsilon_i(e_{jj})=\delta_{ij}$. There is a bilinear form $(\cdot,\cdot)$ on $\h^*$ given by $(\epsilon_i,\epsilon_j)=\ka_i\delta_{ij}$. The \textit{root system $\bf{\Phi}$} is a subset of $\h^*$ given by
\[
{\bf \Phi}:=\{\epsilon_i-\epsilon_j~\vert ~i,j\in\I \text{ and }i\ne j\}.
\]
Let $\mc R^+=\{\epsilon_i-\epsilon_j~|~1\lle i<j\lle N\}$ be the set of positive roots. We call a root $\epsilon_i-\epsilon_j$ \textit{even} (resp. \textit{odd}) if $\vert i\vert =\vert j\vert $ (resp. $\vert i\vert \ne \vert j\vert $). Set $\alpha_i:=\epsilon_i-\epsilon_{i+1}$ for $i\in\I^0$. Then we have $|\alpha_i|=|i|+|i+1|$.

The \textit{symmetric Cartan matrix} $C^\bs=(c_{ij})_{i,j\in\I^0}$ associated to the parity sequence $\bs$ is given by
\[
c_{ij}=(\alpha_i,\alpha_j).
\]
For example, $c_{ii}=\ka_i+\ka_{i+1}$ and $c_{i,i+1}=c_{i+1,i}=-\ka_{i+1}$.

For a symmetric parity sequence, the associated Dynkin diagram is also symmetric. We shall give a few examples. Here we use $\dynkin[root radius = 0.1cm] A{o}$ for even roots and $\dynkin[root radius = 0.1cm] A{t}$ for odd roots.
For the parity sequences $(-1,-1,1,-1,-1)$ and $(-1,-1,1,1,-1,-1)$, the corresponding Dynkin diagrams, respectively, are given by
\[
\dynkin[root radius = 0.15cm,edge length=1cm,involutions={[in=-130,out=-50,relative]14;23}] A{otto} \quad\qquad \dynkin[root radius = 0.15cm,edge length=1cm,involutions={[in=-130,out=-50,relative]15;24;33}] A{ototo}
\]
Note that if $N$ is even (i.e. the rank is odd), then the node in the middle is always an even root. We rephrase these more precisely.

Let $\bs$ be a symmetric parity sequence. Let $\tau:\I^0\to \I^0$ be the bijection, $\tau i=N-i$. Then $\tau$ is a Dynkin diagram involution, i.e. $c_{ij}=c_{\tau i,\tau j}$. If $N=2\ell$ is even, then $\ell$ is a fixed point of $\tau$, i.e. $\tau \ell=\ell$. In this case, we always have $|\alpha_\ell|=\bar 0$. 

Throughout the paper, we use two closely related involutions on different index sets: for \(i\in \mathbb I=\{1,\ldots,N\}\), we write \(i'=N+1-i\), while for \(i\in \mathbb I^0=\{1,\ldots,N-1\}\), the Dynkin diagram involution is denoted by \(\tau i=N-i\). Thus \(\tau i=(i+1)'=i'-1\) for \(i\in\mathbb I^0\).

Let $\theta$ be the involution of $\gl^\bs$ defined by
\[
\theta:\gl^\bs\to \gl^\bs,\quad e_{ij}\mapsto (-1)^{i-j}e_{i'j'}.
\]
The involution $\theta$ can be thought of as the composition of the involution $\tau$ and the Chevalley involution. Let $\mathfrak k^\bs$ be the fixed point subalgebra of $\gl^\bs$ under the involution $\theta$. Then the pair $(\gl^\bs,\mathfrak k^\bs)$ is called a \textit{supersymmetric pair of quasi-split type A} (or \textit{quasi-split type AIII}, to be more precisely), cf. \cite{Kolb2020nichols,Shen2024quantum,algethami2024quantum,ShenWang2024quantum}. 

\subsection{Drinfeld presentation}
From now on, we fix an arbitrary symmetric parity sequence $\bs\in S_{\fkm|\fkn}$. For homogeneous elements $x,y$ in a superalgebra, we write
\[
[x,y]=xy-(-1)^{|x||y|}yx,\qquad \{x,y\}=xy+(-1)^{|x||y|}yx.
\]

The following presentation is motivated by the Gauss decomposition of the generating matrix of the twisted super Yangian in the R-matrix presentation. More precisely, the generators \(h_{i,r}\) and \(b_{i,r}\) should be viewed as abstract counterparts of the Cartan and root-type Gaussian generators introduced later in \eqref{beven}--\eqref{hodd}. The defining relations below are chosen so that these Gaussian generators satisfy them; this will be verified in Sections \ref{sec:GDmain} and \ref{sec:lowrk}, and will ultimately yield the isomorphism theorem in Theorem \ref{main2}.

\begin{dfn}\label{deftY}
The \emph{twisted super Yangian of quasi-split type A} (in Drinfeld presentation) associated with the symmetric parity sequence $\bm\ka$, denoted by $\Yi$ (the subscript $\imath$ is borrowed from the theory of $\imath$-quantum groups, i.e. quantum symmetric pair coideal subalgebras, and $\imath$ stands for involution), is the unital superalgebra generated by $\H_{i,r}$, $\X_{i,r}$, $i\in \mathbb{I}^0$, $r\in \N$, where $|h_{i,r}|=\bar 0$ is even and $b_{i,r}$ is of parity $|\alpha_i|$, subject to
\begin{align}\label{qsconj0}
&\quad [\H_{i,r},\H_{j,s}]=0,\qquad\qquad  h_{\tau i,0}=-h_{i,0},
\\\notag
&\quad [\H_{i,r},\X_{j,s}] - [\H_{i,r-2},\X_{j,s+2}]
\\\label{qsconj2}
&\hskip 0.7cm =\frac{c_{ij}-c_{\tau i,j}}{2} \{\H_{i,r-1},\X_{j,s}\}+\frac{c_{ij}+c_{\tau i,j}}{2} \{\H_{i,r-2},\X_{j,s+1}\}+\frac{c_{ij}c_{\tau i,j}}{4} [\H_{i,r-2}, \X_{j,s}],
\\\label{qsconj3}
&\quad [\X_{i,r+1 },\X_{j,s}]  - [\X_{i,r },\X_{j,s+1 }] 
 =\frac{c_{ij}}{2} \{\X_{i,r },\X_{j,s }\}-2 \delta_{\tau i,j}(-1)^{r}\ka_i \H_{j,r+s+1 },
\end{align}
and the Serre relations: for $c_{ij}=0,$
\begin{align}\label{qsconj4}
&[\X_{i,r},\X_{j,s}]= \delta_{\tau i,j}(-1)^{r}\ka_i h_{j,r+s},
\end{align}
and for $j\neq \tau i\neq i,j=i\pm 1,|\alpha_i|=\bar 0$,
\begin{align}\label{qsconj9}
&\mathrm{Sym}_{k_1,k_2}\big[b_{i,k_1},[b_{i,k_2},b_{j,r}] \big] =0,
\end{align}
and for $|\alpha_{i-1}|=|\alpha_{i+1}|=\bar 0$, $|\alpha_i|=\bar 1$,
\beq\label{qsconjnew}
\big[[b_{i-1,r},b_{i,0}],[b_{i,0},b_{i+1,s}]\big]=0,
\eeq
and for $N=2\ell,j=\ell\pm 1$,
\beq\label{qsconj8}
\begin{split}
 \mathrm{Sym}_{k_1,k_2}&\big[b_{\ell,k_1},[b_{\ell,k_2},b_{j,r}] \big] \\
=&\,(-1)^{k_1}\sum_{p\gge 0}2^{-2p} \big(\ka_\ell [h_{\ell,k_1+k_2-2p-1},b_{j,r+1}]-\{h_{\ell,k_1+k_2-2p-1},b_{j,r}\}\big),
\end{split}
\eeq
and for $N=2\ell+1,i\in\{\ell,\ell+1\}, |\alpha_\ell|=\bar 0$,
\begin{align}\label{qsconj10}
&\mathrm{Sym}_{k_1,k_2}\big[b_{i,k_1},[b_{i,k_2},b_{\tau i,r}] \big] =\frac{4}{3}\,\mathrm{Sym}_{k_1,k_2}(-1)^{k_1}\sum_{p=0}^{k_1+r}3^{-p}\ka_i[b_{i,k_2+p},h_{\tau i,k_1+r-p}],
\end{align}
where $\H_{i,s}=0$ if $s< -1$ and $h_{i,-1}=1$.
\end{dfn}

\begin{lem}\label{lem:sym-new}
We have $h_{\tau i,r}=(-1)^{r+1}h_{i,r}$ for all $i\in \I^0$ and $r\gge 0$.
\end{lem}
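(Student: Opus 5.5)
The plan is to extract the identity directly from the defining relation \eqref{qsconj3} with $j=\tau i$, comparing it with the same relation after swapping the roles of $i$ and $\tau i$. The case $r=0$ is exactly the second relation in \eqref{qsconj0}, since $(-1)^{0+1}=-1$. So it suffices to treat $h_{\tau i,m}$ with $m\gge 1$, and to write $m=r+s+1$ with $r,s\in\N$.

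First I record some bookkeeping. Since $\bs$ is symmetric, $\tau$ is a diagram involution, so $c_{i,\tau i}=c_{\tau i,i}$ and $|\alpha_{\tau i}|=|\alpha_i|=:p$. Thus $b_{i,r}$ and $b_{\tau i,s}$ have the same parity $p$. Moreover, the coefficient $\ka_{\tau i}$ appearing in \eqref{qsconj3} (with the roles of $i,\tau i$ swapped) is $\ka_{N-i}=\ka_{(i+1)'}=\ka_{i+1}$. Since $(-1)^p=\ka_i\ka_{i+1}$, this gives $(-1)^p\ka_{\tau i}=\ka_i$.

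Now write \eqref{qsconj3} for the pair $(i,\tau i)$ with indices $(r,s)$:
\[
[b_{i,r+1},b_{\tau i,s}]-[b_{i,r},b_{\tau i,s+1}]=\tfrac{c_{i,\tau i}}{2}\{b_{i,r},b_{\tau i,s}\}-2(-1)^r\ka_i h_{\tau i,r+s+1}.
\]
Next write it for the pair $(\tau i,i)$ with indices $(s,r)$:
\[
[b_{\tau i,s+1},b_{i,r}]-[b_{\tau i,s},b_{i,r+1}]=\tfrac{c_{i,\tau i}}{2}\{b_{\tau i,s},b_{i,r}\}-2(-1)^s\ka_{\tau i} h_{i,r+s+1}.
\]
Using $[y,x]=-(-1)^{p}[x,y]$ and $\{y,x\}=(-1)^p\{x,y\}$ for $x,y$ both of parity $p$, the left side and the anticommutator term of the second identity are $(-1)^p$ times those of the first. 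Multiplying the first identity by $(-1)^p$ and subtracting it from the second therefore leaves only the $h$-terms:
\[
(-1)^s\ka_{\tau i}h_{i,m}=(-1)^{p+r}\ka_i h_{\tau i,m}.
\]
By the bookkeeping above, $(-1)^p\ka_{\tau i}=\ka_i$, so this reduces to $h_{\tau i,m}=(-1)^{s-r}h_{i,m}$. Since $s-r\equiv m-1 \pmod 2$, we get $h_{\tau i,m}=(-1)^{m+1}h_{i,m}$, as claimed.

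When $\tau i=i$ (so $N=2\ell$ and $i=\ell$, with $|\alpha_\ell|=\bar 0$), the same computation is a single relation compared with itself under $r\leftrightarrow s$. Its conclusion, $h_{\ell,m}=(-1)^{m+1}h_{\ell,m}$, i.e.\ $h_{\ell,m}=0$ for even $m$, still follows: take $s=r+1$ to get $m=2r+2$, and use \eqref{qsconj0} for $m=0$. The only real subtlety, and the step I would check most carefully, is the sign bookkeeping: the super-antisymmetry signs, and the identification of the coefficient $\ka_{\tau i}$ with $\ka_{i+1}$ rather than $\ka_i$. Everything else is formal.
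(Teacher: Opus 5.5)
Your proposal is correct and follows the paper's proof. The paper also reduces to $r>0$ via \eqref{qsconj0}, writes \eqref{qsconj3} for the pairs $(i,\tau i)$ and $(\tau i,i)$ with the indices swapped, and compares the two using $|\alpha_i|=|\alpha_{\tau i}|$ and $(-1)^{|\alpha_i|}\fks_i=\fks_{i+1}=\fks_{\tau i}$. Your sign bookkeeping and your handling of the fixed point $\tau\ell=\ell$ are accurate, only more explicit than in the paper.
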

\begin{proof}
By \eqref{qsconj0}, it suffices to prove it for the case $r>0$. By \eqref{qsconj3}, we have
\begin{align*}
[\X_{i,r+1 },\X_{\tau i,s}]  - [\X_{i,r },\X_{\tau i,s+1 }] 
 =\frac{c_{i\tau i}}{2} \{\X_{i,r },\X_{\tau i,s }\}-2 (-1)^{r}\ka_i \H_{\tau i,r+s+1 },\\
[\X_{\tau i,s+1 },\X_{i,r }]   - [\X_{\tau i,s},\X_{i,r+1 }]
 =\frac{c_{\tau i, i}}{2} \{\X_{\tau i,s },\X_{i,r }\}-2 (-1)^{s}\ka_{\tau i} \H_{i,r+s+1 }.
\end{align*}
Note that $|\alpha_i|=|\alpha_{\tau i}|$ and $(-1)^{|\alpha_i|}\fks_i=\fks_{i+1}=\fks_{N-i}=\fks_{\tau i}$. Therefore, the desired equality for $r>0$ follows by comparing the two displayed formulas.
\end{proof}

\begin{rem}
By setting $r=0$ and $r=1$ in \eqref{qsconj2}, we have
\begin{align}
\label{qsconj5'}
&\quad [\H_{i,0},\X_{j,r}]=  (c_{ij}-c_{\tau i,j}) \X_{j,r},
\\\label{qsconj5}
&\quad [\H_{i,1},\X_{j,r}]=  (c_{ij}+c_{\tau i,j}) \X_{j,r+1}+\frac{ c_{ij}-c_{\tau i,j}}{2}\{\H_{i,0},\X_{j,r}\}.    
\end{align}
\end{rem}

If $c_{\tau i,j}=0$, then the relation \eqref{qsconj2} has a more familiar equivalent form, which corresponds to the current relations for the ordinary super Yangians; see also \S\ref{sec:hb} and \S\ref{secA}. 
\begin{lem}\label{lemalt}
If $c_{\tau i,j}=0$, then the relation \eqref{qsconj2} is equivalent to 
\beq\label{alt}
[h_{i,r+1},b_{j,s} ]-[h_{i,r},b_{j,s+1} ]= \frac{c_{ij}}{2} \{h_{i,r},b_{j,s}\}.
\eeq    
\end{lem}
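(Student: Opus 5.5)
The plan is to turn \eqref{qsconj2} into a two-term recursion for the defect of \eqref{alt} and then run an induction on $r$. Fix $i,j$ with $c_{\tau i,j}=0$ and write $c=c_{ij}$. With this hypothesis, \eqref{qsconj2} reads
\[
[h_{i,r},b_{j,s}]-[h_{i,r-2},b_{j,s+2}]=\frac{c}{2}\{h_{i,r-1},b_{j,s}\}+\frac{c}{2}\{h_{i,r-2},b_{j,s+1}\}.
\]
Here we use the conventions $h_{i,-1}=1$ and $h_{i,t}=0$ for $t<-1$. For $r\gge -2$ and $s\gge 0$ I would introduce the defect
\[
X_{r,s}:=[h_{i,r+1},b_{j,s}]-[h_{i,r},b_{j,s+1}]-\frac{c}{2}\{h_{i,r},b_{j,s}\},
\]
so that \eqref{alt} is exactly the statement $X_{r,s}=0$.

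The key step is a direct check, just expanding and cancelling the middle commutator $[h_{i,r-1},b_{j,s+1}]$. It shows that the difference of the two sides of \eqref{qsconj2} equals
\[
X_{r-1,s}+X_{r-2,s+1}.
\]
Hence \eqref{qsconj2}, for all $r,s\in\N$, is equivalent to
\[
X_{r-1,s}=-X_{r-2,s+1}\qquad (r,s\gge 0).
\]

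Next I would compute the boundary values from the conventions.
\begin{itemize}
\item Since $h_{i,-1}=1$ is central and $h_{i,-2}=0$, we get $X_{-2,s}=[1,b_{j,s}]-0-0=0$ for all $s$.
\item Similarly, $X_{-1,s}=[h_{i,0},b_{j,s}]-c\,b_{j,s}$.
\end{itemize}

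For the direction \eqref{qsconj2} $\Rightarrow$ \eqref{alt}, I would induct on $r$. The case $r=0$ of the recursion gives $X_{-1,s}=-X_{-2,s+1}=0$; this is the instance $[h_{i,0},b_{j,s}]=c\,b_{j,s}$, in agreement with \eqref{qsconj5'}. Then $X_{r-1,s}=-X_{r-2,s+1}=0$ for every $s$ by the induction hypothesis, so all $X_{r,s}$ vanish. For the converse, if all $X_{r,s}=0$ (including $r=-1$), then every sum $X_{r-1,s}+X_{r-2,s+1}$ is zero, which is \eqref{qsconj2}.

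The only delicate point is bookkeeping of the index range. The equivalence must be read with \eqref{alt} holding for $r\gge -1$ under the same conventions, so that the $r=-1$ case $[h_{i,0},b_{j,s}]=c_{ij}b_{j,s}$ is included. Equivalently, \eqref{alt} for $r\gge0$ together with the $r=0$ specialization \eqref{qsconj5'} of \eqref{qsconj2} can be taken as the replacement relation. I would make this convention explicit in the write-up; otherwise the argument is a routine telescoping induction.
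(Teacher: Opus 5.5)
Your proposal is correct and follows essentially the paper's route: the paper defers to \cite[Lemma 3.3]{lu2024drinfeld}, which likewise uses that, when $c_{\tau i,j}=0$, relation \eqref{qsconj2} is the sum of two consecutive instances of \eqref{alt}, and obtains the converse by induction on $r$ from the cases forced by the conventions $h_{i,-1}=1$, $h_{i,t}=0$ for $t<-1$. Your remark that \eqref{alt} must be read for $r\geqslant -1$ (equivalently, supplemented by \eqref{qsconj5'}) is the correct bookkeeping, and the paper leaves it implicit.
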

\begin{proof}
The proof is the same as that of \cite[Lemma 3.3]{lu2024drinfeld}.
\end{proof}

\subsection{A PBW spanning set}
For $\alpha\in\mc R^+$ and $r\in\bN$, where $\alpha=\alpha_i+\alpha_{i+1}+\cdots+\alpha_{j-1}$ for some $i,j$ such that $1\lle i<j\lle N$, define
\beq\label{rv}
b_{\alpha,r}:=\Big[b_{j-1,0},\big[b_{j-2,0},\cdots[b_{i+1,0},b_{i,r}]\cdots\big]\Big].
\eeq
Set
\beq\label{Itau}
\begin{split}
&\I_{\ne}^0=\{1,\dots,\ell-1\},\qquad \I_{=}^0=\{\ell\},\qquad \text{ if }N=2\ell;\\
&\I_{\ne}^0=\{1,\dots,\ell\},\hskip 1.45cm \I_{=}^0=\varnothing,\hskip 1.cm \text{ if }N=2\ell+1.
\end{split}
\eeq
An \textit{ordered super monomial} is an ordered monomial containing
no second or higher order powers of the odd generators.
\begin{prop}\label{prop:span}
The ordered super monomials of
\beq\label{eq:span}
\big\{b_{\alpha,r},h_{i,r},h_{j,2r+1}|\alpha\in\mc R^+,i\in \I^0_{\ne}, j\in \I^0_{=},r\in\bN\big\}
\eeq
(with respect to any fixed total ordering) form a spanning set of $\Yi$.
\end{prop}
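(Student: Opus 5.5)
Throughout, work with the filtration on $\Yi$ in which $h_{i,r}$ and $b_{i,r}$ have degree $r$. The strategy is the standard one for Yangian-type algebras. We show that the associated graded algebra $\gr\,\Yi$ is spanned by ordered super monomials in the images of the elements \eqref{eq:span}. It suffices to show that $\gr\,\Yi$ is a quotient of the enveloping algebra of a Lie superalgebra spanned by those images. Equivalently, we show that the span $\mathcal L$ of the symbols $\bar b_{\alpha,r}$, $\bar h_{i,r}$ ($i\in\I^0_{\ne}$), $\bar h_{j,2r+1}$ ($j\in\I^0_=$) is closed under supercommutators modulo lower degree, and that it contains the symbols of all generators. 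Once this is known, the usual induction on degree and on length of monomials (reordering adjacent factors at the cost of a supercommutator of lower total degree, and killing squares of odd elements via $x^2=\frac12[x,x]$) shows that ordered super monomials span $\Yi$.

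\emph{Step 1: reducing the generating set.} By Lemma~\ref{lem:sym-new}, $h_{\tau i,r}=(-1)^{r+1}h_{i,r}$. Hence the $h_{i,r}$ with $i\in\I^0_{\ne}$ together with $h_{\ell,r}$ for $N=2\ell$ suffice. For $i=\ell=\tau\ell$ the same lemma gives $h_{\ell,2r}=-h_{\ell,2r}$, so $h_{\ell,2r}=0$, leaving only $h_{\ell,2r+1}$. For the $b$'s, we must show that every $b_{i,r}$ and every iterated commutator lies (modulo lower degree) in the span of the $b_{\alpha,r}$ and $h$'s. Since $\alpha$ runs over $\mc R^+$, which has one element for each pair $i<j$, the key point is that $b_{i,r}$ attached to $\alpha_i$ and $b_{\tau i,r}$ attached to $\alpha_{\tau i}$ are independent. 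We do not identify positive and negative root vectors here. In $\gr$, the relation \eqref{qsconj3} gives that $[\bar b_{i,r+1},\bar b_{j,s}]-[\bar b_{i,r},\bar b_{j,s+1}]$ equals $-2\delta_{\tau i,j}(-1)^r\ka_i\bar h_{j,r+s+1}$. From this, by induction, $[\bar b_{i,r},\bar b_{j,s}]$ depends only on $r+s$ up to the $h$-terms. Consequently $b_{\alpha,r}$ defined with the degree put on any other factor agrees with \eqref{rv} modulo $h$'s and lower terms. Similarly, \eqref{qsconj2} in top degree gives $[\bar h_{i,r},\bar b_{j,s}]=[\bar h_{i,r-2},\bar b_{j,s+2}]$, and with \eqref{qsconj5'} and \eqref{qsconj5} this yields $[\bar h_{i,r},\bar b_{j,s}]\in\bC\,\bar b_{j,r+s}$ (a nonzero multiple only for the appropriate parity of $r$).

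\emph{Step 2: closure.} We show $[\bar b_{\alpha,r},\bar b_{\beta,s}]\in\mathcal L$. When $\alpha+\beta$ is a positive root we get a multiple of $\bar b_{\alpha+\beta,r+s}$, and when $\alpha+\beta\notin\mc R^+\cup\{0\}$ we get $0$. These statements are proved by the usual Jacobi-identity induction on heights, using the Serre relations \eqref{qsconj4}, \eqref{qsconj9} and \eqref{qsconjnew}. The quartic relation \eqref{qsconjnew} handles the odd isotropic node, where $[\bar b_{i,0},\bar b_{i,0}]=0$ replaces the cubic Serre relation. New contributions appear only through $\delta_{\tau i,j}$-terms and the right-hand sides of \eqref{qsconj8} and \eqref{qsconj10}. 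All of these are of the form $h$, or $[h,b]$, $\{h,b\}$ of strictly lower degree or already in $\mathcal L$. They are therefore harmless, since we only need the top-degree part to lie in $\mathcal L$. In particular, $\gr\,\Yi$ is a quotient of the enveloping algebra of a current-type Lie superalgebra, a twisted current algebra of $\mathfrak k^\bs$, and $\mathcal L$ maps onto its image.

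The main obstacle is Step 2 near the fixed points of $\tau$: brackets $[\bar b_{\alpha,r},\bar b_{\beta,s}]$ with $\beta$ involving $\alpha_{\tau i}$, and the middle node for $N=2\ell$ or the pair $\ell,\ell+1$ for $N=2\ell+1$. There one must check that the extra terms in \eqref{qsconj8} and \eqref{qsconj10} are genuinely of lower filtered degree, or land in the span of the chosen $h$'s. This uses the fact that $h_{\ell,\mathrm{even}}$ vanishes, and the parity factors $(-1)^{k_1}$. I would first verify this carefully in ranks $2$ and $3$ and then propagate by the height induction.
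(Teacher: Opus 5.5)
Your Step~2 has a genuine gap, and under your grading that step carries the whole content of the proposition.

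With $\deg h_{i,r}=\deg b_{i,r}=r$, every $\tau$-twisted term survives in the associated graded. The $h$-terms in \eqref{qsconj3} and \eqref{qsconj4}, and the right-hand sides of \eqref{qsconj8} and \eqref{qsconj10}, are all of top degree. Closure of your span $\mathcal L$ therefore means deriving, from the defining relations alone, the full bracket table of the twisted current superalgebra $\mathfrak{sl}^\bs[z]^\vartheta$ in terms of the $\bar b_{\alpha,r}$ and $\bar h_{i,r}$. Your assertion that $\gr\,\Yi$ is a quotient of its enveloping algebra is exactly what would need proof, and no presentation of this kind is available to cite.

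This is not the usual height induction, because the bracket rules are not the untwisted ones.
\begin{itemize}
\item For $N=3$ and $|\alpha_1|=\bar 0$, \eqref{qsconj10} gives $[\bar b_{1,0},\bar b_{\alpha_1+\alpha_2,0}]=-4\bar b_{1,0}\ne 0$, although $2\alpha_1+\alpha_2\notin\mc R^+$.
\item Each correction you call harmless re-enters the induction and must be bracketed again.
\item The corrections are not confined to the fixed point of $\tau$. By \eqref{qsconj4}, $[\bar b_{i,r},\bar b_{\tau i,s}]=(-1)^r\ka_i\bar h_{\tau i,r+s}$ for every $i$ with $c_{i,\tau i}=0$. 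So corrections appear whenever $\bar b_{i,r}$ meets a root vector built through node $\tau i$, for arbitrary $i$.
\end{itemize}
Checking ranks $2$ and $3$ and then ``propagating'' does not control these terms. In addition, filtration degree $0$ contains nothing of lower degree, so you would need exact closure of $\mathrm{span}\{1,b_{\alpha,0},h_{i,0}\}$ in $\Yi$. That amounts to a PBW theorem for the degree-zero subalgebra, in effect a Serre-type presentation of $\mathfrak k^\bs$ by the $b_{i,0}$. This is itself nontrivial in the super setting.

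The missing idea is to choose the grading so that the twisting disappears. The paper uses $\deg b_{i,r}=\deg h_{i,r}=r+1$.
\begin{itemize}
\item \eqref{qsconj5'}, \eqref{qsconj5} and the recursion \eqref{qsconj2} then force $[\tl h_{i,r},\tl b_{j,s}]=0$.
\item Every twisted contribution listed above, together with the $\{b,b\}$-terms, falls into strictly lower degree.
\item The $\tl h$'s are therefore central. As in your Step~1, Lemma~\ref{lem:sym-new} reduces them to $\tl h_{i,r}$ ($i\in\I^0_{\ne}$) and $\tl h_{j,2r+1}$ ($j\in\I^0_{=}$).
\item The $\tl b_{i,r}$ satisfy only the untwisted relations \eqref{tobea} of the positive half of the graded super Yangian.
\end{itemize}
For that algebra, spanning by ordered super monomials in the $\tl b_{\alpha,r}$ is known \cite[\S2.6--\S2.7]{tsymbaliuk2020shuffle}. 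No twisted bracket computation is ever needed.
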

\begin{proof}
Define a filtration on the superalgebra $\Yi$ by setting $\deg b_{i,r}=\deg h_{i,r}=r+1$ and denote by $\tl\gr\Yi$ the associated graded superalgebra. Denote by $\tl h_{i,r},\tl b_{i,r}$ the images of $h_{i,r},b_{i,r}$ in the $(r+1)$-st component in $\tl\gr\Yi$. Then the relations \eqref{qsconj2}, \eqref{qsconj5'}, and \eqref{qsconj5} imply that $[\tl h_{i,r},\tl b_{j,s}]=0$ for $i,j\in\I^0$ and $r,s\in\bN$. By \eqref{qsconj0}, $\tl h_{i,r}$ and $\tl h_{j,s}$ commute in $\tl\gr\Yi$. Moreover, the relations \eqref{qsconj3}--\eqref{qsconj10} imply that
\beq\label{tobea}
\begin{split}
[\tl b_{i,r+1},\tl b_{j,s}]-[\tl b_{i,r},\tl b_{j,s+1}]=0,&\\
[\tl b_{i,r},\tl b_{j,s}]=0,&\qquad \text{ if }c_{ij}=0,\\
\Sym_{k_1,k_2}\big[\tl b_{i,k_1},[\tl b_{i,k_2},\tl b_{j,r}]\big]=0,& \qquad \text{ if }|\alpha_i|=\bar 0 \text{ and }j=i\pm 1,\\
\big[[\tl b_{i-1,r},\tl b_{i,0}],[\tl b_{i,0},\tl b_{i+1,s}]\big]=0,&\qquad \text{ if }|\alpha_i|=\bar 1 \text{ and }|\alpha_{i-1}|=|\alpha_{i+1}|=\bar 0.
\end{split}
\eeq
Together with Lemma \ref{lem:sym-new}, we conclude that $\tl\gr\Yi$ is a quotient of the tensor superalgebra $$\bC\big[\tl h_{i,r},\tl h_{j,2r+1}|i\in \I^0_{\ne}, j\in \I^0_{=},r\in\bN\big]\otimes \wtl{\mathbf Y}_\imath^\bs,$$ where $\wtl{\mathbf Y}_\imath^\bs$ is the superalgebra generated by $\tl b_{i,r}$ of parity $|\alpha_i|$ for $i\in \I^0$, $r\in\bN$ subject to the relations \eqref{tobea}. For $\alpha\in\mc R^+$ and $r\in\bN$, define $\tl b_{\alpha,r}$ in the same way as in \eqref{rv} with $b_{k,s}$ replaced by $\tl b_{k,s}$. It follows from \cite[\S2.6--\S2.7]{tsymbaliuk2020shuffle} that $\wtl{\mathbf Y}_\imath^\bs$ is spanned by the ordered super monomials in $\tl b_{\alpha,r}$ for $\alpha\in\mc R^+$ and $r\in\bN$, completing the proof.
\end{proof}

We will see that these ordered monomials indeed form a basis of $\Yi$; see Corollary \ref{thm:pbw}.

\subsection{On Serre relations}\label{app}
In this subsection, we establish that some complicated Serre relations can be deduced effectively from other relations and the finite type Serre relations. Similar reduction steps are commonly used in the study of Yangians and quantum loop algebras. For instance, this is a key part of the minimalistic presentation of Yangians \cite{Levendorski93generators,GNW18}, see also \cite[Proposition 2.7]{GTL16}.

\begin{prop}\label{prop-serre}
Suppose the relations \eqref{qsconj0}--\eqref{qsconj3} hold. Assume further that $\tau i=i\pm 1$, $|\alpha_i|=\bar 0$, and
\beq\label{ppf0}
\big[b_{i,0},[b_{i,0},b_{\tau i,0}]\big]=4b_{i,0},
\eeq
then the relation \eqref{qsconj10} holds as well.
\end{prop}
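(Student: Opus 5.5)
We argue by induction on the total degree $k_1+k_2+r$, applying the current-shifting operators built from $h_{i,1}$ and $h_{\tau i,1}$ to the base relation \eqref{ppf0}. This is the standard mechanism behind minimalistic presentations of Yangians \cite{Levendorski93generators,GNW18}. The setting is $N=2\ell+1$, $i\in\{\ell,\ell+1\}$, $\tau i=i\pm1$ and $|\alpha_i|=\bar 0$. Here $c_{ii}=\ka_i+\ka_{i+1}=2\ka_i$, $c_{i,\tau i}=-\ka_{\ell+1}$ and $c_{\tau i,\tau i}=2\ka_i$. Set
\[
S(k_1,k_2;r):=\mathrm{Sym}_{k_1,k_2}\big[b_{i,k_1},[b_{i,k_2},b_{\tau i,r}]\big],
\]
and let $T(k_1,k_2;r)$ denote the right-hand side of \eqref{qsconj10}. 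The goal is $S=T$ for all $k_1,k_2,r\in\bN$.

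First, the base case. For $k_1=k_2=r=0$, the right-hand side of \eqref{qsconj10} reduces to $\frac{4}{3}\cdot 2\ka_i[b_{i,0},h_{\tau i,0}]$. By \eqref{qsconj5'} this equals a scalar multiple of $b_{i,0}$, since $c_{\tau i,i}-c_{ii}$ is computable. We check that this matches $2\cdot 4b_{i,0}$ coming from \eqref{ppf0}, with the factor $2$ from symmetrization. This fixes the normalization.

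Next, the shift operators. By \eqref{qsconj5}, $\mathrm{ad}\,h_{j,1}$ acts on $b_{m,s}$ by $(c_{jm}+c_{\tau j,m})\,b_{m,s+1}$ plus a term involving $\{h_{j,0},b_{m,s}\}$. The latter is a scalar multiple of $b_{m,s}$ modulo products with $h_{j,0}$, and these can be absorbed using \eqref{qsconj5'}. We choose the two operators $\mathrm{ad}\,h_{i,1}$ and $\mathrm{ad}\,h_{\tau i,1}$. On the pair $(b_{i,s},b_{\tau i,s})$ their leading coefficients form the matrix
\[
\begin{pmatrix} c_{ii}+c_{\tau i,i} & c_{i,\tau i}+c_{\tau i,\tau i}\\ c_{\tau i,i}+c_{ii} & c_{\tau i,\tau i}+c_{i,\tau i}\end{pmatrix}.
\]
This matrix is degenerate, since by Lemma \ref{lem:sym-new} we have $h_{\tau i,1}=h_{i,1}$. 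So $h_{\cdot,1}$ alone gives only one shift direction. We therefore also use $h_{i,2}$, through \eqref{qsconj2} with $r=2$; it shifts $b_{i}$ and $b_{\tau i}$ with the opposite relative sign, because of the antisymmetric parts $c_{ij}-c_{\tau i,j}$. Combining these, we can shift the index $r$ of $b_{\tau i,r}$ and the indices $k$ of $b_{i,k}$ independently, modulo lower terms.

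For the inductive step, we apply $\mathrm{ad}$ of these elements to $S(k_1,k_2;r)=T(k_1,k_2;r)$. Using the derivation property together with \eqref{qsconj3} to reorganize $[b_{i,k+1},b_{i,k'}]$ terms, we obtain three relations expressing $S(k_1+1,k_2;r)+S(k_1,k_2+1;r)$ and $S(k_1,k_2;r+1)$ in terms of lower $S$'s. A standard two-step argument, first increasing $k_1+k_2$ symmetrically and then separating using \eqref{qsconj3} with $j=i$ (valid since $c_{ii}$ is known), then gives $S(k_1,k_2;r)$ for all indices. Once we know that $S$ satisfies the same recursion as $T$, the result follows by induction.

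The main obstacle is the final verification that $T$ satisfies the identical recursion. The operator $\mathrm{ad}\,h_{\cdot,s}$ hits the $h_{\tau i,\cdot}$ factors in $T$ trivially, by \eqref{qsconj0}, but it hits the $b_{i,k_2+p}$ factor through \eqref{qsconj2}. This generates anticommutator terms $\{h,b\}$, and these must be rewritten using $[h_{i,\cdot},h_{\tau i,\cdot}]=0$ and Lemma \ref{lem:sym-new}. The geometric series $\sum_p 3^{-p}$ must then reproduce itself; the factor $3$ arises from the eigenvalue ratio $(c_{ii}-c_{\tau i,i})/(\dots)$. To make this manageable, I would first pass to generating series $b_i(u)=\sum_r b_{i,r}u^{-r-1}$ and $h(u)$. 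In that language \eqref{qsconj2} becomes a rational identity, and the sum $\sum_p 3^{-p}$ becomes a factor like $(u-v\pm\tfrac{1}{3})^{-1}$. Checking the recursion then reduces to a partial-fraction identity, which I would verify directly.
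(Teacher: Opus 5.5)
Your base case is correct: by \eqref{qsconj5'}, $[b_{i,0},h_{\tau i,0}]=3\ka_i b_{i,0}$, so both sides of \eqref{qsconj10} at $k_1=k_2=r=0$ equal $8b_{i,0}$. The inductive mechanism, however, already fails at total degree one.

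For every $j$, the element $\xi_{j,1}:=h_{j,1}-\tfrac12h_{j,0}^2$ acts on $b_{i,s}$ and $b_{\tau i,s}$ by the same scalar, because $c_{ji}+c_{\tau j,i}=c_{j,\tau i}+c_{\tau j,\tau i}$. So degree-one Cartan elements only ever produce the single combination $S(k_1+1,k_2;r)+S(k_1,k_2+1;r)+S(k_1,k_2;r+1)$. Bringing in $h_{i,2}$ does not give a second direction. By \eqref{qsconj2} it raises the degree by two, so no combination of $\mathrm{ad}\,\xi_{i,1}$ and $\mathrm{ad}\,h_{i,2}$ shifts $r$ alone by one. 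Hence your claimed separation of $S(k_1+1,k_2;r)+S(k_1,k_2+1;r)$ from $S(k_1,k_2;r+1)$ does not follow. Concretely, in degree one the unknowns are $S(1,0;0)$ and $S(0,0;1)$, and your tools yield only $2S(1,0;0)+S(0,0;1)$:
\eqref{qsconj3} with $j=i$ gives nothing new, since $S(1,0;0)=S(0,1;0)$ by definition, and $h_{i,2}$ lands in degree two. (Moreover, $[h_{i,2},b_{j,s}]$ contains $\{h_{i,1},b_{j,s}\}$, which is not a multiple of a single $b$, so without a logarithmic correction it is not a clean shift.)

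The missing ingredient is \eqref{qsconj3} with $j=\tau i$:
$[b_{i,k+1},b_{\tau i,s}]-[b_{i,k},b_{\tau i,s+1}]=-\tfrac{\ka_i}{2}\{b_{i,k},b_{\tau i,s}\}-2(-1)^k\ka_i h_{\tau i,k+s+1}$.
This transfers degree from $b_i$ to $b_{\tau i}$, and it is exactly the source of the $h_{\tau i}$ terms in \eqref{qsconj10}. Write $S(k_1,k_2;r)=[[b_{i,k_1},b_{i,k_2}],b_{\tau i,r}]+2[b_{i,k_2},[b_{i,k_1},b_{\tau i,r}]]$ and use also \eqref{qsconj3} with $j=i$. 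One gets, for instance, $S(1,0;m)=S(0,0;m+1)-4\ka_i[b_{i,0},h_{\tau i,m+1}]$, and $T$ satisfies the same identity. More generally, every $S(k_1,k_2;r)$ reduces to $S(0,0;k_1+k_2+r)$ plus explicit terms. Then the single direction suffices to climb in degree: $\mathrm{ad}\,\xi_{i,1}$ applied to $S(0,0;m)$ gives $2\ka_iS(1,0;m)+\ka_iS(0,0;m+1)$.

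Even so, the real content is checking that $T$ is compatible with all these reduction identities, and your proposal explicitly postpones that. As written it is a strategy with a broken step, not a proof. For comparison, the paper does none of this directly. It observes that under the hypotheses all $b_{i,r}$ and $b_{\tau i,s}$ are even, so the relations involved are those of the nonsuper quasi-split case, and the argument of \cite[Prop.~3.12]{lu2024drinfeld} carries over verbatim.
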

\begin{proof}
In this case both $b_{i,r}$ and $b_{\tau i,s}$ are even, and hence the proof is exactly the same as that of \cite[Prop. 3.12]{lu2024drinfeld}.
\end{proof}

\begin{prop}\label{propnewserre}
Suppose the relations \eqref{qsconj0}--\eqref{qsconj3} hold. Assume further that $|\alpha_i|=\bar 1$, $|\alpha_{i-1}|=|\alpha_{i+1}|=\bar 0$, and
\beq\label{ppf1}
\big[[b_{i-1,0},b_{i,0}],[b_{i,0},b_{i+1,0}]\big]=0,
\eeq
then the relation \eqref{qsconjnew} holds as well.
\end{prop}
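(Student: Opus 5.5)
We argue by the usual shift-operator technique. The aim is to promote the degree-zero identity \eqref{ppf1} to all $r,s\in\bN$ in \eqref{qsconjnew}, using the Cartan elements $h_{k,1}$ to raise the mode of one root vector at a time without changing the others.

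\emph{Step 1: choice of shifting elements.} Set $j=i-1$ and $k=i+1$. For each index $p\in\{i-1,i,i+1\}$ we look for an element of the form $\tilde h=\sum_m a_m h_{m,1}$, possibly corrected by quadratic terms in the $h_{m,0}$. We want $\tilde h$ to act by \eqref{qsconj5} as $[\tilde h,b_{p,r}]=b_{p,r+1}+(\text{terms in }h_{\cdot,0}b_{p,r})$, while $[\tilde h,b_{q,r}]$ vanishes, or equals a multiple of $b_{q,r}$, for the other two indices $q$. By \eqref{qsconj5'}, the anticommutator term $\{h_{m,0},b_{p,r}\}$ equals $2h_{m,0}b_{p,r}$ plus a scalar multiple of $b_{p,r}$. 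The $h_{m,0}$ act semisimply with known eigenvalues, so this correction can be absorbed: we use $h_{m,1}-\tfrac12\sum(\dots)h_{m,0}^2$-type combinations, exactly as in the proof of \cite[Prop. 3.12]{lu2024drinfeld}.

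\emph{Step 2: solvability of the linear system.} Solving for the $a_m$ requires the coefficient matrix $\big(c_{mq}+c_{\tau m,q}\big)$, restricted to suitable $m$ and to $q\in\{i-1,i,i+1\}$, to be invertible. Here $c_{ii}=0$ because $\alpha_i$ is odd, so $b_{i,r}$ cannot be shifted by $h_{i,1}$ itself. Instead we shift it using $h_{i\pm1,1}$, since $c_{i-1,i}=-\ka_i\neq 0$. When $\tau$ identifies some of these nodes (for instance $\tau(i-1)=i+1$ in the middle of a symmetric diagram), the twisted coefficients $c_{mq}+c_{\tau m,q}$ change, and we check invertibility case by case. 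This case analysis is where I expect the main difficulty. If for some configuration no such $\tilde h$ exists, I would instead raise $r$ and $s$ separately, exploiting that $b_{i-1,r}$ and $b_{i+1,s}$ enter linearly.

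\emph{Step 3: induction.} We first generalize \eqref{ppf1} to $\big[[b_{i-1,r},b_{i,0}],[b_{i,0},b_{i+1,s}]\big]=0$. Apply $\mathrm{ad}\,\tilde h$, chosen to shift only $b_{i-1,\cdot}$ and to act on $b_{i,\cdot}$ and $b_{i+1,\cdot}$ by scalars plus $h_{\cdot,0}$-terms, and induct on $r$. Then shift $b_{i+1,\cdot}$ in the same way and induct on $s$. The $h_{\cdot,0}$-terms regenerate lower instances of the same identity, multiplied by scalars, and these vanish by the induction hypothesis. Whenever $[\tilde h, b_{i,0}]$ produces $b_{i,1}$ instead of a scalar, the extra terms are handled by \eqref{qsconj3} with $c_{ii}=0$. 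That relation gives $[b_{i,1},b_{i,0}]=[b_{i,0},b_{i,1}]$, so $\{b_{i,0},b_{i,1}\}=0$ and $b_{i,0}^2=0$ up to $h$-terms appearing only when $\tau i=i$, which is excluded because $|\alpha_i|=\bar 1$. These relations let us reorganize the expanded bracket, and the super Jacobi identity then reduces the unwanted terms to the vanishing already established.
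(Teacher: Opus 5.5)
Your overall strategy is the paper's: use $\xi_{j,1}=h_{j,1}-\tfrac12 h_{j,0}^2$, which by \eqref{qsconj2} with $r=0,1$ acts as the pure shift $[\xi_{j,1},b_{k,r}]=(c_{jk}+c_{\tau j,k})\,b_{k,r+1}$, and induct on $r+s$. The gap is that the step carrying the proof, producing the right shift operators, is left as an unperformed case analysis, and the criterion you set for it is the wrong one.

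You are missing the paper's preliminary observation. Since $|\alpha_\ell|=\bar 0$ for $N=2\ell$ and $|\alpha_\ell|=|\alpha_{\ell+1}|$ for $N=2\ell+1$, the hypotheses force $i+1\le\lfloor N/2\rfloor$ or $i-1\ge\lfloor (N+1)/2\rfloor$. So the configuration $\tau(i-1)=i+1$ you worry about never occurs, and $c_{\tau i,i}=0$. Hence $\xi_{i,1}$ has zero coefficient on $b_{i,0}$. The vanishing of $c_{ii}$, which you treat as an obstacle, is exactly what makes $\xi_{i,1}$ useful: the two middle factors are never shifted.

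Write $S(r,s)=\big[[b_{i-1,r},b_{i,0}],[b_{i,0},b_{i+1,s}]\big]$. Applying $\xi_{i,1}$ to $S(r,s)$ gives only one linear relation between $S(r+1,s)$ and $S(r,s+1)$. You need a second, independent relation, again from a combination with vanishing $b_i$-coefficient.

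Invertibility of the block $(c_{jk}+c_{\tau j,k})_{j,k\in\{i-1,i,i+1\}}$ fails in general. Away from the middle this block equals $\ka_i\left(\begin{smallmatrix}2&-1&0\\-1&0&1\\0&1&-2\end{smallmatrix}\right)$, which is singular with kernel $(1,2,1)$. So no combination of $\xi_{i-1,1},\xi_{i,1},\xi_{i+1,1}$ shifts $b_{i-1}$ alone. In that case you must bring in $\xi_{i\pm 2,1}$, taking the node two steps from $i$ towards the middle; it shifts only the adjacent outer factor. Only when $i+1=\lfloor N/2\rfloor$, or in the mirror case, does the twist $c_{\tau j,k}$ make the $3\times 3$ block invertible.

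The fallback in Step 3 does not work under the stated hypotheses. For $j=i$ with $c_{ii}=0$ and $\tau i\ne i$, \eqref{qsconj3} reads $[b_{i,r+1},b_{i,s}]=[b_{i,r},b_{i,s+1}]$. At $r=s=0$ this is a tautology, because the super bracket of two odd elements is symmetric. In general it only says that $[b_{i,r},b_{i,s}]$ depends on $r+s$. It gives neither $b_{i,0}^2=0$ nor $[b_{i,0},b_{i,1}]=0$; these are instances of \eqref{qsconj4}, which the proposition does not assume.

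Moreover, rewriting $[b_{i-1,r},b_{i,1}]$ via \eqref{qsconj3} produces the non-bracket term $\tfrac{c_{i-1,i}}{2}\{b_{i-1,r},b_{i,0}\}$. Without further relations this term cannot be reabsorbed into instances of \eqref{qsconjnew}. So the $b_{i,1}$-terms have to be avoided, not cleaned up afterwards.

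The proof closes once you exhibit two combinations of the $\xi_{j,1}$ with vanishing $b_i$-coefficient and independent coefficient vectors on $(b_{i-1},b_{i+1})$, as described above. Your alternative of raising $r$ and $s$ separately is the same requirement restated, not a way around it.
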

\begin{proof}
Let $N=2\ell$ or $N=2\ell+1$. If $N=2\ell$, then $|\alpha_\ell|=\bar 0$. If $N=2\ell+1$, then $|\alpha_{\ell}|=|\alpha_{\ell+1}|$. Thus our assumptions happen only if $i+1\lle \lfloor \frac{N}2\rfloor$ or $i-1\gge \lfloor \frac{N+1}2\rfloor$. Set $\xi_{j,1}:=h_{j,1}-\frac{1}{2}h_{j,0}^2$, then
\[
[\xi_{j,1},b_{k,r}]=(c_{jk}+c_{\tau j,k})b_{k,r+1}.
\]
Then the relation \eqref{qsconjnew} easily follows by applying $[\xi_{j,1},\cdot\,]$ to \eqref{ppf1} for $j\in \I^0$ such that $|j-i|\lle 1$ and using induction  on $r+s$, cf. \cite{Levendorski93generators}.
\end{proof}

Now let us consider the relation \eqref{qsconj8}. The argument in the rest of this subsection was obtained during the course of the joint work \cite{lu2025affine} with Wang and Zhang, but neither the statement nor the proof of the result below appears in \cite{lu2025affine}. We include the proof here, both for completeness and for its use in the present work.

We fix $i,j\in\I^0$ such that $i=\tau i$, $i\ne j$, and $c_{ij}\ne 0$. Then $b_{i,r}$ are even elements and $c_{ij}=-\ka_i$. 
We assume the relations \eqref{qsconj0}--\eqref{qsconj3} and the finite type Serre relations 
\beq\label{serre0}
\big[b_{i,0},[b_{i,0},b_{j,0}]\big]=-b_{j,0}.
\eeq
Then we shall prove that the general Serre relation \eqref{qsconj8} holds. 

Set
\beq\label{biju}
b_{ij}(u)=[b_{i,0},b_j(u)].
\eeq
Note that \eqref{qsconj3} for the choice of $i,j$ is equivalent to
\[
\begin{split}
(u-v-\tfrac{c_{ij}}{2}) b_i(u) b_j(v)=(u-v+\tfrac{c_{ij}}{2}) b_j(v) b_i(u)+ \big([ b_{i,0}, b_j(v)]-[ b_i(u), b_{j,0}]\big).
\end{split} 
\]
It follows by setting $v=u\pm \tfrac{c_{ij}}{2}$ that  
\beq\label{bi0bj}
[ b_{i}(u), b_{j,0}]= b_{ij}(u-\tfrac{c_{ij}}{2})+c_{ij} b_j(u-\tfrac{c_{ij}}{2}) b_i(u)= b_{ij}(u+\tfrac{c_{ij}}{2})+c_{ij} b_i(u) b_j(u+\tfrac{c_{ij}}{2}).
\eeq
Note that by \eqref{qsconj3}, one also has
\beq\label{bibi0}
[b_i(u),b_{i,0}]=\ka_i\big(b_i(u)^2- h_i(u)+1\big).
\eeq
\begin{lem}
We have
\beq\label{bijbj}
[b_{i,0},b_{ij}(u)]=-b_j(u).
\eeq
\end{lem}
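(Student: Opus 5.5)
The plan is to prove the coefficientwise form of \eqref{bijbj}, namely
\[
\big[b_{i,0},[b_{i,0},b_{j,s}]\big]=-b_{j,s}\qquad (s\in\bN),
\]
by induction on $s$. The case $s=0$ is exactly the finite type Serre relation \eqref{serre0}.

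First I record the data forced by $i=\tau i$. Relation \eqref{qsconj0} gives $h_{i,0}=-h_{i,0}$, hence $h_{i,0}=0$. Since $\ka_i=\ka_{i+1}$, we have $c_{ii}=2\ka_i$, and $c_{ij}=c_{\tau i,j}=-\ka_i$. Plugging into \eqref{qsconj5} gives the shift formulas
\[
[h_{i,1},b_{j,r}]=-2\ka_i\, b_{j,r+1},\qquad [h_{i,1},b_{i,r}]=4\ka_i\, b_{i,r+1}.
\]
The relation \eqref{qsconj3} for the pair $(i,j)$ with $r=0$ (here $\tau i\neq j$, so there is no $h$-term) gives
\[
[b_{i,1},b_{j,s}]=[b_{i,0},b_{j,s+1}]-\tfrac{\ka_i}{2}\{b_{i,0},b_{j,s}\}.
\]
From \eqref{bibi0}, or directly from \eqref{qsconj3} with $j=i$, $r=s=0$, I also get
\[
[b_{i,1},b_{i,0}]=\ka_i\big(b_{i,0}^2-h_{i,1}\big),
\]
using $h_{i,0}=0$.

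For the inductive step, assume the identity holds for $b_{j,s}$ (and all smaller indices), and apply the even derivation $[h_{i,1},\cdot\,]$ to $\big[b_{i,0},[b_{i,0},b_{j,s}]\big]=-b_{j,s}$. The right-hand side becomes $2\ka_i b_{j,s+1}$. The left-hand side becomes
\[
4\ka_i\Big(\big[b_{i,1},[b_{i,0},b_{j,s}]\big]+\big[b_{i,0},[b_{i,1},b_{j,s}]\big]\Big)-2\ka_i\big[b_{i,0},[b_{i,0},b_{j,s+1}]\big].
\]
The first term is rewritten as
\[
\big[[b_{i,1},b_{i,0}],b_{j,s}\big]+\big[b_{i,0},[b_{i,1},b_{j,s}]\big].
\]
Then:
\begin{itemize}
\item every occurrence of $[b_{i,1},b_{j,s}]$ is replaced using the displayed consequence of \eqref{qsconj3};
\item $[b_{i,1},b_{i,0}]$ is replaced by $\ka_i(b_{i,0}^2-h_{i,1})$;
\item $[h_{i,1},b_{j,s}]$ is evaluated by the shift formula.
\end{itemize}
After this, everything is expressed through $b_{j,s+1}$, $b_{j,s}$, $b_{i,0}$ and the double commutator $X:=\big[b_{i,0},[b_{i,0},b_{j,s+1}]\big]$. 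The double commutators at level $s$ are then eliminated by the induction hypothesis.

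The expected outcome is a linear identity of the shape $\lambda X+(\text{known terms})=2\ka_i b_{j,s+1}$ with $\lambda\neq 0$. Solving it should give $X=-b_{j,s+1}$.

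The main obstacle is the bookkeeping of the leftover terms involving $b_{i,0}$ and anticommutators. These come from $\{b_{i,0},b_{j,s}\}$ nested inside $[b_{i,0},\cdot\,]$, and from $[b_{i,0}^2,b_{j,s}]$. They must cancel among themselves, or collapse using the induction hypothesis (e.g. $[b_{i,0}^2,b_{j,s}]=\{b_{i,0},[b_{i,0},b_{j,s}]\}$ and $[b_{i,0},\{b_{i,0},b_{j,s}\}]=[b_{i,0}^2,b_{j,s}]$). One must also check that the coefficient $\lambda$ of $X$ does not vanish.

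If this coefficientwise bookkeeping becomes unwieldy, I would instead pass to generating series. Using \eqref{bi0bj} and \eqref{bibi0}, I would compute $[b_{i,0},b_{ij}(u)]$ by commuting $b_{i,0}$ past $b_j(u\mp\tfrac{c_{ij}}2)b_i(u)$, and compare the two expressions in \eqref{bi0bj} evaluated at shifted arguments. The shifts $u\pm\tfrac{c_{ij}}2$ should organize exactly these cancellations.
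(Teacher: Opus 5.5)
Your proposal is correct and follows essentially the paper's route: induction on $s$ from \eqref{serre0} by applying degree-one Cartan elements (the paper's sketch also brings in $\mathpzc h_{j,1}=h_{j,1}-\frac12h_{j,0}^2$; for instance $h_{i,1}+2\mathpzc h_{j,1}$ commutes with $b_{i,0}$ while still raising $b_{j,s}$ to a nonzero multiple of $b_{j,s+1}$, so the $b_{i,1}$-terms never appear). The bookkeeping you left open does close with $h_{i,1}$ alone: the contributions $4[b_{i,0}^2,b_{j,s}]$ from $4\ka_i[[b_{i,1},b_{i,0}],b_{j,s}]$ and $-4[b_{i,0}^2,b_{j,s}]$ from $8\ka_i[b_{i,0},[b_{i,1},b_{j,s}]]$ cancel identically, so with $S_s:=[b_{i,0},[b_{i,0},b_{j,s}]]$ one gets $[h_{i,1},S_s]=8\ka_i b_{j,s+1}+6\ka_i S_{s+1}$, and then $S_s=-b_{j,s}$ forces $S_{s+1}=-b_{j,s+1}$.
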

\begin{proof}
Using a similar argument of \cite[\S4.3]{Lu2023drinfeld} by induction and recursively applying $h_{i,1}$ and $\mathpzc{h}_{j,1}:=h_{j,1}-\frac12h_{j,0}^2$, one shows that $\big[b_{i,0},[b_{i,0},b_{j,r}]\big]=-b_{j,r}$. The lemma follows from \eqref{biju}.
\end{proof}

\begin{lem}
We have
\begin{align}
&[ h_i(u), b_{j,0}]= c_{ij} h_i(u) b_j(u+\tfrac{c_{ij}}{2})+c_{ij} b_j(-u-\tfrac{c_{ij}}{2}) h_i(u),\label{hb1}\\
&\frac{u}{u^2-\frac{1}4}\big([ h_i(u), b_{j,1}]+c_{ij} \{ h_i(u), b_{j,0}\}\big)= c_{ij}b_j(u-\tfrac{c_{ij}}{2}) h_i(u)-c_{ij}b_j(-u-\tfrac{c_{ij}}{2}) h_i(u).\label{hb2}
\end{align}
\end{lem}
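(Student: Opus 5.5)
The plan is to rewrite \eqref{qsconj2} for the pair $(i,j)$ as one identity of generating series, and then obtain both \eqref{hb1} and \eqref{hb2} by specializing the second spectral parameter at the two roots of a quadratic.

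I use the conventions $h_i(u)=1+\sum_{r\gge0}h_{i,r}u^{-r-1}$ and $b_j(v)=\sum_{s\gge0}b_{j,s}v^{-s-1}$. Here $i=\tau i$, so $c_{\tau i,j}=c_{ij}=:c=-\ka_i$ and $c^2=1$. Relation \eqref{qsconj2} then simplifies to
\[
[h_{i,r},b_{j,s}]-[h_{i,r-2},b_{j,s+2}]=c\,\{h_{i,r-2},b_{j,s+1}\}+\tfrac14[h_{i,r-2},b_{j,s}],
\]
with the conventions $h_{i,-1}=1$ and $h_{i,s}=0$ for $s<-1$.

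\emph{Step 1: generating form in $u$.} Multiply by $u^{-r-1}$ and sum over $r\gge0$. Since $h_{i,-1}=1$ is central in the commutators and enters the anticommutator as $2b_{j,s+1}$, this packages exactly into
\[
(u^2-\tfrac14)[h_i(u),b_{j,s}]-[h_i(u),b_{j,s+2}]=c\,\{h_i(u),b_{j,s+1}\}\qquad(s\gge0).
\]

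\emph{Step 2: generating form in $v$.} Multiply by $v^{-s-1}$ and sum over $s$, using
\[
\sum_s b_{j,s+1}v^{-s-1}=v\,b_j(v)-b_{j,0},\qquad \sum_s b_{j,s+2}v^{-s-1}=v^2b_j(v)-v\,b_{j,0}-b_{j,1}.
\]
Collecting the terms $h_i(u)b_j(v)$ and $b_j(v)h_i(u)$ gives
\[
\big(u^2-(v+\tfrac c2)^2\big)h_i(u)b_j(v)-\big(u^2-(v-\tfrac c2)^2\big)b_j(v)h_i(u)=B(u,v).
\]
Here $B(u,v)$ is the boundary term. It is linear in $v$ and involves only $[h_i(u),b_{j,0}]$, $[h_i(u),b_{j,1}]$ and $\{h_i(u),b_{j,0}\}$; I expect it to be $-v[h_i(u),b_{j,0}]-[h_i(u),b_{j,1}]-c\{h_i(u),b_{j,0}\}$, with signs to be checked.

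\emph{Step 3: specialization.} Substitute $v=u-\tfrac c2$ and $v=-u-\tfrac c2$, which kills the coefficient of $h_i(u)b_j(v)$. The coefficient of $b_j(v)h_i(u)$ becomes $u^2-(u-c)^2=2cu-1$ and $u^2-(u+c)^2=-2cu-1$ respectively. This yields two linear equations expressing $-v[h_i(u),b_{j,0}]-\big([h_i(u),b_{j,1}]+c\{h_i(u),b_{j,0}\}\big)$ through $b_j(\pm u-\tfrac c2)h_i(u)$. Taking the difference and the sum of the two equations isolates $[h_i(u),b_{j,0}]$ and the combination in \eqref{hb2}. The prefactor $u/(u^2-\tfrac14)$ arises from the $\pm 2cu-1$ factors. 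Finally, I would rewrite the resulting $b_j(u-\tfrac c2)h_i(u)$ term in \eqref{hb1} as $h_i(u)b_j(u+\tfrac c2)$ using the same identity at $v=u+\tfrac c2$, where the $b_j(v)h_i(u)$ coefficient vanishes instead.

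The main obstacle I expect is justifying these substitutions. Here $b_j(\pm u-\tfrac c2)$ must be read as a well-defined series in $u^{-1}$ (expand $(\pm u-\tfrac c2)^{-s-1}$), and the products with $h_i(u)$ lie in $\Yi[[u^{-1}]]$, so the specialization is a legitimate ring homomorphism on the polynomial-in-$v$ identity after clearing denominators. Apart from this, the work is bookkeeping of signs and of the $\pm\tfrac c2$ shifts, using $c^2=1$ throughout.
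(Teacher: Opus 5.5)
Your proposal is correct and is essentially the paper's proof. Like you, the paper rewrites \eqref{qsconj2} as $\big(u^2-(v+\tfrac c2)^2\big)h_i(u)b_j(v)-\big(u^2-(v-\tfrac c2)^2\big)b_j(v)h_i(u)=-v[h_i(u),b_{j,0}]-[h_i(u),b_{j,1}]-c\{h_i(u),b_{j,0}\}$, where your guessed boundary term is exactly right, and then specializes $v$ at $u+\tfrac c2$, $u-\tfrac c2$ and $-u-\tfrac c2$; the only bookkeeping adjustment is that \eqref{hb1} comes out directly by subtracting the $v=u+\tfrac c2$ and $v=-u-\tfrac c2$ equations (both carry $2cu+1=2c(u+\tfrac c2)$), while \eqref{hb2} needs the $v=u-\tfrac c2$ and $v=-u-\tfrac c2$ equations weighted by $u+\tfrac c2$ and $u-\tfrac c2$ respectively, rather than a plain sum.
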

\begin{proof}
The relation \eqref{qsconj2} for the choice of $i,j$ can be equivalently written as
\begin{align*}
(u^2-v^2)[ h_i(u),b_j(v)]&=c_{ij}v\{ h_i(u), b_j(v)\}+\frac{1}{4}c_{ij}^2[h_i(u),b_j(v)]\\&\quad -[ h_i(u), b_{j,1}]-c_{ij}\{ h_i(u), b_{j,0}\}-v[ h_i(u), b_{j,0}].
\end{align*}
Hence
\begin{align*}
-[ h_i(u), b_{j,1}]&-c_{ij} \{ h_i(u), b_{j,0}\}\\&=(u^2-(v+\tfrac{c_{ij}}{2})^2) h_i(u) b_j(v)-(u^2-(v-\tfrac{c_{ij}}{2})^2) b_j(v) h_i(u)	+v[ h_i(u), b_{j,0}].
\end{align*}
Note that the LHS is independent of $v$.

Setting $v=u+\tfrac{c_{ij}}{2}$, we have
\[
-[ h_i(u), b_{j,1}]-c_{ij}\{ h_i(u), b_{j,0}\}=-(2c_{ij}u+c_{ij}^2) h_i(u) b_j(u+\tfrac{c_{ij}}{2})+(u+\tfrac{c_{ij}}{2})[ h_i(u), b_{j,0}].
\]	
Setting $v=-u-\tfrac{c_{ij}}{2}$, we obtain
\[
-[ h_i(u), b_{j,1}]-c_{ij} \{ h_i(u), b_{j,0}\}=(2c_{ij}u+c_{ij}^2) b_j(-u-\tfrac{c_{ij}}{2}) h_i(u)-(u+\tfrac{c_{ij}}{2})[ h_i(u), b_{j,0}].
\]
Then the first equality follows from these two equations above.

Setting $v=u-\tfrac{c_{ij}}{2}$, we have
\[
-[ h_i(u), b_{j,1}]-c_{ij} \{ h_i(u), b_{j,0}\}=-(2c_{ij}u-c_{ij}^2) b_j(u-\tfrac{c_{ij}}{2}) h_i(u)+(u-\tfrac{c_{ij}}{2})[ h_i(u), b_{j,0}].
\]	
Then the second equality follows from the two equations above.
\end{proof}

\begin{lem}\label{serre0u0}
We have 
\begin{equation}\label{serre0u}
\big[ b_i(u),[ b_{i,0}, b_{j,0}]\big]+\big[ b_{i,0},[ b_{i}(u), b_{j,0}]\big]= b_j(-u-\tfrac{c_{ij}}{2}) h_i(u)-  b_j(u-\tfrac{c_{ij}}{2}) h_i(u).
\end{equation}
\end{lem}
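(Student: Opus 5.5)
The plan is to reduce the left-hand side of \eqref{serre0u} to expressions already computed in \eqref{bi0bj}, \eqref{bibi0}, \eqref{bijbj} and \eqref{hb1}, and then compare terms. Throughout I write $c=c_{ij}=-\ka_i$, so that $c^2=1$, and I use that $b_{i,r}$, $h_{i,r}$ are even.

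\textbf{Step 1 (Jacobi).} Apply the super Jacobi identity to the first summand:
\[
\big[b_i(u),[b_{i,0},b_{j,0}]\big]=\big[[b_i(u),b_{i,0}],b_{j,0}\big]+\big[b_{i,0},[b_i(u),b_{j,0}]\big].
\]
The left-hand side of \eqref{serre0u} then becomes
\[
\big[[b_i(u),b_{i,0}],b_{j,0}\big]+2\big[b_{i,0},[b_i(u),b_{j,0}]\big].
\]
By \eqref{bibi0}, the first term equals $\ka_i\big([b_i(u)^2,b_{j,0}]-[h_i(u),b_{j,0}]\big)$.

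\textbf{Step 2 (the pieces).} Each piece is expanded separately.
\begin{itemize}
\item For $[h_i(u),b_{j,0}]$, use \eqref{hb1}. This is the only source of the term $b_j(-u-\tfrac{c}{2})h_i(u)$, so it must produce exactly $b_j(-u-\tfrac c2)h_i(u)$ on the right-hand side. With $\ka_i c=-1$ this sign works out.
\item For $[b_i(u)^2,b_{j,0}]=b_i(u)[b_i(u),b_{j,0}]+[b_i(u),b_{j,0}]b_i(u)$, substitute the two forms of \eqref{bi0bj}. Use the second form (shift $u+\tfrac c2$) on the left factor and the first form (shift $u-\tfrac c2$) on the right factor. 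This choice arranges each product with $b_i(u)$ on the outside.
\item For $[b_{i,0},[b_i(u),b_{j,0}]]$, expand the first form of \eqref{bi0bj}. By \eqref{bijbj}, $[b_{i,0},b_{ij}(u-\tfrac c2)]=-b_j(u-\tfrac c2)$. The product term gives
\[
c\big(b_{ij}(u-\tfrac c2)b_i(u)+b_j(u-\tfrac c2)[b_{i,0},b_i(u)]\big).
\]
Applying \eqref{bibi0} once more produces $\ka_i b_j(u-\tfrac c2)\big(h_i(u)-1-b_i(u)^2\big)$ up to sign; this is where the term $-b_j(u-\tfrac c2)h_i(u)$ comes from.
\end{itemize}

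\textbf{Step 3 (cancellation).} All remaining terms should cancel. These are the $b_{ij}(u\pm\tfrac c2)b_i(u)$ and $b_i(u)b_{ij}(u\pm\tfrac c2)$ terms, the cubic terms $b_j\,b_i(u)^2$ and $b_i(u)b_j b_i(u)$, and the linear terms $b_j(u-\tfrac c2)$. For the mixed terms, I rewrite $b_i(u)b_{ij}(u+\tfrac c2)$ via the equality of the two forms in \eqref{bi0bj}, namely
\[
b_{ij}(u+\tfrac c2)-b_{ij}(u-\tfrac c2)=c\big(b_j(u-\tfrac c2)b_i(u)-b_i(u)b_j(u+\tfrac c2)\big).
\]
If some $b_{ij}$ terms still resist cancellation, the fallback is to use \eqref{hb2}, or the commutation of $b_{ij}(v)$ with $b_i(u)$ derived from \eqref{qsconj3}, to move $b_i(u)$ across.

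The main obstacle is this bookkeeping step. One has to choose consistently which form of \eqref{bi0bj} to use at each occurrence so that the quadratic and cubic terms pair off, and one has to track the signs coming from $\ka_i c=-1$ and $c^2=1$ carefully.
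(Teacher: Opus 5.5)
Your route is the paper's: the same Jacobi rewriting into $[[b_i(u),b_{i,0}],b_{j,0}]+2[b_{i,0},[b_i(u),b_{j,0}]]$, followed by \eqref{bi0bj}, \eqref{bibi0}, \eqref{biju}--\eqref{bijbj}, and finally \eqref{hb1}. Your split of $[b_i(u)^2,b_{j,0}]$ (second form on the left factor, first form on the right) is also exactly the paper's. The gap is in the step you flag yourself as the obstacle. With the choices you commit to, the terms do not cancel, and the remedy you propose in Step 3 does not fix this.

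Write $c=c_{ij}$, $B=b_i(u)$, $H=h_i(u)$, $b_j^{\pm}=b_j(u\pm\tfrac c2)$, $b_{ij}^{\pm}=b_{ij}(u\pm\tfrac c2)$. Expanding with the first, respectively the second, form of \eqref{bi0bj} gives
\[
[b_{i,0},[B,b_{j,0}]]=-\ka_i b_{ij}^-B+b_j^-B^2-b_j^-H=-\ka_i Bb_{ij}^++B^2b_j^+-Hb_j^+ .
\]
Your split gives $\ka_i[B^2,b_{j,0}]=\ka_iBb_{ij}^+-B^2b_j^++\ka_ib_{ij}^-B-b_j^-B^2$, and \eqref{hb1} gives $-\ka_i[H,b_{j,0}]=Hb_j^++b_j(-u-\tfrac c2)H$. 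So \eqref{hb1} produces not only $b_j(-u-\tfrac c2)H$ but also $Hb_j^+$.

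If, as in your Step 2, both copies of $[b_{i,0},[B,b_{j,0}]]$ are expanded with the first form, you get $-2b_j^-H$, and nothing cancels $Hb_j^+$. The leftover is exactly the first-form expansion minus the second-form expansion. Substituting $b_{ij}^+=b_{ij}^-+c(b_j^-B-Bb_j^+)$ into $Bb_{ij}^+$, as you suggest, turns this leftover into $\ka_i[B,b_{ij}^-]-[B,b_j^-]B+Hb_j^+-b_j^-H$. That needs the double commutator $[b_i(u),[b_{i,0},b_j(u-\tfrac c2)]]$, a Serre-type expression that none of the relations at hand supplies. The fallbacks via \eqref{hb2}, or via a $[b_i(u),b_{ij}(v)]$ relation extracted from \eqref{qsconj3}, are not needed and would not be straightforward.

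The missing idea is to apply $[b_{i,0},\cdot\,]$ to the equality of the two forms of \eqref{bi0bj}, rather than substituting it. Concretely, expand one copy of $[b_{i,0},[B,b_{j,0}]]$ with the first form and the other copy with the second form, as the paper does. Then the two copies together with $\ka_i[B^2,b_{j,0}]$ collapse to $-b_j^-H-Hb_j^+$. The $Hb_j^+$ coming from \eqref{hb1} cancels, and what remains is exactly $b_j(-u-\tfrac c2)H-b_j(u-\tfrac c2)H$.
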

\begin{proof} 
By Jacobi identity and $c_{ij}=-\ka_i$, we get that
\begin{align*}
&\big[ b_{i}(u),[ b_{i,0}, b_{j,0}]\big]+\big[ b_{i,0},[ b_{i}(u), b_{j,0}]\big]\\
=\hskip0.19cm &2\big[[ b_{j,0}, b_{i}(u)], b_{i,0}\big]+\big[ b_{j,0},[ b_{i,0}, b_i(u)]\big]\\
\overset{\eqref{bi0bj}}{\underset{\eqref{bibi0}}{=}} &-\big[ b_{ij}(u-\tfrac{c_{ij}}{2})-\ka_ib_j(u-\tfrac{c_{ij}}{2}) b_i(u), b_{i,0}\big]\\&-\big[ b_{ij}(u+\tfrac{c_{ij}}{2})-\ka_i b_i(u) b_j(u+\tfrac{c_{ij}}{2}), b_{i,0}\big]+\ka_i[ b_{j,0}, h_i(u)- b_i(u)^2]\\
\stackrel{(*)}{=}\hskip0.15cm &- b_j(u-\tfrac{c_{ij}}{2})- \ka_ib_{ij}(u-\tfrac{c_{ij}}{2}) b_i(u)+ b_j(u-\tfrac{c_{ij}}{2})( b_i(u)^2- h_i(u)+1)\\& - b_j(u+\tfrac{c_{ij}}{2})- \ka_ib_i(u) b_{ij}(u+\tfrac{c_{ij}}{2})+ ( b_i(u)^2- h_i(u)+1) b_j(u+\tfrac{c_{ij}}{2})+\ka_i[ b_{j,0}, h_i(u)]\\&+\ka_i( b_{ij}(u-\tfrac{c_{ij}}{2})-\ka_i b_j(u-\tfrac{c_{ij}}{2}) b_i(u)) b_i(u)+ \ka_ib_i(u)( b_{ij}(u+\tfrac{c_{ij}}{2})-\ka_i b_i(u) b_j(u+\tfrac{c_{ij}}{2}))\\
=\hskip0.19cm &- b_j(u-\tfrac{c_{ij}}{2}) h_i(u)-  h_i(u) b_j(u+\tfrac{c_{ij}}{2})+\ka_i[ b_{j,0}, h_i(u)]\\
\stackrel{\eqref{hb1}}{=}&~ b_j(-u-\tfrac{c_{ij}}{2}) h_i(u)-  b_j(u-\tfrac{c_{ij}}{2}) h_i(u),
\end{align*}
where in $(*)$ we applied \eqref{biju}--\eqref{bijbj}.
\end{proof}

\begin{rem}
Since $ h_i(u)$ is even, it follows from \eqref{hb1} that
\[
[ h_i(u), b_{j,0}]=- \ka_ih_i(u) b_j(-u+\tfrac{c_{ij}}2)- \ka_ib_j(u-\tfrac{c_{ij}}2) h_i(u).
\]
Using the above equation instead of \eqref{hb1} in the proof of Lemma \ref{serre0u0}, we find that
\beq\label{serre0u-new}
\big[ b_i(u),[ b_{i,0}, b_{j,0}]\big]+\big[ b_{i,0},[ b_{i}(u), b_{j,0}]\big]= h_i(u) b_j(-u+\tfrac{c_{ij}}2)-h_i(u) b_j(u+\tfrac{c_{ij}}2).
\eeq
\end{rem}

\begin{prop}\label{prop:Serre}
Suppose the relations \eqref{qsconj0}--\eqref{qsconj3} and \eqref{serre0} hold, then the Serre relation \eqref{qsconj8} also holds.
\end{prop}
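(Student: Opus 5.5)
Everything will be done with generating series. Write $b_i(u)=\sum_{r\gge 0}b_{i,r}u^{-r-1}$ and $h_i(u)=1+\sum_{r\gge0}h_{i,r}u^{-r-1}$. The plan is to derive the full relation \eqref{qsconj8} from the one-variable identity \eqref{serre0u} together with its variant \eqref{serre0u-new}, and then to upgrade to the parameters $k_1$ and $r$ using the relations \eqref{qsconj3} and \eqref{qsconj2}.

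\textbf{Step 1: the case $k_2=0$, $r=0$.} By Lemma \ref{serre0u0}, the left side of \eqref{serre0u}, read coefficientwise in $u^{-k_1-1}$, is exactly $\mathrm{Sym}_{k_1,0}\big[b_{i,k_1},[b_{i,0},b_{j,0}]\big]$. On the right side of \eqref{serre0u}, the expression $b_j(-u-\tfrac{c_{ij}}2)-b_j(u-\tfrac{c_{ij}}2)$ is an expansion in $u^{-1}$. Expanding it with $c_{ij}=-\ka_i$, and collecting the powers of $u$ from the factor $h_i(u)$, should produce sums of the form $\sum_p 2^{-2p}$. To land on the specific combination $\ka_i[h_{i,\cdot},b_{j,1}]-\{h_{i,\cdot},b_{j,0}\}$ appearing in \eqref{qsconj8}, I will average \eqref{serre0u} with \eqref{serre0u-new}. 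This rewrites the product $b_j\,h_i$ as a sum of a commutator and an anticommutator. Then \eqref{hb2} converts the $b_j(u\pm\cdot)h_i(u)$ terms into $[h_i(u),b_{j,1}]$ and $\{h_i(u),b_{j,0}\}$, multiplied by $u/(u^2-\tfrac14)$. The factor $(u^2-\tfrac14)^{-1}$ is what generates the geometric series $\sum_{p\gge0}2^{-2p}$. Lemma \ref{lem:sym-new}, which gives $h_{i,r}=(-1)^{r+1}h_{i,r}$ since $\tau i=i$, kills the even-index coefficients of $h_i$; this parity is where the sign $(-1)^{k_1}$ in \eqref{qsconj8} should come from.

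\textbf{Step 2: general $k_2$ and $r$.} Next I pass from $(k_1,0,0)$ to arbitrary $(k_1,k_2,r)$ by induction, applying the shift operators from Proposition \ref{propnewserre}. Applying $\mathpzc h_{j,1}=h_{j,1}-\frac12h_{j,0}^2$ shifts $b_j$ by one, via \eqref{qsconj5}, provided $c_{jj}+c_{\tau j,j}\ne0$, and it commutes with all $h_i$. Applying $[h_{i,1},\cdot\,]$ raises the indices of $b_i$; because $c_{ii}+c_{\tau i,i}=2c_{ii}$, it raises $k_1$ and $k_2$ together, while also acting on $b_j$. Combining the two operators should give the general case. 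If the coefficient $c_{jj}+c_{\tau j,j}$ vanishes, I will instead run the argument of Step 1 directly with $b_j(v)$ in place of $b_{j,0}$. This requires generalizing \eqref{bi0bj} and \eqref{hb1} to a second spectral parameter, which the equations in their proofs already allow.

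\textbf{Main obstacle.} I expect the hardest part to be the bookkeeping in Step 1: extracting exactly the coefficients $(-1)^{k_1}2^{-2p}$ with the correct lower index $k_1+k_2-2p-1$, including the convention $h_{i,-1}=1$. A second difficulty is checking that the symmetrization in $k_1,k_2$ is compatible with the shift operators in Step 2, since the right side of \eqref{qsconj8} is not manifestly symmetric. For this I would first verify the formula at low degrees ($k_1+k_2\lle 2$) by hand to fix the normalization, and then trust the generating-function identity.
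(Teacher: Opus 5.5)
Your proposal is essentially the paper's proof. Step 1 is the paper's combination of \eqref{serre0u} with \eqref{hb2}, followed by expanding $(u^2-\frac14)^{-1}$; the averaging with \eqref{serre0u-new} is unnecessary, since \eqref{hb2} applies directly to the right side of \eqref{serre0u}. Step 2 is the shift argument behind the paper's identity $\mathbb S_{ij}(k_1+1,k_2;r)=-\mathbb S_{ij}(k_1,k_2+1;r)$, which also settles your symmetrization worry: the right-hand sides of \eqref{qsconj8} at $(k_1+1,k_2)$ and $(k_1,k_2+1)$ cancel identically.

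One correction: $[\mathpzc h_{j,1},b_{i,k}]=(c_{ji}+c_{\tau j,i})b_{i,k+1}=-2\ka_i b_{i,k+1}$, so $\mathpzc h_{j,1}$ does not shift $b_j$ alone. However, the shift coefficients of $h_{i,1}$ and $\mathpzc h_{j,1}$ on $(b_i,b_j)$ form the matrix $\left(\begin{smallmatrix}4\ka_i&-2\ka_i\\-2\ka_i&c_{jj}\end{smallmatrix}\right)$, whose determinant $4\ka_ic_{jj}-4$ is never zero. Hence pure $b_i$- and $b_j$-shifts exist even when $c_{jj}=0$ (for instance, $h_{i,1}+2\mathpzc h_{j,1}$ shifts only $b_j$), and your fallback for that case is not needed.
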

\begin{proof}
By \eqref{hb2} and \eqref{serre0u}, we find that
\[
\big[ b_i(u),[ b_{i,0}, b_{j,0}]\big]+\big[ b_{i,0},[ b_{i}(u), b_{j,0}]\big]=\frac{u}{u^2-\frac14} \big(\ka_i[ h_i(u), b_{j,1}]- \{ h_i(u), b_{j,0}\}\big).
\]
Expanding $(4u^2-1)^{-1}$ as a power series in $u^{-1}$ and comparing coefficients, one shows the Serre relations \eqref{qsconj8} for the case $k_1=r=0$ and $k_2\in\mathbb N$, which corresponds to \cite[Claim 1 in \S4.3]{Lu2023drinfeld}. Then as argued in \cite[Claims 2-3 in \S4.3]{Lu2023drinfeld}, one proves that
\[
\mathbb S_{ij}(k_1+1,k_2;r)=-\mathbb S_{ij}(k_1,k_2+1;r).
\]
Hence the Serre relation \eqref{qsconj8} for general case is reduced to the special case $k_1=r=0$ and $k_2\in\mathbb N$ proved above.
\end{proof}

From the proof of Proposition \ref{prop:Serre}, we also have the following.
\begin{cor}\label{implycor}
The relations \eqref{qsconj2} and \eqref{serre0u} imply the Serre relations \eqref{qsconj8}. 
\end{cor}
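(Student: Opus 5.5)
The corollary should follow by re-reading the proof of Proposition \ref{prop:Serre} and recording exactly which inputs were used at each stage. That proof has two parts. First, the identity \eqref{serre0u} is combined with \eqref{hb2} to give a generating-function identity whose coefficients are the Serre relations \eqref{qsconj8} with $k_1=r=0$. Second, the symmetry $\mathbb S_{ij}(k_1+1,k_2;r)=-\mathbb S_{ij}(k_1,k_2+1;r)$, together with an increase in $r$, reduces the general case to this special one. So I plan to check that each part uses only \eqref{qsconj2} and \eqref{serre0u}. The relations \eqref{qsconj0} and \eqref{qsconj3} are used only inside the proof of Lemma \ref{serre0u0}, and we are now assuming its conclusion directly.

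\emph{Step 1: the special case.} The identity \eqref{hb2} comes purely from rewriting \eqref{qsconj2} for the pair $(i,j)$ as a generating-function identity in $u,v$ and specializing $v=\pm u\pm\tfrac{c_{ij}}{2}$. Hence it holds under \eqref{qsconj2} alone. Substituting \eqref{hb2} into the right-hand side of \eqref{serre0u}, and using $c_{ij}=-\ka_i$ (with $\ka_i^2=1$), gives
\[
\big[b_i(u),[b_{i,0},b_{j,0}]\big]+\big[b_{i,0},[b_i(u),b_{j,0}]\big]=\frac{u}{u^2-\frac14}\big(\ka_i[h_i(u),b_{j,1}]-\{h_i(u),b_{j,0}\}\big).
\]
Expanding $u/(u^2-\tfrac14)=\sum_{p\gge0}2^{-2p}u^{-2p-1}$ and comparing the coefficients of $u^{-k_2-1}$ yields \eqref{qsconj8} for $k_1=r=0$ and all $k_2$. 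Here the left side is $\Sym_{0,k_2}$, and the convention $h_{i,-1}=1$ produces the $b_{j,1}$ and $b_{j,0}$ correction terms.

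\emph{Step 2: general $k_1$ and $r$.} I will shift the indices by taking commutators with suitable $h$'s, as in Claims~2--3 of \cite[\S4.3]{Lu2023drinfeld}. For the $b_j$-index, choose an element built from $h_{j,1}$ (and $h_{i,1}$) whose bracket with $b_{j,r}$ is $b_{j,r+1}$ plus terms involving only $h$'s; this element's action is governed by \eqref{qsconj2}. For the $b_i$-indices, one must show that $\mathbb S_{ij}(k_1+1,k_2;r)+\mathbb S_{ij}(k_1,k_2+1;r)$ vanishes, using the $\tau$-twisted form of \eqref{qsconj2} with $\tau i=i$.

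\emph{Main obstacle.} The difficulty is in Step 2: the original argument in \cite{Lu2023drinfeld} may also invoke the $b_i$--$b_i$ relation \eqref{qsconj3}, and that relation is not among the hypotheses here. If it cannot be avoided, I would show instead that the coefficientwise content of \eqref{serre0u} in the $u$-expansion, combined with \eqref{qsconj2}, already encodes the required shift symmetry. Otherwise I would read the corollary as also assuming the standing relations \eqref{qsconj0} and \eqref{qsconj3}, which the surrounding discussion takes for granted.
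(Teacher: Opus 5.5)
Your proposal follows the paper's own route: \eqref{hb2}, which needs only \eqref{qsconj2}, combined with \eqref{serre0u} gives \eqref{qsconj8} for $k_1=r=0$ and all $k_2$, and the shifting argument of \cite[Claims 2--3 in \S4.3]{Lu2023drinfeld} then gives the general case. The obstacle you flag does not arise: since Step~1 already covers every $k_2$, it suffices to bracket with combinations of $h_{i,1}-\frac12h_{i,0}^2$ and $h_{j,1}-\frac12h_{j,0}^2$ which, by \eqref{qsconj2}, shift only the $b_{j,\cdot}$ (raising $r$) or only the $b_{i,\cdot}$ (passing from $k_1$ to $k_1+1$, with the right-hand sides cancelling because of the sign $(-1)^{k_1}$), so \eqref{qsconj3} is never used, whereas the commutativity $[h_{i,r},h_{j,s}]=0$ from \eqref{qsconj0} is used, contrary to your remark that \eqref{qsconj0} enters only through Lemma~\ref{serre0u0}.
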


\section{Twisted super Yangians in R-matrix presentation}\label{sec:R}
In this section, we recall the basics for the twisted super Yangians $\Y^\bs$ of quasi-split type A (under the name \textit{reflection superalgebra})  defined in the R-matrix presentation from \cite{Molev2002reflection,Ragoucy2007analytical}, cf. also \cite{lu2023twisted}.

\subsection{Yangians}
\label{subsec:Y}
We start with recalling the basic theory of super Yangian $\rY(\gl^\bs)$ from \cite{Nazarov1991berezinian}. 

In this subsection, we do not require $\bs$ to be symmetric. Let $\C^\bs$ be the vector superspace with a basis $\mathbf v_i$ for $i\in \I$ such that $|\mathbf v_i|=|i|$. Let $E_{ij}\in \End(\C^\bs)$ be the linear operators such that $E_{ij}\mathbf v_k=\delta_{jk}\mathbf v_i$ for $i,j,k\in\I$.
\begin{dfn}
The \textit{super Yangian} $\rY(\gl^{\bs})$ corresponding to the Lie superalgebra $\gl^\bs$ is a unital associative superalgebra with generators $t_{ij}^{(r)}$ of parity $|i|+|j|$, where $i,j\in\I$ and $r\in\bZ_{>0}$, and the defining relations written in terms of the generating series
\[
t_{ij}(u)=  \delta_{ij}+t_{ij}^{(1)}u^{-1}+t_{ij}^{(2)}u^{-2}+\cdots
\]
are given by the relations 
\beq\label{Trel}
(u-v)[t_{ij}(u),t_{kl}(v)]=(-1)^{|i||j|+|i||k|+|j||k|}\big(t_{kj}(u)t_{il}(v)-t_{kj}(v)t_{il}(u)\big).
\eeq
\end{dfn}

The super Yangian $\rY(\gl^\bs)$ has the following R-matrix presentation. Let $R(u)$ be the Yang R-matrix
\begin{align} \label{Ru}
R(u)=1-\frac{P}{u}\in \End(\bC^\bs\otimes \bC^\bs)[u^{-1}],\quad \text{where} \quad P=\sum_{i,j=1}^N \ka_jE_{ij}\otimes E_{ji},
\end{align}
and
\[
T(u)=\sum_{i,j=1}^N t_{ij}(u)\otimes E_{ij}(-1)^{|i||j|+|j|}\in \rY(\gl^\bs)[[u^{-1}]]\otimes\End(\bC^\bs). 
\]
Then the defining relations of $\rY(\gl^\bs)$ can be written as
\be
R(u-v)T_1(u)T_2(v)=T_2(v)T_1(u)R(u-v).
\ee

Note that the Yang R-matrix satisfies the Yang-Baxter equation
\beq\label{ybeq}
R_{12}(u-v)R_{13}(u)R_{23}(v)=R_{23}(v)R_{13}(u)R_{12}(u-v).
\eeq
The super flip operator $P$ has the property $P(\mathbf v_i\otimes \mathbf v_j)=(-1)^{|i||j|}\mathbf v_j\otimes \mathbf v_i$.

Let $g(u)$ be any formal power series in $u^{-1}$ with leading term $1$,
\[
g(u)=1+g_1u^{-1}+g_2u^{-2}+\cdots\in \bC[[u^{-1}]].
\]
There is an automorphism of $\rY(\gl^\bs)$ defined by
\beq\label{eq:mu_f-A}
\mathcal M_{g(u)}^\bs:T(u)\to g(u)T(u).
\eeq

The super Yangian for $\mathfrak{sl}^\bs$ is the subalgebra $\rY(\mathfrak{sl}^\bs)$ of $\rY(\gl^\bs)$ which consists of all elements stable under all the automorphisms of the form \eqref{eq:mu_f-A}.

Consider the filtration on $\rY(\gl^\bs)$ obtained by setting
\begin{align}  \label{filter:Y}
\deg t_{ij}^{(r)}=r-1
\end{align}
for every $r\gge 1$. Denote by $\mathrm{gr}\rY(\gl^\bs)$ the associated graded superalgebra. We write $\bar t_{ij}^{(r)}$ the image of $t_{ij}^{(r)}$ in $\mathrm{gr}\rY(\gl^\bs)$. Let $\gl^\bs[z]$ be the polynomial current superalgebra of $\gl^\bs$ in the indeterminate $z$. Then the map
\beq\label{eq:cl-limitA}
\mathrm{U}(\gl^\bs[z])\to \mathrm{gr}\rY(\gl^\bs), \qquad \ka_ie_{ij} z^{r}\mapsto \bar t_{ij}^{(r+1)},
\eeq
induces a Hopf superalgebra isomorphism.

We collect a few facts about the inverse of $T(u)$ of $\rY(\gl^\bs)$. Define the series $\tl t_{ij}(u)$, whose coefficients $\tl t_{ij}^{(r)}$ are in $\rY(\gl^\bs)$,
$$
\tl t_{ij}(u):=\delta_{ij}+\sum_{r>0}\tl t_{ij}^{(r)}u^{-r}
$$
by
$$
\wtl T(u):=\big(T(u)\big)^{-1}=\sum_{i,j=1}^N \tl t_{ij}(u)\otimes E_{ij}(-1)^{|i||j|+|j|}.
$$
Then
\beq\label{T'ij}
\tl t_{ij}(u)=\delta_{ij}+\sum_{k>0} (-1)^k\sum_{a_1,\cdots,a_{k-1}=1}^N t_{ia_1}^\circ(u)t_{a_1a_2}^\circ(u)\cdots t_{a_{k-1}j}^\circ(u),
\eeq
where $t_{ij}^\circ(u)=t_{ij}(u)-\delta_{ij}$. In particular, by taking the coefficient of $u^{-r}$, for $r\gge 1$, one obtains 
\beq\label{t'ijr}
\tl t_{ij}^{(r)}=\sum_{k=1}^r (-1)^k\sum_{a_1,\cdots,a_{k-1}=1}^N\sum_{r_1+\cdots+r_k=r}t_{ia_1}^{(r_1)}t_{a_1a_2}^{(r_2)}\cdots t_{a_{k-1}j}^{(r_k)},
\eeq
where $r_i$ for $1\lle i\lle k$ are positive integers.

\subsection{Twisted super Yangians}\label{subsec:tY}
Recall that $i'=N+1-i$ for $1\lle i\lle N$ and $\mathfrak s$ is symmetric. Let $G=(g_{ij})$ be the $N\times N$ even matrix defined by $g_{ij}=\delta_{ij'}$. For any $N\times N$ super matrix $M=(m_{ij})$, define
\[
M'=G M G^{-1}=(m_{i'j'}).
\]
In particular, we have the modified R-matrix,
\beq\label{twistedR}
R'(u)=G_1R(u)G_1=G_2R(u)G_2.
\eeq

The following twisted super Yangians were specific reflection (super)algebras \cite{Sklyanin1988Boundary} introduced in \cite{Molev2002reflection,Ragoucy2007analytical}; see also \cite{Belliard2009nested,lu2023twisted}.

\begin{dfn}\label{def:R-Y}
The  \textit{twisted super Yangian $\Y^\bs$ of quasi-split type A} is a unital associative superalgebra with generators $x_{ij}^{(r)}$ of parity $|i|+|j|$, where $i,j\in\I$ and $r\in\bZ_{>0}$, and the defining relations written in terms of the generating series
\beq\label{siju}
x_{ij}(u)=  \delta_{ij}+x_{ij}^{(1)}u^{-1}+x_{ij}^{(2)}u^{-2}+\cdots
\eeq
are given by the \textit{quaternary} relations 
\beq\label{bcom}
\begin{split}
[x_{ij}(u),x_{kl}(v)]&= \frac{(-1)^{|i||j|+|i||k|+|j||k|}}{u-v}(x_{kj}(u)x_{il}(v)-x_{kj}(v)x_{il}(u))\\
&  + \frac{(-1)^{|i||j|+|i||k|+|j||k|}}{u+v}\Big(\delta_{kj'}\sum_{a=1}^N x_{ia'}(u)x_{al}(v)-\delta_{il'}\sum_{a=1}^N x_{ka'}(v)x_{aj}(u)\Big)\\
& - \frac{\delta_{ij'}}{u^2-v^2}\Big(\sum_{a=1}^N x_{ka'}(u)x_{al}(v)-\sum_{a=1}^N x_{ka'}(v)x_{al}(u)\Big)
\end{split}
\eeq
and the \textit{unitary} condition
\beq\label{bunit}
\sum_{a=1}^N x_{ia'}(u)x_{aj}(-u)=\delta_{ij'}.
\eeq
\end{dfn}
Define the operator $X(u)\in  \Y^\bs[[u^{-1}]]\otimes\End(\bC^\bs)$,
\[
X(u)=\sum_{i,j=1}^N x_{ij}(u)\otimes E_{ij}(-1)^{|i||j|+|j|}.
\]
Then the defining relations of $\Y^\bs$ are given by
\beq\label{quamat}
R(u-v)X_1(u)R'(u+v)X_2(v)=X_2(v)R'(u+v)X_1(u)R(u-v),
\eeq
\beq\label{unimat}
X(u)X'(-u)=\mathbf 1^\bs,
\eeq
where $\mathbf 1^\bs$ is the identity matrix in $\End(\C^\bs)$.

It is convenient to work with the extended twisted super Yangians defined below instead of twisted super Yangians. By abuse of notations, we shall keep using the same notations for the twisted super Yangians and extended twisted super  Yangians 
of various elements such as $x_{ij}(u)$ and $X(u)$, etc.
\begin{dfn}\label{eradef}
The \textit{extended twisted super Yangian $\scrX^\bs$ of quasi-split type A} is the unital associative superalgebra  with generators $x_{ij}^{(r)}$ of parity $|i|+|j|$, where $i,j\in\I$ and $r\in\bZ_{>0}$ satisfying the quaternary relations \eqref{quamat}, where $x_{ij}(u)$ is again given by \eqref{siju}.
\end{dfn}

Sometimes, we shall use super Yangians and twisted super Yangians whose ranks are smaller than $\rY(\gl^\bs)$ and $\Y^\bs$. In our situation, they are associated to subsequences of the parity sequence $\bm\ka$. 

For $1\lle i\lle j\lle N$ and a parity sequence $\bs=(\ka_1,\ka_2,\cdots,\ka_N)$, we use the notation
\[
\bs_{[i,j]}:=(\ka_i,\ka_{i+1},\cdots,\ka_{j})
\]
and similar notations in terms of open intervals. 

For a fixed symmetric parity sequence $\bs$ and $1\lle m \lle \lfloor \frac{N-1}{2}\rfloor$, the parity sequences $\bs_{(m,m')}$ and $\bs_{[m,m']}$ are symmetric as well. We further use the following
\beq\label{ysbs-sub}
\rY(\gl^\bs_{[m]}):=\rY(\gl^{\bs_{[1,m]}}),\quad \scrX^\bs_{(m,m')}:=\scrX^{\bs_{(m,m')}},\quad \scrX^\bs_{[m,m']}:=\scrX^{\bs_{[m,m']}}.
\eeq

\subsection{Basic properties}\label{sec:basics}
Let $\wtl X(u)=X(u)^{-1}=(\tl x_{ij}(u))$, i.e.
\[
\wtl X(u)=\sum_{i,j=1}^N \tl x_{ij}(u)\otimes E_{ij}(-1)^{|i||j|+|j|}.
\]
\begin{prop}[{\cite[Prop.~3.2]{lu2023twisted}}]\label{Binv}
In the extended twisted super Yangian $\scrX^\bs$, the product $X(u)X'(-u)$ is a scalar matrix,
\beq\label{cudef}
X(u)X'(-u)=X'(-u)X(u)=c(u)\mathbf 1^\bs,
\eeq
where $c(u)$ is an even series in $u^{-1}$ whose coefficients are central in $\scrX^\bs$. In particular, we have $x_{i'j'}(-u)=c(u)\tl x_{ij}(u)$.
\end{prop}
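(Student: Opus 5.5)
The plan is the standard argument for reflection algebras (Molev--Ragoucy, and the nonsuper quasi-split case): extract from the quaternary relation \eqref{quamat} a second matrix identity by a suitable specialization of the spectral parameters, then use a projector argument to show the relevant product is scalar.

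\emph{Step 1: specialize at $v=-u$.} First record the "crossing" property of $R'$. Since $R'(u)=1-Q/u$ with $Q=G_1PG_1$, the super operator $Q$ satisfies $Q^2=\mathrm{str}\cdot Q$-type relations. Its image is one-dimensional, spanned by the vector $\sum_a \pm\mathbf v_a\otimes\mathbf v_{a'}$; here symmetry of $\bs$ guarantees $|a|=|a'|$, so this vector is even. Set $v=-u$ in \eqref{quamat}. This is legitimate after multiplying by $(u+v)$, since $R'(u+v)$ has its pole exactly there. Taking the residue gives
\[
R(2u)\,X_1(u)\,Q\,X_2(-u)=X_2(-u)\,Q\,X_1(u)\,R(2u).
\]
Now multiply on the left and right by $Q$, using that $QA_1Q$ and $QA_2Q$ are computable through partial (super)transposes. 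Since $QM_1Q=Q\,(M')^{t}_2Q$ up to signs, and $Q\,R(2u)$ is a scalar multiple of $Q$ plus terms involving $Q$ again, one deduces that
\[
Q\,X_1(u)\,Q\,X_2(-u)\,Q=\gamma(u)\,Q,
\]
where $\gamma$ is built from the partial trace of $X(u)X'(-u)$. Unwinding $Q(\,\cdot\,)Q$ shows that $X(u)X'(-u)$ commutes appropriately with the projector.

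\emph{Step 2: show $X(u)X'(-u)$ is scalar.} The cleaner route is as follows. From the specialized identity, move $Q$ to the left using $QM_1=Q(M')^{t}_2$-type identities, which hold for any operator because the image of $Q$ is one-dimensional. This yields $Q\,Z_2(u)=Q\,Z'_1{}^{t}(u)$-type relations for $Z(u)=X(u)X'(-u)$. Combining this with the version obtained by applying $Q$ on the right, one gets that $Z(u)$ commutes with all $E_{ij}$ up to the grading, hence $Z(u)=c(u)\mathbf 1^\bs$. The identity $X'(-u)X(u)=c(u)\mathbf 1^\bs$ follows from the same argument with the roles of the factors swapped, or simply because a one-sided scalar inverse of a matrix with invertible leading term $1$ is two-sided. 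The relation $x_{i'j'}(-u)=c(u)\tl x_{ij}(u)$ is then immediate: $X'(-u)=c(u)X(u)^{-1}$, read entrywise.

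\emph{Step 3: centrality and evenness.} For centrality, multiply \eqref{quamat} suitably to get $X_1(u)$ commuting, through the $R$-matrices, with $Z_2(v)$; the cleanest version is the Sklyanin-style identity
\[
Z_2(v)\,X_1(u)=R\,R'\,X_1(u)\,R'\,R\cdots Z_2(v),
\]
and the $R$-factors cancel by unitarity $R(w)R(-w)=1-w^{-2}$. For evenness, replace $u\mapsto -u$ in $X'(-u)X(u)=c(u)$ and apply $M\mapsto M'$ to get $c(-u)=c(u)$.

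The main obstacle I expect is the sign bookkeeping in the super partial transposition identities $QM_1Q=Q\,\widetilde{M}_2Q$. The key input there is that $\bs$ is symmetric, so that $Q$ is even and the signs $(-1)^{|a||b|}$ match under $a\mapsto a'$. Since this is \cite[Prop.~3.2]{lu2023twisted}, one could also simply cite it.
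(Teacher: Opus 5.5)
There is a genuine gap in Steps 1--2, caused by a misidentification of $R'$. Since $R'(u)=G_1R(u)G_1$ is a \emph{conjugate} of $R(u)$, the operator $Q=G_1PG_1=G_1G_2P$ is an invertible even involution, with $Q^2=1$ and $Q(\mathbf v_a\otimes\mathbf v_b)=\pm\mathbf v_{b'}\otimes\mathbf v_{a'}$. It is not of rank one. The rank-one operator with image spanned by $\sum_a\pm\mathbf v_a\otimes\mathbf v_{a'}$ is the partially transposed flip from Olshanski's twisted Yangians of types AI/AII, and it does not occur in this quasi-split (AIII) reflection algebra. So the sandwiching $Q(\cdot)Q$, the identities $QM_1Q=Q\widetilde M_2Q$ and $QM_1=Q(M')^{t}_2$, and the conclusion that $Z(u)=X(u)X'(-u)$ ``commutes with all $E_{ij}$'' have no basis. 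Steps 1--2 therefore do not establish scalarity. Separately, passing from the one-sided to the two-sided identity needs $c(u)$ to commute with the $x_{ij}(u)$, so Step 3 would have to come first.

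The correct mechanism is more direct. With $Q=G_1G_2P$ you have the exact identities $QM_2=M'_1Q$, $M_2Q=QM'_1$, $QM_1=M'_2Q$ and $QR(w)Q=R(w)$. Your specialized identity $R(2u)X_1(u)QX_2(-u)=X_2(-u)QX_1(u)R(2u)$ then reads $R(2u)Z_1(u)Q=QW_1(u)R(2u)$, where $W(u)=X'(-u)X(u)$. Multiplying on the right by $Q$ and using $W(u)'=Z(-u)$ gives
\[
R(2u)Z_1(u)=Z_2(-u)R(2u),\qquad\text{i.e.}\qquad Z_1(u)-Z_2(-u)=\tfrac{1}{2u}\big(Z_2(u)-Z_2(-u)\big)P.
\]
Compare coefficients of $\mathbf v_a\otimes\mathbf v_b$ in the images of $\mathbf v_k\otimes\mathbf v_l$:
\begin{itemize}
\item taking $l=k$ gives $z_{ak}(u)=0$ for $a\ne k$;
\item for $k\ne l$ you get $z_{kk}(u)=z_{ll}(-u)$;
\item for $k=l$ you get $(1-\tfrac{\ka_k}{2u})(z_{kk}(u)-z_{kk}(-u))=0$.
\end{itemize}
Hence $Z(u)=c(u)\mathbf 1$ with $c(u)$ even, and $X'(-u)X(u)=Z(-u)'=c(u)\mathbf 1$ follows at once. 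Your Step 3 is sound once made precise. Conjugate the quaternary relation at $u\mapsto-u$ by $G_1$ and use $R(w)R(-w)=R'(w)R'(-w)=1-w^{-2}$. This yields $c(u)=R(u-v)^{-1}X_2(v)\,c(u)\,X_2(v)^{-1}R(u-v)$, so $c(u)$ commutes with all $x_{ij}(v)$. Symmetry of $\bs$ is needed only to make $G$ even.
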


It is known that $\Y^\bs$ can be identified as a subalgebra of $\rY(\gl^\bs)$, see e.g. \cite{kettle2023orthosymplectic,lu2023twisted,bagnoli2023double}. Specifically, the map
\beq\label{inc}
X(u)\mapsto T(u)\wtl T'(-u)
\eeq
defines a superalgebra embedding $\Y^\bs\hookrightarrow \rY(\gl^\bs)$.
Moreover, there is a filtration on $\Y^\bs$ inherited from the one \eqref{filter:Y} on $\rY(\gl^\bs)$ such that $\deg x_{ij}^{(r)}=r-1$. Let $\mathcal F_s(\Y^\bs)$ be the subspace of $\Y^\bs$ spanned by elements of degree $\lle s$. Then
\begin{align}
\label{filter:B}
    \mathcal F_0(\Y^\bs) \subset \mathcal F_1(\Y^\bs) \subset \mathcal F_2(\Y^\bs) \subset \ldots, 
    \qquad\qquad \Y^\bs =\bigcup_{s\gge 0}  \mathcal F_s(\Y^\bs).
\end{align}
Denote by $\gr\, \Y^\bs$ the associated graded superalgebra. Let $\bar x_{ij}^{(r)}$ be the image of $x_{ij}^{(r)}$ in the $(r-1)$-st component of $\gr\,\Y^\bs$. Then by \eqref{t'ijr}
\beq\label{image-quo}
\bar x_{ij}^{(r)}=\bar t_{ij}^{(r)}-(-1)^r\bar t_{i'j'}^{(r)}.
\eeq

Let $\vartheta$ be the involution of $\gl^\bs$ defined by
\[
\vartheta: \gl^\bs\longrightarrow \gl^\bs,\quad e_{ij}\mapsto e_{i'j'}.
\]
Extend this involution to $\gl^\bs[z]$ by sending $g  z^r$ to $\vartheta(g) (-z)^r$ for $g\in \gl^\bs$ and $r\in\bN$. Let $\gl^\bs[z]^\vartheta$ be the fixed point subalgebra of $\gl^\bs[z]$ under the involution $\vartheta$. Then it is  known that the map
\beq\label{isogr}
\mathrm{U}(\gl^\bs[z]^\vartheta)\longrightarrow \gr\,\Y^\bs,\qquad \ka_i\big(e_{ij}+(-1)^re_{i'j'}\big) z^{r} \mapsto \bar x_{ij}^{(r+1)}
\eeq
induces a superalgebra isomorphism, see \cite[Prop.~3.6]{lu2023twisted} and cf. \eqref{eq:cl-limitA} and \eqref{image-quo}. 

By restriction, we can also define $\mathfrak{sl}^\bs[z]^\vartheta$ and $\mathrm U(\mathfrak{sl}^\bs[z]^\vartheta)$.

\section{Gauss decomposition}\label{sec:GDmain}

In this section, we formulate and study the Gauss decomposition for twisted super Yangians. Using the Gaussian generators, we establish in Theorem~\ref{main2} an isomorphism between $\Yi$ introduced in Definition \ref{deftY} and the special twisted super Yangian $\SY^\bs$.

\subsection{Quasi-determinants and Gauss decomposition}\label{sec:GD}
We shall also need the quasi-determinant presentation, see \cite{Gelfand2005quasi}, of Drinfeld current generating series in terms of the R-matrix generating series. 

Let $\mathsf X$ be a square (super)matrix over a ring with identity. Let $\mathsf X^{ij}$ be the submatrix of $\mathsf X$ obtained by deleting the $i$-th row and $j$-th column. Let $\mathsf R_{i}^j$ be the row matrix obtained from the $i$-th row of $\mathsf X$ by removing $\mathsf x_{ij}$ and $\mathsf C_{j}^i$ be the column matrix obtained from the $j$-th column of $\mathsf X$ by deleting $\mathsf x_{ij}$. Suppose $\mathsf X^{ij}$ is invertible.  Then the $(i,j)$-th
\emph{quasi-determinant} of $\mathsf X$ is defined by the first formula below and denoted graphically by the boxed notation (cf. \cite[\S1.10]{Molev2007book}):
\begin{equation*}
\vert \mathsf X\vert _{ij} \stackrel{\text{def}}{=} \mathsf x_{ij}-\mathsf R_i^j\big(\mathsf X^{ij}\big)^{-1}\mathsf C_j^i = \left\vert  \begin{array}{ccccc} \mathsf x_{11} & \cdots & \mathsf x_{1j} & \cdots & \mathsf x_{1n}\\
&\cdots & & \cdots&\\
\mathsf x_{i1} &\cdots &\mybox{$\mathsf x_{ij}$} & \cdots & \mathsf x_{in}\\
& \cdots& &\cdots & \\
\mathsf x_{n1} & \cdots & \mathsf x_{nj}& \cdots & \mathsf x_{nn}
\end{array} \right\vert .
\end{equation*}

By \cite[Theorem 4.96]{Gelfand2005quasi}, the matrix $X(u)$, for both $\scrX^\bs$ and $\Y^\bs$, has the following Gauss decomposition:
$$
X(u) = F(u) D(u) E(u)
$$
for unique matrices of the form
\begin{equation*}
D(u) = \left[ \begin{array}{cccc} d_1 (u) & &\cdots & 0\\
& d_2 (u) &  &\vdots\ \\
\vdots & &\ddots &\\
0 &\cdots &  &d_{N} (u)
\end{array} \right],
\end{equation*}
\begin{equation*}
E(u)=\left[ \begin{array}{cccc} \!\!1 &e_{12}(u) &\cdots & e_{1N}(u)\!\!  \\
&\ddots & &e_{2N}(u) \!\! \\
& &\ddots & \vdots\\
0 & & &1 
\end{array} \right],\qquad 
F(u) = \left[ \begin{array}{cccc} \!\!1 & &\cdots &0\!\!\\
f_{21}(u) &\ddots & &\vdots\\
\vdots & & \ddots& \\
\!\!f_{N1}(u) & f_{N2}(u) &\cdots &1\!
\end{array} \right],
\end{equation*}
where the matrix entries are defined in terms of quasi-determinants:
\begin{eqnarray}
d_i (u) &=& \left\vert  \begin{array}{cccc} x_{11}(u) &\cdots &x_{1,i-1}(u) &x_{1i}(u) \\
\vdots &\ddots & &\vdots \\
x_{i1}(u) &\cdots &x_{i,i-1}(u) &\mybox{$x_{ii}(u)$}
\end{array} \right\vert, 
\qquad \tl d_i(u)=d_i(u)^{-1}, \label{gd1}
\\
e_{ij}(u) &=&\tl d_i (u) \left\vert  \begin{array}{cccc} x_{11}(u) &\cdots & x_{1,i-1}(u) & x_{1j}(u) \\
\vdots &\ddots &\vdots & \vdots \\
x_{i-1,1}(u) &\cdots &x_{i-1,i-1}(u) & x_{i-1,j}(u)\\
x_{i1}(u) &\cdots &x_{i,i-1}(u) &\mybox{$x_{ij}(u)$}
\end{array} \right\vert,\label{gd2}
\\
f_{ji}(u) &=& \left\vert  \begin{array}{cccc} x_{11}(u) &\cdots &x_{1, i-1}(u) & x_{1i}(u) \\
\vdots &\ddots &\vdots &\vdots \\
x_{i-1,1}(u) &\cdots &x_{i-1,i-1}(u) &x_{i-1,i}(u)\\
x_{j1}(u) &\cdots &x_{j, i-1}(u) &\mybox{$x_{ji}(u)$} 
\end{array} \right\vert\, 
\tl d_{i}(u).\label{gd3}
\end{eqnarray}
The Gauss decomposition can also be written component-wise as, for $i<j$, 
\begin{align}
x_{ii}(u)&=d_i(u)+\sum_{k<i}f_{ik}(u)d_k(u)e_{ki}(u),\nonumber\\
x_{ij}(u)&=d_i(u)e_{ij}(u)+\sum_{k<i}f_{ik}(u)d_k(u)e_{kj}(u),\label{eq:sij-Gauss}\\
x_{ji}(u)&=f_{ji}(u)d_i(u)+\sum_{k<i}f_{jk}(u)d_k(u)e_{ki}(u).\nonumber
\end{align}

We further denote
\begin{align}
    \label{gauss-gen}
e_{ij}(u) &=\sum_{r\gge 1}e_{ij}^{(r)}u^{-r},\quad f_{ji}(u)=\sum_{r\gge 1}f_{ji}^{(r)}u^{-r},\quad d_k(u)=1+\sum_{r\gge 1}d_{k}^{(r)}u^{-r}.
\\
e_i(u) &=\sum_{r\gge 1}e_{i}^{(r)}u^{-r}=e_{i,i+1}(u),\quad f_i(u)=\sum_{r\gge 1}f_{i}^{(r)}u^{-r}=f_{i+1,i}(u),\quad 1\lle i<N.
\end{align}

Set 
\beq\label{edfinv}
\begin{split}
&\wtl D(u)=D(u)^{-1}=\sum_{1\lle i\lle N} \tl d_{i}(u)\otimes E_{ii},
\\
&\wtl E(u)=E(u)^{-1}=\sum_{1\lle i<j\lle N}\tl e_{ij}(u)\otimes E_{ij}(-1)^{|i||j|+|j|},\\
&\wtl F(u)=F(u)^{-1}=\sum_{1\lle i<j\lle N}\tl f_{ji}(u)\otimes E_{ji}(-1)^{|i||j|+|i|}.
\end{split}
\eeq
Then we have 
\beq\label{eq:def-tilde-e-f}
\begin{split}
&\tl e_{ij}(u)=\sum_{i=i_0<i_1<\cdots<i_s=j}(-1)^s e_{i_0i_1}(u)e_{i_1i_2}(u)\cdots e_{i_{s-1}i_s}(u),\\
&\tl f_{ji}(u)=\sum_{i=i_0<i_1<\cdots<i_s=j}(-1)^s f_{i_{s}i_{s-1}}(u)\cdots f_{i_2i_1}(u) f_{i_1i_0}(u).
\end{split}
\eeq

\subsection{A homomorphism $\scrX^\bs_{(m,m')}\to \scrX^\bs$}

Unlike the case of type AI in \cite{Lu2023drinfeld}, the commutator relations \eqref{bcom} for type AIII involve summation. Consequently, there is no obvious (natural) embedding from $\scrX^{\tl\bs}$ to $\scrX^\bs$ for a symmetric parity subsequence $\tl\bs$ of $\bs$. From the viewpoint of Satake diagrams for quantum symmetric pairs \cite{Letzter1999coideal,Kolb2014quantum} and their super analogues \cite{ShenWang2024quantum}, one still expects a homomorphism from $\scrX^\bs_{(m,m')}\to \scrX^\bs$; recall the definition of $\scrX^\bs_{(m,m')}$ from \eqref{ysbs-sub}. The main goal of this section is to construct such a homomorphism, following similar strategy of \cite[\S3]{Jing18iso}.

For $2\lle  i\lle 2' (=N-1)$, we have 
\beq\label{emrel1}
x_{11}(u+\ka_1)x_{i 1}(u)=x_{i1}(u+\ka_1)x_{11}(u).
\eeq
Therefore
\begin{align*}
\begin{vmatrix}x_{11}(u) & x_{1j}(u)\\
x_{i1}(u) & \mybox{$x_{ij}(u)$}\end{vmatrix}&=x_{ij}(u)-x_{i1}(u)x_{11}(u)^{-1}x_{1j}(u)\\
&=x_{11}(u+\ka_1)^{-1}\big(x_{11}(u+\ka_1)x_{ij}(u)-x_{i1}(u+\ka_1)x_{1j}(u)\big).
\end{align*}
Set 
\beq\label{newt}
\mc T_{ij}(u)=x_{11}(u+\ka_1)x_{ij}(u)-x_{i1}(u+\ka_1)x_{1j}(u)=x_{11}(u+\ka_1)\begin{vmatrix}x_{11}(u) & x_{1j}(u)\\
x_{i1}(u) & \mybox{$x_{ij}(u)$}\end{vmatrix}
\eeq
and introduce
\beq\label{gamma}
\begin{split}
\Gamma(u)&=\sum_{a_i,b_i}E_{a_1b_1}\otimes E_{a_2b_2}\otimes \Gamma_{b_1b_2}^{a_1a_2}(u)\\
&=R_{12}(\ka_1)X_1(u+\ka_1){R}_{12}'(2u+\ka_1)X_2(u)=X_2(u){R}_{12}'(2u+\ka_1)X_1(u+\ka_1)R_{12}(\ka_1),
\end{split}
\eeq
where the last equality follows from \eqref{quamat}. 
\begin{lem}\label{Blem}
We have
\begin{enumerate}
    \item  $[x_{11}(u),\mc T_{ij}(v)]=0$, $2\lle i,j\lle 2'$;
\item $\Gamma_{1j}^{1i}(u)=\mc T_{ij}(u)(-1)^{|i||j|+|j|}$, $2\lle i,j\lle 2'$;
\item $\Gamma_{j_1j_2}^{i_1i_2}(u)=-\ka_1(-1)^{|i_1||i_2|+|i_1||j_1|+|i_2||j_1|}\Gamma_{j_1j_2}^{i_2i_1}(u)=-\ka_1(-1)^{|j_1||j_2|+|j_1||i_2|+|j_2||i_2|}\Gamma_{j_2j_1}^{i_1i_2}(u)$.
\end{enumerate}
\end{lem}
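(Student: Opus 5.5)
We prove the three parts in the order (3), (2), (1), since (1) is easiest once (2) and (3) are available. Throughout, we work with the matrix identity \eqref{quamat} specialized at $v=u$, $u\mapsto u+\ka_1$. This is exactly the second equality in \eqref{gamma}, and it holds because the R-matrices involved have no singularities there.

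\emph{Part (3).} We use that $R(\ka_1)=1-\ka_1 P$ is, up to a scalar, a (super)projector. Indeed, $P^2=1$, so
\[
(1-\ka_1P)(1+\ka_1P)=0,
\]
which gives $P_{12}R_{12}(\ka_1)=-\ka_1R_{12}(\ka_1)$ and likewise $R_{12}(\ka_1)P_{12}=-\ka_1R_{12}(\ka_1)$. Since $\Gamma(u)=R_{12}(\ka_1)\cdots$, multiplying on the left by $P_{12}$ gives $P_{12}\Gamma(u)=-\ka_1\Gamma(u)$. Writing $\Gamma(u)$ in the other form $\cdots R_{12}(\ka_1)$ gives $\Gamma(u)P_{12}=-\ka_1\Gamma(u)$. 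We then read off matrix entries. With $P=\sum\ka_jE_{ij}\otimes E_{ji}$ and the sign convention $E_{ab}(-1)^{|a||b|+|b|}$ built into $X$, the left action of $P_{12}$ swaps the upper indices $i_1\leftrightarrow i_2$, and the right action swaps the lower indices $j_1\leftrightarrow j_2$. Each swap comes with the super sign in the statement. The only real work in this part is tracking those signs through the tensor convention, which is routine bookkeeping.

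\emph{Part (2).} We expand the first expression $R_{12}(\ka_1)X_1(u+\ka_1)R'_{12}(2u+\ka_1)X_2(u)$ and take the coefficient of $E_{1 1}\otimes E_{i j}$, i.e.\ $(a_1,a_2)=(1,i)$ and $(b_1,b_2)=(1,j)$.
\begin{enumerate}
\item The term $1\cdot X_1X_2$ contributes $x_{11}(u+\ka_1)x_{ij}(u)$.
\item The term $-\ka_1P_{12}X_1X_2$ contributes $-x_{i1}(u+\ka_1)x_{1j}(u)$ (with signs).
\item The $R'$ term $-\frac{P'_{12}}{2u+\ka_1}$ involves $P'=G_1PG_1$, which is a sum of $E_{ab'}\otimes E_{a'b}$-type terms. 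After inserting it between $X_1$ and $X_2$, its contribution to the entry with outer indices $(1,i)$, $(1,j)$ is proportional to $\delta$-factors forcing $j=1'$ or $i=1'$. For $2\le i,j\le 2'$ these vanish.
\item The corresponding product $P\cdot P'$ terms vanish for the same reason.
\end{enumerate}
What remains is exactly $\mc T_{ij}(u)$ from \eqref{newt}, up to the sign $(-1)^{|i||j|+|j|}$. The fact that $\ka_1$ (not $2u+\ka_1$) is the shift is what makes the $P$-term give the second product in \eqref{newt}. We check the index restrictions carefully, since this is where the range $2\le i,j\le 2'$ enters.

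\emph{Part (1).} We follow \cite[\S3]{Jing18iso}. Form the three-fold product
\[
R_{01}(u-v)X_0(u)R'_{01}(u+v)\,\Gamma_{12}(v),
\]
where we apply \eqref{quamat} repeatedly together with the Yang--Baxter equation \eqref{ybeq} (and its twisted variants with $R'$). This shows that $\Gamma_{12}(v)$ satisfies a reflection-type relation with $X_0(u)$, with fused R-matrices $R_{01}R_{02}$ and $R'_{02}R'_{01}$. We take the entry with $0$-indices $(1,1)$ and $12$-indices as in part (2). Part (3) lets us replace the fused R-matrix acting on the antisymmetrized space by a scalar. The remaining off-diagonal contributions are again killed by the restriction $2\le i,j\le 2'$: any term moving index $1$ in space $0$ to another value would need an index $1$ or $1'$ in $\Gamma$, and these terms either vanish by part (3) (an antisymmetric entry with a repeated $1$) or cancel pairwise. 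This yields $[x_{11}(u),\Gamma^{1i}_{1j}(v)]=0$, and part (2) then gives (1).

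The main obstacle is this last step: controlling the $R'$ contributions, which introduce primed indices $a'$ and cannot be discarded as in the nontwisted case. As a fallback, we would verify (1) directly from \eqref{bcom} together with \eqref{emrel1}, computing $[x_{11}(u),x_{11}(v+\ka_1)x_{ij}(v)]$ and $[x_{11}(u),x_{i1}(v+\ka_1)x_{1j}(v)]$ and showing that the $\frac1{u+v}$- and $\frac1{u^2-v^2}$-terms cancel.
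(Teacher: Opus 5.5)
Parts (2) and (3) are the paper's argument. For (2) you expand the first form of $\Gamma(u)$ in \eqref{gamma}; the $P'_{12}$-terms would force $i=1'$, respectively $1=1'$. For (3), your identity $P_{12}R_{12}(\ka_1)=R_{12}(\ka_1)P_{12}=-\ka_1R_{12}(\ka_1)$ is the paper's $(1-\ka_1P_{12})R_{12}(\ka_1)=2R_{12}(\ka_1)$, applied on the left of the first form and on the right of the second.

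\textbf{Part (1): the fusion sketch has a gap.} The argument you lead with does not prove (1), for two reasons.
\begin{enumerate}
\item The fused R-matrix is not a scalar on the image of $R_{12}(\ka_1)$. By Lemma \ref{Rsim}, $R_{12}(\ka_1)R_{13}(u)R_{23}(u-\ka_1)=R_{12}(\ka_1)\big(1-\frac{P_{13}+P_{23}}{u-\ka_1}\big)$, which is not a multiple of $R_{12}(\ka_1)$.
\item The $R'$ factors bring in entries of $\Gamma$ with one index equal to $1'=N$. Part (3) does not annihilate these; it only kills entries with a repeated index $a$ satisfying $\ka_a=\ka_1$.
\end{enumerate}
So the claim that the leftover terms ``vanish or cancel pairwise'' is exactly the missing content. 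Completing it would need the vector-by-vector bookkeeping done in the proof of Proposition \ref{embedB}.

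\textbf{Part (1): the fallback.} Your fallback is the paper's proof, but you have put the work in the wrong place. Nothing needs to cancel among the $\frac{1}{u+v}$- and $\frac{1}{u^2-v^2}$-terms. The commutators that enter are $[x_{11}(u),x_{11}(w)]$, $[x_{11}(u),x_{ij}(v)]$, $[x_{11}(u),x_{i1}(w)]$ and $[x_{11}(u),x_{1j}(v)]$. In each of them all Kronecker deltas in \eqref{bcom} are zero, because $1'=N\neq 1$ and $2\le i,j\le N-1$. These are therefore literally relations of $\rY(\gl^\bs)$.

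What must actually be checked is that the surviving $\frac{1}{u-v}$-type terms sum to zero. The terms ending in $x_{1j}(u)$ cancel by \eqref{emrel1}. The terms ending in $x_{1j}(v)$ reduce to
\[
(v+\ka_1-u)[x_{11}(v+\ka_1),x_{i1}(u)]=\ka_1\big(x_{i1}(v+\ka_1)x_{11}(u)-x_{i1}(u)x_{11}(v+\ka_1)\big),
\]
which is again an instance of \eqref{bcom}. Make this the main argument for (1) and drop the fusion sketch.
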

\begin{proof}
(1) Note that by \eqref{bcom} we have $[x_{11}(u),x_{11}(v)]=0$. It follows from \eqref{bcom} that
\begin{align*}
&[x_{11}(u),\mc T_{ij}(v)]=[x_{11}(u),x_{11}(v+\ka_1)x_{ij}(v)-x_{i1}(v+\ka_1)x_{1j}(v)]\\
=&\,\frac{\ka_1}{u-v}x_{11}(v+\ka_1)\big(x_{i1}(u)x_{1j}(v)- x_{i1}(v)x_{1j}(u) \big)-\frac{\ka_1}{u-v-\ka_1}\big(x_{i1}(u)x_{11}(v+\ka_1)\\
&\qquad   -x_{i1}(v+\ka_1)x_{11}(u)\big)x_{1j}(v)-\frac{\ka_1}{u-v}x_{i1}(v+\ka_1)\big(x_{11}(u)x_{1j}(v)- x_{11}(v)x_{1j}(u) \big).
\end{align*}
Due to \eqref{emrel1}, it suffices to show that
\begin{align*}
\frac{1}{u-v}x_{11}(v+\ka_1)x_{i1}(u)&-\frac{1}{u-v-\ka_1}x_{i1}(u)x_{11}(v+\ka_1)\\
&+\frac{\ka_1}{(u-v)(u-v-\ka_1)}x_{i1}(v+\ka_1)x_{11}(u)=0
\end{align*}
which is equivalent to
\[
(v+\ka_1-u)[x_{11}(v+\ka_1),x_{i1}(u)]=\ka_1\big(x_{i1}(v+\ka_1)x_{11}(u)-x_{i1}(u)x_{11}(v+\ka_1)\big).
\]
This follows directly from \eqref{bcom}.

(2) Computing $\Gamma_{1j}^{1i}(u)$, $2\lle i,j\lle 2'$, using the definition \eqref{gamma}, one finds that it is given by 
$$
\big(x_{11}(u+\ka_1)x_{ij}(u)-x_{i1}(u+\ka_1)x_{1j}(u)\big)(-1)^{|i||j|+|j|}
$$ 
which coincides with $\mc T_{ij}(u)(-1)^{|i||j|+|j|}$ in \eqref{newt}.

(3) Note that $(1-\ka_1P_{12}) R_{12}(\ka_1)=2R_{12}(\ka_1)$. Thus $R_{12}(\ka_1)$ remains unchanged when multiplying by $(1-\ka_1P_{12})/2$ from the left. Then applying multiplication by $(1-\ka_1P_{12})/2$ from the left to \eqref{gamma}, one derives $\Gamma_{j_1j_2}^{i_1i_2}(u)=-\ka_1(-1)^{|i_1||i_2|+|i_1||j_1|+|i_2||j_1|}\Gamma_{j_1j_2}^{i_2i_1}(u)$. 

To prove $\Gamma_{j_1j_2}^{i_1i_2}(u)=-\ka_1(-1)^{|j_1||j_2|+|j_1||i_2|+|j_2||i_2|}\Gamma_{j_2j_1}^{i_1i_2}(u)$, using
\[
R_{12}(\ka_1)X_1(u+\ka_1){R}_{12}'(2u+\ka_1)X_2(u)=X_2(u){R}_{12}'(2u+\ka_1)X_1(u+\ka_1)R_{12}(\ka_1)
\]
from \eqref{quamat}, one exploits the same approach with  multiplication by $(1-\ka_1P_{12})/2$ from the right to \eqref{gamma}.
\end{proof}

We will need the following simplified expression of \eqref{ybeq} when $v=u-\ka_1$.
\begin{lem}\label{Rsim}
We have the following relations,
\be
R_{12}(\ka_1)R_{13}(u)R_{23}(u-\ka_1)=R_{12}(\ka_1)\Big(1-\frac{P_{13}+P_{23}}{u-\ka_1}\Big),
\ee
\be
R_{23}(u-\ka_1)R_{13}(u)R_{12}(\ka_1)=\Big(1-\frac{P_{13}+P_{23}}{u-\ka_1}\Big)R_{12}(\ka_1).
\ee
\end{lem}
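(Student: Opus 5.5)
We expand both sides directly, using only the explicit form $R(u)=1-P/u$ and the super flip identities $P_{12}^2=1$, $P_{12}P_{13}=P_{23}P_{12}=P_{13}P_{23}$ and $P_{12}P_{23}=P_{13}P_{12}=P_{23}P_{13}$. These hold verbatim in the super setting because $P$ is the graded flip. Since $\ka_1^2=1$, we have $R_{12}(\ka_1)=1-\ka_1P_{12}$. The key observation is that $P_{12}R_{12}(\ka_1)=P_{12}-\ka_1=-\ka_1R_{12}(\ka_1)$, and likewise $R_{12}(\ka_1)P_{12}=-\ka_1R_{12}(\ka_1)$. So $R_{12}(\ka_1)$ acts as a (scaled) projector that absorbs $P_{12}$ on either side. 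This is the same fact used in Lemma~\ref{Blem}(3).

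For the first identity, we first expand
\[
R_{13}(u)R_{23}(u-\ka_1)=1-\frac{P_{13}}{u}-\frac{P_{23}}{u-\ka_1}+\frac{P_{13}P_{23}}{u(u-\ka_1)}.
\]
We rewrite $P_{13}P_{23}=P_{12}P_{13}$ so that $P_{12}$ stands on the left, next to $R_{12}(\ka_1)$. Multiplying by $R_{12}(\ka_1)$ on the left and using $R_{12}(\ka_1)P_{12}=-\ka_1R_{12}(\ka_1)$ gives
\[
R_{12}(\ka_1)\Big(1-\frac{P_{13}}{u}-\frac{P_{23}}{u-\ka_1}-\frac{\ka_1P_{13}}{u(u-\ka_1)}\Big).
\]
The $P_{13}$ coefficients combine as $-\frac1u-\frac{\ka_1}{u(u-\ka_1)}=-\frac{u-\ka_1+\ka_1}{u(u-\ka_1)}=-\frac{1}{u-\ka_1}$, which yields $R_{12}(\ka_1)\big(1-\frac{P_{13}+P_{23}}{u-\ka_1}\big)$, as claimed.

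The second identity is the mirror computation. We expand $R_{23}(u-\ka_1)R_{13}(u)$, whose quadratic term is $P_{23}P_{13}=P_{13}P_{12}$. Here $P_{12}$ sits on the right and is absorbed by $R_{12}(\ka_1)$ placed to the right, and the same cancellation of coefficients occurs. Alternatively, the second identity follows from the first by applying the Yang–Baxter equation \eqref{ybeq} at $v=u-\ka_1$: it says the two left-hand sides are equal, and the right-hand sides can then be compared directly.

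The only point needing care is the validity of the permutation identities in the super case, in particular the sign convention $P=\sum\ka_jE_{ij}\otimes E_{ji}$ together with the graded tensor product rule. This follows from $P(\mathbf v_i\otimes\mathbf v_j)=(-1)^{|i||j|}\mathbf v_j\otimes\mathbf v_i$, which makes $P_{ab}$ realize the transpositions of the symmetric group acting on $(\bC^\bs)^{\otimes 3}$ by graded permutations. Once this is checked, the rest is routine algebra.
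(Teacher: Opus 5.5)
Your proof is correct. The paper gives no written proof and presents the lemma only as the specialization of the Yang--Baxter equation at $v=u-\ka_1$; your direct expansion, using $R_{12}(\ka_1)P_{12}=-\ka_1R_{12}(\ka_1)$, its mirror, and the graded $S_3$ relations $P_{13}P_{23}=P_{12}P_{13}$ and $P_{23}P_{13}=P_{13}P_{12}$, is exactly the verification needed. Your alternative route to the second identity through the Yang--Baxter equation also works, because $P_{12}$ commutes with $P_{13}+P_{23}$, so $R_{12}(\ka_1)$ commutes with $1-\frac{P_{13}+P_{23}}{u-\ka_1}$.
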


\begin{prop}[{cf.~\cite[Lem.~3.6]{Jing18iso}}]\label{embedB}
The map $x_{ij}(u)\mapsto \mc T_{ij}(u)$, $2\lle i,j\lle 2'$, defines a homomorphism $\scrX^\bs_{[2,2']}\to \scrX^\bs$.
\end{prop}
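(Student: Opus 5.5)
We follow the fusion strategy of \cite[Lem.~3.6]{Jing18iso}. The idea is to realize the matrix $\mathcal T(u)=\sum_{i,j=2}^{2'}\mc T_{ij}(u)\otimes E_{ij}(-1)^{|i||j|+|j|}$ as a component of the fused operator $\Gamma(u)$ of \eqref{gamma}. We then derive the quaternary relation \eqref{quamat} for $\mathcal T(u)$, with the smaller $R$-matrices attached to $\bs_{[2,2']}$, from a ``fused'' reflection equation for $\Gamma$.

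\emph{Step 1: the fused reflection equation.} Work in $\End(\bC^\bs)^{\otimes 3}\otimes\scrX^\bs$ with spaces $1,2$ carrying $\Gamma(u)$ and space $3$ carrying $X_3(v)$. Apply \eqref{quamat} repeatedly: first to the pair $(1,3)$ at spectral parameters $(u+\ka_1,v)$, then to the pair $(2,3)$ at $(u,v)$. Use the Yang--Baxter equation \eqref{ybeq} together with its twisted variants for $R'$, which follow from \eqref{twistedR} by conjugating by $G$. This yields
\[
R_{13}(u-v+\ka_1)R_{23}(u-v)\,\Gamma_{12}(u)\,R'_{23}(u+v)R'_{13}(u+v+\ka_1)\,X_3(v)
= X_3(v)\,R'_{13}(u+v+\ka_1)R'_{23}(u+v)\,\Gamma_{12}(u)\,R_{23}(u-v)R_{13}(u-v+\ka_1),
\]
up to the correct ordering of the $R$-factors. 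The first expression for $\Gamma$ in \eqref{gamma} is used on one side and the second on the other. Lemma~\ref{Rsim} (with $u$ replaced by $u-v+\ka_1$) ensures that the product $R_{12}(\ka_1)R_{13}R_{23}$ collapses to $R_{12}(\ka_1)\bigl(1-\frac{P_{13}+P_{23}}{u-v}\bigr)$; the same holds for its primed analogue. Therefore the fused $R$-matrix acting on $\Gamma$ has the simple form $1-\frac{P_{13}+P_{23}}{u-v}$.

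\emph{Step 2: extracting the $(2,2')$ block.} Replace $X_3(v)$ by $\Gamma_{34}(v)$ through a second fusion, i.e.\ repeat Step~1 with the roles exchanged. Then take the matrix entries with upper indices $(1,i)$ and $(1,k)$ and lower indices $(1,j)$ and $(1,l)$, where $2\le i,j,k,l\le 2'$. By Lemma~\ref{Blem}(2) these entries are $\mc T_{ij}(u)$ and $\mc T_{kl}(v)$ up to signs. Lemma~\ref{Blem}(3) (antisymmetry in the $1\leftrightarrow 2$ index pair) shows that every term in which an $R$-factor moves the index $1$ into position $2$, or moves an interior index to $1$, either vanishes or recombines into a $\mc T$-entry. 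Lemma~\ref{Blem}(1) lets us move the leftover scalar factors $x_{11}$ past the $\mc T$'s. The surviving terms must then assemble into \eqref{bcom} for the parity sequence $\bs_{(1,1')}$, with the involution $i\mapsto i'$ restricted to $\{2,\dots,2'\}$. The sums $\sum_a$ in \eqref{bcom} must also effectively run only over $2\le a\le 2'$.

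\emph{Main obstacle.} The key difficulty is controlling the contributions of the boundary indices $a=1$ and $a=1'$ in the sums coming from $R'$. The twisted $R$-matrix pairs $1$ with $1'=N$, so these terms do not vanish trivially. My first attempt would be to show that they cancel against the contributions from the $x_{11}(u+\ka_1)$ prefactor. This should follow from the antisymmetry in Lemma~\ref{Blem}(3) combined with the identity $(1-\ka_1P)R(\ka_1)=2R(\ka_1)$. The shift $u+\ka_1$ is chosen precisely so that $R(u-v)$ and $R'(2u+\ka_1)$ factorize correctly; the same shift should turn the $1/(u+v+\ka_1)$ denominators into $1/(u+v)$ after cancellation. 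If this direct cancellation becomes unwieldy, a fallback is the following. First verify the relation on the associated graded algebra via \eqref{isogr}, as a consistency check. Then argue entrywise using \eqref{bcom} for $x_{11}$, $x_{i1}$ and $x_{1j}$ only, in the same way as the computation in Lemma~\ref{Blem}(1).
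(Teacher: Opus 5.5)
Your strategy is the paper's. You fuse \eqref{quamat} into a four-space relation for $\Gamma_{12}(u)$ and $\Gamma_{34}(v)$ via \eqref{ybeq}, collapse the $R$-products with Lemma~\ref{Rsim}, restrict to vectors $\mathbf v_1\otimes\mathbf v_j\otimes\mathbf v_1\otimes\mathbf v_l$, and identify $\Gamma^{1i}_{1j}$ with $\mc T_{ij}$ by Lemma~\ref{Blem}(2). As you suspected, the order in Step~1 must be $R_{23}(u-v)R_{13}(u-v+\ka_1)\,\Gamma_{12}(u)\,R'_{13}(u+v+\ka_1)R'_{23}(u+v)X_3(v)$, mirrored on the other side, so that \eqref{ybeq} carries these factors through $R_{12}(\ka_1)$; your displayed order is reversed.

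The real problem is that the proposal stops exactly at the step that constitutes the proof, and the mechanism you anticipate there is not what happens. Nothing cancels against the prefactor $x_{11}(u+\ka_1)$. No $x_{11}$ factors are left over: they already sit inside $\Gamma^{1i}_{1j}=\pm\mc T_{ij}$, so Lemma~\ref{Blem}(1) plays no role here. The terms with denominator $u+v+\ka_1$ that remain after Lemma~\ref{Rsim} are not converted into $1/(u+v)$; they simply drop out of the block. Your fallback does not close the gap either. Agreement in the associated graded algebra says nothing about lower-order terms. An entrywise check would also need \eqref{bcom} for all entries occurring in $\mc T_{ij}(u)\mc T_{kl}(v)$ and in the twisted sums over $a$, not only for $x_{11},x_{i1},x_{1j}$.

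The missing step is bookkeeping with Lemma~\ref{Blem}(3), i.e.\ with $\Gamma=\tfrac12(1-\ka_1P_{12})\Gamma=\Gamma\,\tfrac12(1-\ka_1P_{12})$, which you do mention. After Lemma~\ref{Rsim} and its primed analogue, the right-hand side becomes
\[
\Gamma_{34}(v)\Big(1-\tfrac{P'_{13}+P'_{23}}{u+v+\ka_1}\Big)\Big(1-\tfrac{P'_{14}+P'_{24}}{u+v}\Big)\Gamma_{12}(u)\Big(1-\tfrac{P_{14}+P_{24}}{u-v}\Big)\Big(1-\tfrac{P_{13}+P_{23}}{u-v-\ka_1}\Big).
\]
On $\mathbf v_1\otimes\mathbf v_j\otimes\mathbf v_1\otimes\mathbf v_l$ the terms behave as follows.
\begin{itemize}
\item $P_{13}$ acts by the scalar $\ka_1$, producing the factor $\frac{u-v-2\ka_1}{u-v-\ka_1}$.
\item All terms containing $P_{23}$ are annihilated by $\Gamma_{12}(u)$: either the lower indices are $11$, or a pair cancels by the lower-index antisymmetry.
\item The $P_{14}$-term $\mathbf v_l\otimes\mathbf v_j\otimes\mathbf v_1\otimes\mathbf v_1$ survives $\Gamma_{12}(u)$ but is annihilated afterwards by $\Gamma_{34}(v)$.
\end{itemize}
The remaining vectors have the form $\mathbf v_a\otimes\mathbf v_b\otimes\mathbf v_1\otimes\mathbf v_c$ with $2\le c\le 2'$. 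On these, $P'_{13}$ and $P'_{23}$ place $\mathbf v_{1'}$ in space $1$ or $2$, and $P'_{14}$ places $\mathbf v_{c'}\neq\mathbf v_1$ in space $1$. Since $\Gamma_{34}$ does not act on spaces $1,2$, these terms leave the block.

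Hence only the identity, $P_{24}$ and $P'_{24}$ survive, and only entries $\Gamma^{1b}_{1c}$ with $2\le b,c\le 2'$ contribute. The restriction is $\frac{u-v-2\ka_1}{u-v-\ka_1}\,\Gamma_{34}(v)R'_{24}(u+v)\Gamma_{12}(u)R_{24}(u-v)$, and the left-hand side gives the mirror operator with the same scalar. Cancelling the scalar yields \eqref{quamat} for $\scrX^\bs_{[2,2']}$ directly. Here $R'_{24}(u+v)$ is the twisted $R$-matrix of $\bs_{[2,2']}$, so the sums over $a$ automatically run over $2\le a\le 2'$.
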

\begin{proof}We first introduce some shorthand notations. 
Let $u$ and $v$ be parameters. Set $a=u-v$, $\tl a=u+v$. We have the following equality in the superalgebra $\scrX^\bs\otimes \End(\bC^\bs)^{\otimes 4}$, 
\beq\label{pf1}
\begin{split}
&R_{23}(a-\ka_1)R_{13}(a)R_{24}(a)R_{14}(a+\ka_1)\Gamma_{12}(u)\\
&\qquad \times R'_{14}(\tl a+\ka_1)R'_{24}(\tl a)R'_{13}(\tl a+2\ka_1)R'_{23}(\tl a+\ka_1)\Gamma_{34}(v)\\
&\qquad\qquad=\Gamma_{34}(v)R'_{23}(\tl a+\ka_1)R'_{13}(\tl a+2\ka_1)R'_{24}(\tl a)R'_{14}(\tl a+\ka_1)\\
&\qquad\qquad\qquad\qquad \times\Gamma_{12}(u) R_{14}(a+\ka_1)R_{24}(a)R_{13}(a)R_{23}(a-\ka_1).
\end{split}
\eeq
This follows from the Yang-Baxter equation \eqref{ybeq}, where we also used \eqref{twistedR}, and the relations \eqref{quamat}. We shall rewrite both sides of \eqref{pf1} by Lemma \ref{Rsim} and then equate certain matrix elements.

Consider the right hand side of \eqref{pf1}. Applying \eqref{gamma}, \eqref{quamat}, and Lemma \ref{Rsim}, we have
\begin{align*}
&\Gamma_{34}(v)R'_{23}(\tl a+\ka_1)R'_{13}(\tl a+2\ka_1)R'_{24}(\tl a)R'_{14}(\tl a+\ka_1)\\
&\quad\times\Gamma_{12}(u) R_{14}(a+\ka_1)R_{24}(a)R_{13}(a)R_{23}(a-\ka_1)\\
&\quad =\Gamma_{34}(v)\Big(1-\frac{P_{13}'+P_{23}'}{\tl a+\ka_1}\Big)\Big(1-\frac{P_{14}'+P_{24}'}{\tl a}\Big)\Gamma_{12}(u)\Big(1-\frac{P_{14}+P_{24}}{a}\Big)\Big(1-\frac{P_{13}+P_{23}}{a-\ka_1}\Big).
\end{align*}
Then we apply the operator above to a basis vector of the form $\mathbf v_1\otimes \mathbf v_j\otimes \mathbf v_1\otimes \mathbf v_l$ for certain $j,l\in\{2,\cdots,2'\}$. The application of the factor
\[
\Big(1-\frac{P_{14}+P_{24}}{a}\Big)\Big(1-\frac{P_{13}+P_{23}}{a-\ka_1}\Big)
\]
gives
\beq\label{pf2}
\begin{split}
&\frac{a-2\ka_1}{a-\ka_1}\Big(\mathbf v_1\otimes \mathbf v_j\otimes \mathbf v_1\otimes \mathbf v_l  -\frac{1}{a}\mathbf v_1\otimes \mathbf v_l\otimes \mathbf v_1\otimes \mathbf v_j(-1)^{|1||j|+|1||l|+|j||l|}\Big)\\
&- \frac{1}{a-\ka_1}\mathbf v_{1}\otimes \mathbf v_1\otimes \mathbf v_j\otimes \mathbf v_l(-1)^{|1||l|}-\frac{\ka_1(a-2\ka_1)}{a(a-\ka_1)}\mathbf v_l\otimes \mathbf v_j\otimes \mathbf v_1\otimes \mathbf v_1(-1)^{|1||j|+|j||l|} \\
&+ \frac{1}{a(a-\ka_1)}\Big(\mathbf v_l\otimes \mathbf v_1\otimes \mathbf v_j\otimes \mathbf v_1(-1)^{|1|+|j||l|}+\mathbf v_1\otimes \mathbf v_l\otimes \mathbf v_j\otimes \mathbf v_1(-1)^{|1||l|+|j||l|})\Big).
\end{split}
\eeq
It follows from Lemma \ref{Blem} (3) that $\Gamma_{11}^{a_1a_2}(u)=0$ and hence a further application of $\Gamma_{12}(u)$ annihilates the first term in the second line of \eqref{pf2}. Similarly, consider the action of $\Gamma_{12}(u)$ on the last line of \eqref{pf2}, we obtain
\begin{align*}
\Gamma_{l1}^{a_1a_2}(u)\otimes \mathbf v_{a_1}\otimes \mathbf v_{a_2}\otimes \mathbf v_j\otimes \mathbf v_1(-1)^{|a_2||l|+|1||l|+|1|+|j||l|}&\\
+\,\Gamma_{1l}^{a_1a_2}(u)\otimes \mathbf v_{a_1}\otimes \mathbf v_{a_2}\otimes \mathbf v_j\otimes \mathbf v_1(-1)^{|a_2||1|+|j||l|}&.
\end{align*}
It follows from Lemma \ref{Blem}\,(3) again that the above sum vanishes. We consider the further application of the remaining factors acting on the second term in the second line of \eqref{pf2}. The application of $\Gamma_{12}(u)$ on $\mathbf v_l\otimes \mathbf v_j\otimes \mathbf v_1\otimes \mathbf v_1$ gives vectors of the form $\mathbf v_a\otimes \mathbf v_b\otimes \mathbf v_1\otimes \mathbf v_1$. A further application the factor
\beq\label{pf3}
\Big(1-\frac{P_{13}'+P_{23}'}{\tl a+\ka_1}\Big)\Big(1-\frac{P_{14}'+P_{24}'}{\tl a}\Big)
\eeq
on $\mathbf v_a\otimes \mathbf v_b\otimes \mathbf v_1\otimes \mathbf v_1$ results in
\begin{align*}
&\mathbf v_a\otimes \mathbf v_b\otimes \mathbf v_1\otimes \mathbf v_1\\
&-\frac{1}{\tl a+\ka_1}(\mathbf v_a\otimes \mathbf v_{1'}\otimes \mathbf v_{b'}\otimes \mathbf v_{1}(-1)^{|1||b|}+\mathbf v_a\otimes \mathbf v_{1'}\otimes \mathbf v_1\otimes \mathbf v_{b'}(-1)^{|1|})\\
&+\frac{1}{\tl a(\tl a+\ka_1)}(\mathbf v_{1'}\otimes \mathbf v_{1'}\otimes \mathbf v_{a'}\otimes \mathbf v_{b'}+\mathbf v_{1'}\otimes \mathbf v_{1'}\otimes \mathbf v_{b'}\otimes \mathbf v_{a'}(-1)^{|1|+|a||b|}) \\
&-\frac{1}{\tl a+\ka_1}(\mathbf v_{1'}\otimes \mathbf v_b\otimes \mathbf v_1\otimes \mathbf v_{a'}(-1)^{|a||b|+|1||b|+|1|}+\mathbf v_{1'}\otimes \mathbf v_b\otimes \mathbf v_{a'}\otimes \mathbf v_{1}(-1)^{|a||b|+|1||a|+|1||b|}).
\end{align*}
Again by Lemma \ref{Blem} (3), $\Gamma_{34}(v)$ annihilates the above vectors. Thus it suffices to consider the  further application of the rest factors acting on the terms in the first line of \eqref{pf2}. Note that the action of $\Gamma_{12}(u)$ gives vectors of the form $\mathbf v_{a}\otimes \mathbf v_b\otimes \mathbf v_1\otimes \mathbf v_{c}$ where $2\lle c\lle 2'$. A similar calculation as above shows that the restriction of the image of $\mathbf v_{a}\otimes \mathbf v_b\otimes \mathbf v_1\otimes \mathbf v_{c}$ under the operator
$$
\Gamma_{34}(v)\Big(1-\frac{P_{13}'+P_{23}'}{\tl a+\ka_1}\Big)\Big(1-\frac{P_{14}'+P_{24}'}{\tl a}\Big)
$$
to the subspace spanned by the vectors of the form $\mathbf v_{1}\otimes \mathbf v_i\otimes \mathbf v_1\otimes \mathbf v_k$ with $2\lle i,k\lle 2'$ is nonzero only if $a=1$ and $2\lle b\lle 2'$. Moreover,
\begin{align*}
\Big(1-\frac{P_{13}'+P_{23}'}{\tl a+1}\Big)\Big(1-\frac{P_{14}'+P_{24}'}{\tl a}\Big)\mathbf v_{1}\otimes \mathbf v_b\otimes \mathbf v_1\otimes \mathbf v_{c}\equiv  \Big(1-\frac{P_{24}'}{\tl a}\Big)\mathbf v_{1}\otimes \mathbf v_b\otimes \mathbf v_1\otimes \mathbf v_{c},
\end{align*}
where the symbol $\equiv$ means we only keep the basis vectors which can give a nonzero contribution to the coefficients of $\mathbf v_1\otimes \mathbf v_i\otimes \mathbf v_1\otimes \mathbf v_k$ after the subsequent application of the operator $\Gamma_{34}(v)$.

To sum up, we have proved that the restriction of the operator on the right hand side of \eqref{pf1} to the subspace spanned by the basis vectors of the form $\mathbf v_1\otimes \mathbf v_j\otimes \mathbf v_1\otimes \mathbf v_l$ with $2\lle j,l\lle 2'$ coincides with the operator
\beq\label{pf4}
\begin{split}
\frac{a-2\ka_1}{a-\ka_1}\,\Gamma_{34}(v)&\Big(1-\frac{P_{24}'}{\tl a}\Big)\Gamma_{12}(u)\Big(1-\frac{P_{24}}{a}\Big)\\
&=\frac{a-2\ka_1}{a-\ka_1}\,\Gamma_{34}(v)R_{24}'(u+v)\Gamma_{12}(u)R_{24}(u-v).
\end{split}
\eeq
Here $R_{24}(u-v)$ and $R_{24}'(u+v)$ are the R-matrices used to define the extended twisted super Yangian $\scrX_{[2,2']}^\bs$. Moreover, the matrix elements for this restriction involve only the series $\Gamma_{1j}^{1i}(u)$ with $2\lle i,j\lle 2'$.

For the left hand side of \eqref{pf1}, we again apply it to the basis vectors of the form $\mathbf v_1\otimes \mathbf v_j\otimes \mathbf v_1\otimes \mathbf v_l$ with $2\lle j,l\lle 2'$ and look at the coefficients of the basis vectors of the same form in the image. Then the same argument as for the right hand side (with the reversed factors in the operators) implies that the coefficients of such basis vectors coincide with those of the operator
\beq\label{pf5}
\begin{split}
\frac{a-2\ka_1}{a-\ka_1}\,&\Big(1-\frac{P_{24}}{a}\Big)\Gamma_{12}(u)\Big(1-\frac{P_{24}'}{\tl a}\Big)\Gamma_{34}(v)\\&=\frac{a-2\ka_1}{a-\ka_1}\,R_{24}(u-v)\Gamma_{12}(u)R_{24}'(u+v)\Gamma_{34}(v).
\end{split}
\eeq
Again, $R_{24}(u-v)$ and $R_{24}'(u+v)$ are the R-matrices used to define the extended twisted super Yangian $\scrX_{[2,2']}^\bs$. Moreover, the matrix elements for this restriction involve only the series $\Gamma_{1j}^{1i}(u)$ with $2\lle i,j\lle 2'$.

Therefore, by equating the matrix elements of the operators \eqref{pf4} and \eqref{pf5}, we get the R-matrix form of the defining relations for the superalgebra $\scrX_{[2,2']}^\bs$ is satisfied by the series 
$$
\mathcal T_{ij}(u)=\Gamma_{1j}^{1i}(u)(-1)^{|i||j|+|j|},
$$ see Lemma \ref{Blem} (2), as required.
\end{proof}

\begin{rem}
In the orthosymplectic Yangian case \cite{Molev24drinfeld,frassek2023orthosymplectic}, the proof of \cite[Lem.~3.6]{Jing18iso} requires a specialization of the orthosymplectic R-matrix at a value where a denominator may vanish. More precisely, for the orthosymplectic R-matrix $R(u)=1-P/u+Q/(u-\kappa)$, the value $R(u)$ is regular at $u=\pm 1$ unless \(\kappa=1\). In our type A super setting, the Yang R-matrix \(R(u)=1-P/u\) is always regular at \(u=\pm1\). Therefore, the same strategy as in \cite[Lem.~3.6]{Jing18iso} applies here without this singularity issue.
\end{rem}

We also need the following generalization of Proposition \ref{embedB}. Fix a positive integer $m$ such that
$m\leqslant \ell$ if $N=2\ell+1$ and $m\leqslant \ell-1$ if $N=2\ell$.
Suppose that the generators $x_{ij}^{(r)}$ of the superalgebra $\scrX^\bs_{(m,m')}$ are labelled by the indices
$m+1\leqslant i,j\leqslant (m+1)'$ and $r>0$.

\begin{prop}\label{thm:red}
The mapping
\beq\label{redu}
\psi_m^\bs:x_{ij}(u)\mapsto \left|\begin{matrix}
x_{11}(u)&\dots&x_{1m}(u)&x_{1j}(u)\\
\dots&\dots&\dots&\dots\\
x_{m1}(u)&\dots&x_{mm}(u)&x_{mj}(u)\\
x_{i1}(u)&\dots&x_{im}(u)&\mybox{$x_{ij}(u)$}
\end{matrix}\right|,\qquad m+1\leqslant i,j\leqslant (m+1)',
\eeq
defines a superalgebra homomorphism $\scrX^\bs_{(m,m')}\to \scrX^\bs$.
\end{prop}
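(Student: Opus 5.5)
We will prove the statement by induction on $m$, using Proposition \ref{embedB} as the case $m=1$ and the standard heredity property of quasi-determinants (the Sylvester-type identity) to compose the homomorphisms. Write $\bs^{(1)}:=\bs_{(1,1')}=\bs_{[2,2']}$; this is again a symmetric parity sequence, of length $N-2$. Proposition \ref{embedB} gives a homomorphism $\psi_1^\bs:\scrX^{\bs^{(1)}}\to\scrX^\bs$, $x_{ij}(u)\mapsto\mc T_{ij}(u)$ for $2\lle i,j\lle 2'$.

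The first step is to upgrade this map to the quasi-determinant form \eqref{redu}. By \eqref{newt}, $\mc T_{ij}(u)=x_{11}(u+\ka_1)\,|X(u)|^{(1)}_{ij}$, where $|X(u)|^{(1)}_{ij}$ denotes the $2\times 2$ quasi-determinant in \eqref{newt}. By Lemma \ref{Blem}\,(1), the coefficients of $x_{11}(u+\ka_1)$ commute with all $\mc T_{kl}(v)$, and $x_{11}(u+\ka_1)$ has constant term $1$. We then check that the assignment $x_{ij}(u)\mapsto |X(u)|^{(1)}_{ij}$ also respects \eqref{quamat}. Write $X^{(1)}(u)=x_{11}(u+\ka_1)^{-1}\mc T(u)$. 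Since the scalar series commutes with $\mc T$, the left side of \eqref{quamat} becomes $x_{11}(u+\ka_1)^{-1}x_{11}(v+\ka_1)^{-1}$ times the same expression in $\mc T$, and similarly for the right side. Hence the relation is preserved, and the map in \eqref{redu} for $m=1$ is a homomorphism.

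For the inductive step, assume $\psi_{m-1}^{\bs}$ is a homomorphism $\scrX^\bs_{(m-1,(m-1)')}\to\scrX^\bs$. Apply the case $m=1$ to the symmetric sequence $\bs_{[m,m']}$, which is allowed since $m\lle \lfloor (N-1)/2\rfloor$ keeps its length at least $3$. This gives a homomorphism $\scrX^\bs_{(m,m')}\to\scrX^\bs_{(m-1,(m-1)')}$, sending $x_{ij}(u)$ to the $2\times 2$ quasi-determinant with pivot row and column $m$. Composing the two maps, the image of $x_{ij}(u)$ is the $2\times2$ quasi-determinant built from the entries $\psi_{m-1}^\bs(x_{ab}(u))$ with $a,b\in\{m,i\}\times\{m,j\}$. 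Each such entry is the quasi-determinant of $X(u)$ with rows $1,\dots,m-1,a$ and columns $1,\dots,m-1,b$. By the heredity (Sylvester) identity for quasi-determinants \cite{Gelfand2005quasi}, this iterated quasi-determinant equals the $(m+1)\times(m+1)$ quasi-determinant in \eqref{redu}. The identity is a formal matrix-inversion identity; it holds for super matrices with even diagonal invertible blocks, and the relevant principal minors are invertible since they have constant term the identity.

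The main obstacle is the first step, because \eqref{newt} carries a shift $u+\ka_1$ rather than the naive normalization. If dividing by $x_{11}(u+\ka_1)$ causes trouble, for instance with parity signs of $x_{11}$ when $\ka_1=-1$ (it is still even), the fallback is to prove the general step directly. That would mean rerunning the fusion argument of Proposition \ref{embedB} with the antisymmetrizer on $m+1$ tensor factors, and restricting to vectors $\mathbf v_1\otimes\cdots\otimes\mathbf v_m\otimes\mathbf v_j$, following \cite{Jing18iso}. The reduction route above avoids this, and the remaining verifications are routine bookkeeping of signs $(-1)^{|i||j|+|j|}$.
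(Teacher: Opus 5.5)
Your proposal is correct and takes essentially the same route as the paper, whose proof is modelled on \cite[Proposition 3.7]{Jing18iso}: you get the case $m=1$ from Proposition \ref{embedB} after rescaling by the even series $x_{11}(u+\ka_1)^{-1}$, which commutes with the $\mc T_{ij}(v)$ by Lemma \ref{Blem}\,(1). The general case then follows by composing rank-one reductions and applying the Sylvester theorem for quasi-determinants, so the fallback fusion argument you mention is not needed.
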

\begin{proof}
The proof is parallel to that of \cite[Proposition 3.7]{Jing18iso} by using the Sylvester theorem for quasi-determinants.
\end{proof}

The homomorphisms $\psi_m^\fks$ have the following consistence property. For $l\in\bN$, we have the corresponding homomorphism
\[
\psi_m^{\bs_{(l,l')}}:\scrX^\bs_{(m+l,(m+l)')}\to \scrX^\bs_{(l,l')}
\]
given by \eqref{redu}.

\begin{cor}\label{cons}
We have the equality of superalgebra homomorphisms,
\[
\psi^\bs_l\circ \psi_m^{\bs_{(l,l')}}=\psi^\bs_{m+l}.
\]
\end{cor}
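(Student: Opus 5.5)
It suffices to compare both homomorphisms on the generating series $x_{ij}(u)$ of $\scrX^\bs_{(m+l,(m+l)')}$, for $m+l+1\lle i,j\lle (m+l+1)'$. The plan is to reduce the statement to the quasi-determinant Sylvester identity (heredity of quasi-determinants).

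\textbf{Step 1: apply the inner map.} By \eqref{redu} applied to the subsequence $\bs_{(l,l')}$, the map $\psi_m^{\bs_{(l,l')}}$ sends $x_{ij}(u)$ to the quasi-determinant of the $(m+1)\times(m+1)$ matrix with rows indexed by $l+1,\dots,l+m,i$, columns indexed by $l+1,\dots,l+m,j$, and boxed entry in position $(i,j)$. The entries of this matrix are generators $x_{ab}(u)$ of $\scrX^\bs_{(l,l')}$.

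\textbf{Step 2: apply the outer map.} Since $\psi^\bs_l$ is a superalgebra homomorphism, it commutes with forming quasi-determinants. Indeed, a quasi-determinant is built from sums, products and inverses of submatrices, and invertibility is preserved because all series have invertible leading term (identity or triangular). So the image is the same quasi-determinant with every entry $x_{ab}(u)$ replaced by $\psi^\bs_l(x_{ab}(u))$. Each such entry is itself the quasi-determinant of the submatrix of $X(u)$ with rows $1,\dots,l,a$ and columns $1,\dots,l,b$, boxed at $(a,b)$.

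\textbf{Step 3: Sylvester identity.} Consider the submatrix $\mathsf X$ of $X(u)$ with rows $1,\dots,l+m,i$ and columns $1,\dots,l+m,j$, and take the block $\mathsf A$ given by rows and columns $1,\dots,l$. Sylvester's theorem for quasi-determinants \cite{Gelfand2005quasi} states that the $(i,j)$ quasi-determinant of $\mathsf X$ equals the $(i,j)$ quasi-determinant of the matrix whose $(a,b)$ entry, for $a\in\{l+1,\dots,l+m,i\}$ and $b\in\{l+1,\dots,l+m,j\}$, is the quasi-determinant of $\mathsf A$ bordered by row $a$ and column $b$. The left side is exactly $\psi^\bs_{m+l}(x_{ij}(u))$ and the right side is the expression obtained in Step 2, which proves the identity.

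\textbf{Main obstacle.} The only delicate point is the super setting. The Sylvester identity is valid over any associative ring, so it applies to the matrix of entries $x_{ab}(u)$ over $\scrX^\bs[[u^{-1}]]$. However, the matrix $X(u)$ carries sign factors $(-1)^{|i||j|+|j|}$ in its tensor form. I would work consistently with the plain entry matrix $(x_{ij}(u))$, exactly as in \eqref{redu}, so that no signs arise. I would also check that all intermediate submatrices are invertible, which holds because their leading terms are identity matrices.
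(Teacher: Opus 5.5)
Your proposal is correct and follows essentially the same route as the paper. The paper simply defers to the argument of \cite[Proposition 3.8]{Jing18iso}. That argument is exactly this one: check the identity on the generating series $x_{ij}(u)$, use that $\psi^\bs_l$ commutes with forming quasi-determinants of matrices of series with identity leading term, and conclude with the noncommutative Sylvester identity applied to the submatrix with rows $1,\dots,l+m,i$ and columns $1,\dots,l+m,j$, taking the upper-left $l\times l$ block as pivot.

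Your choice to work with the plain entry matrix $(x_{ij}(u))$, as in \eqref{redu}, correctly avoids any sign issues from the super structure.
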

\begin{proof}
Follows from the same argument as in \cite[Proposition 3.8]{Jing18iso}.
\end{proof}

\begin{cor}\label{cor:commu}
We have the relations
\be
\big[x_{ab}(u),\psi_m^\fks(x_{ij}(v))\big]=0
\ee
for all $1\leqslant a,b\leqslant m$ and $m+1\leqslant i,j\leqslant (m+1)'$. 
\end{cor}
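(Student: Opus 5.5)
The plan is to reduce to the case $m=1$ by induction, using the consistency property of Corollary \ref{cons}, and to prove the case $m=1$ directly from the quaternary relations \eqref{bcom}.

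\emph{The case $m=1$.} Here $a=b=1$ and, by \eqref{newt}, $\psi_1^\bs(x_{ij}(v))=x_{11}(v+\ka_1)^{-1}\mc T_{ij}(v)$. By Lemma \ref{Blem}(1), $x_{11}(u)$ commutes with $\mc T_{ij}(v)$. By \eqref{bcom}, $[x_{11}(u),x_{11}(v)]=0$. Hence $x_{11}(u)$ commutes with $x_{11}(v+\ka_1)^{-1}$ as well, and therefore with the product $\psi_1^\bs(x_{ij}(v))$.

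\emph{Induction on $m$.} Assume the statement holds for all smaller values of $m$ and all symmetric parity sequences. Write $\psi_m^\bs=\psi_1^\bs\circ\psi_{m-1}^{\bs_{(1,1')}}$ using Corollary \ref{cons}. Take $2\lle a,b\lle m$. In $\scrX^{\bs_{(1,1')}}$ the induction hypothesis, applied with indices shifted by one, gives
\[
\big[x_{ab}(u),\psi_{m-1}^{\bs_{(1,1')}}(x_{ij}(v))\big]=0 .
\]
Applying the homomorphism $\psi_1^\bs$ then yields
\[
\big[\psi_1^\bs(x_{ab}(u)),\psi_m^\bs(x_{ij}(v))\big]=0 .
\]
The element $\psi_1^\bs(x_{ab}(u))=x_{ab}(u)-x_{a1}(u)x_{11}(u)^{-1}x_{1b}(u)$ is a quasi-determinant. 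Its coefficients, together with those of $x_{11}(u)$, $x_{1b}(u)$ and $x_{a1}(u)$, generate the same subalgebra as the entries $x_{ab}(u)$ with $1\lle a,b\lle m$. So it suffices to show that $x_{11}(u)$, $x_{1b}(u)$ and $x_{a1}(u)$, for $a,b\lle m$, commute with $\psi_m^\bs(x_{ij}(v))$.

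\emph{Main obstacle.} The main difficulty is that $x_{1b}(u)$ and $x_{a1}(u)$ with $b,a\gge 2$ are not covered by the induction. For these I would switch to the Gauss decomposition. By the Sylvester identity, the quasi-determinant \eqref{redu} is built from the Gaussian data of the lower-right block $X_{(m,m')}$; in particular it equals an entry of $\wtl F\,X\,\wtl E$ restricted to that block. The $x_{ab}$ with $a,b\lle m$ are built from $d_k, e_{kl}, f_{lk}$ with $k,l\lle m$. So the claim becomes the statement that the upper-left Gaussian generators commute with $\psi_m^\bs(x_{ij}(v))$.

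I would first try to prove this directly from the matrix relation \eqref{quamat}, applied to vectors $\mathbf v_a\otimes \mathbf v_i$ as in the proof of Proposition \ref{embedB}. The expected route uses the partial-trace argument of \cite[Cor.~3.10]{Jing18iso} (in the spirit of Brundan--Kleshchev). There, the terms involving $R'(u+v)$ and the summation over $a'$ land in indices outside $\{m+1,\dots,(m+1)'\}$ and cancel.

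I expect this verification to be the hardest step. If it fails, the fallback is to prove the case $m$ directly by repeating the argument of Lemma \ref{Blem}(1) for a general $a$ in place of $1$. This would use a fused matrix $\Gamma$ built on the first $m$ tensor factors, with the antisymmetrizer $R_{12}(\ka_1)$ replaced by the corresponding $m$-fold fusion.
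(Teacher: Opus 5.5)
Your case $m=1$ is correct: it is Lemma~\ref{Blem}(1) together with $[x_{11}(u),x_{11}(v)]=0$. The induction via Corollary~\ref{cons} is also valid as far as it goes. It shows that $x_{11}(u)$ and the coefficients of $\psi_1^\bs(x_{ab}(u))$, $2\lle a,b\lle m$, commute with $\psi_m^\bs(x_{ij}(v))$.

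However, the remaining claim is the real content of the corollary, and you leave it unproved. That claim is that $x_{1b}(u)$ and $x_{a1}(u)$ with $2\lle a,b\lle m$ commute with $\psi_m^\bs(x_{ij}(v))$. You only list candidate strategies for it. Recasting it in terms of upper-left Gaussian generators is an equivalent reformulation, not progress. Note also that $\wtl F(u)X(u)\wtl E(u)=D(u)$ is diagonal, so the quasi-determinant \eqref{redu} is an entry of a \emph{block} Gauss factorization, not of $\wtl F X\wtl E$. Neither the partial-trace argument nor the $m$-fold fused $\Gamma$ is carried out. The latter would need a fusion procedure for arbitrary parity sequences that is not available here.

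The missing idea is elementary. For $a,b\lle m$ and $k,l\lle (m+1)'$, all $\delta$-terms in \eqref{bcom} for $[x_{ab}(u),x_{kl}(v)]$ vanish. Indeed $k\lle N-m<b'$, $l'\gge m+1>a$, and $b'\gge N+1-m\gge m+2>a$ because $2m\lle N-1$. So every commutator between an upper-left entry and an entry in the first $(m+1)'$ rows and columns is literally the $\rY(\gl^\bs)$ relation.

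The paper uses this to rewrite $[x_{ab}(u),\psi_m^\bs(x_{ij}(v))]$ as a combination of ordered monomials, using only these Yangian-type relations. The ordering puts $x_{ic}$ on the left, upper-left entries in the middle, and $x_{dj}$ on the right. It then imports the corresponding identity in $\rY(\gl^\bs)$ from \cite[Lemma 4.3]{Peng2016parabolic}.

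Within your own framework the same observation closes the gap directly:
\begin{enumerate}
\item Take the coefficient of $u^0$ in $(u-v)[x_{ab}(u),x_{kl}(v)]$. It shows that $\mathrm{ad}\,x_{ab}^{(1)}$ acts on the entries $x_{kl}(v)$, $k,l\lle (m+1)'$, as the superderivation induced by the matrix unit $E_{ab}$ of the upper-left block. Such a derivation annihilates every entry of the Schur complement $X_{22}-X_{21}X_{11}^{-1}X_{12}$.
\item The same coefficient for $(a,b,k,l)=(1,b,1,1)$ gives $x_{1b}(v)=\pm[x^{(1)}_{1b},x_{11}(v)]$. For $(a,1,1,1)$ it gives $x_{a1}(v)=\pm[x^{(1)}_{a1},x_{11}(v)]$.
\item Your $m=1$ case plus Corollary~\ref{cons} shows that $x_{11}(v)$ commutes with $\psi_m^\bs(x_{ij}(w))$, since $\psi_m^\bs$ factors through $\psi_1^\bs$.
\end{enumerate}
The super Jacobi identity then finishes the argument.
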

\begin{proof}
Write $A(u)=\big(x_{ab}(u)\big)_{1\lle a,b\lle m}$. For \(m+1\lle i,j\lle (m+1)'\), the image \(\psi_m^\fks(x_{ij}(u))\) is the
\((i,j)\)-entry of the Schur complement of the upper-left \(m\times m\)
block:
\[
\psi_m^\fks(x_{ij}(u))
=
x_{ij}(u)-\sum_{c,d=1}^m x_{ic}(u)\,\big(A(u)^{-1}\big)_{cd}\,x_{dj}(u),
\]
equivalently the quasideterminant in \eqref{redu}.

Observe that, for $1\lle a,b,c,d\lle m$, $m+1\lle i,j\lle (m+1)'$,  the following commutation relations \eqref{bcom}
reduce exactly to the corresponding relations for the super Yangian
\(\rY(\gl^\bs)\):
\begin{align*}
(u-v)[x_{ab}(u),x_{ij}(v)]
&=
(-1)^{|a||b|+|a||i|+|b||i|}
\big(x_{ib}(u)x_{aj}(v)-x_{ib}(v)x_{aj}(u)\big),\\
(u-v)[x_{ab}(u),x_{ic}(v)]
&=
(-1)^{|a||b|+|a||i|+|b||i|}
\big(x_{ib}(u)x_{ac}(v)-x_{ib}(v)x_{ac}(u)\big),\\
(u-v)[x_{ab}(u),x_{cd}(v)]
&=
(-1)^{|a||b|+|a||c|+|b||c|}
\big(x_{cb}(u)x_{ad}(v)-x_{cb}(v)x_{ad}(u)\big),\\
(u-v)[x_{ab}(u),x_{dj}(v)]
&=
(-1)^{|a||b|+|a||d|+|b||d|}
\big(x_{db}(u)x_{aj}(v)-x_{db}(v)x_{aj}(u)\big).
\end{align*}
Thus we can express the LHS of the desired equality in terms of a linear combination of ordered PBW-type supermonomials with an order satisfying $x_{ic}^{(r_1)}<x_{ab}^{(r_2)}<x_{dj}^{(r_3)}$ for all $r_1,r_2,r_3\in \mathbb{Z}_{>0}$. Namely, we always put $x_{ic}^{(r_1)}$ to the LHS while put $x_{dj}^{(r_3)}$ to the RHS. The elements $x_{ab}^{(r_2)}$ are always in the middle. It is not hard to see from the classical picture \eqref{isogr} that such ordered PBW supermonomials are linearly independent in $\mathscr X^\fks$. Then to rewrite the LHS of the desired equality in terms of these ordered supermonomials, the choice of the order ensures that all the computations only use the simplified relations in the super Yangian
\(\rY(\gl^\bs)\).
Since it has been proved in \cite[Lemma 4.3]{Peng2016parabolic} that the desired equality holds in the super Yangian $\rY(\gl^\bs)$, we conclude it also holds in $\mathscr X^\fks$; see also \cite[Coro. 3.10]{Jing18iso}, \cite[Coro. 3.3]{Molev24drinfeld} and \cite[Coro. 3.52]{frassek2023orthosymplectic}.
\end{proof}


\subsection{Gaussian generators and their properties}
Recall the definition of the central series $c(u)$ from \eqref{cudef}.

\begin{lem}\label{efaiii}
In the superalgebra $\scrX^\bs$, we have $c(u)\tl d_{i'}(u)=d_{i}(-u)$,
\[
\tl d_i(u)d_{i+1}(u)=\tl d_{i'-1}(-u)d_{i'}(-u),\quad  \tl e_{ij}(u)=f_{i'j'}(-u), \quad  \tl f_{ji}(u)=e_{j'i'}(-u).
\]
In particular, we have $\wtl E(u)=F(-u)'$ in matrix form and $e_{i}(u)=-f_{\tau i}(-u)$.
\end{lem}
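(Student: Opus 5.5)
The plan is to combine Proposition~\ref{Binv} with the uniqueness of the Gauss decomposition. By Proposition~\ref{Binv}, $X(u)X'(-u)=c(u)\mathbf 1^\bs$, and $c(u)$ is central and even with leading term $1$, so it is invertible. Hence
\[
X'(-u)=c(u)\,X(u)^{-1}=c(u)\,\wtl E(u)\wtl D(u)\wtl F(u).
\]
On the other hand, conjugating the decomposition $X(-u)=F(-u)D(-u)E(-u)$ by $G$ gives $X'(-u)=F(-u)'D(-u)'E(-u)'$. Since $M\mapsto M'=GMG^{-1}$ reverses the index order $i\mapsto i'$, it sends lower unitriangular matrices to upper unitriangular ones and vice versa, and it sends diagonal matrices to diagonal matrices. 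Therefore
\[
E(-u)'^{-1}\,\cdot\,\text{(diagonal)}\,\cdot\,F(-u)'^{-1}
\]
presents $X'(-u)$ in "upper$\cdot$diagonal$\cdot$lower" form. The expression $c(u)\wtl E(u)\wtl D(u)\wtl F(u)$ is a second factorization of the same type.

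The key step is uniqueness of the factorization $X'(-u)=U\,\Delta\,L$, with $U$ upper unitriangular, $\Delta$ diagonal and $L$ lower unitriangular. This is the Gauss decomposition taken with respect to the reversed order of indices. It is unique, because the entries are again given by quasi-determinants of the lower-right corners, which are invertible since all diagonal leading terms are $1$; this is \cite[Theorem 4.96]{Gelfand2005quasi} applied after reversing indices. Comparing the two factorizations yields three identifications:
\[
\wtl E(u)=F(-u)',\qquad c(u)\wtl D(u)=D(-u)',\qquad \wtl F(u)=E(-u)'.
\]
Reading off entries, with care for the sign factors $(-1)^{|i||j|+|j|}$ in the matrix conventions, gives
\[
\tl e_{ij}(u)=f_{i'j'}(-u),\qquad \tl f_{ji}(u)=e_{j'i'}(-u),\qquad c(u)\tl d_i(u)=d_{i'}(-u).
\]
Replacing $i$ by $i'$ and $u$ by $-u$, and using that $c$ is even, turns the last identity into $c(u)\tl d_{i'}(u)=d_i(-u)$. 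The sign bookkeeping is the main obstacle. Since $\bs$ is symmetric, we have $|i'|=|i|$, so the factors $(-1)^{|i||j|+|j|}$ attached to $E_{ij}$ and to $E_{i'j'}$ agree. Also $G$ is even, so no extra signs appear; I would check this explicitly on matrix units.

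For the ratio identity, I would divide the diagonal identity at $i+1$ by that at $i$:
\[
\tl d_i(u)\,d_{i+1}(u)=\bigl(c(u)\tl d_i(u)\bigr)\bigl(c(u)\tl d_{i+1}(u)\bigr)^{-1}=d_{i'}(-u)\,d_{(i+1)'}(-u)^{-1}=d_{i'}(-u)\,\tl d_{i'-1}(-u).
\]
This uses that the $d_k$ mutually commute, which follows from the standard commutativity of diagonal Gaussian generators (as in the super Yangian case, via Corollary~\ref{cor:commu}-type arguments), and that $c$ is central.

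Finally, for $e_i(u)=-f_{\tau i}(-u)$, I would use \eqref{eq:def-tilde-e-f}: the only chain from $i$ to $i+1$ has length one, so $\tl e_{i,i+1}(u)=-e_i(u)$. Combined with $\tl e_{i,i+1}(u)=f_{i',(i+1)'}(-u)=f_{(i'-1)+1,\,i'-1}(-u)=f_{\tau i}(-u)$, this gives the claim.
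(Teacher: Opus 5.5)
Your proposal is correct and follows essentially the argument the paper has in mind (it defers to \cite[Lem.~6.8]{lu2024drinfeld}): compare $X'(-u)=F(-u)'D(-u)'E(-u)'$ with $c(u)X(u)^{-1}=c(u)\wtl E(u)\wtl D(u)\wtl F(u)$, then use uniqueness of the upper$\cdot$diagonal$\cdot$lower factorization. Two small fixes are needed: the displayed product $(E(-u)')^{-1}\cdot(\text{diagonal})\cdot(F(-u)')^{-1}$ is a slip (it is lower$\cdot$diagonal$\cdot$upper and would factor $X'(-u)^{-1}$, whereas the factorization you actually compare is $F(-u)'D(-u)'E(-u)'$), and the commutativity $[d_i(u),d_{i+1}(u)]=0$ used in the ratio identity should not be taken from Lemma~\ref{lemnew1}, since the paper deduces that lemma from this one; instead obtain it from Corollary~\ref{cor:commu}, the Yangian relations of the top-left block, and the diagonal identity you have just proved.
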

\begin{proof}
This is completely parallel to the proof of \cite[Lem. 6.8]{lu2024drinfeld}.
\end{proof}

\begin{lem}\label{shiftlem}
Suppose $m\lle \lfloor \tfrac{N-1}{2}\rfloor$, then the homomorphism $\psi^\bs_m:\scrX^\bs_{(m,m')}\to \scrX^\bs$ sends
\[
d_i(u)\to d_{m+i}(u),\quad e_{ij}(u)\to e_{m+i,m+j}(u),\quad f_{ji}(u)\to f_{m+j,m+i}(u).
\]
\end{lem}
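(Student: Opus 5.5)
Since every Gaussian generator is a quasi-determinant of a leading (or bordered leading) submatrix of $X(u)$, the plan is to combine the quasi-determinant description \eqref{gd1}--\eqref{gd3} with Sylvester's theorem for quasi-determinants, exactly as in \cite[Lem.~4.2]{Jing18iso} and the nonsuper analogue in \cite{lu2024drinfeld}.

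First index the generators of $\scrX^\bs_{(m,m')}$ by $m+1\lle i,j\lle (m+1)'$, as in Proposition \ref{thm:red}. Then $d_{m+i}(u)$, $e_{m+i,m+j}(u)$ and $f_{m+j,m+i}(u)$ of $\scrX^\bs_{(m,m')}$ are quasi-determinants of submatrices of $(x_{ab}(u))$ with rows and columns in $\{m+1,\dots,m+i\}$, bordered by one extra row index $m+j$ or column index $m+j$, where needed.

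Apply $\psi_m^\bs$ entrywise. Each entry $x_{ab}(u)$ goes to the quasi-determinant \eqref{redu}, which is the Schur complement of the upper-left $m\times m$ block $A(u)$. The Sylvester identity for quasi-determinants \cite{Gelfand2005quasi} says that a quasi-determinant of a matrix whose entries are these $m\times m$-block quasi-determinants equals the quasi-determinant of the enlarged matrix. This matrix has rows and columns $\{1,\dots,m\}\cup\{m+1,\dots,m+i\}$, plus the border index, with the same boxed entry.

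Comparing with \eqref{gd1}, this is $d_{m+i}(u)$ in $\scrX^\bs$. For $e$, the unnormalized quasi-determinant in \eqref{gd2} similarly maps to the corresponding unnormalized one for $e_{m+i,m+j}(u)$. Since $\psi_m^\bs$ is a homomorphism, it carries $\tl d_i(u)$ to $\tl d_{m+i}(u)$, and multiplying gives $e_{ij}(u)\mapsto e_{m+i,m+j}(u)$. The case of $f$ is symmetric.

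Alternatively, one can argue by uniqueness of the Gauss decomposition. Write $X(u)=F(u)D(u)E(u)$ in block form with respect to the splitting $\{1..m\}\sqcup\{m+1..m'\}\sqcup\{(m+1)'..N\}$. The Schur complement of the $m\times m$ block, restricted to the middle indices, is $F_{\mathrm{mid}}D_{\mathrm{mid}}E_{\mathrm{mid}}$. This is the Gauss decomposition of the middle block of $F D E$ minus the contribution of the first $m$ indices, and it follows from \eqref{eq:sij-Gauss} by separating $k\lle m$ from $k>m$. Uniqueness of the Gauss decomposition of $(\psi_m^\bs(x_{ij}(u)))$ then gives the claim. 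I would present this second argument, because it avoids sign bookkeeping.

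The main obstacle is the super signs. The entries $x_{ij}(u)$ carry parities, and the matrices $X(u)$, $E(u)$, $F(u)$ are built with the sign twists $(-1)^{|i||j|+|j|}$. I therefore need the block Gauss factorization to be compatible with these twists. Working directly with the genuine entry matrix $(x_{ij}(u))$, and with its products computed in the algebra, makes \eqref{eq:sij-Gauss} sign-free, and then the argument goes through. One must also note that the middle indices $m+1,\dots,m'$ satisfy $m'=(m+1)'-1$, so the index range agrees with Proposition \ref{thm:red}.
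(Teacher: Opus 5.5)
Your proposal is correct. Your first sketch, via Sylvester's identity for quasi-determinants, is in the spirit of the paper's proof. The paper proves the lemma in one line from Proposition~\ref{embedB} and the consistency $\psi^\bs_l\circ\psi_m^{\bs_{(l,l')}}=\psi^\bs_{m+l}$ of Corollary~\ref{cons}, which in effect is an induction on $m$ from the case $m=1$. For the quasi-determinant bookkeeping it refers to \cite[Cor.~3.2]{Lu2023drinfeld}. This is the same Sylvester-type machinery already used to build $\psi^\bs_m$ in Proposition~\ref{thm:red}.

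The argument you say you would actually present is a genuinely different and more elementary route. Split the entry matrix $X=FDE$ into $2\times 2$ blocks along $\{1,\dots,m\}\sqcup\{m+1,\dots,N\}$. Then $X_{22}-X_{21}X_{11}^{-1}X_{12}=F_{22}D_{22}E_{22}$. Since $\{m+1,\dots,(m+1)'\}$ is a leading block of $\{m+1,\dots,N\}$, restricting gives a Gauss decomposition of $\big(\psi^\bs_m(x_{ij}(u))\big)_{m+1\lle i,j\lle (m+1)'}$, whose factors are the corresponding blocks of $F,D,E$. Applying the homomorphism $\psi^\bs_m$ to the Gauss decomposition in $\scrX^\bs_{(m,m')}$ gives a second such decomposition. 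Uniqueness of the $LDU$ factorization then identifies the two; this uniqueness holds over any ring once the diagonal entries are invertible, which they are here since they have constant term $1$.

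This route treats all $m$ at once, with neither the Sylvester theorem nor the reduction through Corollary~\ref{cons}. As you note, it is also sign-free, because \eqref{eq:sij-Gauss} factorizes the plain entry matrix $(x_{ij}(u))$. Indeed, the twist $(-1)^{|i||j|+|j|}$ in $X(u)$ is exactly what makes operator products match ordinary products of entry matrices. The paper's route, in exchange, reuses quasi-determinant identities it already needed elsewhere.

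One small correction: $(m+1)'=N-m=m'-1$, so the middle block is $\{m+1,\dots,(m+1)'\}=\{m+1,\dots,m'-1\}$. The splitting should therefore read $\{1,\dots,m\}\sqcup\{m+1,\dots,(m+1)'\}\sqcup\{m',\dots,N\}$. As written, your blocks overlap and your relation $m'=(m+1)'-1$ has the sign reversed. This is harmless, since only the two-block splitting is actually used.
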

\begin{proof}
The lemma follows from Propositions \ref{embedB}, \ref{cons}; cf. the proof of \cite[Corollary 3.2]{Lu2023drinfeld}.
\end{proof}

Due to Lemma \ref{shiftlem}, we call $\psi^\bs_m$ a \textit{rank-reduction homomorphism}.

\begin{lem}\label{lemnew1}
Suppose $m\lle \lfloor \tfrac{N-1}{2}\rfloor$. We have $[d_i(u),d_j(v)]=0$ for $1\lle i,j\lle N$ and 
\[
[d_{i}(u),e_j(v)]=[d_{i}(u),f_j(v)]=0,
\]
for (1) $1\lle i\lle m<j<(m+1)'$ and (2) $1\lle j< m<i\lle (m+1)'$.
\end{lem}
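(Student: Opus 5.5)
The plan is to deduce all statements from the rank-reduction homomorphisms $\psi_m^\bs$ (Proposition \ref{thm:red}), the shift property (Lemma \ref{shiftlem}), and the commutativity in Corollary \ref{cor:commu}, together with the standard super Yangian facts for the upper-left block.

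\textbf{Step 1: case (1), the $d$'s from the upper block.} For $1\lle i\lle m$, the series $d_i(u)$ is a quasi-determinant built only from $x_{ab}(u)$ with $1\lle a,b\lle m$; see \eqref{gd1}. For $m<j<(m+1)'$, both $j$ and $j+1$ lie in $\{m+1,\dots,(m+1)'\}$. Hence, by Lemma \ref{shiftlem}, $e_j(v)=\psi_m^\bs(e_{j-m}(v))$ and $f_j(v)=\psi_m^\bs(f_{j-m}(v))$, and likewise $d_k(v)=\psi_m^\bs(d_{k-m}(v))$ for $m<k\lle (m+1)'$. These are all expressions, via \eqref{gd1}--\eqref{gd3}, in the entries $\psi_m^\bs(x_{ij}(v))$ and their inverses. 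Corollary \ref{cor:commu} says every $x_{ab}(u)$ with $a,b\lle m$ supercommutes with each such entry. It follows that everything generated by the $x_{ab}(u)$, in particular $d_i(u)$ (an even element), commutes with $e_j(v)$, $f_j(v)$ and $d_k(v)$. This already gives $[d_i(u),d_k(v)]=0$ for $i\lle m<k\lle (m+1)'$.

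\textbf{Step 2: case (2) and the remaining $[d_i,d_j]$.} For $1\lle j<m$, the series $e_j,f_j$ and $d_a$ with $a\lle m$ lie in the subalgebra generated by the upper-left $m\times m$ block. As already observed in the proof of Corollary \ref{cor:commu}, the relations \eqref{bcom} restricted to indices $\lle m\lle \lfloor\frac{N-1}2\rfloor<m'$ have vanishing $\delta_{kj'},\delta_{il'},\delta_{ij'}$ terms. Hence $x_{ab}(u)\mapsto t_{ab}(u)$ defines a homomorphism $\rY(\gl^\bs_{[m]})\to\scrX^\bs$ sending Gaussian generators to Gaussian generators. Therefore $[d_a(u),d_b(v)]=0$ for $a,b\lle m$ follows from the super Yangian case \cite{Gow2007gauss,Peng2016parabolic}.

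For $m<i\lle (m+1)'$, we have $d_i(u)=\psi_m^\bs(d_{i-m}(u))$. By Step 1 (with the roles exchanged, since the $x_{ab}(u)$ commute with all $\psi_m^\bs$-images), $d_i(u)$ commutes with all upper-block data, in particular with $e_j(v)$ and $f_j(v)$ for $j<m$. Note that we need $j+1\lle m$ here, which is exactly why the statement requires $j<m$.

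Finally, for $[d_i(u),d_k(v)]$ with both $i,k>m$, we apply the super Yangian result inside the image of $\psi_m^\bs$. Concretely, we use $\psi_{m'}^\bs$ for suitable $m'$ together with Corollary \ref{cons}, reducing to the upper block of $\scrX^\bs_{(m,m')}$ via induction on $m$.

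\textbf{Step 3: indices beyond $(m+1)'$.} To obtain $[d_i(u),d_j(v)]=0$ for all $1\lle i,j\lle N$, including indices $>(m+1)'$ (and the middle ones when $N$ is even), we use Lemma \ref{efaiii}: $d_{i'}(u)=c(-u)\,\tl d_{i}(-u)$ up to the central series $c$. This expresses the lower $d$'s through the upper ones, and the claim follows from the previous steps.

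The main obstacle I expect is to justify rigorously that functions of the images $\psi_m^\bs(x_{ij})$ — quasi-determinants involving inverses — still commute with the $x_{ab}$. This is handled by noting that commutation with each entry passes to inverses of invertible power series and to products, so Corollary \ref{cor:commu} suffices.
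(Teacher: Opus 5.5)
Your overall strategy is the paper's. You get cross-block commutativity from Corollary \ref{cor:commu}, transport it with Lemma \ref{shiftlem}, and handle indices beyond $(m+1)'$ by Lemma \ref{efaiii}. Cases (1) and (2) are fine as you wrote them.

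The gap is in the claim $[d_i(u),d_j(v)]=0$ for all $i,j$. When $N=2\ell+1$, none of your three mechanisms gives the self-commutativity $[d_{\ell+1}(u),d_{\ell+1}(v)]=0$ of the middle series:
\begin{itemize}
\item The index $\ell+1$ is never in the upper block of any $\scrX^\bs_{(m,m')}$. In $\scrX^\bs_{(\ell,\ell')}$ it is the only index. In that rank-one algebra $\scrX^{(\ka_{\ell+1})}$ the relations \eqref{bcom} are not super Yangian relations, because $1'=1$ and all $\delta$-terms survive. So your ``induction down to the upper block'' has nothing to land on.
\item Corollary \ref{cor:commu} only relates two different blocks.
\item Lemma \ref{efaiii} sends $\ell+1$ to itself. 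It only gives $d_{\ell+1}(u)=c(-u)\tl d_{\ell+1}(-u)$, which says nothing about $[d_{\ell+1}(u),d_{\ell+1}(v)]$.
\end{itemize}

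The missing ingredient is how the paper starts: obtain $[d_1(u),d_1(v)]=0$ directly from \eqref{bcom} in every $\scrX^{\bs'}$, including the rank-one case. There $d_1(u)=x_{11}(u)$, and all three lines of \eqref{bcom} with $i=j=k=l=1$ contribute. They combine to $\big(u^2-2\ka_{\ell+1}u+1-v^2\big)[x_{11}(u),x_{11}(v)]=0$, which forces the commutator to vanish by a leading-term argument. Applying $\psi_\ell^\bs$ and Lemma \ref{shiftlem} then gives $[d_{\ell+1}(u),d_{\ell+1}(v)]=0$. The paper transports $[d_1(u),d_1(v)]=0$ this way to all $i$, via Lemma \ref{shiftlem} and Lemma \ref{efaiii}. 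It gets the mixed relations from Corollary \ref{cor:commu}, so it never needs the super Yangian embedding for the $d$'s.

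A smaller issue of the same kind arises for $N=2\ell$. There $[d_\ell(u),d_\ell(v)]=0$ is not covered by your upper-block argument under your bound $m\lle\lfloor\frac{N-1}{2}\rfloor$. Your own observation repairs it: the $\delta$-terms of \eqref{bcom} vanish as soon as $m<m'$. So the homomorphism $\rY(\gl^\bs_{[\ell]})\to\scrX^\bs$ also exists for $m=\ell$, as used in \S\ref{secA}.
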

\begin{proof}
Note that $[d_1(u),d_1(v)]=0$ follows from \eqref{bcom} and this implies by Lemma \ref{efaiii} and Lemma \ref{shiftlem} that $[d_i(u),d_i(v)]=0$ for $1\lle i\lle N$. The other relations are corollaries of Corollary \ref{cor:commu}, Lemma \ref{efaiii}, and Lemma \ref{shiftlem}. 
\end{proof}

\begin{lem}\label{lem:eta}
There is an anti-automorphism $\eta$ for $\mathscr X^\bs$ (and for $\Y^\bs$) defined by
\begin{align}\label{eta}
\eta:X(u)\longrightarrow X^t(u),\qquad x_{ij}(u)\mapsto x_{ji}(u)(-1)^{|i||j|+|j|}.
\end{align}
Moreover, for $1\lle i<j\lle N$ and $\ 1\lle k\lle N$, we have
\[
\eta\big(e_{ij}(u)\big)=f_{ji}(u)(-1)^{|i||j|+|j|},\quad \eta\big(f_{ji}(u)\big)=e_{ij}(u)(-1)^{|i||j|+|i|},\quad \eta\big(d_{k}(u)\big)=d_{k}(u).
\]
\end{lem}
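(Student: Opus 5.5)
The plan is to verify directly that $\eta$ preserves the defining relations, and then to read off its action on the Gaussian generators from the quasi-determinant formulas \eqref{gd1}--\eqref{gd3}.

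\emph{Step 1: well-definedness.} I would work in matrix form. Let $t$ denote the super transposition on $\End(\bC^\bs)$, normalized so that, with the sign convention built into $X(u)=\sum x_{ij}(u)\otimes E_{ij}(-1)^{|i||j|+|j|}$, the entrywise rule \eqref{eta} becomes $\eta(X(u))=X(u)^{t}$. The key point is the standard fact that, for an anti-homomorphism of superalgebras combined with super transposition in every tensor factor, a product of operators of the form $A_1B_2C_{12}\cdots$ is sent to the reversed product of the transposed factors. The signs from reversing the order in the algebra and the signs from the super transpose compensate; checking this compensation, including the parity of the entries, is a finite sign computation.

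Next I need three invariances:
\begin{itemize}
\item $P^{t_1t_2}=P$, hence $R(u)^{t_1t_2}=R(u)$;
\item $G^t=G$ and $G$ is even, hence $R'(u)^{t_1t_2}=R'(u)$;
\item $(M')^t=(M^t)'$ for any supermatrix $M$.
\end{itemize}
Applying $\eta\otimes t_1t_2$ to \eqref{quamat} then gives
\[
X_2(v)R'(u+v)X_1(u)R(u-v)=R(u-v)X_1(u)R'(u+v)X_2(v),
\]
which is \eqref{quamat} with its two sides exchanged. So $\eta$ is a well-defined anti-homomorphism of $\scrX^\bs$.

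For $\Y^\bs$, applying $\eta$ to the unitary relation \eqref{unimat} yields $X'(-u)X(u)=\mathbf 1^\bs$. By Proposition \ref{Binv}, this holds in $\Y^\bs$, since there $c(u)=1$. Hence $\eta$ descends to $\Y^\bs$.

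\emph{Step 2: bijectivity.} Computing $\eta^2$ on generators gives $x_{ij}(u)\mapsto(-1)^{|i|+|j|}x_{ij}(u)$. This is the parity automorphism, so $\eta$ is invertible and hence an anti-automorphism.

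If the matrix sign bookkeeping in Step 1 becomes unwieldy, I will fall back on a componentwise check. Apply $\eta$ to \eqref{bcom}, with the supercommutator reversed and the factors $(-1)^{|i||j|+|j|}$ attached. After relabelling $(i,j,k,l)\to(j,i,l,k)$ one should recover \eqref{bcom}; the only nontrivial point is the summation terms, where one uses $|a'|=|a|$ since $\bs$ is symmetric.

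\emph{Step 3: Gaussian generators.} Because $\eta$ is an anti-automorphism, it maps a quasi-determinant $x-RM^{-1}C$ to $\eta(x)-\eta(C)\eta(M)^{-1}\eta(R)$. Applied to the quasi-determinant formulas \eqref{gd1}--\eqref{gd3}, this gives the following:
\begin{itemize}
\item the quasi-determinant defining $d_k(u)$ is sent to the quasi-determinant of the transposed leading $k\times k$ submatrix, which is again $d_k(u)$ once the signs are checked;
\item the product $\tl d_i(u)\,|\cdots|$ defining $e_{ij}(u)$ is sent to $|\cdots|^{\eta}\,\tl d_i(u)$, where $|\cdots|^{\eta}$ is exactly the quasi-determinant appearing in $f_{ji}(u)$.
\end{itemize}
Alternatively, one can apply $\eta$ to the factorization $X=FDE$ to get $X^t=E^tDF^t$ (up to signs) and invoke the uniqueness of the Gauss decomposition. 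The formula for $\eta(f_{ji}(u))$ then follows by applying $\eta$ once more and using Step 2.

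The main obstacle I expect is the sign bookkeeping. This occurs in the transpose compatibility of Step 1, and again in producing the precise factors $(-1)^{|i||j|+|j|}$ and $(-1)^{|i||j|+|i|}$ in Step 3.
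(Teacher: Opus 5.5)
Your proposal is correct and follows the same route as the paper, which says only that verifying the defining relations for $\eta$ is straightforward and then obtains the action on $d_k,e_{ij},f_{ji}$ by applying $\eta$ to the Gauss decomposition \eqref{eq:sij-Gauss} and invoking uniqueness, as in your second option for Step 3. Your matrix super-transpose check, the descent to $\Y^\bs$ via Proposition \ref{Binv}, and bijectivity from $\eta^2$ being the parity automorphism supply details the paper leaves implicit; the one small correction is that, with your normalization, $\eta\otimes t$ sends $X(u)$ to its parity twist rather than to $X(u)$ itself, which is harmless.
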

\begin{proof}
It is straightforward to prove that $\eta$ defines an anti-automorphism for $\mathscr X^\bs$ and $\Y^\bs$. Applying $\eta$ to \eqref{eq:sij-Gauss}, the second statement follows from the uniqueness of Gauss decomposition.
\end{proof}

\begin{lem}\label{lem:de-gen}
The superalgebra $\scrX^\bs$ is generated by the coefficients of $d_i(u)$ and $e_j(u)$, where $1\lle i\lle N$ and $1\lle j<N$.
\end{lem}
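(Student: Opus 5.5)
By the Gauss decomposition $X(u)=F(u)D(u)E(u)$, the algebra $\scrX^\bs$ is generated by the coefficients of all $d_i(u)$, $e_{ij}(u)$ and $f_{ji}(u)$. The plan is to show in two steps that the subalgebra $\mathcal A$ generated by the coefficients of $d_i(u)$ and $e_j(u)=e_{j,j+1}(u)$ already contains all of these.

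\emph{Step 1: the $f$'s.} Lemma \ref{efaiii} gives $\wtl E(u)=F(-u)'$, so $f_{ji}(u)=\tl e_{j'i'}(-u)$. By \eqref{eq:def-tilde-e-f}, $\tl e_{j'i'}$ is a polynomial in the $e_{kl}(u)$. Hence every $f_{ji}^{(r)}$ lies in the algebra generated by the coefficients of the $e_{kl}(u)$, $k<l$. It therefore suffices to show that all $e_{ij}(u)$ with $j>i+1$ lie in $\mathcal A$.

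\emph{Step 2: higher $e$'s from simple ones.} We argue by induction on $j-i$ and want a recursion of the form
\[
e_{i,j+1}^{(r)}=\pm\,[e_{i,j}^{(r)},e_{j}^{(1)}],
\]
as in the super Yangian case (\cite{Gow2007gauss,Peng2016parabolic}). For the indices lying in the upper-left corner, i.e. $i<j+1\lle (m+1)'$ with $m\lle\lfloor\frac{N-1}2\rfloor$ suitably chosen, we use the rank-reduction homomorphism $\psi_m^\bs$ and Lemma \ref{shiftlem}. These reduce the claim to the case $i=1$ in $\scrX^{\bs_{(m,m')}}$, which gives indices $1\lle a<b\lle N$ of the smaller algebra. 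Here we compute $[e_{1j}(u),x^{(1)}_{j,j+1}]$ directly from the quaternary relation \eqref{bcom}. Taking the coefficient of $v^{-1}$ in \eqref{bcom} with $(k,l)=(j,j+1)$ gives a commutation relation for $[x_{ij}(u),x^{(1)}_{kl}]$. It consists of the ``$\gl$-part'' $\pm(\delta_{kj}x_{il}(u)-\delta_{il}x_{kj}(u))$ plus terms coming from $\frac1{u+v}$ and $\frac1{u^2-v^2}$. At order $v^{-1}$, the second of these contributes nothing. The first contributes only terms of the form $\delta_{kj'}x_{ia'}(u)\cdot \delta_{al}$ at leading order, i.e. $\delta_{kj'}x_{il'}(u)$ and $\delta_{il'}x_{kj'}(u)$.

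Thus $x^{(1)}_{kl}$ acts on the generators by the adjoint action of the element $\ka_k(e_{kl}-e_{l'k'})$ of $\gl^\bs[z]^\vartheta$. Acting on the quasideterminant $e_{1j}(u)$, this twisted derivation yields $\pm e_{1,j+1}(u)$ plus correction terms from the $e_{l'k'}$ part. We then express $x^{(1)}_{j,j+1}$ through Gaussian generators: it equals $e_j^{(1)}$ plus a linear combination of $d$'s and lower $e$'s in degree one, and it lies in $\mathcal A$.

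\emph{Main obstacle.} The delicate point is controlling the $\vartheta$-twisted correction terms, i.e. showing they involve only entries already known to lie in $\mathcal A$ (or vanish). The first thing I would try is to avoid exact identities and pass to the associated graded algebra instead. Under \eqref{isogr}, the image of $e_{ij}^{(r)}$ in $\gr\scrX^\bs$ is, for $i<j$, the image of $x_{ij}^{(r)}$, namely $\ka_i(e_{ij}+(-1)^{r-1}e_{i'j'})z^{r-1}$ (together with central $c(u)$-type parts in the extended algebra, which come from the $d$'s). The $d_i^{(r)}$ give the Cartan part. One checks that these elements generate $\gl^\bs[z]^\vartheta$ in the appropriate sense: brackets of the $e_{i,i+1}z^s$ components produce all $e_{ij}z^r$ components modulo the fixed-point identification, and the $f$'s are handled by Step 1. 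A standard filtration argument then shows that $\mathcal A$ equals $\scrX^\bs$. The only genuinely new check is that the bracket $[\,e_{i,j}+(-1)^se_{i'j'},\,e_{j,j+1}+e_{j',(j+1)'}]$ produces $e_{i,j+1}+\cdots$ without cancellation. This can fail only when the index sets overlap across the middle, and there we use the symmetry $e_{ij}\leftrightarrow f_{j'i'}$ to move to the other half.
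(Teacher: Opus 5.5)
Your final (filtered) argument is correct, but it follows a genuinely different route from the paper.

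The paper uses exact series identities and an induction on $N$:
\begin{itemize}
\item Via $\psi_1^\bs$ and Lemma \ref{shiftlem}, the induction hypothesis puts all $d_k(u),e_{ij}(u),f_{ji}(u)$ with $2\lle i,j,k\lle 2'$ into the generated subalgebra.
\item The first row is obtained from \eqref{0000-} through $x_{1j}(u)=d_1(u)e_{1j}(u)$. No $f$'s enter the first row, which is exactly how the paper sidesteps the obstacle you hit when acting on the quasideterminant $e_{1j}(u)$.
\item The first column is handled the same way, and the last column comes from $e_{jN}(u)=\tl f_{j'1}(-u)$.
\end{itemize}
You instead use the filtration $\deg x^{(r)}_{ij}=r-1$. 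There $d_i^{(r)}\equiv x^{(r)}_{ii}$ and $e_j^{(r)}\equiv x^{(r)}_{j,j+1}$ modulo lower filtration, so everything reduces to leading-order brackets and needs neither rank reduction nor induction on $N$. The paper's route stays inside the Gaussian/rank-reduction framework used throughout and needs no information about $\gr\scrX^\bs$. Yours is more uniform and shorter.

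One point needs repair: \eqref{isogr} is established only for $\gr\Y^\bs$, not for $\gr\scrX^\bs$, so you should not quote it. You also do not need it. Your own $v^{-1}$ computation from \eqref{bcom}, after clearing $u^2-v^2$, gives for every row $i<j<N$ the exact identity
\[
x_{i,j+1}(u)=\ka_j\,[x_{ij}(u),x^{(1)}_{j,j+1}]-\delta_{jj'}\,x_{i,j-1}(u)+\delta_{i,j'-1}\,x_{i+1,j}(u),
\]
which is \eqref{0000-} with general $i$ in place of $1$. Two index slips: the second twisted term is $\delta_{il'}x_{k'j}(u)$, not $\delta_{il'}x_{kj'}(u)$. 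Correspondingly, the twisted part acts through $e_{k'l'}$ (consistent with \eqref{isogr}), not $e_{l'k'}$. The correction terms are diagonal or of strictly smaller height, so the main term never cancels and your ``move to the other half'' fallback is unnecessary.

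With this identity your argument becomes fully rigorous by induction on $r$:
\begin{itemize}
\item Suppose all $x^{(s)}_{ab}$ with $s<r$ lie in $\mathcal A$. Then $x^{(r)}_{aa}-d^{(r)}_a$ and $x^{(r)}_{j,j+1}-e^{(r)}_j$ are polynomials in these, so $x^{(r)}_{aa}$ and $x^{(r)}_{j,j+1}$ lie in $\mathcal A$.
\item The displayed identity then gives all $x^{(r)}_{ab}$ with $a<b$, by induction on $b-a$.
\item The $(a,b)$-entry of $X(u)X'(-u)=c(u)\mathbf 1^\bs$ (Proposition \ref{Binv}) expresses $x^{(r)}_{ab}$ for $a>b$ through $x^{(r)}_{a'b'}$ and lower coefficients.
\end{itemize}
This replaces both your Step 1 and the appeal to $\gl^\bs[z]^\vartheta$.
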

\begin{proof}
We say that a series in $u^{-1}$ can be generated if its coefficients can be generated by the coefficients of $d_i(u)$ and $e_j(u)$, where $1\lle i\lle N$ and $1\lle j<N$.

By Lemma \ref{efaiii} and \eqref{eq:def-tilde-e-f}, it suffices to show that $e_{kl}(u)$ with $k<l$ can be generated. We prove it by induction on $N$. The base case $N=2$ is trivial. Now assume $N\gge 3$. Then it follows from Lemma \ref{shiftlem} and the induction hypothesis that $d_{k}(u), e_{ij}(u), f_{ji}(u)$ for $2\lle k\lle 2'$ and $2\lle i<j\lle 2'$ can be generated.

We now prove that $e_{1j}(u)$ is generated for all $j\gge 3$. Note that $x_{1j}(u)=d_1(u)e_{1j}(u)$, it suffices to show $x_{1j}(u)$ can be generated by another induction on $j$. Now let $2\lle j<N$ and suppose that $x_{1k}(u)$ for $1\lle k\lle j$ can be generated. By \eqref{bcom}, we have
\begin{align*}
(u^2-v^2)[x_{1j}(u),x_{j,j+1}(v)]=&\,(u+v)\ka_j\big(x_{jj}(u)x_{1,j+1}(v)-x_{jj}(v)x_{1,j+1}(u)\big)\\
+& \, (u-v)\ka_j\Big(\delta_{jj'}\sum_{a=1}^N x_{1a'}(u)x_{a,j+1}(v)-\delta_{1(j+1)'}\sum_{a=1}^N x_{ja'}(v)x_{aj}(u)\Big).
\end{align*}
Taking the coefficients of $v$, we have
\beq\label{0000-}
x_{1,j+1}(u)=\ka_j[x_{1j}(u),x_{j,j+1}^{(1)}]-\delta_{jj'}x_{1,j'-1}(u)+\delta_{1,j'-1}x_{j'j}(u)
\eeq
Clearly, $[x_{1j}(u),x_{j,j+1}^{(1)}]$ can be generated. If $j=j'$, then $x_{1,j'-1}(u)=x_{1,j-1}(u)=d_1(u)e_{1,j-1}(u)$ can be generated by induction hypothesis. If $j'=2$, then $x_{j'j}(u)=x_{2j}(u)$,  expressed by
\[
x_{j'j}(u)=\begin{cases}d_2(u)e_{2j}(u)+f_{1}(u)d_1(u)e_{1j}(u),&\text{ if }j> 2,\\
d_2(u)+f_{1}(u)d_1(u)e_{1}(u),  &\text{ if }j=2,
\end{cases}
\]
can also be generated by induction hypothesis. Thus it follows from \eqref{0000-} that $x_{1,j+1}(u)$ can be generated, completing the proof that $e_{1j}(u)$ for $j\gge 3$ can be generated.

By a similar induction, one proves that $f_{j1}(u)$ can also be generated. Finally, using \eqref{eq:def-tilde-e-f}, we find that $e_{jN}(u)=\tl f_{j'1}(-u)$ can be generated. 

Combining the observations above, we find that $e_{kl}(u)$ with $1\lle k<l\lle N$ can be generated, and hence the proof is complete.
\end{proof}

\subsection{Special twisted super Yangians}\label{sec:hb}
Let $\ell=\lfloor\frac{N}{2}\rfloor$. For a fixed symmetric parity sequence $\bm\ka$ and $1\lle i\lle N$, we introduce
\begin{align}\label{eq:rho}
\varrho_i=\sum_{j=1}^i \ka_j,\qquad \varkappa=\frac12\varrho_N=\frac12\sum_{i=1}^N \ka_i=\frac12(\fkm-\fkn).
\end{align}
Since $\bm\ka$ is symmetric, we always have $\varrho_i+\varrho_{\tau i}=2\varkappa$. By convention $\varrho_0=0$.

Set $d_0(u)=d_{N+1}(u)=1$. We define the following generating series, for $0\lle i\lle N$ and $1\lle j< N$,
\begin{enumerate}
    \item if $N=2\ell$ is even, then we set
\begin{align}
& b_j(u)= \sqrt{-1}\,f_j(u+\tfrac{\varkappa-\varrho_j}{2}),\label{beven}\\
& h_i(u)=\tl d_{i}(u+\tfrac{\varkappa-\varrho_i}{2})d_{i+1}(u+\tfrac{\varkappa-\varrho_i}{2})\label{heven};
\end{align}
\item if $N=2\ell+1$ is odd, then we set
\begin{align}
&b_j(u)= \sqrt{-1}f_j(u+\tfrac{\varkappa-\varrho_j}{2}),\label{bodd}\\
&h_i(u)=
\begin{cases}
\tl d_{i}(u+\tfrac{\varkappa-\varrho_i}{2})d_{i+1}(u+\tfrac{\varkappa-\varrho_i}{2}), &\text{ if } i\ne \ell,\ell+1,\\
\big(1+\tfrac{\ka_{\ell+1}}{4u}\big)\tl d_{i}(u+\tfrac{\ka_{\ell+1}}{4})d_{i+1}(u+\tfrac{\ka_{\ell+1}}{4}),& \text{ if } i=\ell,\\
\big(1-\tfrac{\ka_{\ell+1}}{4u}\big)\tl d_{i}(u-\tfrac{\ka_{\ell+1}}{4})d_{i+1}(u-\tfrac{\ka_{\ell+1}}{4}),& \text{ if } i=\ell+1.
\end{cases}\label{hodd}
\end{align}
\end{enumerate}
\begin{rem}
Note that our special shifts satisfy
\begin{align*}
& \frac{\varkappa-\varrho_i}{2}+\frac{\varkappa-\varrho_{\tau i}}{2}=0.
\end{align*}
The factor $\sqrt{-1}$ is introduced only to adjust a sign, so that the resulting generators satisfy the relations in Definition \ref{deftY}. These relations specialize, when $\tau=\mathrm{id}$, to the Drinfeld presentation of split twisted Yangians \cite{Lu2023drinfeld,lu2025affine}. The other modifications in the definitions will be justified by the low-rank computations carried out later. In particular, when $N=2\ell+1$, the special treatment of the indices $\ell$ and $\ell+1$ is designed precisely so that the corresponding generators satisfy the relations in Definition \ref{deftY}.
\end{rem}
By \eqref{unimat}, \eqref{cudef} and Lemma \ref{efaiii}, we have the following relation in $\mathscr Y^{\fks}$:
\beq\label{hprod}
\prod_{i=0}^N h_{i}\big(u-\tfrac{\varkappa-\varrho_i}{2}\big)=1-\delta_{N,\mathrm{odd}}\frac{1}{16u^2}.
\eeq
It follows from Lemma \ref{efaiii} that
\begin{align}
\label{i=tauih}
&h_{\tau i}(u)=h_{i}(-u),\\  
&b_{i}(u)=-\sqrt{-1}\,e_{\tau i}(-u+\tfrac{\varkappa-\varrho_{\tau i}}{2}).\label{fi=tauie}
\end{align}
In particular, if $N=2\ell$ is even, then $h_\ell(u)$ is an even series in $u^{-1}$. Moreover, we have
\beq\label{etahb}
\eta(h_i(u))=h_i(u),\qquad \eta(b_{i}(u))=-b_{\tau i}(-u)(-1)^{|i||i+1|+|i|}.
\eeq

Introduce $h_{i,r}$ and $b_{j,r}$ for $0\lle i\lle N$, $1\lle j<N$, and $r\in\bN$ as follows,
\begin{align}
h_i(u)=1+\sum_{r\gge 0}h_{i,r}u^{-r-1},\qquad b_j(u)=\sum_{r\gge 0}b_{j,r}u^{-r-1},\label{bhcom}
\end{align}
namely they are coefficients of $h_i(u)$ and $b_j(u)$.
\begin{dfn}\label{defSY}
The \emph{special twisted super Yangian $\SY^\bs$} is the subalgebra of $\Y^\bs$ generated by $b_{i,r}$ and $h_{i,r}$ for $1\lle i<N$ and $r\in\bN$.    
\end{dfn}

Define the root vectors $b_{\alpha,r}=b_{ji;r}$ for $\alpha=\alpha_i+\cdots+\alpha_{j-1}$ with $1\lle i<j\lle N$ and $r\in\bN$ recursively as follows,  
\beq\label{bjir2}
b_{\alpha_i,r}=b_{i+1,i;r}=b_{i,r},\qquad b_{\alpha,r}=b_{ji;r}=[b_{j-1,0},b_{j-1,i;r}].
\eeq
Recall the sets from \eqref{Itau},
\beq\label{I+-A}
\begin{split}
&\I_{\ne}^0=\{1,\dots,\ell-1\},\qquad \I_{=}^0=\{\ell\},\qquad \text{ if }N=2\ell;\\
&\I_{\ne}^0=\{1,\dots,\ell\},\hskip 1.45cm \I_{=}^0=\varnothing,\hskip 1.cm \text{ if }N=2\ell+1.
\end{split}
\eeq
\begin{prop}\label{prop:pwb}
The ordered super monomials of 
\beq\label{dd34}
\{b_{\alpha,r},h_{0,2r},h_{i,r},h_{j,2r+1}~|~\alpha\in\cR^+,i\in\I^0_{\ne},j\in\I_{=}^0,r\in\N\}
\eeq
(with respect to any fixed total ordering) form a basis in  $\Y^\bs$. In particular, the ordered super monomials of 
\beq\label{dd341}
\{b_{\alpha,r},h_{i,r},h_{j,2r+1}~|~\alpha\in\cR^+,i\in\I_{\ne}^0,j\in\I_{=}^0,r\in\N\}
\eeq
(with respect to any fixed total ordering) are linearly independent in  $\SY^\bs$.
\end{prop}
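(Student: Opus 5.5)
We will prove the statement by passing to the associated graded algebra $\gr\,\Y^\bs$ for the filtration \eqref{filter:B}, and identifying it with $\mathrm U(\gl^\bs[z]^\vartheta)$ through \eqref{isogr}. By the PBW theorem for enveloping superalgebras, it suffices to show two things. First, each element of \eqref{dd34} has a well-defined top-degree image; we assign degree $r$ to $b_{\alpha,r}$, $h_{0,2r}$, $h_{i,r}$, $h_{j,2r+1}$, consistent with $\deg x^{(r)}_{ij}=r-1$. Second, these images form a homogeneous basis of $\gl^\bs[z]^\vartheta$ with the correct parities. A standard filtered-algebra argument then shows that ordered super monomials in the elements form a basis of $\Y^\bs$, since their images are PBW monomials in $\mathrm U(\gl^\bs[z]^\vartheta)$.

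The first step is to compute the top symbols of the Gaussian generators. From the quasideterminant formulas \eqref{gd1}--\eqref{gd3}, we get $d_i^{(r)}\equiv x_{ii}^{(r)}$, $e_{ij}^{(r)}\equiv x_{ij}^{(r)}$ and $f_{ji}^{(r)}\equiv x_{ji}^{(r)}$ modulo lower filtration degree. The shifts in \eqref{beven}--\eqref{hodd} alter only lower-order terms, as do the factors $1\pm \ka_{\ell+1}/4u$. Hence the image of $b_{j,r}$ is, up to the scalar $\sqrt{-1}$, $\bar x_{j+1,j}^{(r+1)}$. This corresponds to $\ka_{j+1}(e_{j+1,j}+(-1)^r e_{(j+1)',j'})z^r$. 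Likewise, $h_{i,r}$ maps to $\bar x_{i+1,i+1}^{(r+1)}-\bar x_{ii}^{(r+1)}$, which corresponds to $\big(\ka_{i+1}(e_{i+1,i+1}+(-1)^re_{i'-1,i'-1})-\ka_i(e_{ii}+(-1)^re_{i'i'})\big)z^r$; here $h_{0,r}$ maps to $\bar x_{11}^{(r+1)}$.

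The second step is the root vectors. By induction, $b_{\alpha,r}=[b_{j-1,0},b_{j-1,i;r}]$ has top symbol equal, up to a nonzero scalar, to the image of $(e_{ji}+(-1)^r e_{j'i'})z^r$. This follows from the bracket in $\gl^\bs[z]^\vartheta$: $[e_{j,j-1}+e_{j',(j-1)'},\,e_{j-1,i}z^r\pm e_{(j-1)',i'}z^r]$ gives $e_{ji}z^r\pm e_{j'i'}z^r$. The cross terms vanish, since $j\neq (j-1)'$ outside degenerate middle cases, and those middle cases must be checked separately.

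The third step, and the main obstacle, is the basis count. For $\alpha\in\cR^+$, the elements $(e_{ji}+(-1)^re_{j'i'})z^r$ with $i<j$ span the strictly lower part of $\gl^\bs[z]^\vartheta$. Since $\vartheta$ exchanges lower with upper triangular parts ($i<j\Rightarrow j'<i'$), the fixed-point algebra splits as lower part $\oplus$ diagonal part. For the diagonal part, the fixed Cartan elements are $(e_{kk}+(-1)^re_{k'k'})z^r$ for $k\le \lceil N/2\rceil$. When $k=k'$, only even $r$ survive. We must verify that the images of $h_{0,2r}$, $h_{i,r}$ ($i\in\I^0_{\ne}$), $h_{j,2r+1}$ ($j\in\I^0_=$) give a basis of this span. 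The identities $h_{\tau i}(u)=h_i(-u)$ from \eqref{i=tauih} and \eqref{hprod} explain why the other $h$'s are redundant, and why only $h_{0,2r}$ and odd $h_{\ell,2r+1}$ appear. The delicate bookkeeping is as follows. For $N=2\ell+1$, $e_{\ell+1,\ell+1}z^r$ occurs only for even $r$. For $N=2\ell$, $h_{\ell,r}$ at even $r$ is killed. The coefficient $(\ka_{i+1}-\ka_i)$-type triangularity must then be checked for invertibility, using that the $\ka$'s are $\pm1$ and $\bs$ is symmetric, via a triangular change of basis starting from $h_{0}$.

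Finally, the linear independence in $\SY^\bs$ follows immediately, since \eqref{dd341} is a subset of \eqref{dd34} and $\SY^\bs\subset\Y^\bs$.
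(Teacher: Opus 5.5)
Your strategy matches the paper's: pass to $\gr\,\Y^\bs\cong\mathrm U(\gl^\bs[z]^\vartheta)$ via \eqref{isogr}, compute the symbols of the generators, and conclude by PBW. Your symbols for $b_{j,r}$, $h_{i,r}$ and $h_{0,r}$ agree with the paper's.

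The problem is your second step. The assertion that the symbol of $b_{\alpha,r}$ is a nonzero multiple of $(e_{ji}+(-1)^re_{j'i'})z^r$ is false in general, and the condition $j\neq(j-1)'$ you invoke is not the relevant one. Consider
\[
\big[e_{j,j-1}+e_{j',(j-1)'},\ e_{j-1,i}+(-1)^re_{(j-1)',i'}\big].
\]
Here the cross brackets are nonzero exactly when $i+j=N+1$, or when $N=2\ell+1$ and $j-1=\ell+1$ is the middle index. When $j=(j-1)'$ (bracketing with the middle node for even $N$), they actually vanish. Three examples:
\begin{itemize}
\item For $N=3$, the symbol of $b_{\alpha_1+\alpha_2,r}=[b_{2,0},b_{1,r}]$ is a multiple of $(e_{31}+(-1)^re_{13})z^r$ plus a nonzero multiple of $\bar h_{1,r}$, coming from $[e_{32},e_{23}]$ and $[e_{12},e_{21}]$.
\item For $N=4$, the symbol of $b_{\alpha_1+\alpha_2+\alpha_3,r}$ picks up a nonzero multiple of $(e_{32}+(-1)^re_{23})z^r$.
\item For $N=5$, the correction $(e_{21}+(-1)^re_{45})z^r$ in $b_{\alpha_1+\alpha_2+\alpha_3,r}$ becomes a nonzero Cartan term in $b_{\alpha_1+\cdots+\alpha_4,r}$, so such corrections survive later steps of the recursion.
\end{itemize}
So these are not degenerate cases that can be checked away: the exact statement you set out to prove is false.

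What is true, and what the paper uses, is a triangular version. Write $f_\mu=e_{qp}$ for $\mu=\epsilon_p-\epsilon_q$. Then
\[
\bar b_{\alpha,r}\in \varsigma_\alpha\big(f_\alpha+(-1)^r\vartheta(f_\alpha)\big)z^r+\sum_{1\lle i<N}\bC\,\bar h_{i,r}+\sum_{\mathrm{ht}(\mu)<\mathrm{ht}(\alpha)}\bC\big(f_\mu+(-1)^r\vartheta(f_\mu)\big)z^r,\qquad \varsigma_\alpha\neq 0.
\]
This follows by induction on height. Bracketing with $\bar b_{j-1,0}$ sends (Cartan part) $+$ (symmetrized root vectors of height $\lle h$) into the same space with height $\lle h+1$. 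All the cross terms above have height $<\mathrm{ht}(\alpha)$, so the leading coefficient is the one you computed. Unitriangularity with respect to height, with the Cartan part at the bottom, is all the PBW argument needs.

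Your third step, which you left as a check, then goes through with no $(\ka_{i+1}-\ka_i)$-type degeneracy. Set $H_{k,r}=(e_{kk}+(-1)^re_{k'k'})z^r$. Then $\bar h_{0,r}=\ka_1H_{1,r}$ and $\bar h_{i,r}=\ka_{i+1}H_{i+1,r}-\ka_iH_{i,r}$. For $N=2\ell$ one also has $\bar h_{\ell,2r+1}=-2\ka_\ell H_{\ell,2r+1}$ and $\bar h_{\ell,2r}=0$. For $N=2\ell+1$, $H_{\ell+1,r}=0$ when $r$ is odd. Hence in each degree the listed Cartan symbols are triangular in the $H_{k,r}$, with diagonal entries $\pm1$ or $\pm2$: start from $H_{1,r}$ in even degree and from $H_{\ell,r}$ in odd degree. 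Their number equals the dimension of the $\vartheta$-fixed Cartan part in that degree.
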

\begin{proof}
Let $\bar b_{\alpha;r}$, $\bar h_{i,r}$ be the images of $b_{\alpha;r}$, $h_{i,r}$ in the associated graded superalgebra $\gr\Y^\bs$, respectively. Then by \eqref{isogr}, we have
\beq\label{helper1}
\begin{split}
&\bar b_{\alpha_i;r}=\sqrt{-1}\,\ka_{i+1}\big(e_{i+1,i}+(-1)^re_{i'-1,i'}\big)z^r,\\
&\bar h_{0,r}=\ka_1\big(e_{11}+(-1)^re_{NN}\big)z^r,\\
&\bar h_{i,r}=\big(\ka_{i+1}e_{i+1,i+1}-\ka_ie_{ii}+(-1)^r(\ka_{i+1}e_{i'-1,i'-1}-\ka_{i}e_{i'i'})\big)z^r.    
\end{split}
\eeq
If $1\lle i<j\lle N$, we set $\alpha=\alpha_i+\cdots+\alpha_{j-1}$. Then it follows from \eqref{bjir2} that
\beq\label{helper2}
\begin{split}
\bar b_{\alpha,r}\in \varsigma_{\alpha}\big(f_{\alpha}+(-1)^r\vartheta(f_{\alpha})\big)z^r &+\sum_{1\lle i<N} \bC\bar h_{i,r}\\
&+\sum_{\mu:\mathrm{ht}(\mu) <\mathrm{ht}(\alpha)}\C\big(f_{\mu}+(-1)^r\vartheta(f_{\mu})\big)z^r
,
\end{split}
\eeq
where $\varsigma_{\alpha}\in \C^\times$ and $\mathrm{ht}(\mu)$ denotes the height of the root $\mu$. It follows from \eqref{helper1}, \eqref{helper2} and the PBW theorem that the images of the ordered super monomials of these elements form a basis in the associated graded superalgebra $\gr\,\Y^\bs\cong \rU(\gl^\bs[z]^\vartheta)$, completing the proof.
\end{proof}

We will see soon that the ordered super monomials of 
\eqref{dd341} also form a basis for the corresponding superalgebra $\SY^\bs$. 

\section{Main results}\label{sec:mainres}

\subsection{Explicit isomorphism}\label{sec:iso-GD}
In this subsection we discuss the explicit isomorphism between the twisted super Yangian $\Yi$ defined in Drinfeld type presentation and the (special) twisted super Yangians constructed via R-matrix presentation.

\begin{thm}\label{main2}
There is a superalgebra isomorphism 
\beq\label{isoexp}
\begin{split}
\Phi: &\,\Yi\to \SY^\bs,\\
&\, h_{i,r}\mapsto h_{i,r},\quad b_{i,r}\mapsto b_{i,r},
\end{split}
\eeq
for $i\in \I^0$, $r\in\bN$.
\end{thm}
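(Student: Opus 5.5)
The proof splits into two independent halves: $\Phi$ is a well-defined homomorphism, and $\Phi$ is bijective. Surjectivity is automatic, because $\SY^\bs$ is by Definition~\ref{defSY} generated by the elements $h_{i,r},b_{i,r}$, which are exactly the images of the generators of $\Yi$. Injectivity then follows from the two PBW-type results already available. By Proposition~\ref{prop:span}, the ordered super monomials in \eqref{eq:span} span $\Yi$. Their images under $\Phi$ are the ordered super monomials in \eqref{dd341}; here $b_{\alpha,r}$ in \eqref{rv} and \eqref{bjir2} are built by the same iterated brackets. By Proposition~\ref{prop:pwb}, these images are linearly independent in $\SY^\bs$. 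Hence $\Phi$ sends a spanning set to a linearly independent set, so it is injective. As a byproduct, \eqref{eq:span} is a basis of $\Yi$ and \eqref{dd341} is a basis of $\SY^\bs$. Everything therefore reduces to checking that the Gaussian series $h_i(u),b_i(u)$ of \eqref{beven}--\eqref{hodd} satisfy every relation of Definition~\ref{deftY}.

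For the relations I plan to work inside $\scrX^\bs$ or $\Y^\bs$ and use rank reduction. A given relation involves only finitely many indices $i,j$. There are two kinds of pairs.

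\textbf{Pairs far from the middle.} If $\tau i,\tau j\notin\{i,j,i\pm1,j\pm1\}$, the relation involves only the upper-left block. There $X(u)$ satisfies the super Yangian relations, as in Corollary~\ref{cor:commu}, so \eqref{qsconj2} in the form \eqref{alt} from Lemma~\ref{lemalt}, \eqref{qsconj3}, \eqref{qsconj4} and \eqref{qsconj9} follow from the known Drinfeld relations for $\rY(\gl^\bs)$ in \cite{Gow2007gauss,Peng2016parabolic,tsymbaliuk2020shuffle}. The shifts in \eqref{beven} produce exactly the symmetrized form $\tfrac{c_{ij}}2\{\cdot,\cdot\}$. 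The quartic relation \eqref{qsconjnew} occurs only away from the middle, as noted in the proof of Proposition~\ref{propnewserre}, so it also comes from the super Yangian.

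\textbf{Pairs near the middle.} Relations involving $\tau i$ in an essential way are handled with the rank-reduction homomorphism $\psi_m^\bs$ of Proposition~\ref{thm:red}. By Lemma~\ref{shiftlem}, $\psi_m^\bs$ maps the Gaussian generators of $\scrX^\bs_{(m,m')}$ to shifted Gaussian generators of $\scrX^\bs$. The shifts $\tfrac{\varkappa-\varrho_i}{2}$ are compatible with this: removing the symmetric pair $\ka_1,\ka_N$ from the ends of $\bs$ changes $\varkappa$ and $\varrho_{i}$ by the same amount $\ka_1$, so the shift is unchanged. Relations that mix a middle index with far indices are handled by combining Lemma~\ref{lemnew1}, the symmetries \eqref{i=tauih}, \eqref{fi=tauie} and the anti-automorphism $\eta$ of \eqref{etahb}. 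For example, $e_i(u)=-f_{\tau i}(-u)$ turns $[b_i,b_{\tau i}]$ into an $[e,f]$ commutator, and this produces the $h$-term in \eqref{qsconj3} and \eqref{qsconj4}. Relations of type \eqref{qsconj2} with $c_{\tau i,j}\neq0$ reduce to $h$'s and $b$'s inside a small middle block. In this way every relation is reduced to a low-rank algebra $\scrX^{\bs'}$ with $N'\le 4$: $N'\in\{2,3,4\}$ for middle-type relations, with all possible parities of the middle entries.

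For the Serre relations in the middle I will not verify the full forms directly. I will use the reductions of \S\ref{app}: Proposition~\ref{prop-serre} reduces \eqref{qsconj10} to the finite-type identity \eqref{ppf0}, and Proposition~\ref{prop:Serre} reduces \eqref{qsconj8} to \eqref{serre0}. Both finite-type identities can be checked at degree zero, for instance via the classical limit \eqref{isogr}, where they become identities in $\mathfrak k^\bs$; any lower-order corrections must then be checked directly.

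The main obstacle is the explicit low-rank verification, deferred to Section~\ref{sec:lowrk}, of \eqref{qsconj2} and \eqref{qsconj3} for the middle pairs. For $N=2$ and $N=4$ this means $j=\tau j=\ell$ together with $i=\ell\pm1$. For $N=3$ it means $i=\ell$, $\tau i=\ell+1$; this case involves the factors $(1\pm\tfrac{\ka_{\ell+1}}{4u})$, and the exact shifts in \eqref{hodd} have to come out right. In these cases I will write $X(u)$ in terms of its Gauss factors and take suitable matrix entries of the quaternary relation \eqref{bcom}. I will also use the unitarity/centrality relation $x_{i'j'}(-u)=c(u)\tl x_{ij}(u)$ from Proposition~\ref{Binv} to turn entries in the lower-right corner into Gaussian generators. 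Then I will convert the resulting series identities into the coefficient form of \eqref{qsconj2} and \eqref{qsconj3}, using generating-function manipulations of the kind in Lemma~\ref{serre0u0}. The super sign bookkeeping has to be carried out separately for each parity pattern of $\ka_\ell,\ka_{\ell+1}$.
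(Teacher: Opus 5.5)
Your bijectivity argument is exactly the paper's: surjectivity comes from Definition~\ref{defSY}, and injectivity holds because the spanning set of Proposition~\ref{prop:span} maps onto the independent set \eqref{dd341} of Proposition~\ref{prop:pwb}. Your overall plan for the relations also matches the paper: type~A input, the rank-reduction maps $\psi_m^\bs$, low-rank checks, and the Serre reductions of \S\ref{app}.

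The step that fails as written is the claim that every relation outside your ``far'' bin reduces to some $\scrX^{\bs'}$ with $N'\le 4$. Take $N=2\ell+1$ and the relations coupling node $\ell-1$ with $\ell$ or $\ell+1$. Examples are \eqref{qsconj3} for $(i,j)=(\ell-1,\ell)$, \eqref{qsconj4} for $(\ell-1,\ell+1)$, and the cubic Serre relations \eqref{qsconj9} for $(\ell-1,\ell)$ and $(\ell,\ell-1)$. These are not ``far'' by your criterion, since $\tau\ell=\ell+1$ is adjacent to $\ell$. Yet $b_{\ell-1}$ involves the matrix index $\ell-1$, so any $\psi_m^\bs$ whose image contains it has $m\le\ell-2$, which means a source of size $N'\ge 5$. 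This is exactly why the paper also treats $N=5$ (Lemma~\ref{mmm} and the corollary after it).

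Relatedly, the near-middle cubic Serre relations \eqref{qsconj9} are not assigned to any step of your plan, even for even $N$. Your Serre paragraph covers only \eqref{qsconj8} and \eqref{qsconj10}, and your low-rank agenda lists only \eqref{qsconj2} and \eqref{qsconj3}. The paper, by contrast, has to prove \eqref{qsconj9} for $(\ell-1,\ell)$ with $N=2\ell$ in the case $N=4$ (Lemmas~\ref{be1e2lem}, \ref{lemhelp1}, \ref{sec:A-serre}).

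The fix is to sort relations by which rows of $X(u)$ they use, not by adjacency to $\tau$. After applying $\eta$ (Lemma~\ref{lem:eta}) or Lemma~\ref{efaiii}, the odd-$N$ relations above involve only $e_{\ell-1}(u)$, $e_\ell(u)$, $f_{\ell-1}(u)$ and $d_k(u)$ with $k\le\ell$. All of these are built from $x_{ab}(u)$ with $a\le\ell$ and $b\le\ell+1$. For such entries every $\delta_{kj'}$, $\delta_{il'}$, $\delta_{ij'}$ term in \eqref{bcom} vanishes. So these relations, cubic Serre included, are inherited from $\rY(\gl^\bs)$ by the argument of Corollary~\ref{cor:commu}. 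For even $N$, the pair $(\ell-1,\ell)$ does reduce to $N=4$, but the cubic Serre relation there still has to be proved. With these additions your argument is complete.

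Your alternative treatment of \eqref{ppf0} and \eqref{serre0} via the classical limit is sound, and in fact exact. Here $b_{i,0}=\sqrt{-1}\,x^{(1)}_{i+1,i}$ and $\mathcal F_{-1}=0$, so $\mathcal F_0(\Y^\bs)\cong\rU\big((\gl^\bs)^\vartheta\big)$, which is isomorphic to $\rU(\mathfrak k^\bs)$. Hence no lower-order corrections arise. This cleanly replaces the direct computations of Lemmas~\ref{x4serre0} and \ref{fin-serre}.
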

\begin{proof}
In the next section, we shall prove that the defining relations for $\Yi$ are satisfied by the generators $h_{i,r},b_{i,r}$ of $\scrX^\bs$ constructed by Gauss decomposition; see \eqref{beven}--\eqref{hodd} and \eqref{bhcom}. We first verify these relations for small rank cases and apply the rank-reduction homomorphism (see Proposition \ref{thm:red} and Lemma \ref{shiftlem}) to obtain the general case. Since $\SY^\bs$ is a subquotient of $\scrX^\bs$, these relations also hold in $\SY^\bs$, proving that $\Phi$ is a superalgebra homomorphism. 

By Definition \ref{defSY}, $\Phi$ is surjective. Therefore, it suffices to show that $\Phi$ is injective which reduces to prove that a spanning set of $\Yi$ is sent to a set of linearly independent vectors of $\SY^\bs$. 

By Proposition \ref{prop:span}, the ordered super monomials of the elements in the set  \eqref{eq:span} with the index sets given by \eqref{Itau} in $\Yi$ form a spanning set of $\Yi$. It is clear from \eqref{rv}  and \eqref{bjir2} that $\Phi$ sends $b_{\alpha,r}$ to $b_{\alpha,r}$ for $\alpha\in\mc R^+$ and $r\in\bN$. Thus, these ordered super monomials are sent via $\Phi$ to the ordered super monomials of the elements in the set \eqref{dd34} in $\SY^\bs$ which are linearly independent in $\SY^\bs$ by Proposition \ref{prop:pwb}.
\end{proof}
In the course of proving Theorem \ref{main2}, we also obtain the following, cf. Proposition \ref{prop:span} and Proposition \ref{prop:pwb}.
\begin{cor}\label{thm:pbw}
The ordered super monomials of
\beq\label{eq:span2}
\big\{b_{\alpha,r},h_{i,r},h_{j,2r+1}\mid\alpha\in\mc R^+,i\in \I^0_{\ne}, j\in \I^0_{=},r\in\bN\big\}
\eeq
(with respect to any fixed total ordering) form a basis of $\Yi$ (resp. $\SY^\bs$). In particular, we have $$\gr\,\SY^\bs\cong \rU(\mathfrak{sl}^\bs[z]^\vartheta).$$
\end{cor}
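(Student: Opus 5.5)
The basis statement will follow from comparing the spanning set of Proposition \ref{prop:span} with the linearly independent set of Proposition \ref{prop:pwb}, transported through the isomorphism $\Phi$ of Theorem \ref{main2}.

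First, Proposition \ref{prop:span} gives that the ordered super monomials in \eqref{eq:span2} span $\Yi$; the set \eqref{eq:span2} coincides with \eqref{eq:span}. Second, $\Phi(h_{i,r})=h_{i,r}$ and $\Phi(b_{i,r})=b_{i,r}$. Since \eqref{rv} and \eqref{bjir2} build $b_{\alpha,r}$ by the same iterated brackets with $b_{k,0}$, we get $\Phi(b_{\alpha,r})=b_{\alpha,r}$. Hence $\Phi$ maps each ordered super monomial in $\Yi$ to the corresponding ordered super monomial of \eqref{dd341} in $\SY^\bs$, using the same total ordering. By Proposition \ref{prop:pwb}, these images are linearly independent. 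A spanning set whose image under a linear map is linearly independent is itself linearly independent, so the monomials form a basis of $\Yi$. Since $\Phi$ is an isomorphism, their images form a basis of $\SY^\bs$.

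For the graded statement, I would use the filtration on $\SY^\bs$ inherited from $\Y^\bs$, where $b_{\alpha,r}$ and $h_{i,r}$ have degree $r$, together with the embedding $\gr\,\SY^\bs\subset\gr\,\Y^\bs\cong\rU(\gl^\bs[z]^\vartheta)$ from \eqref{isogr}. By \eqref{helper1} and \eqref{helper2}, the symbols $\bar b_{\alpha,r}$, $\bar h_{i,r}$ ($i\in\I^0_{\ne}$) and $\bar h_{j,2r+1}$ ($j\in\I^0_{=}$) lie in $\mathfrak{sl}^\bs[z]^\vartheta$, because $\ka_{i+1}e_{i+1,i+1}-\ka_ie_{ii}\in\mathfrak{sl}^\bs$. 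Their triangular form shows they span $\mathfrak{sl}^\bs[z]^\vartheta$ degree by degree. Here one has to check that the Cartan part of $\mathfrak{sl}^\bs[z]^\vartheta$ in degree $r$ has dimension $|\I^0_{\ne}|$ plus $|\I^0_{=}|$ when $r$ is odd. This holds because $\bar h_{\tau i,r}=(-1)^{r+1}\bar h_{i,r}$, and for $j=\ell$ with $N=2\ell$, $\bar h_{\ell,r}$ vanishes when $r$ is even.

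It follows that $\gr\,\SY^\bs$ contains $\rU(\mathfrak{sl}^\bs[z]^\vartheta)$. The basis just established, together with the PBW theorem for $\rU(\mathfrak{sl}^\bs[z]^\vartheta)$, shows the two have the same graded dimension, so the inclusion is an equality. The only mildly delicate step is this dimension count of the Cartan part, including the parity bookkeeping at the middle node. Everything else is formal given Theorem \ref{main2} and Propositions \ref{prop:span} and \ref{prop:pwb}.
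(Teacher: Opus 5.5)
Your proof of the basis statement is exactly the paper's. The corollary is read off from the proof of Theorem \ref{main2}: $\Phi$ sends the spanning monomials of Proposition \ref{prop:span} to the linearly independent monomials of Proposition \ref{prop:pwb}. That part is fine. Your check that the symbols of \eqref{dd341} span $\mathfrak{sl}^\bs[z]^\vartheta$ degree by degree is also correct, including the middle-node bookkeeping, so $\rU(\mathfrak{sl}^\bs[z]^\vartheta)\subseteq\gr\,\SY^\bs$.

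The final step of the graded statement fails as written. With $\deg x_{ij}^{(r)}=r-1$, every filtered piece $\mathcal F_s(\SY^\bs)$ is infinite-dimensional, since it already contains infinitely many ordered monomials of degree $0$. Likewise every graded component of $\rU(\mathfrak{sl}^\bs[z]^\vartheta)$ is infinite-dimensional. So comparing graded dimensions proves nothing. Nor does the mere existence of a monomial basis of $\SY^\bs$ rule out an element $x\in\mathcal F_s(\SY^\bs)$ whose expansion involves monomials of degree $>s$ with cancelling top symbols; a priori its image in degree $s$ is then uncontrolled.

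The missing ingredient is compatibility of the basis with the filtration, and the proof of Proposition \ref{prop:pwb} supplies it. There, the symbols of the ordered monomials of \eqref{dd34} form a PBW basis of $\gr\,\Y^\bs\cong\rU(\gl^\bs[z]^\vartheta)$. By induction on $s$, $\mathcal F_s(\Y^\bs)$ is therefore spanned by those monomials of degree $\le s$. Now choose the order on \eqref{dd34} to extend the one on \eqref{dd341}, and take $x\in\mathcal F_s(\SY^\bs)=\mathcal F_s(\Y^\bs)\cap\SY^\bs$. Its expansion in the monomials of \eqref{dd341} is its unique expansion in the basis of $\Y^\bs$, hence involves only monomials of degree $\le s$. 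Its image in the degree-$s$ component is then a combination of ordered monomials in the symbols, so it lies in $\rU(\mathfrak{sl}^\bs[z]^\vartheta)$. This gives $\gr\,\SY^\bs=\rU(\mathfrak{sl}^\bs[z]^\vartheta)$. The paper states this ``in particular'' without further argument; the reasoning above is what implicitly underlies it.
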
   

We can also obtain a Drinfeld type presentation for the twisted super Yangian $\Y^\bs$.  Note that $h_{0,2r}$ for $r\in\bN$
are additional generators in $\Y^\bs$, which are not present in $\SY^\bs$. To that end, we use the following notation.  

Set $\tau(0)=N$, $\tau(N)=0$, $c_{0i}=-\ka_1\delta_{1i}$ and $c_{Ni}=-\ka_N\delta_{N-1,i}$.
\begin{thm}\label{main1}
The twisted super Yangian $\Y^\bs$ is isomorphic to the unital superalgebra generated by $h_{i,r}$, $b_{j,r}$, $0\lle i\lle N$, $1\lle j<N$, $r\in\bN$, where $|h_{i,r}|=\bar 0$ is even and $b_{j,r}$ is of parity $|\alpha_j|$, subject to the relations \eqref{qsconj0}--\eqref{qsconj10}, \eqref{hprod}, and $h_0(u)=h_N(-u)$. 
\end{thm}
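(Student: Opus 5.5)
Let $\mathbf Y$ denote the abstract superalgebra of Theorem~\ref{main1}: it has generators $h_{i,r}$ ($0\lle i\lle N$) and $b_{j,r}$, and relations \eqref{qsconj0}--\eqref{qsconj10}, \eqref{hprod} and $h_0(u)=h_N(-u)$. The plan mirrors the proof of Theorem~\ref{main2}. We build a surjective homomorphism $\Psi:\mathbf Y\to\Y^\bs$ sending the abstract generators to the Gaussian generators $h_{i,r},b_{j,r}$ of \eqref{bhcom}. We then show injectivity by comparing a spanning set of $\mathbf Y$ with the PBW basis of Proposition~\ref{prop:pwb}.

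\emph{Well-definedness.} The relations \eqref{qsconj0}--\eqref{qsconj10} among $h_{i,r},b_{j,r}$ with $1\lle i,j<N$ hold in $\scrX^\bs$, hence in $\Y^\bs$; this is what the next section establishes for Theorem~\ref{main2}. Once $c_{0i},c_{Ni},\tau(0)=N$ are set, the relations involving the extra index $0$ (or $N$) still need checking. First, $h_0(u)=\tl d_0d_1(u+\tfrac{\varkappa}{2})=d_1(u+\tfrac{\varkappa}2)$ and $h_N(u)=\tl d_N(u-\tfrac{\varkappa}{2})$. The identity $h_0(u)=h_N(-u)$ then follows from Lemma~\ref{efaiii}, since $c(u)=1$ in $\Y^\bs$ by \eqref{unimat}; this is the $i=0$ case of \eqref{i=tauih}. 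Relation \eqref{hprod} holds by construction. The commutativity $[h_0,h_i]=0$ comes from Lemma~\ref{lemnew1}. For \eqref{qsconj2} with $i=0$, note that $c_{\tau 0,j}=c_{Nj}$ is nonzero only for $j=N-1$, and $c_{0j}$ only for $j=1$. When $N\gge3$, for each $j$ at most one of these is nonzero, so Lemma~\ref{lemalt} reduces the relation to the Yangian-type form \eqref{alt}. That form involves only $d_1$ and $f_1$ (or, via \eqref{fi=tauie}, $d_N$ and $e_{N-1}$), together with $[d_1(u),f_j(v)]=0$ for $j\gge2$ from Lemma~\ref{lemnew1}. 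The $d_1,f_1$ relation is a rank-two computation in $\scrX^{\bs_{[1,2]}}$-type relations read off from \eqref{bcom}; the $N=2$ case has to be handled directly, since there both $c_{0,1}$ and $c_{N,1}$ are nonzero. Surjectivity of $\Psi$ holds because $\Y^\bs$ is generated by the $d_i,e_j$ (Lemma~\ref{lem:de-gen} passes to the quotient). Indeed, $d_1$ is recovered from $h_0$, each $d_{i+1}$ from $d_i$ and $h_i$, and $e_j$ from $b_{\tau j}$ via \eqref{fi=tauie}.

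\emph{Injectivity.} In $\mathbf Y$ we repeat the argument of Proposition~\ref{prop:span}. Relation $h_0(u)=h_N(-u)$ eliminates $h_{N,r}$. Relation \eqref{hprod}, combined with the symmetry $h_{\tau i}(u)=h_i(-u)$ (which follows in $\mathbf Y$ as in Lemma~\ref{lem:sym-new}), expresses the odd coefficients of $h_0$ as polynomials in lower-degree generators. Concretely, pairing the factors $i$ and $\tau i$ in \eqref{hprod}, the degree-$(2r+2)$ part of the product gives $h_{0,2r+1}$ modulo products of lower-filtration terms; the factor for $i=0$ together with the one for $i=N$ contributes $2h_{0,2r+1}$ at leading order, up to the sign conventions. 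In the associated graded, this gives a spanning set consisting of the ordered super monomials in $b_{\alpha,r}$, $h_{0,2r}$, $h_{i,r}$ ($i\in\I^0_{\ne}$) and $h_{j,2r+1}$ ($j\in\I^0_=$). Here we again use \cite{tsymbaliuk2020shuffle} for the $b$-part, and the fact that the $\tl h_{0,r}$ commute with the $\tl b$'s in $\tl\gr$ by \eqref{qsconj2} with $i=0$. By Proposition~\ref{prop:pwb}, $\Psi$ sends this spanning set bijectively onto the basis \eqref{dd34} of $\Y^\bs$, so $\Psi$ is an isomorphism.

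\emph{Main obstacle.} I expect the hardest step to be the leading-term analysis of \eqref{hprod}. It must show that exactly the odd coefficients $h_{0,2r+1}$ become redundant, with no cancellation caused by the shifts $\tfrac{\varkappa-\varrho_i}{2}$ or by the correction factors $1\pm\tfrac{\ka_{\ell+1}}{4u}$ when $N$ is odd. To handle this, I would work modulo lower filtration and use the graded images \eqref{helper1}. There, $\sum_i \bar h_{i,r}$ with appropriate signs telescopes to a multiple of $(1+(-1)^{r+1})\bar h_{0,r}$-type terms, which should confirm that only odd $r$ are constrained. A secondary difficulty is the direct verification of \eqref{qsconj2} for $i=0$ when $N=2$.
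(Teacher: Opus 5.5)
Your proposal is correct and follows the same route as the paper. The Gaussian generators give a homomorphism onto $\Y^\bs$, which is surjective by Lemma~\ref{lem:de-gen} and \eqref{fi=tauie}. Injectivity comes from rewriting \eqref{hprod}, together with $h_0(u)=h_N(-u)$, as an expression for $h_0(u-\tfrac{\varkappa}{2})h_0(-u-\tfrac{\varkappa}{2})$ in terms of the $h_i$, $1\lle i<N$. This removes the odd $h_{0,2s+1}$ from the spanning set, and the remaining spanning set maps onto the basis of Proposition~\ref{prop:pwb}.

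The ``main obstacle'' you anticipate does not arise. The $u^{-2s-2}$-coefficient of that product is exactly $2h_{0,2s+1}$ plus a polynomial in the $h_{0,k}$ with $k\lle 2s$, so no cancellation is possible.

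There is one small slip in checking \eqref{qsconj2} for the index $0$. For $j=N-1$ you should pass to $i=N$, where Lemma~\ref{lemalt} applies, and use the pair $d_N,f_{N-1}$ (equivalently $d_1,e_1$). The claim $[d_1(u),f_j(v)]=0$ only holds for $2\lle j\lle N-2$, because $d_1$ does not commute with $f_{N-1}(v)=-e_1(-v)$.
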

\begin{proof}
Let $\mathfrak Y^\bs_\imath$ be the superalgebra generated by $h_{i,r},b_{j,r}$ for $0\lle i\lle N$, $1\lle j<N$, $r\in\bN$ subject to the relations \eqref{qsconj0}--\eqref{qsconj10}, \eqref{hprod}, and $h_0(u)=h_N(-u)$. Similar to the proof of Theorem \ref{main2}, there is a superalgebra homomorphism
\[
\Xi:\mathfrak Y_\imath^\bs\to \Y^\bs,\quad h_{i,r}\mapsto h_{i,r},\quad b_{j,r}\mapsto b_{j,r}.
\]
Indeed, in the next section, we prove that the relations \eqref{qsconj0}--\eqref{qsconj10} are satisfied in the extended twisted super Yangian $\scrX^\bs$ and hence in $\Y^\bs$. The relation \eqref{hprod} holds by its construction. By Proposition \ref{Binv}, Lemma \ref{efaiii}, and the definition of $h_i(u)$ in \eqref{heven} and \eqref{hodd}, the unitary relation \eqref{bunit} or \eqref{unimat} is equivalent to $d_1(u)=\tl d_N(-u)$, i.e. $h_0(u)=h_{N}(-u)$.

It remains to prove that $\Xi$ is an isomorphism. By Lemma \ref{lem:de-gen} and \eqref{fi=tauie}, $\Xi$ is surjective. Then we prove the injectivity. By the relation \eqref{hprod} and $h_0(u)=h_N(-u)$, we conclude that
\[
h_0\big(u-\tfrac{\varkappa}{2}\big)h_0\big(\hskip -0.1cm -u-\tfrac{\varkappa}{2}\big)=\Big(1-\delta_{N,\mathrm{odd}}\frac{1}{16u^2}\Big)\prod_{1\lle i<N}h_{i}\big(u-\tfrac{\varkappa-\varrho_i}{2}\big)^{-1},
\]
where $\varrho_i$ and $\varkappa$ are defined in \eqref{eq:rho}. Therefore, $h_{0,2s+1}$ for $s\in\bN$ can be expressed as polynomials in $h_{0,2r}$ and $h_{i,r}$ for $1\lle i<N$ and $r\in\bN$. Thus, arguing as in Proposition \ref{prop:span} with the help of Lemma \ref{lem:sym-new}, we prove that the ordered super monomials in the elements of $\{b_{\alpha,r},h_{0,2r},h_{i,r},h_{j,2r+1}~|~\alpha\in\cR^+,i\in\I^0_{\ne},j\in\I_{=}^0,r\in\N\}$ span the superalgebra $\mathfrak Y^\bs_\imath$, where $b_{\alpha,r}$ is defined the same way as in \eqref{rv}. By Proposition \ref{prop:pwb}, the images of these ordered super monomials under $\Xi$ are linearly independent and hence these ordered super monomials form a basis of $\mathfrak Y^\bs_\imath$, establishing the injectivity of $\Xi$. 
\end{proof}

\subsection{Center of twisted super Yangians}
As an application, we take the chance to discuss a set of algebraically independent generators of the center $\Y^\bs$ in terms of the generating series $d_i(u)$ for $i\in\I$. 

For a parity sequence, we introduce the following numbers $\gamma_{i}$, $i\in \I$, by the rule:
\beq\label{eq:gammadef}
\gamma_1=\varkappa-\tfrac12\ka_1
\qquad 
\gamma_{i+1}=\gamma_{i}-\tfrac{1}{2}(\ka_i+\ka_{i+1}).
\eeq
Note that we always have
\beq\label{gam}
\gamma_i+\gamma_{i'}=0,\qquad i\in\I.
\eeq

Define the \emph{quantum Berezinian of the matrix $X(u)$} by
\beq\label{cu}
\mathpzc{Ber}^\bs(u)=\prod_{i=1}^N d_i(u+\gamma_i)^{\ka_i}.
\eeq
For instance, if $N=2$ and $N=3$, then the corresponding $\mathpzc{Ber}(u)$ are, respectively, given by
\[
d_1(u+\tfrac{\ka_1+\ka_2}{4})^{\ka_1}d_2(u-\tfrac{\ka_1+\ka_2}{4})^{\ka_2},\quad d_1(u+\tfrac{\ka_1+\ka_2}{2})^{\ka_1}d_2(u)^{\ka_2}d_3(u-\tfrac{\ka_1+\ka_2}{2})^{\ka_3}.
\]

Note that by Lemma \ref{efaiii} and \eqref{gam} we have
\beq\label{symcu}
d_{N}(u+\gamma_N)^{\ka_N}\cdots d_\ell(u+\gamma_{N+1-\ell})^{\ka_{N+1-\ell}}=\prod_{i=1}^\ell d_{i'}(u+\gamma_{i'})^{\ka_{i'}}=\Big(\prod_{i=1}^\ell d_{i}(-u+\gamma_{i})^{\ka_{i}}\Big)^{-1}.
\eeq
Set 
\be 
\mathfrak C^\bs(u)=\prod_{i=1}^\ell d_i(u+\gamma_i)^{\ka_i}.
\ee 
It follows from \eqref{symcu} that
\beq\label{B=CC}
\mathpzc{Ber}^\bs(u)=
\begin{cases}\mathfrak C^\bs(u)\mathfrak C^\bs(-u)^{-1}, &\text{ if }N=2\ell,\\
\mathfrak C^\bs(u)d_{\ell+1}(u)^{\ka_{\ell+1}}\mathfrak C^\bs(-u)^{-1}, &\text{ if }N=2\ell+1.
\end{cases}
\eeq

Recall $c(u)$ from \eqref{cudef} and note that $c(u)=1$ in $\Y^\bs$; see \eqref{unimat} and Proposition \ref{Binv}. Then it follows from Lemma \ref{efaiii} that
\beq\label{cu-sym}
\mathpzc{Ber}^\bs(u)\mathpzc{Ber}^\bs(-u)=1.
\eeq
Define the elements $\mathcal C_r\in\Y^\bs$ by
\[
\mathpzc{Ber}^\bs(u)=1+\sum_{r\gge 1}\mathcal C_ru^{-r}.
\]
Denote by $\mathscr{ZY}^\bs$ the center of the twisted super Yangian $\Y^\bs$. 

In the rest of this subsection, we describe the center $\mathscr{ZY}^\bs$, and make connections with $\Y^\bs$ and $\SY^\bs$ using this center. Similar results to these features in the literature on extended Yangians and twisted Yangians can be found, for instance, in \cite[\S 2 and \S 4]{Molev96Yangians}, \cite{Guay2016twisted}, and \cite[\S4.2]{lu2024isomorphism}.

\begin{thm}\label{thm:center}
We have the following statements.
\begin{enumerate}
    \item The coefficients $\mathcal C_r$ of the series $\mathpzc{Ber}^\bs(u)$ are central in $\Y^\bs$.
    \item The elements $\mathcal C_{2r+1}$ for $r\in\bN$ are algebraically free generators of the center $\mathscr{ZY}^\bs$ of $\Y^\bs$.
    \item We have $\SY^\bs=\rY(\mathfrak{sl}^\bs)\cap \Y^\bs$.
\end{enumerate}
\end{thm}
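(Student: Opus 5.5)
Part (1) will be proved with the Drinfeld-type presentation of $\Y^\bs$ from Theorem~\ref{main1}. The algebra $\Y^\bs$ is generated by the coefficients of $h_i(u)$ for $0\lle i\lle N$ and $b_j(u)$ for $1\lle j<N$. Since all $d_k(u)$ commute with each other (Lemma~\ref{lemnew1}), it suffices to show that $\mathpzc{Ber}^\bs(u)$ commutes with every $b_{j,s}$. The plan is to rewrite $\mathpzc{Ber}^\bs(u)$ as $d_1(u+\gamma_1)^{\ka_1}$ times a product of the ratio series $\tl d_i(v)d_{i+1}(v)$ at suitably shifted arguments. The shifts $\gamma_{i+1}=\gamma_i-\frac12(\ka_i+\ka_{i+1})$ are exactly the ones that appear in the nontwisted super Yangian Berezinian (cf.~\cite{Gow2007gauss,Huang20duality}).

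For a fixed $j$, only the factors involving $d_j$ and $d_{j+1}$ (and, through Lemma~\ref{efaiii}, $d_{j'}$, $d_{j'-1}$) can fail to commute with $b_j(v)$. This is by Lemma~\ref{lemnew1} together with the rank-reduction homomorphism $\psi_m^\bs$ of Lemma~\ref{shiftlem}. The commutation can therefore be checked in low rank: $N=2$, or $N=3,4$ when $j$ is near the middle. There one uses the relations between $d_j(u)$ and $f_j(v)$ that come from \eqref{qsconj2}, equivalently from the super Yangian relation \eqref{Trel} on the upper-left block, where $\rY(\gl^{\bs_{[1,m]}})$ sits inside $\scrX^\bs$. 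On that block the relation takes the Yangian form $d_j(u)f_j(v)d_j(u)^{-1}=\frac{u-v\pm\ka_j}{u-v}f_j(v)+(\text{terms killed by the shift})$. The shift $\gamma_{j+1}-\gamma_j$ is chosen so that the scalar factors from $d_j(u+\gamma_j)^{\ka_j}$ and $d_{j+1}(u+\gamma_{j+1})^{\ka_{j+1}}$ cancel, just as for $\rY(\gl_{m|n})$. When $j$ or $j+1$ lies in the lower half, I would use \eqref{symcu} to rewrite the relevant factors as inverses of factors at $-u$ in the upper half, and apply the same cancellation there, using \eqref{fi=tauie} to convert $b_j$ into $e_{\tau j}$. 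The middle node(s) ($j=\ell$ for $N=2\ell$, or $j\in\{\ell,\ell+1\}$ for $N=2\ell+1$) need a direct rank-two or rank-three computation in $\scrX^\bs$. I expect this to be the main obstacle: the relation \eqref{qsconj2} there mixes $u$ and $-u$, and the extra factors $1\pm\frac{\ka_{\ell+1}}{4u}$ in \eqref{hodd} must be absorbed. I would first try to derive it from Lemma~\ref{Binv} and the explicit low-rank formulas of Section~\ref{sec:lowrk}.

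For (2), by \eqref{cu-sym} the even coefficients $\mathcal C_{2r}$ are polynomials in the odd ones, so the odd coefficients generate the same algebra as all the $\mathcal C_r$. Their images in $\gr\Y^\bs\cong\rU(\gl^\bs[z]^\vartheta)$ are $\bar{\mathcal C}_{2r+1}=\sum_i \ka_i\cdot\ka_i(e_{ii}-e_{i'i'})z^{2r}$ up to nonzero scalar. Equivalently, they equal (up to a constant) the supertrace-type element $\sum_i (e_{ii}+e_{i'i'})z^{2r}$, which spans the center of $\gl^\bs[z]^\vartheta$ in degree $2r$. These images are algebraically independent. The standard argument (cf.~\cite[\S2]{Molev96Yangians}) then shows that the center of $\rU(\gl^\bs[z]^\vartheta)$ is generated by them: the super-centralizer argument reduces to the center of $\gl^\bs[z]^\vartheta$ being the span of these elements, which holds since $\mathfrak{sl}$-part has trivial center in the relevant range, with care needed for $\fkm=\fkn$. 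Hence $\gr\mathscr{ZY}^\bs\subseteq\bC[\bar{\mathcal C}_{2r+1}]$, and freeness follows by the filtration argument.

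For (3), first show $\Y^\bs=\mathscr{ZY}^\bs\otimes\SY^\bs$. Spanning comes from the basis of Proposition~\ref{prop:pwb}, replacing $h_{0,2r}$ by $\mathcal C_{2r+1}$; this replacement is triangular modulo lower filtration, because $\bar{\mathcal C}_{2r+1}$ has a nonzero $\bar h_{0,2r}$ component. Independence then follows from Corollary~\ref{thm:pbw}. The elements $b_{i,r}$ and $h_{i,r}$ are ratios of quasideterminants in $T(u)\wtl T'(-u)$ under \eqref{inc}, so they are invariant under $\mathcal M_{g(u)}$; hence $\SY^\bs\subseteq\rY(\mathfrak{sl}^\bs)$. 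Conversely, $\mathcal M_{g(u)}$ multiplies $\mathpzc{Ber}^\bs(u)$ by $g(u)/g(-u)$-type factors, and by freeness its odd coefficients can be moved arbitrarily. So an invariant element of $\mathscr{ZY}^\bs\otimes\SY^\bs$ must have trivial central part, which gives $\rY(\mathfrak{sl}^\bs)\cap\Y^\bs=\SY^\bs$.
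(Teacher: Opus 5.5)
The genuine gap is in part (3). Your argument needs two facts. The first is $\Y^\bs=\mathscr{ZY}^\bs\otimes\SY^\bs$, obtained by replacing $h_{0,2r}$ with $\mathcal C_{2r+1}$ in the basis of Proposition~\ref{prop:pwb}. The second is that $\mathcal M_{g(u)}$ translates the $\mathcal C_{2r+1}$ by arbitrary constants. Both fail when $\fkm=\fkn$, which is allowed for symmetric parity sequences; for example $\bs=(1,-1,-1,1)$, and then $N=2\ell$.

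To see the first failure, look at degree $2r$. The Cartan part of $\gl^\bs[z]^\vartheta$ there is spanned by $\bar h_{0,2r}$ and the $\bar h_{i,2r}$ with $1\lle i<\ell$. The $\bar h_{i,2r}$ span exactly the kernel of the supertrace $\sum_j c_je_{jj}\mapsto\sum_j\ka_jc_j$. This functional takes the value $2$ on $\bar h_{0,2r}$ and $2(\fkm-\fkn)$ on $\bar{\mathcal C}_{2r+1}=2\mathcal Iz^{2r}$. So the $\bar h_{0,2r}$-component of $\bar{\mathcal C}_{2r+1}$ is $\fkm-\fkn$, and your triangular replacement breaks down when $\fkm=\fkn$.

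The second fact fails for the same reason. $\mathcal M_{g(u)}$ multiplies $\mathpzc{Ber}^\bs(u)$ by $\prod_i\big(g(u+\gamma_i)/g(-u-\gamma_i)\big)^{\ka_i}$, and this is identically $1$ when $\fkm=\fkn$. For $(1,-1,-1,1)$ one has $\gamma=(-\tfrac12,-\tfrac12,\tfrac12,\tfrac12)$, and the factors cancel in pairs. In this case $\mathscr{ZY}^\bs\subset\SY^\bs$ (Proposition~\ref{prop:5.5}(2)), so the center cannot separate $\SY^\bs$ from $\Y^\bs$. You would instead have to control the $\mathcal M_{g(u)}$-action on the non-central $h_{0,2r}$.

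For $\fkm\ne\fkn$ your route does work; it is essentially Proposition~\ref{prop:5.5}(1) plus a translation argument. Even then, it makes (3) depend on (2). The paper avoids both issues. It notes that $\SY^\bs\subseteq\rY(\mathfrak{sl}^\bs)\cap\Y^\bs$ is immediate from the Gauss decomposition. It then reads off equality on the associated graded level inside $\gr\rY(\gl^\bs)$, which works uniformly in $\fkm,\fkn$.

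There are also smaller points in (2) and (1). In (2), your first formula for $\bar{\mathcal C}_{2r+1}$ has the wrong sign; as written it is $0$. The correct element is $2\mathcal Iz^{2r}$ with $\mathcal I=\sum_ie_{ii}$, which is not a supertrace. More importantly, the center of $\rU(\gl^\bs[z]^\vartheta)$ is not determined by the center of the Lie superalgebra; this is already false for $\rU(\gl_N)$. So the reduction you state is not valid as a general principle.

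What works is the current-algebra argument of \cite{Molev96Yangians}, which the paper runs in the conjugate model $\gl^\bs[z]^\varpi$. For $N=2$ the algebra is nonsuper. For $N\gge3$, the absence of $\gl^\bs_{\bar0}$-invariants in $\gl^\bs_{\bar1}$ confines the center to $\mathrm S(\gl^\bs_{\bar0}[z^2])$. After that, \cite[Lem.~6]{Gow2007gauss} applied to the two blocks and a second use of the odd part finish the proof.

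In (1), your reduction matches the paper's. The middle-node step you leave open is the explicit computation of Lemma~\ref{centralb2} and its $N=3$ analogue, Lemma~\ref{centralb3}. The factors $1\pm\frac{\ka_{\ell+1}}{4u}$ play no role there, since $\mathpzc{Ber}^\bs(u)$ is built from the $d_i$, not the $h_i$.
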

\begin{proof}
(1) By Lemma \ref{lemnew1}, we have $[d_i(u),d_j(v)]=0$ for $1\lle i,j\lle N$. Thus it suffices, by Lemma  \ref{efaiii} and Lemma \ref{lem:de-gen}, to verify that 
$$
[\mathpzc{Ber}^\bs(u),e_i(v)]=[\mathpzc{Ber}^\bs(u),f_i(v)]=0,\qquad 1\lle i\lle \ell:=\lfloor \tfrac{N}{2}\rfloor.
$$ 
Recall the definition of $\rY(\gl_{[\ell]}^{\bm\ka})$ from \eqref{ysbs-sub}. By \eqref{bcom}, there is a homomorphism from
\[
\rY(\gl_{[\ell]}^{\bm\ka})\to \Y^\bs,\quad t_{ij}(u)\mapsto x_{ij}(u),\quad 1\lle i,j\lle \ell,
\]
see also Proposition \ref{propA} below. Thus if $i<\ell$, then by \cite[Thm.~2.43]{tsymbaliuk2020shuffle} we have $\mathfrak C^\bs(u)$ commutes with $e_i(v)$ and $f_i(v)$ if $1\lle i<\ell$. 
Hence it follows from Lemma \ref{lemnew1} and \eqref{B=CC} that $[\mathpzc{Ber}^\bs(u),e_i(v)]=[\mathpzc{Ber}^\bs(u),f_i(v)]=0$ for $1\lle i<\ell$. 

It remains to verify that $[\mathpzc{Ber}^\bs(u),e_\ell(v)]=[\mathpzc{Ber}^\bs(u),f_\ell(v)]=0$. Again by Lemma \ref{lemnew1}, it reduces to verify the statement for the case $N=2,3$ which will be done in Lemma \ref{centralb2} and Lemma  \ref{centralb3} below.

(2) It follows from \eqref{cu-sym} that $\mathcal C_{2r}$ can be expressed by $\mathcal C_i$ for $i<2r$. Thus it is easy to see by induction that all $\mathcal C_i$ can be expressed in terms of $\mathcal C_{2r+1}$ for $r\in\bN$. It suffices to prove the statement in the associated graded superalgebra $\gr\,\Y^\bs$. Let $\overline{\mc C}_i$ be the image of $\mc C_i$ in the $(i-1)$-st component of $\gr\,\Y^\bs$.

Recall from Section \ref{sec:basics} that $\gr\,\Y^\bs\cong \mathrm{U}(\gl^\bs[z]^\vartheta)$. By \eqref{isogr}, one easily sees that
\[
\overline{\mc C}_{2r+1}=2\mathcal Iz^{2r},\qquad \mathcal I=e_{11}+\cdots+e_{NN}.
\]
To complete the proof, it suffices to show that the center of the superalgebra $\mathrm{U}(\gl^\bs[z]^\vartheta)$ is generated by the elements $\mathcal I z^{2r}$ with $r\in\bN$. 
The rest is very similar to \cite[Prop.~4.10]{Molev96Yangians} and \cite[Lem.~6]{Gow2007gauss}. We sketch the proof. 

Instead of working on the twisted current superalgebra $\mathrm{U}(\gl^\bs[z]^\vartheta)$, we consider another twisted current superalgebra $\mathrm{U}(\gl^\bs[z]^\varpi)$ defined as follows, see e.g. \cite{Molev2002reflection,lu2023twisted}. Let $\bm\iota=(\iota_1,\cdots,\iota_N)$ be a sequence such that $\iota_i=1$ for $1\lle i\lle \ell$ and $\iota_i=-1$ otherwise. Let $\varpi$ be the involution of $\gl^\bs$ defined by
\[
\varpi:\gl^\bs\to\gl^\bs,\quad e_{ij}\mapsto \iota_i\iota_je_{ij}.
\]
Let $\gl^\bs_{\bar 0}$ be the fixed point subalgebra under $\varpi$ and $\gl^\bs_{\bar 1}$ the eigenspace of $\varpi$ associated to the eigenvalue $-1$,
$$
\gl^\bs_{\bar 0}=\bC\langle e_{ij}:\iota_i=\iota_j,i,j\in\I\rangle\cong \gl^{\bs_{[1,\ell]}}\oplus \gl^{\bs_{(\ell,N]}},\quad 
\gl^\bs_{\bar 1}=\bC\langle e_{ij}: \iota_i\ne \iota_j,i,j\in\I\rangle.$$ 
Then set
\[
\gl^\bs[z]^\varpi=\gl^\bs_{\bar 0}\oplus \gl^\bs_{\bar 1}z\oplus \gl^\bs_{\bar 0}z^2\oplus \gl^\bs_{\bar 1}z^3\oplus\cdots.
\]
The superalgebras $\gl^\bs[z]^\varpi$ and $\gl^\bs[z]^\vartheta$ are isomorphic by a conjugation. Under this isomorphism, $\mc I z^{2r}$ in $\gl^\bs[z]^\varpi$ corresponds to $\mc I z^{2r}$ in $\gl^\bs[z]^\vartheta$. Let $\mathrm S(\gl^\bs[z]^\varpi)$ denote the supersymmetric algebra of $\gl^\bs[z]^\varpi$.  It suffices to prove the corresponding result for $\mathrm S(\gl^\bs[z]^\varpi)$.

If $N=2$, then by our assumption, $\ka_1=\ka_2$. Thus all elements in the superalgebra $\Y^\bs$ are even. Hence this follows from the corresponding result for nonsuper case; see \cite[Thm. 3.4]{Molev2002reflection} and \cite[Thm. 6.19]{lu2024drinfeld}.

If $N\gge 3$, then it is easy to see that the $\gl^\bs_{\bar 0}$-module $\gl^\bs_{\bar 1}$ has no invariant elements. Thus using the same arguments in \cite[Prop.~2.12 \& Prop.~4.10]{Molev96Yangians}, one proves that the center is contained in the subalgebra $\mathrm S(\gl^\bs_{\bar 0}[z^2])$, where $\mathrm S(\gl_{\bar 0}^\bs[z^2])$ denote the supersymmetric algebra of $\gl_{\bar 0}^\bs[z^2]$. Since $\gl^\bs_{\bar 0}\cong \gl^{\bs_{[1,\ell]}}\oplus \gl^{\bs_{(\ell,N]}}$, by \cite[Lem.~6]{Gow2007gauss} for $\gl^\bs_{\bar 0}[z]$, the center is contained in the subalgebra generated by $\sum_{i=1}^\ell e_{ii}z^{2r}$ and $\sum_{i=\ell+1}^N e_{ii}z^{2r}$ for $r\in\bN$. Note that the center supercommutes with $\gl_{\bar 1}^{\bs}z$. Then again by the argument in \cite[Prop.~2.12]{Molev96Yangians}, one finds that the center is further contained in the subalgebra generated by $\mc Iz^{2r}$ for $r\in\bN$, completing the proof.

(3) It is clear by the definition of $\rY(\mathfrak{sl}^\bs)$ and the Gauss decomposition that $\SY^\bs\subset \rY(\mathfrak{sl}^\bs)\cap \Y^\bs$. Thus, to show the equality, it suffices to note that we have the corresponding equality in $\gr\,\rY(\gl^\bs)$ (recall that the filtration on $\Y^\bs$ is induced from the one on $\rY(\gl^\bs)$).
\end{proof}

Recall that $\mathfrak m$ is the number of $1$'s in the parity sequence $\bs$ while $\mathfrak n=N-\mathfrak m$.

\begin{prop}\label{prop:5.5}
We have the following statements.
\begin{enumerate}
\item If $\mathfrak m\ne \mathfrak n$, then we have a superalgebra isomorphism $\Y^\bs\cong \mathscr{ZY}^\bs\otimes \SY^\bs$. 
\item If $\mathfrak m= \mathfrak n$, then $\mathscr{ZY}^\bs\subset\SY^\bs$. 
\end{enumerate}
\end{prop}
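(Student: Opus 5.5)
The whole argument takes place in the associated graded superalgebra. By Theorem \ref{thm:center}(2), $\mathscr{ZY}^\bs$ is the polynomial algebra in the $\mathcal C_{2r+1}$, and their symbols are $\overline{\mc C}_{2r+1}=2\mathcal I z^{2r}$ in $\gr\,\Y^\bs\cong\rU(\gl^\bs[z]^\vartheta)$. By Corollary \ref{thm:pbw}, $\gr\,\SY^\bs\cong\rU(\mathfrak{sl}^\bs[z]^\vartheta)$, sitting inside $\gr\,\Y^\bs$ via \eqref{helper1}. The dichotomy is then the usual one: $\mathcal I=\sum_i e_{ii}$ lies in $\mathfrak{sl}^\bs$ if and only if its supertrace $\sum_i\ka_i=\fkm-\fkn$ vanishes.

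For (1), assume $\fkm\ne\fkn$. Then $\gl^\bs=\mathfrak{sl}^\bs\oplus\bC\mathcal I$ with $\mathcal I$ central. The map $\xi\mapsto\xi+(-1)^r\vartheta(\xi)$ commutes with $\vartheta$ and fixes $\mathcal I$, so the fixed-point current algebra splits:
\[
\gl^\bs[z]^\vartheta=\mathfrak{sl}^\bs[z]^\vartheta\oplus\textstyle\bigoplus_{r}\bC\,\mathcal I z^{2r}.
\]
Note that $\mathcal I z^{2r+1}$ is not $\vartheta$-fixed. The multiplication map $\mathscr{ZY}^\bs\otimes\SY^\bs\to\Y^\bs$ is a superalgebra homomorphism because the first factor is central. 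It is surjective and injective once I check that ordered monomials in the $\mathcal C_{2r+1}$ times the PBW basis \eqref{eq:span2} of $\SY^\bs$ form a basis of $\Y^\bs$. Their symbols are the ordered monomials in $\mathcal Iz^{2r}$ and in the symbols of \eqref{eq:span2}. These form a basis of $\rU(\gl^\bs[z]^\vartheta)$ by the PBW theorem and the decomposition above, since the symbols of \eqref{eq:span2} give a PBW basis of $\rU(\mathfrak{sl}^\bs[z]^\vartheta)$. The standard filtration argument then lifts this to $\Y^\bs$.

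The real check in (1) is compatibility with Proposition \ref{prop:pwb}. There the basis of $\Y^\bs$ uses $h_{0,2r}$ in place of $\mathcal C_{2r+1}$, with $\bar h_{0,2r}=\ka_1(e_{11}+e_{NN})z^{2r}$. I must verify that the symbols of the $\mathcal C_{2r+1}$ together with those of $\mathfrak{sl}$-type span the same degree-$2r$ part as $\bar h_{0,2r}$ does. This holds because $\mathcal I\notin\mathfrak{sl}^\bs$ is exactly the hypothesis $\fkm\neq\fkn$. Alternatively, one could directly solve for $h_0$ through $\mathpzc{Ber}^\bs(u)$ via \eqref{cu} and \eqref{hprod}, but the graded argument suffices.

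For (2), assume $\fkm=\fkn$. The aim is to show each $\mathcal C_r$ lies in $\SY^\bs$, i.e.\ that $\mathpzc{Ber}^\bs(u)$ is a product of shifted $h_i$ series. I would try to rewrite \eqref{cu} telescopically:
\[
\mathpzc{Ber}^\bs(u)=\prod_{i=1}^N d_i(u+\gamma_i)^{\ka_i},
\]
grouping the factors into ratios $\tl d_i(w)\,d_{i+1}(w)$ raised to suitable powers. When $\sum_i\ka_i=0$, the leftover power of $d_1$ (equivalently of $d_N$, by Lemma \ref{efaiii} with $c(u)=1$) should cancel. Concretely, I would write $d_i=d_1\cdot\prod_{j<i}(\tl d_jd_{j+1})$ at shifted arguments. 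The total exponent of $d_1$ is then $\sum_i\ka_i=0$, but its arguments differ by the shifts $\gamma_i$. This is the main obstacle: the $d_1$ factors at different shifts need not cancel literally.

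Therefore I would fall back on the graded and central argument. The element $\mathcal C_{2r+1}$ is central and has symbol $2\mathcal Iz^{2r}$, which lies in $\mathfrak{sl}^\bs[z]^\vartheta$ when $\fkm=\fkn$. By Theorem \ref{thm:center}(3), it suffices to show $\mathcal C_r\in\rY(\mathfrak{sl}^\bs)$, i.e.\ that $\mathcal C_r$ is invariant under the automorphisms $\mathcal M_{g(u)}^\bs$ of \eqref{eq:mu_f-A}, transported to $\Y^\bs$ via the embedding \eqref{inc}. Under $T\mapsto g(u)T$, the quasi-determinants transform as $d_i(u)\mapsto g(u)g(-u)^{-1}d_i(u)$. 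Hence
\[
\mathpzc{Ber}^\bs(u)\mapsto \prod_i \big(g(u+\gamma_i)g(-u-\gamma_i)^{-1}\big)^{\ka_i}\mathpzc{Ber}^\bs(u).
\]
The scalar factor in front must be shown to equal $1$. I expect this to follow from the structure of the shifts \eqref{eq:gammadef}: they pair $\ka$'s so that the shifted arguments cancel. If literal cancellation fails, I would instead use the identification of $\mathpzc{Ber}^\bs(u)$ with the image of the quantum Berezinian of $\rY(\gl^\bs)$, as $B(u)B(-u+\text{shift})^{-1}$ up to the shift, which is $\mathcal M_g$-invariant precisely when $\fkm=\fkn$.
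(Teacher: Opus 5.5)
Your part (1) is correct. You take ordered monomials in the $\mathcal C_{2r+1}$ times the basis of Corollary \ref{thm:pbw}, and check that their symbols form a PBW basis of $\rU(\gl^\bs[z]^\vartheta)$ because $\mathcal I\notin\mathfrak{sl}^\bs$ when $\fkm\ne\fkn$. This gives surjectivity and injectivity of the multiplication map at once. The paper instead first shows $\Y^\bs=\mathscr{ZY}^\bs\cdot\SY^\bs$ and then checks trivial intersection at the graded level. Your version is, if anything, a cleaner packaging of the same idea.

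Part (2), however, is not proved. Both of your routes rest on one statement that you only say you `expect': if $\fkm=\fkn$, then the multisets $\{\gamma_i:\ka_i=1\}$ and $\{\gamma_i:\ka_i=-1\}$ coincide. Equivalently, $\prod_{i\in\I}g(u+\gamma_i)^{\ka_i}=1$ for every series $g$. Your ``main obstacle'' (the $d_1$ factors at different shifts) is exactly this statement, and so is the scalar factor in your $\mathcal M_g$ computation. Your final fallback, identifying $\mathpzc{Ber}^\bs(u)$ with the image under \eqref{inc} of a ratio of Yangian Berezinians, is established neither in your proposal nor in the paper. Indeed, the paper notes that a Sklyanin-type R-matrix description of the center is not available.

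The missing fact is elementary. By \eqref{eq:gammadef}, $\gamma_i=\varkappa-\frac12(\varrho_{i-1}+\varrho_i)$. So $\varkappa-\gamma_i$ is the mid-height of the $i$-th step of the lattice path $0=\varrho_0,\varrho_1,\dots,\varrho_N$, an up-step if $\ka_i=1$ and a down-step if $\ka_i=-1$. When $\fkm=\fkn$ we have $\varkappa=0=\varrho_N$, so the path is closed. Hence every half-integer level is crossed by as many up-steps as down-steps, and each $i$ with $\ka_i=1$ can be paired with some $j$ with $\ka_j=-1$ and $\gamma_j=\gamma_i$. In particular, the $d_1$ factors in your telescoping do cancel literally.

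More directly, set $\zeta_k(w):=\tl d_k(w)d_{k+1}(w)$. Since the $d_k$ commute (Lemma \ref{lemnew1}), $d_i(w)\tl d_j(w)=\prod_{k=i}^{j-1}\zeta_k(w)^{-1}$ for $i<j$, and $d_i(w)\tl d_j(w)=\prod_{k=j}^{i-1}\zeta_k(w)$ for $j<i$. Moreover $\zeta_k(w)=h_k\big(w-\frac{\varkappa-\varrho_k}{2}\big)$ by \eqref{heven}, since $N=2\fkm$ is even. Thus $\mathpzc{Ber}^\bs(u)$ is a product of shifted series $h_k^{\pm1}$, and its coefficients lie in $\SY^\bs$.

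This is the paper's argument. There it is carried out on $\mathfrak C^\bs(u)$ over the first half $\bs_{[1,\ell]}$, where $\fkm=\fkn$ forces equally many $\pm1$'s, and then extended via \eqref{B=CC}. Once this fact is supplied, your $\mathcal M_g$-invariance route through Theorem \ref{thm:center}(3) also closes and is a legitimate alternative.
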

\begin{proof}
(1) As in, e.g., \cite[Lem.~3.11]{Lu2023drinfeld}, one verifies that
$\Y^\bs=\mathscr{ZY}^\bs\cdot \SY^\bs$, where the condition
$\mathfrak m\ne \mathfrak n$ is needed. Thus it suffices to show that
$\mathscr{ZY}^\bs\cap\SY^\bs=\{1\}$. Again, this reduces to the associated
graded level. On the associated graded side, we use the standard decomposition
of the fixed-point current Lie superalgebra into its central trace part and
traceless part:
\[
\gl^\bs[z]^\vartheta
=
\bigoplus_{r\gge 0}\bC\,\mathcal I z^{2r}
\oplus
\mathfrak{sl}^\bs[z]^\vartheta,
\]
where the first summand is central. Hence, by the PBW theorem, we have the
vector space decomposition
\beq\label{grdec}
\mathrm{U}(\gl^\bs[z]^\vartheta)
=
\bC[\mathcal I z^{2r}]_{r\gge 0}\otimes
\mathrm{U}(\mathfrak{sl}^\bs[z]^\vartheta).
\eeq
Since the image of $\SY^\bs$ in $\gr\,\Y^\bs$ equals
$\mathrm{U}(\mathfrak{sl}^\bs[z]^\vartheta)$, \eqref{grdec} implies that the
associated graded images of $\mathscr{ZY}^\bs$ and $\SY^\bs$ intersect
trivially. Therefore $\mathscr{ZY}^\bs\cap\SY^\bs=\{1\}$.

(2) Note that $\bs$ is symmetric. If $\mathfrak m=\mathfrak n$, then $\mathfrak m=\mathfrak n$ is even and hence $N=2\ell$ with $\ell$ even. Moreover, the number of $1$'s in $\ka_{[1,\ell]}$ is the same as the number of $-1$'s. Recall $d_i(u)=\tl d_{i'}(-u)$ from Lemma \ref{efaiii} and $\gamma_i=\gamma_{i'}$ from \eqref{gam}. Now it suffices to show that $\mathfrak C^\bs(u)$ can be written as a product of $h_j(v)$ and $h_j(v)^{-1}$ with various $v$. This is essentially the same as \cite[Thm. 2.48(b)]{tsymbaliuk2020shuffle}.
\end{proof}

\begin{rem}
The use of \eqref{grdec} in the argument above is confined to Proposition~\ref{prop:5.5}. In particular, the earlier identification $\SY^\bs=\rY(\mathfrak{sl}^\bs)\cap \Y^\bs$ in Theorem~\ref{thm:center}(3) follows from a separate associated graded argument and is independent of \eqref{grdec}.
\end{rem}

\begin{rem}
One advantage of the Gaussian generators is that they provide direct access to the center. Indeed, an R-matrix description of the center via a Sklyanin-type superdeterminant is not presently available in the literature, while the Gauss decomposition naturally leads to the central series \(\mathpzc{Ber}^\bs(u)\) constructed above. It would be interesting to construct the super analogues of Sklyanin determinant of $S(u)$ and compare it with the central series $\mathpzc{Ber}^\bs(u)$; cf. \cite[Thm. 3.4]{Molev2002reflection}.
\end{rem}

\subsection{Isomorphism between different parity sequences} 
In this subsection, we discuss the relations between twisted super Yangians associated to different symmetric parity sequences.

Let $\bs,\tl{\bs}$ be parity sequences in $S_{\mathfrak m|\mathfrak n}$, then it is well known that $\rY(\gl^\bs)\cong \rY(\gl^{\tl{\bs}})$; see e.g. \cite{Huang19Solution,tsymbaliuk2020shuffle}. Specifically, take any $\sigma$ in the symmetric group $\mathfrak S_{N}$ such that $\ka_{i}=\tl{\ka}_{\sigma(i)}$ for $i\in\I$, then the map
\beq\label{eqPsigma}
\mathfrak P^\bs_{\sigma}:\rY(\gl^\bs)\to \rY(\gl^{\tl{\bs}}),\quad t_{ij}(u)\mapsto t_{\sigma(i)\sigma(j)}(u)
\eeq
defines a superalgebra isomorphism. Recall $\mathcal M_{g(u)}^\bs$ from \eqref{eq:mu_f-A}. Clearly, we have 
$$
\mathfrak P^\bs_{\sigma}\circ\mathcal M_{g(u)}^\bs=\mathcal M_{g(u)}^{\tl{\bs}}\circ \mathfrak P^\bs_{\sigma}.
$$
Thus we further have $\rY(\mathfrak{sl}^\bs)\cong \rY(\mathfrak{sl}^{\tl{\bs}})$.

Let $\bs,\tl{\bs}$ be symmetric in $S_{\mathfrak m|\mathfrak n}$, then $\Y^\bs\cong \Y^{\tl{\bs}}$ and $\scrX^\bs\cong \scrX^{\tl{\bs}}$. Specifically, take any $\sigma$ in the symmetric group $\mathfrak S_{N}$ such that $\ka_{i}=\tl{\ka}_{\sigma(i)}$ and $\sigma(i)'=\sigma(i')$ for $i\in\I$, then the map
\[
\mathfrak Q^\bs_{\sigma}:\scrX^\bs\to \scrX^{\tl{\bs}},\quad x_{ij}(u)\mapsto x_{\sigma(i)\sigma(j)}(u)
\]
defines a superalgebra isomorphism. By Proposition \ref{Binv}, $\mathfrak Q_{\sigma}(c^\bs(u))=c^{\tl{\bs}}(u)$ and hence we obtain the superalgebra isomorphism
\beq\label{eqPsigma2}
\mathfrak P^\bs_{\sigma}:\Y^\bs\to \Y^{\tl{\bs}},\quad x_{ij}(u)\mapsto x_{\sigma(i)\sigma(j)}(u).
\eeq
Here we use the same symbol $\mathfrak P^\bs_{\sigma}$ as the isomorphism \eqref{eqPsigma2} is compatible with \eqref{eqPsigma} by restriction if we regard $\Y^\bs$ and $\Y^{\tl{\bs}}$ as subalgebras of $\rY(\gl^\bs)$ and $\rY(\gl^{\tl{\bs}})$ via \eqref{inc}, respectively. Note that it is not hard to see from e.g. \cite[Proposition 3.5]{lu2023twisted} that $\mathfrak P^\bs_{\sigma}$ also preserves the coideal structures.

\begin{thm}\label{main3}
Let $\bs,\tl{\bs}$ be symmetric parity sequences in $S_{\mathfrak m|\mathfrak n}$, then $\mathbf Y^{\bs}_\imath \cong \mathbf Y^{\tl{\bs}}_\imath$, i.e. the twisted super Yangians are independent of the choice of symmetric parity sequences in $S_{\mathfrak m|\mathfrak n}$ (does depend on $\mathfrak m$ and $\mathfrak n$).
\end{thm}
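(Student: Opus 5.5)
By Theorem \ref{main2} we have $\mathbf Y^{\bs}_\imath\cong\SY^\bs$ and $\mathbf Y^{\tl\bs}_\imath\cong\SY^{\tl\bs}$. It therefore suffices to show $\SY^\bs\cong\SY^{\tl\bs}$. The plan is to prove that the permutation isomorphism $\mathfrak P^\bs_\sigma:\Y^\bs\to\Y^{\tl\bs}$ of \eqref{eqPsigma2} restricts to an isomorphism $\SY^\bs\to\SY^{\tl\bs}$.

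First, a suitable $\sigma$ exists. Both $\bs$ and $\tl\bs$ are symmetric, so each is determined by its first half $\bs_{[1,\ell]}$ together with the middle entry $\ka_{\ell+1}$ when $N=2\ell+1$. The number of $1$'s in the first half is $\lfloor\fkm/2\rfloor$ for both sequences. When $N$ is odd, the middle entries agree because the parity of $\fkm$ determines them. Choose any permutation $\sigma_0$ of $\{1,\dots,\ell\}$ with $\ka_i=\tl\ka_{\sigma_0(i)}$. Extend it by $\sigma(i')=\sigma_0(i)'$, fixing the middle index if there is one. Then $\ka_i=\tl\ka_{\sigma(i)}$ and $\sigma(i')=\sigma(i)'$, so $\mathfrak P^\bs_\sigma$ is a superalgebra isomorphism $\Y^\bs\to\Y^{\tl\bs}$.

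Second, I would use the characterization $\SY^\bs=\rY(\mathfrak{sl}^\bs)\cap\Y^\bs$ from Theorem \ref{thm:center}(3). The map $\mathfrak P^\bs_\sigma$ on $\Y^\bs$ is the restriction of the Yangian isomorphism \eqref{eqPsigma} under the embedding \eqref{inc}. To check this, note that $T(u)\wtl T'(-u)$ is sent to $T(u)\wtl T'(-u)$ with indices permuted by $\sigma$; this uses $\sigma(i')=\sigma(i)'$ to handle the primed factor. The Yangian isomorphism intertwines $\mathcal M^\bs_{g(u)}$ with $\mathcal M^{\tl\bs}_{g(u)}$, so it maps $\rY(\mathfrak{sl}^\bs)$ onto $\rY(\mathfrak{sl}^{\tl\bs})$. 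It also maps $\Y^\bs$ onto $\Y^{\tl\bs}$. Hence it maps the intersection $\SY^\bs$ onto $\SY^{\tl\bs}$. Composing the isomorphisms gives
\[
\Phi_{\tl\bs}^{-1}\circ\mathfrak P^\bs_\sigma\circ\Phi_\bs:\ \mathbf Y^\bs_\imath\to\mathbf Y^{\tl\bs}_\imath.
\]

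The main obstacle is the compatibility of the two permutation maps through the embedding \eqref{inc}, in particular the sign conventions $(-1)^{|i||j|+|j|}$ in $T(u)$ and $X(u)$. Since $\ka_i=\tl\ka_{\sigma(i)}$, these signs are preserved by $\sigma$, and the quaternary relations \eqref{bcom} are visibly invariant under such a relabeling. If a cleaner check is needed, one can verify directly from \eqref{bcom} and \eqref{bunit} that $x_{ij}(u)\mapsto x_{\sigma(i)\sigma(j)}(u)$ is an isomorphism. One then checks that it commutes with the gradings used in Theorem \ref{thm:center}(3), that is, that the induced map on $\gr$ is the Lie superalgebra isomorphism $\gl^\bs[z]^\vartheta\to\gl^{\tl\bs}[z]^\vartheta$ preserving the $\mathfrak{sl}$ part. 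This gives $\mathfrak P^\bs_\sigma(\SY^\bs)\subset\SY^{\tl\bs}$, and applying the same argument to $\sigma^{-1}$ gives the reverse inclusion.
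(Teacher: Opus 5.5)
Your proposal is correct and follows the paper's proof: you reduce via Theorem~\ref{main2} to $\SY^\bs\cong\SY^{\tl\bs}$, then use that $\mathfrak P^\bs_\sigma$, restricted from $\rY(\gl^\bs)$ through \eqref{inc}, carries both $\Y^\bs$ and $\rY(\mathfrak{sl}^\bs)$ onto their $\tl\bs$ counterparts, and hence carries $\SY^\bs=\rY(\mathfrak{sl}^\bs)\cap\Y^\bs$ onto $\SY^{\tl\bs}$ by Theorem~\ref{thm:center}(3). One caveat on your optional final aside: matching associated graded images alone would not give $\mathfrak P^\bs_\sigma(\SY^\bs)\subset\SY^{\tl\bs}$, because equal associated graded subspaces do not force equality without an inclusion in one direction; your main argument, however, does not rely on that aside.
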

\begin{proof}
Note that the superalgebras $\Y^\bs$ (resp. $\rY(\mathfrak{sl}^\bs)$ ) and $\Y^{\tl{\bs}}$ (resp. $\rY(\mathfrak{sl}^{\tl{\bs}})$) are isomorphic via the restriction of $\mathfrak P^\bs_\sigma$ in \eqref{eqPsigma}. Then the statement follows from Theorem \ref{main2} and Theorem \ref{thm:center} (3).
\end{proof}

\subsection{Quantum Berezinians in different parity sequences}
In this subsection, we discuss the relations between quantum Berezinians in different parity sequences; see e.g. \cite{Huang20duality,Chang23center} for the results of more general Manin (super)matrices or super Yangians of type A (cf. also \cite{tsymbaliuk2020shuffle,Lu2022note}). We first recall \cite[Prop. 3.6]{Huang20duality} and show by example how it implies \cite[Thm. 4.5]{Huang20duality}.

Let $\mc A$ be a superalgebra. We call the operators of the form
\[
\mc K =
\sum_{i,j\in\I} K_{ij}\otimes E_{ij}(-1)^{|i||j|+|j|}\in \mc A \otimes \End(\C^\bs),
\]
\emph{a matrix of parity sequence $\bs$} if $K_{ij}$ are elements of $\mc A$ of parity $|i|+|j|$ (determined by $\bs$). We simply write it as $\mc K=(K_{ij})_{i,j\in\I}$.

We say that $\mc K$ is a \emph{Manin matrix of parity sequence $\bs$} if $\mc K$ is of parity sequence $\bs$ and
\[
[K_{ij},K_{kl}]=(-1)^{|i||j|+|i||k|+|j||k|}[K_{kj},K_{il}]
\]
for all $i,j,k,l\in \I$, cf. \eqref{Trel}.

The symmetric group $\mathfrak S_N$ acts on matrices and parity sequences by the following rule. For $\sigma\in\mathfrak S_N$, we set $\sigma(\mc K)=(K_{\sigma^{-1}(i),\sigma^{-1}(j)})_{i,j\in\I}$ and $\sigma(\bs)=(\ka_{\sigma^{-1}(1)},\cdots,\ka_{\sigma^{-1}(N)})$. By \cite[Lem. 3.3]{Huang20duality}, if $\mc K$ is a Manin matrix of parity sequence $\bs$, then $\sigma(\mc K)$ is a Manin matrix of parity sequence $\sigma(\bs)$.

Suppose that $\mc K$ is a Manin matrix of parity sequence $\bs$ and has a Gauss decomposition (see Section \ref{sec:GD}) with the entries of the diagonal matrix given by $\mc D = \mathrm{diag}(\mc D_1,\cdots,\mc D_N)$. Here and below, we shall always assume that $\mc D_i$ are invertible for any choice of $\bs$. 

Define the \emph{Berezinian of $\mc K$ associated to the parity sequence $\bs$} by
\[
\mathpzc{Ber}^\bs(\mc K)=\mc D_1^{\ka_1}\mc D_2^{\ka_2}\cdots \mc D_N^{\ka_N}.
\]
The following shows that the action of the symmetric group $\mathfrak S_N$ does not change the Berezinian.

\begin{prop}[{\cite[Prop 3.6]{Huang20duality}}]\label{prophm}
Let $\mc K$ be a Manin matrix of parity sequence $\bs$ and $\sigma\in\mathfrak S_N$. Then
\[
\mathpzc{Ber}^\bs(\mc K)=\mathpzc{Ber}^{\sigma(\bs)}(\sigma(\mc K)).
\]
\end{prop}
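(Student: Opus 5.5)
The plan is to reduce to adjacent transpositions and then to a $2\times2$ computation. Since $\mathfrak S_N$ is generated by $s_i=(i,i+1)$, and since by \cite[Lem.~3.3]{Huang20duality} $\sigma(\mc K)$ is again a Manin matrix of parity sequence $\sigma(\bs)$, induction on the length of $\sigma$ leaves only the case $\sigma=s_i$. Then $s_i(\mc K)$ is $\mc K$ with rows $i,i+1$ and columns $i,i+1$ swapped.

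\textbf{Step 1: all factors except $\mc D_i,\mc D_{i+1}$ are unchanged.} Write $\mc D_k$ as the quasideterminant of the leading $k\times k$ submatrix, as in \eqref{gd1}.
\begin{itemize}
\item For $k<i$, the leading block of $s_i(\mc K)$ is the same as that of $\mc K$, so $\mc D_k$ is unchanged.
\item For $k>i+1$, the leading block of $s_i(\mc K)$ is obtained from that of $\mc K$ by simultaneously permuting rows and columns among the first $k-1$ indices. The quasideterminant $|\cdot|_{kk}$ is invariant under such permutations, since $\mathsf R(\mathsf X^{kk})^{-1}\mathsf C$ is.
\end{itemize}
It therefore suffices to show
\[
\mc D_i^{\ka_i}\mc D_{i+1}^{\ka_{i+1}}=\mc D_i'^{\ka_{i+1}}\mc D_{i+1}'^{\ka_i},
\]
where the primes refer to the Gauss decomposition of $s_i(\mc K)$.

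\textbf{Step 2: reduce to a $2\times2$ Manin matrix.} Apply the quasideterminant Sylvester theorem (heredity) with respect to the leading $(i-1)\times(i-1)$ block. This produces the matrix $\mc K^{[i-1]}$ of quasideterminants with rows and columns indexed by $i,\dots,N$. Its first two diagonal Gauss factors are exactly $\mc D_i,\mc D_{i+1}$, and performing the swap on $\mc K^{[i-1]}$ yields $\mc D'_i,\mc D'_{i+1}$. One needs the fact that such a Schur complement of a Manin matrix is again Manin with the truncated parity sequence. This is the analogue for Manin matrices of Corollary~\ref{cor:commu} and Proposition~\ref{thm:red}, and it is the known heredity property of Manin matrices. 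With it, we reduce to a $2\times2$ Manin matrix $\begin{pmatrix} a&b\\ c&d\end{pmatrix}$ with parities $(\ka_1,\ka_2)$.

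\textbf{Step 3: the $2\times2$ identity, the main obstacle.} In the $2\times2$ case,
\[
\mc D_1=a,\quad \mc D_2=d-ca^{-1}b,\qquad \mc D_1'=d,\quad \mc D_2'=a-bd^{-1}c.
\]
If $\ka_1=\ka_2=1$, we must show $a(d-ca^{-1}b)=d(a-bd^{-1}c)$, i.e.\ $ad-cb=da-bc$. This is exactly the Manin relation $[a,d]=[c,b]$ (appropriately signed), and the case $\ka_1=\ka_2=-1$ is the same identity inverted. The genuinely super case is $\ka_1=-\ka_2$, with $b,c$ odd. Here the required identity is
\[
a^{-1}(d-ca^{-1}b)=(d-bd^{-1}c)\,\ldots
\]
in the form $a\,\mc D_2^{-1}=\mc D_1'^{-1}\mc D_2'$, where $\mc D_1'=d$ and $\mc D_2'=a-bd^{-1}c$. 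I would first derive from the Manin relations with odd $b,c$ the consequences
\[
[a,b]=0=[d,b],\qquad [a,c]=0=[d,c],\qquad b^2=c^2=0,\qquad [a,d]=\pm(cb+bc),
\]
noting that for these index choices the relations give commutation rather than anticommutation of $a$ with $b$. Using these, I would verify $(d-ca^{-1}b)\,a^{-1}\ldots$, clearing denominators to a polynomial identity such as
\[
(a-bd^{-1}c)(d-ca^{-1}b)=da
\]
(up to ordering), and check it by expanding and using $b^2=c^2=0$ together with the commutation rules. This sign bookkeeping is where I expect the difficulty.
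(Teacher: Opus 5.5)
Steps 1 and 2 are the right reduction: pass to adjacent transpositions using that $\sigma(\mc K)$ is Manin of parity $\sigma(\bs)$, note that $\mc D_k$ is unchanged for $k\ne i,i+1$, and apply the Sylvester reduction to a $2\times 2$ block. The paper gives no proof of its own; it cites the result and records only the $N=2$ check in the Example. The problem is Step~3, where everything is decided.

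\textbf{Step 3.} The relations you propose to derive in the odd case do not follow from the hypothesis.
For $\bs=(1,-1)$ the Manin relations are exactly $[a,c]=[d,c]=0$, $c^2=0$ and $[a,d]=bc+cb$; this is the list in the Example. They impose nothing on $[a,b]$, $[d,b]$ or $b^2$. In fact, for $\mc K=T^\bs(u)e^{-\pa_u}$ the coefficient of $u^{-2}e^{-2\pa_u}$ in $[a,b]$ is $t_{12}^{(1)}+[t_{11}^{(1)},t_{12}^{(1)}]=2t_{12}^{(1)}\neq 0$.
For $\bs=(-1,1)$ the roles of $b$ and $c$ are exchanged: $[a,b]=[d,b]=0$, $b^2=0$ and $[a,d]=-(bc+cb)$.
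Your list is the union of these two incompatible cases. A verification based on it would cover only a strictly smaller class of matrices, which excludes the matrix $T^\bs(u)e^{-\pa_u}$ to which the paper applies the proposition.

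Only the correct half is needed. For $\bs=(1,-1)$,
\[
(a-bd^{-1}c)(d-ca^{-1}b)=ad-aca^{-1}b-bd^{-1}cd+bd^{-1}c^2a^{-1}b=ad-cb-bc=da .
\]
This is equivalent to $a(d-ca^{-1}b)^{-1}=d^{-1}(a-bd^{-1}c)$, that is, to $\mathpzc{Ber}^{(1,-1)}(\mc K)=\mathpzc{Ber}^{(-1,1)}(s_1(\mc K))$. The case $\bs=(-1,1)$ then follows by applying this identity to $s_1(\mc K)$.

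The even cases also need care.
\begin{enumerate}
\item For $\ka_1=\ka_2=-1$, the identity is not ``the same identity inverted''. Here the Manin relations say that the \emph{rows} commute and $[a,d]=[b,c]$. What must be checked is $\mc D_2\mc D_1=\mc D_2'\mc D_1'$, i.e.\ $(d-ca^{-1}b)a=(a-bd^{-1}c)d$.
\item For $\ka_1=\ka_2=1$, you also need the column relations $[a,c]=[b,d]=0$ to reduce $a(d-ca^{-1}b)=d(a-bd^{-1}c)$ to $[a,d]=[c,b]$.
\end{enumerate}

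\textbf{Step 2.} The Manin property of the $2\times 2$ Schur-complement block is a genuine lemma about arbitrary super Manin matrices. Corollary~\ref{cor:commu} and Proposition~\ref{thm:red} concern $\scrX^\bs$ and its specific defining relations, so they do not supply it. You need to prove or cite it.

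One route is as follows. Let $M$ be the leading $(i+1)\times(i+1)$ block of $\mc K$. The $2\times 2$ block is the inverse of the principal $\{i,i+1\}$ block of $M^{-1}$. Principal submatrices of a Manin matrix are Manin for the restricted parity sequence, since the defining relations and their signs involve only the indices concerned. So it suffices to show that the inverse of an invertible Manin matrix of parity sequence $\bs$ is again Manin of parity sequence $\bs$. The invertibility assumptions already imposed on the $\mc D_i$ make this applicable.
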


It is well known that $T^\bs(u)e^{-\pa_u}$ is a Manin matrix of parity sequence $\bs$, where $T^\bs(u)$ is the generating matrix of the super Yangian $\rY(\gl^\bs)$ and $e^{-\pa_u}$ is the difference operator, i.e. $e^{-\pa_u}f(u)=f(u-1)$ for any function $f(u)$ in $u$; see e.g. \cite{Molev14MacMahon}. Suppose the diagonal matrix in the Gauss decomposition of $T^\bs(u)$ is given by $\mathfrak D^\bs(u)=\mathrm{diag}(\mathfrak D^\bs_1(u),\cdots,\mathfrak D^\bs_N(u))$. Then the diagonal matrix in the Gauss decomposition of $T^\bs(u)e^{-\pa_u}$ is given by 
$$
\mathfrak D^\bs(u)e^{-\pa_u}=\mathrm{diag}(\mathfrak D^\bs_1(u)e^{-\pa_u},\cdots,\mathfrak D^\bs_N(u)e^{-\pa_u}).
$$

Let us consider the following example which was used in \cite[\S4.3]{Lu21gl11XXX} and \cite[\S3.4]{Lu2021gelfand}.
\begin{eg}
Let $\mathfrak m=\mathfrak n=1$. Set $\bs=(1,-1)$ and $\tl{\bs}=(-1,1)$. Let $\sigma=(1,2)$ be the simple permutation. The matrix $\mc K$ is a Manin matrix of parity sequence $\bs$ if and only if
$$
[K_{11},K_{21}]=[K_{22},K_{21}]=[K_{21},K_{21}]=0,\qquad [K_{11},K_{22}]=[K_{12},K_{21}].
$$
Then the Berezinians of $\mc K$ associated to parity sequence $\bs$ and $\tl\bs$ are given by 
\begin{align*}
&\mathpzc{Ber}^\bs(\mc K)=K_{11}(K_{22}-K_{21}K_{11}^{-1}K_{12})^{-1},\\
&\mathpzc{Ber}^{\tl{\bs}}(\sigma(\mc K))=K_{22}^{-1}(K_{11}-K_{12}K_{22}^{-1}K_{21}).
\end{align*}
It is straightforward to check that $\mathpzc{Ber}^\bs(\mc K)=\mathpzc{Ber}^{\tl{\bs}}(\sigma(\mc K))$.

If $\mc K=T(u)^\bs e^{-\pa_u}$, then $\mathfrak D_1^\bs(u)=t_{11}^\bs(u)$ and $\mathfrak D_2^\bs(u)=t_{22}^\bs(u)-t_{21}^\bs(u)t_{11}^\bs(u)^{-1}t_{12}^\bs(u)$ while $\sigma(\mc K)=T^{\tl{\bs}}(u)e^{-\pa_u}$, $\mathfrak D_1^{\tl{\bs}}(u)=t_{11}^{\tl{\bs}}(u)=t_{22}^\bs(u)$ and $\mathfrak D_2^{\tl{\bs}}(u)=t_{11}^\bs(u)-t_{12}^\bs(u)t_{22}^\bs(u)^{-1}t_{21}^\bs(u)$. In terms of Gaussian generators, we have
\beq\label{99+1}
\mathpzc{Ber}^\bs(\mc K)= \mathfrak D_1^\bs(u)e^{-\pa_u}\big(\mathfrak D_2^\bs(u)e^{-\pa_u}\big)^{-1}=\big(\mathfrak D_1^{\tl{\bs}}(u)e^{-\pa_u}\big)^{-1}\mathfrak D_2^{\tl{\bs}}(u)e^{-\pa_u},
\eeq
which implies further
\beq\label{99+2}
\mathfrak D_1^\bs(u) \mathfrak D_2^\bs(u)^{-1}= \mathfrak D_1^{\tl{\bs}}(u+1)^{-1}\mathfrak D_2^{\tl{\bs}}(u+1).
\eeq
This shows the equality in \cite[Rem. 3.11]{Lu2021gelfand} and \cite[Ex. 3.2]{Lu2022note}; see also \cite[Lem. 4.3]{Chang23center}.
\end{eg}

The general case is no different and one obtains immediately the following. Let $\wp^\bs_1=0$ if $\ka_1=1$ and $\wp^\bs_1=1$ if $\ka_1=-1$.\footnote{The definition is clear from \eqref{99+1}--\eqref{99+2} as if $\ka_1=-1$ one has to move $e^{\pa_u}$ through to the right which creates a shift by $1$.} Define $\wp^\bs_i$ recursively for $2\lle i\lle N$ by $\wp^\bs_{i+1}=\wp^\bs_i-\frac{1}{2}(\ka_i+\ka_{i+1})$,
cf. \eqref{eq:gammadef}. 

The following is a corollary of Proposition \ref{prophm} with Proposition \ref{prophm2} which recovers \cite[Thm.~4.5]{Chang23center} (with all difference operators moved to the right and then dropped). It was previously used in \cite[Lem.~2.2]{Lu2021gelfand} and \cite[Ex.~3.3]{Lu2022note}. Recall $\mathfrak P_\sigma^\bs$ from \eqref{eqPsigma}.

\begin{prop}\label{prophm2}
We have
\[
\mathpzc{Ber}^\bs(T^\bs(u)e^{-\pa_u})=\Big(\prod_{i=1}^N \mathfrak D_i^\bs(u+\wp_i^\bs)^{\ka_i}\Big)e^{-(\mathfrak m-\mathfrak n)\pa_u}.
\]
In particular, 
\[
\mathfrak P_\sigma^\bs: \prod_{i=1}^N \mathfrak D_i^\bs(u+\wp_i^\bs)^{\ka_i}\mapsto \prod_{i=1}^N \mathfrak D_i^{\tl\bs}(u+\wp_i^{\tl\bs})^{\tl \ka_i}.
\]
\end{prop}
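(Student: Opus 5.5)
The plan is to apply Proposition \ref{prophm} to the Manin matrix $\mc K=T^\bs(u)e^{-\pa_u}$ and then move every difference operator to the right. By the remark preceding the statement, the diagonal Gauss factors of $\mc K$ are $\mc D_i=\mathfrak D^\bs_i(u)e^{-\pa_u}$. Hence
\[
\mathpzc{Ber}^\bs(\mc K)=\big(\mathfrak D^\bs_1(u)e^{-\pa_u}\big)^{\ka_1}\cdots\big(\mathfrak D^\bs_N(u)e^{-\pa_u}\big)^{\ka_N}.
\]
For $\ka_i=-1$ we write $\big(\mathfrak D_i^\bs(u)e^{-\pa_u}\big)^{-1}=e^{\pa_u}\mathfrak D_i^\bs(u)^{-1}$.

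To move the operators, we use the identity $e^{c\pa_u}g(u)=g(u+c)e^{c\pa_u}$ and push all of them to the far right, proceeding by induction on $i$. Let $s_i$ be the total exponent of $e^{\pa_u}$ accumulated to the left of the $i$-th factor after rewriting. A factor with $\ka_i=1$ contributes $\mathfrak D_i^\bs(u+s_i)$ followed by $e^{-\pa_u}$. A factor with $\ka_i=-1$ contributes $e^{\pa_u}$ first, so it becomes $\mathfrak D_i^\bs(u+s_i+1)^{-1}$.

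The shift of the $i$-th factor is therefore $\wp_i=s_i+\delta_{\ka_i,-1}$, where $s_{i+1}=s_i-\ka_i$ and $s_1=0$. This gives $\wp_1=\delta_{\ka_1,-1}$, matching the definition. The recursion reads
\[
\wp_{i+1}-\wp_i=-\ka_i+\delta_{\ka_{i+1},-1}-\delta_{\ka_i,-1}.
\]
Since $\delta_{\ka,-1}=(1-\ka)/2$, the right side equals $-\ka_i-\ka_{i+1}/2+\ka_i/2=-\frac12(\ka_i+\ka_{i+1})$, which is exactly the stated recursion. The total operator left on the right is $e^{-\sum_i\ka_i\pa_u}=e^{-(\fkm-\fkn)\pa_u}$. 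This proves the first formula.

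For the second statement, take $\tl\bs=\sigma(\bs)$, using the convention that $\tl\ka_{\sigma(i)}=\ka_i$. Then $\mathfrak P^\bs_\sigma$ sends $T^\bs(u)$ to $T^{\tl\bs}$ with permuted entries, which is $\sigma(\mc K)$ up to identifying $\sigma$ with its inverse. Because $T^{\tl\bs}(u)e^{-\pa_u}$ is a Manin matrix, Proposition \ref{prophm} gives
\[
\mathfrak P_\sigma^\bs\big(\mathpzc{Ber}^\bs(T^\bs e^{-\pa_u})\big)=\mathpzc{Ber}^{\tl\bs}(T^{\tl\bs}e^{-\pa_u}).
\]
Applying the first part to both sides, the common factor $e^{-(\fkm-\fkn)\pa_u}$ appears on the right of each, since $\fkm$ and $\fkn$ are unchanged. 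Both sides are series in $u$ times this invertible operator, so we can cancel it and compare the coefficients of the series. This yields the claimed identity.

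The main obstacle is the bookkeeping of the shifts, namely checking that the recursion for $\wp_i$ matches in both sign cases, since the $\ka=-1$ case picks up the extra $+1$. A secondary point is justifying that Gauss factors may be treated as inverted operators, which relies on the standing invertibility assumption on the $\mc D_i$.
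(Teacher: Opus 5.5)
Your proposal is correct and follows the paper's own argument. The paper just applies Proposition \ref{prophm} to $T^\bs(u)e^{-\pa_u}$ and moves every difference operator to the right, exactly as in the example \eqref{99+1}--\eqref{99+2}; your bookkeeping $\wp_i=s_i+\delta_{\ka_i,-1}$, $s_{i+1}=s_i-\ka_i$ supplies the details it leaves implicit. For the second part, your phrase ``up to identifying $\sigma$ with its inverse'' is best made precise as applying Proposition \ref{prophm} with $\sigma^{-1}$ to $T^{\tl\bs}(u)e^{-\pa_u}$, using $\sigma^{-1}(\tl\bs)=\bs$ and $\sigma^{-1}(T^{\tl\bs})=\mathfrak P^\bs_\sigma(T^\bs)$.
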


Recall $\mathpzc{Ber}^\bs(u)$ from \eqref{eq:gammadef}--\eqref{cu}. Then we have the following. (Note that $\mathpzc{Ber}(u)$ is the Berezinian of the matrix $X(u)e^{-\pa_u}$ defined in this subsection.)

\begin{thm}\label{main4}
Let $\bs,\tl\bs$ be two symmetric parity sequences in $S_{\mathfrak m|\mathfrak n}$. If we identify the superalgebras $\Y^\bs$ and $\Y^{\tl\bs}$ via the isomorphism \eqref{eqPsigma2}, then we have $\mathpzc{Ber}^\bs(u)=\mathpzc{Ber}^{\tl\bs}(u)$.
\end{thm}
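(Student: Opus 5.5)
The plan is to reduce the statement to Proposition~\ref{prophm}, applied to a Manin matrix built from $X(u)$. Identifying $\mathpzc{Ber}^\bs(u)$ with a Berezinian in the sense of this subsection makes the permutation invariance automatic.

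\textbf{Step 1: a Manin matrix.} I would first show that $\mc K^\bs:=X(u)e^{-\pa_u}$ is a Manin matrix of parity sequence $\bs$, or at least that its Berezinian depends only on its Gauss diagonal. The cleanest route is the embedding \eqref{inc}, $X(u)\mapsto T(u)\wtl T'(-u)$, together with the fact that $T(u)e^{-\pa_u}$ is Manin. However, $\wtl T'(-u)$ involves $-u$, so the product is not obviously Manin. An alternative is to check the Manin relations directly from the quaternary relations \eqref{bcom}. There, the $1/(u-v)$ term at $v=u-1$ gives exactly the Yangian-type Manin relations, and the remaining $1/(u+v)$ terms must cancel pairwise after symmetrisation. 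I expect this verification to be the main obstacle.

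\textbf{Step 2: fallback if $X(u)e^{-\pa_u}$ is not Manin.} I would work inside $\rY(\gl^\bs)$ instead. By the quasideterminant description, $d_i(u)$ for $i\lle \ell$ coincide with the Gauss diagonal of a product in which $\wtl T'(-u)$ contributes only through blocks with indices $>\ell$. Combining this with \eqref{B=CC} reduces $\mathpzc{Ber}^\bs(u)$ to $\mathfrak C^\bs(u)$, which is built from Yangian-type data. Proposition~\ref{prophm2} then handles this part, because the upper-left $\ell\times\ell$ block maps $\rY(\gl^\bs_{[\ell]})\to\Y^\bs$ by Proposition~\ref{propA}.

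\textbf{Step 3: matching the shifts.} Next I would compare $\gamma_i$ with $\wp_i^\bs$. Both satisfy the same recursion $x_{i+1}=x_i-\frac12(\ka_i+\ka_{i+1})$, so $\gamma_i-\wp_i^\bs$ is a constant independent of $i$. The analogue of Proposition~\ref{prophm2} would give
\[
\mathpzc{Ber}^\bs(\mc K^\bs)=\prod_i d_i(u+\wp_i^\bs)^{\ka_i}e^{-(\fkm-\fkn)\pa_u}.
\]
Shifting $u$ by the constant $\gamma_1-\wp_1^\bs$ then turns this into $\mathpzc{Ber}^\bs(u)$ times the difference operator.

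I need this constant to transform compatibly under $\sigma$, that is, $\gamma_1^\bs-\wp_1^\bs=\gamma_1^{\tl\bs}-\wp_1^{\tl\bs}$. Since $\gamma_1=\varkappa-\frac12\ka_1$ and $\wp_1\in\{0,1\}$ according to $\ka_1$, the difference equals $\varkappa-\frac12$ in both cases $\ka_1=\pm1$. It therefore depends only on $\fkm,\fkn$, and the shift is uniform.

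\textbf{Step 4: conclusion.} Finally, $\mathfrak P_\sigma^\bs$ sends $\mc K^\bs$ to $\sigma(\mc K^\bs)=\mc K^{\tl\bs}$, using $\sigma(i)'=\sigma(i')$. Proposition~\ref{prophm} then gives equality of the two Berezinians. Dropping the common difference operator $e^{-(\fkm-\fkn)\pa_u}$ and applying the uniform shift from Step 3 yields $\mathpzc{Ber}^\bs(u)=\mathpzc{Ber}^{\tl\bs}(u)$.
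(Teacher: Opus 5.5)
\textbf{Step 1 fails.} The matrix $X(u)e^{-\pa_u}$ is not a Manin matrix, so Proposition~\ref{prophm} cannot be applied to it, and Step~4 as written has nothing to rest on. Specializing \eqref{quamat} at $v=u-1$ controls $R(1)X_1(u)R'(2u-1)X_2(u-1)$, not $R(1)X_1(u)X_2(u-1)$. The $1/(u+v)$ and $1/(u^2-v^2)$ terms of \eqref{bcom} therefore survive; they do not cancel pairwise. Already for $N=2$, $\bs=(1,1)$, take $(i,j,k,l)=(1,1,2,1)$ in \eqref{bcom} and set $v=u-1$:
\[
x_{11}(u)x_{21}(u-1)-x_{21}(u)x_{11}(u-1)=\frac{1}{2u-1}\big(x_{12}(u)x_{11}(u-1)+x_{11}(u)x_{21}(u-1)\big).
\]
The $u^{-2}$-coefficient of the right side is $\tfrac12(x_{12}^{(1)}+x_{21}^{(1)})$, which is nonzero by \eqref{isogr}. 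So even the requirement that entries in the same column commute fails.

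\textbf{The fallback is right but incomplete.} Your Step~2 is the paper's argument. You reduce via \eqref{B=CC} to $\mathfrak C^\bs(u)$, which lies in the image of $\rY(\gl^{\bs_{[1,\ell]}})\to\Y^\bs$, $t_{ij}(u)\mapsto x_{ij}(u)$. You then apply Proposition~\ref{prophm2} with your uniform shift $\varkappa-\tfrac12$ from Step~3, which is correct. Two side remarks: your comment about $\wtl T'(-u)$ contributing only through blocks with indices $>\ell$ is wrong and not needed. What you need is that the correction terms in \eqref{bcom} vanish when all indices are $\lle\ell$.

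What you are missing is the ingredient that makes the reduction legitimate. The map $\mathfrak P^\bs_\sigma$ must carry the upper-left $\ell\times\ell$ block of $X^\bs(u)$ onto that of $X^{\tl\bs}(u)$. This requires choosing $\sigma$ with $\sigma(\{1,\dots,\ell\})=\{1,\dots,\ell\}$. For a general admissible $\sigma$, $\mathfrak P^\bs_\sigma(\mathfrak C^\bs(u))$ is built from entries $x_{\sigma(a)\sigma(b)}(u)$ outside that block, and Proposition~\ref{prophm2} says nothing about it.

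You must show such a $\sigma$ exists:
\begin{itemize}
\item A symmetric $\bs$ is determined by $\bs_{[1,\ell]}$.
\item For $N=2\ell+1$, the middle entry is forced because $\fkm$ and $\fkn$ are not both odd.
\item Hence $\bs_{[1,\ell]}$ and $\tl\bs_{[1,\ell]}$ contain the same number of $1$'s. Take a matching permutation of $\{1,\dots,\ell\}$ and extend it by $\sigma(i')=\sigma(i)'$.
\end{itemize}

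For $N$ odd you must also check that the middle factor $d_{\ell+1}(u)^{\ka_{\ell+1}}$ in \eqref{B=CC} is preserved. With this $\sigma$ one has $\sigma(\ell+1)=\ell+1$. The quasi-determinant \eqref{gd1} defining $d_{\ell+1}(u)$ is unchanged under a simultaneous permutation of the first $\ell$ rows and columns, so it is preserved.
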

\begin{proof}
Note that if $N=2\ell+1$, then the parity $\ka_{\ell}$ is fixed and depends on the parity of $\mathfrak m$ and $\mathfrak n$. Note that $\mathfrak m$ and $\mathfrak n$ cannot be both odd. In addition, a symmetric parity sequence $\bs$ is determined by its parity subsequence $\bs_{[1,\ell]}$. Let $\tl\bs$ be another symmetric parity sequence in $S_{\fkm|\fkn}$. Then a permutation $\sigma\in\mathfrak S_{N}$ with $\ka_{i}=\tl{\ka}_{\sigma(i)}$ and $\sigma(i)'=\sigma(i')$ for $i\in\I$ can be chosen to satisfy the property that $\{1,\cdots,\ell\}$ is invariant under $\sigma$.

Moreover, if $N=2\ell+1$, then, by the quasi-determinant presentation of $d_i(u)$ in \eqref{gd1}, the series $d_{\ell+1}(u)$ remains the same under the permutation $\sigma$ we choose. Thus, by \eqref{B=CC}, it suffices to show that $\mathfrak C^\bs(u)$ is independent of $\bs$.

Recall the definition of $\rY(\gl_{[\ell]}^{\bm\ka})$ from \eqref{ysbs-sub}. By \eqref{bcom}, there is a homomorphism from
\[
\rY(\gl_{[\ell]}^{\bm\ka})\to \Y^\bs,\quad t_{ij}(u)\mapsto x_{ij}(u),\quad 1\lle i,j\lle \ell,
\]
see also Proposition \ref{secA} below. Moreover, the homomorphism sends $\mathfrak D_i(u)$ to $d_i(u)$ for $1\lle i\lle \ell$. Recall from \eqref{eq:gammadef} that
$\gamma_1=\varkappa-\frac{1}{2}\ka_1$.
Since $\varkappa$ remains the same for all symmetric parity sequences, the claims follow from Proposition \ref{prophm2}.
\end{proof}

\section{Relations between Gaussian generators}\label{sec:lowrk}
In this section, we work out the relations of type A and the relations between Gaussian generators when $N=2,3,4,5$. For low rank situation, there will be two main cases, i.e. with a fixed point ($N=2,4$) for the Dynkin diagram automorphism $\tau$ and without a fixed point ($N=3,5$).
\subsection{Relations of type A}\label{secA}
Suppose $N\gge 2m \gge  4$. Recall the definition of $\rY(\gl_{[m]}^{\bm\ka})$ from \eqref{ysbs-sub}. By \eqref{bcom}, there is a homomorphism from
\[
\rY(\gl_{[m]}^{\bm\ka})\to \Y^\bs,\quad t_{ij}(u)\mapsto x_{ij}(u),\quad 1\lle i,j\lle m.
\]
Therefore, the relations among $d_i(u),e_{j}(v),f_{k}(w)$ for $1\lle i\lle m$ and $1\lle j,k<m$ are the same as  those in $\rY(\gl_{[m]}^{\bm\ka})$. These relations are given in \cite{Gow2007gauss,Peng2016parabolic,tsymbaliuk2020shuffle} which we shall list below. 

Let $\ell=\lfloor\tfrac{N}2\rfloor$. For $1\lle i<\ell$, set
\[
e_i^\circ(u)=\sum_{r\gge 2}e_i^{(r)}u^{-r},\qquad f_i^\circ(u)=\sum_{r\gge 2}f_i^{(r)}u^{-r},\qquad \zeta_i(u)=\tl d_i(u)d_{i+1}(u).
\]

\begin{prop}\label{propA}
The following relations hold in $\scrX^\bs$, with the conditions on the indices $1\lle i,j<\ell$ and $1\lle k,l\lle \ell$,
\begin{align*}
&[d_k(u),d_l(v)]=0,\\
&[e_i(u),f_j(v)]=\delta_{ij}\ka_{i+1}\frac{\zeta_i(u)-\zeta_i(v)}{u-v},\\
&[d_k(u),e_j(v)]=\ka_k(\delta_{k,j+1}-\delta_{kj})\frac{d_k(u)(e_j(u)-e_j(v))}{u-v},\\
&[d_k(u),f_j(v)]=\ka_k(\delta_{kj}-\delta_{k,j+1})\frac{(f_j(u)-f_j(v))d_k(u)}{u-v},\\
&[e_i(u),e_i(v)]=\ka_i\frac{(e_i(u)-e_i(v))^2}{u-v},\\
&[f_i(u),f_i(v)]=-\ka_i\frac{(f_i(u)-f_i(v))^2}{u-v},\\
&[e_i(u),e_j(v)]=[f_i(u),f_j(v)]=0,\qquad\quad~~  \text{ if }c_{ij}=0.
\end{align*}
Moreover, we have, for $1\lle i\lle \ell-2$,
\begin{align*}
u[e_i^\circ(u),e_{i+1}(v)]-v[e_i(u),e_{i+1}^\circ(v)]&=\ka_{i+1}e_i(u)e_{i+1}(v),\\
u[f_{i+1}(v),f_i^\circ(u)]-v[f_{i+1}^\circ(v),f_i(u)]&=\ka_{i+1}f_{i+1}(v)f_{i}(u),
\end{align*}
and the cubic Serre relations, for $1\lle i,j<\ell$ with $|i-j|=1$,
\begin{align*}
\big[e_i(u),[e_i(v),e_j(w)]\big]+\big[e_i(v),[e_i(u),e_j(w)]\big]&=0,\\
\big[f_i(u),[f_i(v),f_j(w)]\big]+\big[f_i(v),[f_i(u),f_j(w)]\big]&=0,
\end{align*}
and the quartic Serre relations, for $1\lle i<\ell-1$, $|\alpha_i|=\bar 1$, $|\alpha_{i-1}|=|\alpha_{i+1}|=\bar 0$,
\begin{align*}
\big[[e_{i-1}(u),e_i^{(1)}],[e_i^{(1)},e_{i+1}(v)]\big]=\big[[f_{i-1}(u),f_i^{(1)}],[f_i^{(1)},f_{i+1}(v)]\big]=0.
\end{align*}
\end{prop}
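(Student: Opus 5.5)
The plan is to reduce Proposition~\ref{propA} entirely to the known Gauss-decomposition relations for the super Yangian of type A, through the homomorphism $\rY(\gl_{[m]}^{\bm\ka})\to\scrX^\bs$, $t_{ij}(u)\mapsto x_{ij}(u)$ for $1\lle i,j\lle m$, with $m=\ell$.

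\emph{Step 1: check that the map is a homomorphism.} For $1\lle i,j,k,l\lle \ell$ one has $j'\gge N+1-\ell>\ell\gge k$, so $\delta_{kj'}=0$. By the same argument $\delta_{il'}=0$ and $\delta_{ij'}=0$. Hence the last two lines of \eqref{bcom} vanish, and the relation reduces exactly to \eqref{Trel}.

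\emph{Step 2: transport the Gaussian generators.} The quasi-determinants \eqref{gd1}--\eqref{gd3} defining $d_k(u)$, $e_i(u)$, $f_i(u)$ for $k\lle\ell$ and $i<\ell$ involve only the entries $x_{ab}(u)$ with $a,b\lle \ell$. Indeed, $e_i=e_{i,i+1}$ and $f_i=f_{i+1,i}$ use only rows and columns up to $i+1\lle \ell$. The corresponding quasi-determinants in $\rY(\gl_{[\ell]}^{\bm\ka})$ are built from the $t_{ab}(u)$ in the same way. Therefore the homomorphism sends the Gaussian generators $\mathfrak D_k(u)$, $e_i(u)$, $f_i(u)$ of $\rY(\gl_{[\ell]}^{\bm\ka})$ to $d_k(u)$, $e_i(u)$, $f_i(u)$ of $\scrX^\bs$. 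Consequently every relation among them in the super Yangian also holds among their images.

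\emph{Step 3: quote the known relations.} All the listed relations are known in $\rY(\gl^{\bs_{[1,\ell]}})$:
\begin{itemize}
\item the $d$--$d$, $d$--$e$, $d$--$f$, $e$--$f$ relations and the relations $[e_i(u),e_i(v)]$, $[f_i(u),f_i(v)]$;
\item the commutation of $e_i$ with $e_j$ (and $f_i$ with $f_j$) when $c_{ij}=0$;
\item the $e_i^\circ$ relations;
\item the cubic Serre relations;
\item the quartic Serre relations at odd nodes flanked by even nodes.
\end{itemize}
For arbitrary parity sequences these are proved in \cite{Gow2007gauss,Peng2016parabolic,tsymbaliuk2020shuffle}.

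The main obstacle is bookkeeping of conventions. The sign factor $(-1)^{|i||j|+|j|}$ in $T(u)$ and in $X(u)$ is the same, so the matrix conventions match. What remains is to check that the normalizations $\ka_{i+1}$, $\ka_k$, $\ka_i$ and $c_{ij}$ in the cited references agree with the signs written in the statement. I would verify this in rank two, by computing $[e_1(u),f_1(v)]$ and $[e_1(u),e_1(v)]$ directly from \eqref{Trel} with $\bs=(\ka_1,\ka_2)$, and then adjust the signs accordingly.

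For the quartic Serre relation, the cited results state it with $e_i(u)$ in general form. Here it is needed only with $e_i^{(1)}$ at the middle node, which is a coefficient specialization of that general form.
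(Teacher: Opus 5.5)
Your proposal is correct and follows the paper's argument. The paper likewise notes that for indices $1\lle i,j,k,l\lle \ell$ the twisted terms in \eqref{bcom} vanish, which gives a homomorphism $\rY(\gl^\bs_{[\ell]})\to\scrX^\bs$, $t_{ij}(u)\mapsto x_{ij}(u)$; this map carries the Gaussian generators to $d_k(u)$, $e_i(u)$, $f_i(u)$, and the paper then quotes the relations from \cite{Gow2007gauss,Peng2016parabolic,tsymbaliuk2020shuffle}, with your explicit checks ($\delta_{kj'}=\delta_{il'}=\delta_{ij'}=0$, and the quasi-determinants using only the upper-left $\ell\times\ell$ block) simply spelling out what the paper leaves implicit.
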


Comparing to \cite{lu2024drinfeld}, the quartic Serre relation is new. We verify the relation \eqref{qsconjnew} here. By Proposition \ref{propnewserre} (and its proof), it suffices to establish the following.

\begin{lem}\label{newserrelem}
If $i+1\lle \lfloor \frac{N}2\rfloor$ or $i-1\gge \lfloor \frac{N+1}2\rfloor$, then 
$$\big[[e_{i-1}^{(1)},e_i^{(1)}],[e_i^{(1)},e_{i+1}^{(1)}]\big]=\big[[f_{i-1}^{(1)},f_i^{(1)}],[f_i^{(1)},f_{i+1}^{(1)}]\big]=0.$$
\end{lem}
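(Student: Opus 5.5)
The plan is to reduce the statement to the quartic Serre relation in the ordinary super Yangian, using the type A embedding from \S\ref{secA} together with the symmetries of the Gaussian generators. There are two cases.

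\emph{Case $i+1\lle \lfloor N/2\rfloor=\ell$.} All indices $i-1,i,i+1,i+2$ lie in $\{1,\dots,\ell\}$. The elements $e_j^{(1)},f_j^{(1)}$ for $j\in\{i-1,i,i+1\}$ are built from the entries $x_{ab}(u)$ with $1\lle a,b\lle \ell$ via the quasideterminants \eqref{gd2}--\eqref{gd3}. Hence they are the images of the corresponding Gaussian generators of $\rY(\gl^\bs_{[\ell]})$ under the homomorphism $t_{ab}(u)\mapsto x_{ab}(u)$, which is well defined by \eqref{bcom}. In $\rY(\gl^\bs_{[\ell]})$ the quartic Serre relation for $|\alpha_i|=\bar 1$ with even neighbours is known: it is the classical-level identity
\[
\big[[e_{i-1}^{(1)},e_i^{(1)}],[e_i^{(1)},e_{i+1}^{(1)}]\big]=0,
\]
since $e_j^{(1)}$ is (up to sign) $t_{j,j+1}^{(1)}$, which is $\ka_j e_{j,j+1}\in\gl^\bs_{[\ell]}$ under \eqref{eq:cl-limitA}. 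Alternatively one can take it directly from the $r=s=0$ specialization of the quartic relation listed in Proposition \ref{propA}, citing \cite{Gow2007gauss,Peng2016parabolic,tsymbaliuk2020shuffle}. The relation for the $f$'s follows either in the same way or by applying the anti-automorphism $\eta$ of Lemma \ref{lem:eta}. The map $\eta$ sends $e_j^{(1)}$ to $\pm f_j^{(1)}$ and reverses products, so it carries the nested supercommutator to $\pm$ the corresponding one in the $f$'s.

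One caveat concerns the edge case $i+1=\ell$. Proposition \ref{propA} states the quartic relation only for $i<\ell-1$, so here I would use the first-coefficient identity in $\rY(\gl^\bs_{[\ell]})$ directly. It only needs indices up to $i+2=\ell+1$ for the $e_{i+1}$ entry. That is still one beyond $\ell$, so it is not quite inside $\rY(\gl^\bs_{[\ell]})$, and I would handle it through the symmetric case below.

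\emph{Case $i-1\gge \lfloor (N+1)/2\rfloor$.} Apply the symmetry of Lemma \ref{efaiii}, namely $e_j(u)=-f_{\tau j}(-u)$. Taking the coefficient of $u^{-1}$ gives $e_j^{(1)}=f_{\tau j}^{(1)}$. The indices $\tau(i+1),\tau i,\tau(i-1)$ are consecutive, and the largest of them, $\tau(i-1)=N-i+1$, satisfies $\tau(i-1)+1\lle \lfloor N/2\rfloor$ because $i-1\gge\lfloor (N+1)/2\rfloor$. Also $|\alpha_{\tau j}|=|\alpha_j|$. The $e$-relation for index $i$ therefore becomes, after reordering the supercommutators by supersymmetry (which only affects overall signs), the $f$-relation for index $\tau i$, which falls under the first case. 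The same argument applies with $e$ and $f$ interchanged.

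The main obstacle is bookkeeping the boundary situation. In the first case one must make sure that all entries involved really come from a top-left block on which \eqref{bcom} reduces to the Yangian relations \eqref{Trel}; the $\delta_{kj'}$ and $\delta_{il'}$ terms must vanish. This requires all row and column indices to be at most $\ell$ and to avoid $j'$ collisions, and $e_{i+1}^{(1)}$ involves the index $i+2$. If $i+2=\ell+1$ causes trouble, I would instead use the rank-reduction homomorphism $\psi_m^\bs$ (Lemma \ref{shiftlem}) with $m=i-2$ to shift to a small rank algebra $\scrX^\bs_{(m,m')}$. There the relation concerns $e_1^{(1)},e_2^{(1)},e_3^{(1)}$, and the entries $x_{ab}$ with $a,b\lle 4$ satisfy the pure Yangian relations provided $4<4'$ in that algebra, i.e.\ the reduced $N\gge 8$. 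That inequality holds precisely because $|\alpha_{i+1}|=\bar 0$ forces $i+1$ away from the middle node. I would verify this index inequality carefully.
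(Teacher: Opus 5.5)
Your argument is fine when $i+2\le \ell=\lfloor N/2\rfloor$. In that range every entry involved lies in the top-left $\ell\times\ell$ block, where $t_{ab}(u)\mapsto x_{ab}(u)$ is a homomorphism, and the claim reduces to the degree-zero $\gl$ identity.

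\textbf{The gap is the boundary case.} When $i+1=\ell$, and in its mirror $i-1=\lfloor (N+1)/2\rfloor$, the element $e^{(1)}_{i+1}=x^{(1)}_{\ell,\ell+1}$ involves the index $\ell+1$. The top-left $(\ell+1)\times(\ell+1)$ block no longer satisfies the untwisted relations \eqref{Trel}. This case is not marginal: for $N=6,7$ it is the only case in which the lemma has content, e.g.\ $\bs=(1,1,-1,-1,1,1)$ with $i=2$.

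Neither of your remedies closes it.
\begin{itemize}
\item The symmetry $e^{(1)}_j=f^{(1)}_{\tau j}$ sends $i$ with $i+1=\ell$ to $\tau i$ with $\tau i-1=\lfloor (N+1)/2\rfloor$. That is the boundary of your second case, which you then send back to the boundary of the first case, so the argument is circular. Your inequality $\tau(i-1)+1\le\lfloor N/2\rfloor$ is off by one: what holds is $\tau(i-1)=\tau i+1\le\lfloor N/2\rfloor$, with equality exactly in the boundary case.
\item The rank-reduction fallback with $m=i-2$ lands in an algebra of size $N-2i+4$. In the boundary case this equals $6$ or $7$, not $\ge 8$.
\item The claim that the parity conditions keep $i+1$ away from the middle node is false. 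For $N=2\ell$ the middle node $\ell$ is always even, so $i+1=\ell$ with $|\alpha_{i+1}|=\bar 0$ is allowed.
\end{itemize}

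\textbf{How to close it.} You do not need any block to satisfy the full Yangian relations, only three first-order brackets. Take the coefficient of $u^{-1}v^{-1}$ in \eqref{bcom} for $[x_{ab}(u),x_{cd}(v)]$. The $\frac{1}{u^2-v^2}$ term contributes nothing at this order. The $\frac{1}{u+v}$ term contributes $\pm\big(\delta_{cb'}x^{(1)}_{a'd}-\delta_{ad'}x^{(1)}_{cb'}\big)$.

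For $(a,b,c,d)=(i-1,i,i,i+1)$, $(i,i+1,i+1,i+2)$ and $(i-1,i+1,i,i+2)$, these Kronecker deltas are nonzero only if $N\in\{2i-1,2i+1,2i\}$. This is excluded by $N\ge 2i+2$, including when $i+2=\ell+1$. Hence
\begin{itemize}
\item $[x^{(1)}_{i-1,i},x^{(1)}_{i,i+1}]=\ka_i x^{(1)}_{i-1,i+1}$,
\item $[x^{(1)}_{i,i+1},x^{(1)}_{i+1,i+2}]=\ka_{i+1}x^{(1)}_{i,i+2}$,
\item $[x^{(1)}_{i-1,i+1},x^{(1)}_{i,i+2}]=0$.
\end{itemize}
With $e^{(1)}_j=x^{(1)}_{j,j+1}$, this gives the $e$-relation for all $i$ with $i+1\le\lfloor N/2\rfloor$. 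The other cases then follow from $\eta$ and Lemma \ref{efaiii} as you describe. This direct first-order computation is exactly the paper's proof.
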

\begin{proof}
We first consider the case $i+1\lle \lfloor \frac{N}2\rfloor$ and $\big[[e_{i-1}^{(1)},e_i^{(1)}],[e_i^{(1)},e_{i+1}^{(1)}]\big]$. By \eqref{bcom}, we have
\begin{align*}
[x_{i-1,i}^{(1)},x_{i,i+1}^{(1)}]=\ka_ix_{i-1,i+1}^{(1)},\quad [x_{i,i+1}^{(1)},x_{i+1,i+2}^{(1)}]=\ka_{i+1}x_{i,i+2}^{(1)}.
\end{align*}
Again by \eqref{bcom}, we have $[x_{i-1,i+1}^{(1)},x_{i,i+2}^{(1)}]=0$. Since $e_j^{(1)}=x_{j,j+1}^{(1)}$, the desired relation follows.

The other situations are obtained by applying the anti-automorphism $\eta$ and Lemma \ref{efaiii} to $$\big[[e_{i-1}^{(1)},e_i^{(1)}],[e_i^{(1)},e_{i+1}^{(1)}]\big]=0;$$ see also Lemma \ref{lem:eta}.
\end{proof}

Recall Lemma \ref{efaiii} and the definition of $h_i(u),b_i(u)$ from \eqref{beven}--\eqref{hodd}. It is clear that the commutator relations \eqref{qsconj0}--\eqref{qsconjnew} between the generating series    $h_i(u),b_i(u)$ for $i$ not close to $\ell$ can be deduced from the relations listed in Proposition \ref{propA} (exactly as $\rY(\mathfrak{sl}^\bs_{[\ell]})$) and Lemma \ref{newserrelem}; see also Lemma \ref{lemalt} and Lemma \ref{lemnew1}. Thus, we are left with verifying the relations for $i$ close to $\ell$. By Lemma \ref{shiftlem}, it suffices to do that for the case when $N$ is small, namely $N=2,3,4,5$. Note that the case $N=4,5$ is mainly for the Serre relations.

\subsection{Relations in the case $N=2$}
In this case, we have $\ka_1=\ka_2$. All elements here are even.
\begin{lem}\label{b2ser}
We have the following relations in $\scrX^{(\ka_1,\ka_2)}$,
\begin{align}
[d_i(u),d_j(v)]&=0,\label{b2dd}\\
e(u)&=-f(-u),\label{b2e=f}\\
\tl d_1(u)d_2(u)&=\tl d_1(-u)d_2(-u),\label{b2d=d}\\
[d_1(u),f(v)]&=\frac{\ka_1}{u-v}(f(u)-f(v))d_1(u)+\frac{\ka_1}{u+v}d_1(u)(e(u)+f(v)),\label{b2d1f}\\
[d_2(u),f(v)]&=\frac{\ka_1}{u-v}(f(v)-f(u))d_2(u)-\frac{\ka_1}{u+v}d_2(u)(e(u)+f(v)),\label{b2d2f}\\
[f(u),f(v)]&=-\frac{\ka_1}{u-v}(f(u)-f(v))^2+\frac{\ka_1}{u+v}\big(\tl d_1(u)d_2(u)-\tl d_1(v)d_2(v)\big).\label{b2ff}
\end{align}
\end{lem}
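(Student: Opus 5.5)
For $N=2$ with $\ka_1=\ka_2$ every generator is even. The plan is to write the Gauss decomposition explicitly and then read off each relation from the quaternary relations \eqref{bcom}. Since $N=2$, we have $1'=2$ and $2'=1$, and
\[
x_{11}(u)=d_1(u),\quad x_{12}(u)=d_1(u)e(u),\quad x_{21}(u)=f(u)d_1(u),\quad x_{22}(u)=d_2(u)+f(u)d_1(u)e(u).
\]
Relations \eqref{b2e=f} and \eqref{b2d=d} follow directly from Lemma~\ref{efaiii}. With $i=1$ and $N=2$ it gives $e_1(u)=-f_{\tau 1}(-u)=-f(-u)$ and $\tl d_1(u)d_2(u)=\tl d_{1}(-u)d_{2}(-u)$. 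For \eqref{b2dd}, Lemma~\ref{lemnew1} already gives $[d_i(u),d_i(v)]=0$. The mixed case $[d_1(u),d_2(v)]=0$ then follows because $c(u)\tl d_2(u)=d_1(-u)$ with $c(u)$ central. Hence $d_2(v)$ is a series in $d_1(-v)$ and the coefficients of $c$, and so it commutes with $d_1(u)$.

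For \eqref{b2d1f}, I will take \eqref{bcom} with $(i,j,k,l)=(1,1,2,1)$, which computes $[x_{11}(u),x_{21}(v)]=[d_1(u),f(v)d_1(v)]$. The $\frac1{u-v}$ term gives $\ka_1(x_{21}(u)x_{11}(v)-x_{21}(v)x_{11}(u))$. The $\frac1{u+v}$ term survives only through $\delta_{kj'}=\delta_{21'}=1$ and contributes $\ka_1\sum_a x_{1a'}(u)x_{a1}(v)$. There is no $\frac1{u^2-v^2}$ term because $\delta_{ij'}=\delta_{12}=0$. I will then substitute the Gauss expressions and use $[d_1(u),d_1(v)]=0$. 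Right-multiplying by $\tl d_1(v)$ should give \eqref{b2d1f}. One point needs care: the sum contains $x_{12}(u)x_{11}(v)+x_{11}(u)x_{21}(v)$, which becomes $d_1(u)e(u)d_1(v)+d_1(u)f(v)d_1(v)$. To reach $d_1(u)(e(u)+f(v))$ I still have to move $d_1(v)$ past $e(u)$. I expect this to close up once the $f(u)$ term is rearranged, possibly by using $e(u)=-f(-u)$ together with the commutation of $d_1(u)$ with $f(v)$ evaluated at suitable points.

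Relation \eqref{b2d2f} should follow from \eqref{b2d1f} with almost no extra work. Write $d_2(u)=c(-u)\tl d_1(-u)$ and use $e(u)=-f(-u)$. Then commute $\tl d_1(-u)$ with $f(v)$ by inverting \eqref{b2d1f} at $-u$. The $\frac1{u-v}$ and $\frac1{u+v}$ terms should swap roles and change sign. Alternatively, \eqref{b2d2f} can be checked directly from \eqref{bcom} with $(2,2,2,1)$.

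Finally, for \eqref{b2ff} I will use \eqref{bcom} with $(i,j,k,l)=(2,1,2,1)$, i.e.\ $[x_{21}(u),x_{21}(v)]$. Here $\delta_{kj'}=\delta_{il'}=1$ and $\delta_{ij'}=0$. I will expand $x_{21}=f d_1$ and strip off the factors $d_1$ on the right. To do this I will push every $d_1$ to the right using \eqref{b2d1f}, and rewrite the $\frac1{u+v}$ sums in terms of $x_{22}-x_{21}x_{11}^{-1}x_{12}=d_2$. This should produce the term $\tl d_1 d_2$. This bookkeeping is the main obstacle: the unwanted $e(u)$ contributions coming from \eqref{b2d1f} have to cancel against pieces of $x_{22}$. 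If the direct computation becomes unmanageable, a fallback is to apply the anti-automorphism $\eta$ from Lemma~\ref{lem:eta} and use $e(u)=-f(-u)$ to symmetrize, or to work instead with $[e(u),f(v)]$ via $(1,2,2,1)$ and then substitute $e(u)=-f(-u)$.
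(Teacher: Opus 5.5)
\textbf{There is a genuine gap in your setup for \eqref{b2ff}; the rest of the proposal is sound.}

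Your handling of \eqref{b2dd}--\eqref{b2d2f} is fine. Deriving \eqref{b2d1f} from $[x_{11}(u),x_{21}(v)]$ is even slightly more direct than the paper, which starts from $[x_{11}(u),x_{12}(v)]$ and then converts via $e(u)=-f(-u)$. The obstacle you anticipate there does not exist. The factor $d_1(v)$ already stands at the far right of both $d_1(u)e(u)d_1(v)$ and $d_1(u)f(v)d_1(v)$. So right multiplication by $\tl d_1(v)$ gives \eqref{b2d1f} at once, with nothing to commute.

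The problem with \eqref{b2ff} is a wrong Kronecker delta. For $(i,j,k,l)=(2,1,2,1)$ and $N=2$ one has $j'=1'=2=i$, so $\delta_{ij'}=1$, not $0$. Hence the $\frac{1}{u^2-v^2}$ line of \eqref{bcom} is present. It carries exactly the same sum as the $\frac1{u+v}$ line, namely $[x_{21}(u),x_{21}(v)]+x_{22}(u)x_{11}(v)-x_{22}(v)x_{11}(u)$.

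Without that line, the identity you start from is false: combined with the true one, it would force $[x_{21}(u),x_{21}(v)]=0$. Concretely, your bookkeeping would leave an uncancelled term $\frac{\ka_1}{u-v}[x_{21}(u),x_{21}(v)]$, so the computation cannot close.

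The fix is to keep both lines, clear denominators, and cancel the common factor $u-v-\ka_1$ (using $\ka_1^2=1$). This gives
\[
(u+v)[x_{21}(u),x_{21}(v)]=\ka_1\big([x_{21}(u),x_{21}(v)]+x_{22}(u)x_{11}(v)-x_{22}(v)x_{11}(u)\big).
\]
This is the $\eta$-image of the relation the paper uses for $[x_{12}(u),x_{12}(v)]$.

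From here your plan goes through, and no quasideterminant rewriting is needed:
\begin{enumerate}
\item Divide by $u+v$ and substitute $x_{21}=fd_1$ and $x_{22}=d_2+fd_1e$.
\item On the left-hand side only, write $[x_{21}(u),x_{21}(v)]=f(u)d_1(u)f(v)d_1(v)-f(v)d_1(v)f(u)d_1(u)$. Rewrite $d_1(u)f(v)$ and $d_1(v)f(u)$ there using \eqref{b2d1f}.
\item The $\frac{\ka_1}{u+v}$-parts produced this way cancel exactly the $fd_1fd_1$ and $fd_1ed_1$ terms on the right.
\end{enumerate}
What remains is
\[
[f(u),f(v)]d_1(u)d_1(v)+\frac{\ka_1}{u-v}\big(f(u)-f(v)\big)^2d_1(u)d_1(v)=\frac{\ka_1}{u+v}\big(d_2(u)d_1(v)-d_2(v)d_1(u)\big).
\]
Multiplying on the right by $\tl d_1(u)\tl d_1(v)$ gives \eqref{b2ff}.
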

\begin{proof}
Equations \eqref{b2dd}--\eqref{b2d=d} follow directly from Lemma \ref{efaiii} and Lemma \ref{lemnew1}.

\mybox{Equations \eqref{b2d1f}--\eqref{b2d2f}}. Setting $i=j=k=1$ and $l=2$ in \eqref{bcom} and using \eqref{b2dd}, we have
\beq\label{b2pf1}
(u^2-v^2)[d_1(u),e(v)]=\ka_1(u+v)d_1(u)(e(v)-e(u))-\ka_1(u-v)(e(v)+f(u))d_1(u).
\eeq
Thus \eqref{b2d1f} follows from \eqref{b2e=f} and \eqref{b2pf1}. By Lemma \ref{efaiii}, we have $d_2(u)=c(u)\tl d_1(-u)$. Note that $c(u)$ is central. Therefore \eqref{b2d2f} follow from \eqref{b2e=f} and \eqref{b2d1f}.

\mybox{Equation \eqref{b2ff}}. Setting $i=k=1$ and $j=l=2$, we have
\[
(u+v)[x_{12}(u),x_{12}(v)]=\ka_1\big([x_{12}(u),x_{12}(v)]+x_{11}(u)x_{22}(v)-x_{11}(v)x_{22}(u)\big).
\]
Rewriting it in terms of Gaussian generating series, we obtain
\beq\label{b2pf3}
\begin{split}
&\, d_1(u)e(u)d_1(v)e(v)-d_1(v)e(v)d_1(u)e(u)\\
=&\,\frac{\ka_1}{u+v}\Big(d_1(u)e(u)d_1(v)e(v)-d_1(v)e(v)d_1(u)e(u)+d_1(u)d_2(v)\\&\qquad \qquad +d_1(u)f(v)d_1(v)e(v)-d_1(v)d_2(u)-d_1(v)f(u)d_1(u)e(u)\Big)
\end{split}
\eeq
By \eqref{b2pf1}, we have
\beq\label{b2pf4}
e(v)d_1(u)=d_1(u)e(v)-\frac{\ka_1}{u-v}d_1(u)(e(v)-e(u))+\frac{\ka_1}{u+v}(e(v)+f(u))d_1(u).
\eeq
Using \eqref{b2pf4} to commute $e(u)d_1(v)$ and $e(v)d_1(u)$ in \eqref{b2pf3}, we find that the l.h.s. of \eqref{b2pf3} is transformed to
\begin{align*}
d_1(u)d_1(v)[e(u),e(v)]&-\frac{\ka_1}{u-v}d_1(u)d_1(v)(e(u)-e(v))^2\\ &+\frac{\ka_1}{u+v}\Big(d_1(u)e(u)d_1(v)e(v)+d_1(u)f(v)d_1(v)e(v)\\&\qquad\qquad-d_1(v)e(v)d_1(u)e(u)-d_1(v)f(u)d_1(u)e(u)\Big).
\end{align*}
Thus it follows from \eqref{b2pf3} that
\[
[e(u),e(v)]=\frac{\ka_1}{u-v}(e(u)-e(v))^2-\frac{\ka_1}{u+v}\big(\tl d_1(u)d_2(u)-\tl d_1(v)d_2(v)\big).
\]
By \eqref{b2e=f}, we obtain \eqref{b2ff}.
\end{proof}

It is convenient to use the following notation. We write
\[
A(u,v)\simeq B(u,v)
\]
if $A(u,v)$ and $B(u,v)$ have the same coefficients of $u^{-r-1}v^{-s-1}$ for $r,s\in\bN$. We sometimes use the same notation for the case $r\in\Z$ and $s\in\bN$. When the case $r\in\bZ$ is used, we will clarify it further.

Recall $b(u)=\sqrt{-1}f(u)$ and $h(u)=\tl d_1(u)d_2(u)$ from \eqref{beven} and \eqref{heven}, respectively. Here we drop the subscript $i$ as the rank is one. 

\begin{lem}\label{b2s}
We have the relations in $\scrX^{(\ka_1,\ka_2)}$ in terms of generating series,
\begin{align*}
&[h(u),h(v)]=0,\qquad h(u)=h(-u),\\
&[b(u),b(v)]=-\frac{\ka_1}{u-v}(b(u)-b(v))^2-\frac{\ka_1}{u+v}(h(u)-h(v)),\\\
&[h(u),b(v)]\simeq\frac{1}{u^2-v^2}\big((2\ka_1v+1)h(u)b(v)+(2\ka_1v-1)b(v)h(u)\big).
\end{align*}
\end{lem}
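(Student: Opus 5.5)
We translate Lemma \ref{b2ser} into the $b,h$ language. For $N=2$ we have $\varkappa=\ka_1$ and $\varrho_1=\ka_1$, so the shift $\tfrac{\varkappa-\varrho_1}{2}$ vanishes. Hence $b(u)=\sqrt{-1}f(u)$ and $h(u)=\tl d_1(u)d_2(u)$ carry no shift.

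\emph{First two relations.} The identity $[h(u),h(v)]=0$ follows at once from \eqref{b2dd}. The symmetry $h(u)=h(-u)$ is exactly \eqref{b2d=d}.

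\emph{The $bb$-relation.} Multiply \eqref{b2ff} by $(\sqrt{-1})^2=-1$. Since $(b(u)-b(v))^2=-(f(u)-f(v))^2$, the quadratic term becomes
\[
-[f(u),f(v)]\cdot(-1)=-\frac{\ka_1}{u-v}(b(u)-b(v))^2 .
\]
The $h$-term picks up a sign and becomes $-\frac{\ka_1}{u+v}(h(u)-h(v))$. This is the stated formula, and the only thing to watch is the sign bookkeeping.

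\emph{The $hb$-relation.} This is the main step. From \eqref{b2d1f} and \eqref{b2d2f} we get $d_1(u)f(v)$ and $d_2(u)f(v)$ in terms of $f(v)d_i(u)$, together with error terms involving $f(u)$ and $e(u)$. By \eqref{b2e=f} we rewrite $e(u)=-f(-u)$. Inverting \eqref{b2d1f} gives the commutation of $\tl d_1(u)$ with $f(v)$:
\[
\tl d_1(u)f(v)\tl d_1(u)^{-1}\ \text{expressed through}\ f(v),\ f(u),\ f(-u).
\]
We then compute $h(u)f(v)=\tl d_1(u)d_2(u)f(v)$. Move $f(v)$ past $d_2(u)$ and then past $\tl d_1(u)$; the two exchanges have opposite signs, so they compound. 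The result should take the form
\[
(u^2-v^2)[h(u),b(v)]=\alpha(v)\,h(u)b(v)+\beta(v)\,b(v)h(u)+(\text{terms in } b(\pm u)h(u),\ h(u)b(\pm u)).
\]
The terms of the last kind involve $b(\pm u)$ and no $v$ except through polynomial prefactors in $v$ of degree $\lle 1$. Their expansion in $u^{-r-1}v^{-s-1}$ with $r,s\gge 0$ therefore vanishes: a polynomial in $v$ times a series in $u$ alone has no negative powers of $v$. This is exactly why the statement uses $\simeq$ rather than $=$.

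It remains to identify the coefficients $\alpha(v)=2\ka_1v+1$ and $\beta(v)=2\ka_1v-1$. Roughly, the $\frac{1}{u-v}$ and $\frac{1}{u+v}$ poles from the two $d$'s combine into $\frac{2\ka_1 v}{u^2-v^2}$. The constant $\pm1$ comes from $\ka_1^2=1$ when we commute the correction terms a second time.

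\emph{Main obstacle.} The obstacle is precisely this rearrangement. Moving $\tl d_1(u)$ through $f(v)$ produces nested quadratic terms such as $f(u)\tl d_1(u)f(v)$. These must be re-commuted, and we must check that everything except the claimed two terms is a polynomial in $v$ times series in $u$ only.

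If that direct route becomes messy, the first thing we would try is to work with $x_{ij}(u)$ instead. We would start from \eqref{bcom} with $(i,j,k,l)=(1,1,2,1)$ and $(2,2,2,1)$, where every relation is exact, form the quasi-determinant combination, and only at the end discard the $v$-polynomial terms.
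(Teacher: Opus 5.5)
Your proposal is correct and is essentially the paper's argument, which defers to the parallel rank-one computation in \cite[Lem.~4.2]{Lu2023drinfeld}: the first three relations are direct rewritings of \eqref{b2dd}, \eqref{b2d=d}, \eqref{b2ff}, and the $hb$-relation comes from pushing $b(v)$ through $d_2(u)$ and $\tl d_1(u)$ via \eqref{b2d1f}--\eqref{b2d2f} while discarding remainders with no negative powers of $v$, exactly as in the proof of Lemma~\ref{centralb2}. Carrying this out gives $h(u)b(v)\simeq\frac{u^2-(v-\ka_1)^2}{u^2-(v+\ka_1)^2}\,b(v)h(u)$, so the constants $\pm1$ come simply from $\ka_1^2=1$ in $(v\pm\ka_1)^2$ (not from a second round of commutations), and the nested re-commutations you flag as the main obstacle never arise, because the remainder terms in \eqref{b2d1f}--\eqref{b2d2f} contain no $b(v)$.
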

\begin{proof}
The proof is parallel to that of \cite[Lem. 4.2]{Lu2023drinfeld}.
\end{proof}

\begin{prop}\label{propx=2}
In terms of components, we have
\begin{align*}
[h_r,h_s] &=0,\\
[b_{r+1},b_s]-[b_r,b_{s+1}] &=\ka_1\big(b_rb_s+b_sb_r\big)-2(-1)^r\ka_1h_{r+s+1},\\
[h_{r+2},b_s]-[h_r,b_{s+2}] &=2\ka_1(b_{s+1}h_r+h_rb_{s+1})+[h_r,b_s].
\end{align*}
Here in the last equality we allow $r\in\bZ$ with $h_{-1}=1$ and $h_{r}=0$ for $r<-1$.
\end{prop}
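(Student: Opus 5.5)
We derive Proposition \ref{propx=2} from the series identities of Lemma \ref{b2s} by expanding both sides in $u^{-1},v^{-1}$ and comparing coefficients. We use $h(u)=1+\sum_{r\gge0}h_ru^{-r-1}$ and $b(u)=\sum_{r\gge0}b_ru^{-r-1}$. The first relation, $[h_r,h_s]=0$, is the coefficient of $u^{-r-1}v^{-s-1}$ in $[h(u),h(v)]=0$.

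For the $bb$-relation we clear denominators in the second identity of Lemma \ref{b2s}. Multiplying by $(u-v)(u+v)$ gives
\[
(u^2-v^2)[b(u),b(v)]=-\ka_1(u+v)(b(u)-b(v))^2-\ka_1(u-v)(h(u)-h(v)).
\]
The cleaner route is to multiply only by $u-v$ and keep $\frac{1}{u+v}$ expanded in powers of $u^{-1}$. Note that $(h(u)-h(v))/(u+v)$ is an honest power series, because $h(v)=h(-v)$ makes $h(u)-h(v)=h(u)-h(-v)$ divisible by $u+v$.

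On the left side, the coefficient of $u^{-r-1}v^{-s-1}$ in $(u-v)[b(u),b(v)]$ is $[b_{r+1},b_s]-[b_r,b_{s+1}]$. The term $-\ka_1(b(u)-b(v))^2$ gives, at mixed order, $\ka_1(b_rb_s+b_sb_r)$; the pure $u$ and pure $v$ parts do not contribute to mixed coefficients. For the $h$-term we write $h(u)-h(-v)=\sum_k h_{k}(u^{-k-1}-(-v)^{-k-1})$. We then use the identity
\[
\frac{u-v}{u+v}\big(u^{-k-1}-(-v)^{-k-1}\big)\simeq -2(-1)^r u^{-r-1}v^{-s-1}\quad\text{summed over } r+s+1=k.
\]
This identity is checked directly from $\frac{x^{n}-y^{n}}{x-y}$ with $x=u^{-1}$, $y=-v^{-1}$. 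The result is the term $-2(-1)^r\ka_1h_{r+s+1}$. This sign bookkeeping is the main place where care is needed.

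For the third relation we use the $\simeq$ identity of Lemma \ref{b2s} and multiply through by $u^2-v^2$. This is legitimate because $\simeq$ only compares coefficients with $s\in\bN$, and we allow $r\in\Z$ with the stated conventions $h_{-1}=1$ and $h_r=0$ for $r<-1$. Taking the coefficient of $u^{-r-1}v^{-s-1}$ on the left gives $[h_{r+2},b_s]-[h_r,b_{s+2}]$. On the right, the term $2\ka_1v(h(u)b(v)+b(v)h(u))$ contributes $2\ka_1(h_rb_{s+1}+b_{s+1}h_r)$, and the term $h(u)b(v)-b(v)h(u)$ contributes $[h_r,b_s]$. The constant $1$ in $h(u)$ is what makes the case $r=-1$ consistent with the convention $h_{-1}=1$.

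The main obstacle is purely bookkeeping. One must ensure that the boundary terms produced by multiplying by $u^2-v^2$ (nonnegative powers of $u$ or $v$) are discarded consistently with the meaning of $\simeq$, and track the sign $(-1)^r$ coming from $h(u)=h(-u)$.
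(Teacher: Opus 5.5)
Your strategy is the paper's: read off coefficients of $u^{-r-1}v^{-s-1}$ from Lemma~\ref{b2s}. However, two steps do not hold up as written.

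First, the displayed identity for the $h$-term has the wrong sign. With $x=u^{-1}$ and $y=-v^{-1}$ one has
\[
\frac{u-v}{u+v}\big(u^{-k-1}-(-v)^{-k-1}\big)=(v^{-1}-u^{-1})\sum_{a+b=k}(-1)^b u^{-a}v^{-b}.
\]
Its coefficient of $u^{-r-1}v^{-s-1}$ ($r,s\in\bN$) is $+2(-1)^s\delta_{k,r+s+1}$. For instance, for $k=1$ the left side equals $-(u-v)^2u^{-2}v^{-2}$, whose $u^{-1}v^{-1}$-coefficient is $+2$, not $-2$. The $h$-term in Lemma~\ref{b2s} carries the prefactor $-\ka_1$, so your identity would produce $+2(-1)^r\ka_1h_{r+s+1}$. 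The result you state is reached only through a second, compensating slip. The correct bookkeeping gives $-2(-1)^s\ka_1h_{r+s+1}$. Converting this to $(-1)^r$ requires the observation that $h(u)=h(-u)$ forces $h_k=0$ for even $k$, so $r+s$ is even whenever $h_{r+s+1}\neq 0$; you never invoke this. Alternatively, as in the paper, keep $h(v)$ and expand $\frac{u-v}{u+v}=1-2\sum_{n\gge 0}(-1)^nv^{n+1}u^{-n-1}$ in $|u|\gg|v|$. Then only $-h(v)$ contributes mixed terms, and $-2(-1)^r\ka_1h_{r+s+1}$ comes out directly.

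Second, you do not justify the extension to $r\in\bZ$ in the third relation. With the default meaning of $\simeq$ (equality of coefficients only for $r,s\in\bN$), the discarded part in Lemma~\ref{b2s} could contain a term such as $u^0v^{-3}$. After multiplication by $-v^2$ it alters the coefficient of $u^0v^{-1}$, i.e.\ the case $r=-1$, $s=0$, which is exactly the key relation $[h_1,b_s]=4\ka_1b_{s+1}$. Saying the step is legitimate ``because $\simeq$ only compares coefficients with $s\in\bN$'' presupposes the stronger reading rather than proving it. What is needed, and what the paper's proof points out, is that the terms dropped in deriving Lemma~\ref{b2s} involve only nonnegative powers of $v$. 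These are terms such as $\frac{1}{u-v}f(u)d_1(u)$ and $\frac{1}{u+v}d_1(u)e(u)$ from \eqref{b2d1f}--\eqref{b2d2f}, expanded in $|u|\gg|v|$. This property is preserved by multiplication by $u^2-v^2$, and only then may every $r\in\bZ$ be read off. Accordingly, your closing remark that terms with nonnegative powers of $u$ \emph{or} $v$ may be discarded is too permissive: for $r\in\bZ$, only nonnegative powers of $v$ may be dropped.
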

\begin{proof}
The proof is parallel to that of  \cite[Prop. 4.3]{Lu2023drinfeld} by taking the coefficients of $u^{-r-1}v^{-s-1}$ for $r,s\in\bN$ from the relations (expanded in the region $|u|\gg |v|$) in Lemma \ref{b2s}. Note that we can take $r\in\bZ$ in the third relation as the power of $v$ in the terms we dropped are nonnegative; see the
proof of \cite[Prop. 7.16]{lu2024drinfeld} for more detail.
\end{proof}
\begin{rem}
If we set $b(u)=f(u)$, then we have
\[
[b(u),b(v)]=-\frac{\ka_1}{u-v}(b(u)-b(v))^2 + \frac{\ka_1}{u+v}(h(u)-h(v)).
\]
The purpose to use $b(u)=\sqrt{-1}f(u)$ is to change $+$ above to $-$ so that it will match with the nonsuper split case in \cite{Lu2023drinfeld}.
\end{rem}

\begin{lem}\label{centralb2}
The coefficients of $d_1(u)d_2(u-\ka_1)$ are central elements in $\scrX^\bs$.
\end{lem}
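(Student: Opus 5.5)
Here $N=2$, $\ka_1=\ka_2$, and every element is even. By Lemma \ref{lem:de-gen}, $\scrX^\bs$ is generated by the coefficients of $d_1(u)$, $d_2(u)$ and $e(u)$. Since $e(u)=-f(-u)$ by \eqref{b2e=f}, it is enough to show that $Z(u):=d_1(u)d_2(u-\ka_1)$ commutes with $d_1(v)$, $d_2(v)$ and $f(v)$. The first two follow at once from \eqref{b2dd}. So the whole task is to prove
\[
[Z(u),f(v)]=0 .
\]

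\textbf{Step 1: an exchange rule for $d_2$.} From \eqref{b2d2f} I first derive an exchange rule for $d_2(u-\ka_1)$ with $f(v)$. Write \eqref{b2d1f} in the form
\[
d_1(u)f(v)=\Big(1+\tfrac{\ka_1}{u-v}\Big)^{-1}\Big(\dots\Big),
\]
so that $d_1(u)f(v)=A\,f(v)d_1(u)+B\,f(u)d_1(u)+C\,d_1(u)e(u)$ with explicit rational coefficients $A,B,C$ in $u,v$. Do the same for $d_2(u-\ka_1)f(v)$, using \eqref{b2d2f} with $u$ replaced by $u-\ka_1$. Here $\ka_1^2=1$, so the factor $\tfrac{\ka_1}{u-\ka_1-v}$ appears.

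\textbf{Step 2: commute $f(v)$ through $Z(u)$.} Compute $Z(u)f(v)=d_1(u)\,d_2(u-\ka_1)f(v)$ by pushing $f(v)$ first past $d_2(u-\ka_1)$ and then past $d_1(u)$. The scalar factors in front of $f(v)$ should multiply to
\[
\frac{u-v-\ka_1}{u-v}\cdot\frac{u-v}{u-v-\ka_1}=1.
\]
This is the standard Yangian mechanism and explains the shift by $\ka_1$. The leftover terms involve $f(u)$, $f(u-\ka_1)$, $e(u)$ and $e(u-\ka_1)$, together with the $\frac{1}{u+v}$ corrections.

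\textbf{Step 3: the main obstacle — cancelling the leftovers.} The $\frac{\ka_1}{u+v}$ terms do not obviously cancel. To handle them, I would not work with raw series but pass to the matrix picture. Alternatively, I would use Proposition \ref{Binv}: $d_2(u)=c(u)\tl d_1(-u)$ with $c(u)$ central, so $Z(u)=c(u-\ka_1)\,d_1(u)\,\tl d_1(-u+\ka_1)$. Then it suffices to show that $d_1(u)\tl d_1(\ka_1-u)$ commutes with $f(v)$. That is a relation involving only $d_1$ and $f$, governed by \eqref{b2pf1}/\eqref{b2d1f}. I would check the cancellation by the substitution $w=\ka_1-u$: the $\frac{1}{u+v}$ term arising from $d_1(u)$ should cancel against the $\frac{1}{u-v}$-type term arising from $\tl d_1(w)$, since $w+v$ and $u-v$ are interchanged up to the shift $\ka_1$.

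Concretely, \eqref{b2d1f} gives $\tl d_1(w)f(v)$ via $[f(v),\tl d_1(w)]=\tl d_1(w)[d_1(w),f(v)]\tl d_1(w)$. The $e(w)$ terms are rewritten as $-f(-w)=-f(u-\ka_1)$. Comparing coefficients of $f(u)$, $f(u-\ka_1)$, $f(v)$ and $e(u)$ then should yield zero. Along the way, I expect to need the consequence of \eqref{b2pf1} that relates $f(u)d_1(u)$ and $d_1(u)f(u-\ka_1)$, namely the $v\to u-\ka_1$ specialization. This is the delicate bookkeeping, and it is where I expect most of the work to lie. If the direct cancellation gets messy, the fallback is the R-matrix route: fuse $R(\ka_1)$ as in Lemma \ref{Blem} to identify $Z(u)$, up to the central factor $c$, with a quantum-determinant-like coefficient of $\Gamma(u)$ antisymmetrized by $(1-\ka_1P)/2$. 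Centrality would then follow from \eqref{quamat} by the usual projection argument.
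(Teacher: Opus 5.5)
Your reduction to $[Z(u),f(v)]=0$ via Lemma \ref{lem:de-gen}, \eqref{b2dd} and \eqref{b2e=f} is the same as the paper's. However, the proof stops where the real content is. Step~3 only asserts that the leftover terms ``should'' cancel. It then offers either unexecuted bookkeeping or an unexecuted fusion argument. The bookkeeping would compare coefficients of $f(u)$, $f(u-\ka_1)$, $e(u)$, $e(u-\ka_1)$, but these are not independent. So that route needs extra $v$-independent identities, e.g.\ from specializing \eqref{b2pf1}, which you have not formulated.

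The missing idea is that you never need to cancel these terms. Every leftover term has the form $g(u,v)\,Y(u)$, where:
\begin{itemize}
\item $Y(u)$ is a $v$-independent product such as $f(u)d_1(u)d_2(u-\ka_1)$ or $d_1(u)e(u)d_2(u-\ka_1)$;
\item $g$ is rational with denominators among $u\pm v$ and $u\pm v-\ka_1$.
\end{itemize}
Expanded in the region $|u|\gg|v|$, $g$ involves only nonnegative powers of $v$, and this survives multiplication by $d_2(u-\ka_1)$ and by the exchange factors. On the other hand, $[Z(u),f(v)]$ involves only $v^{-s-1}$ with $s\in\bN$, and it has no $u^0$ term. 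So it suffices to work modulo such terms, i.e.\ with the relation $\simeq$ of the paper.

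Step~1 also misidentifies the exchange factor. Solving \eqref{b2d1f} for $d_1(u)f(v)$ requires dividing by $1-\frac{\ka_1}{u+v}$, not $1+\frac{\ka_1}{u-v}$, because the term $\frac{\ka_1}{u+v}d_1(u)f(v)$ feeds into the coefficient of $f(v)$. Modulo the discarded terms, \eqref{b2d1f} and \eqref{b2d2f} (the latter with $u\mapsto u-\ka_1$) give
\[
d_1(u)f(v)\simeq\frac{(u+v)(u-v-\ka_1)}{(u+v-\ka_1)(u-v)}\,f(v)d_1(u),\qquad d_2(u-\ka_1)f(v)\simeq\frac{(u+v-\ka_1)(u-v)}{(u+v)(u-v-\ka_1)}\,f(v)d_2(u-\ka_1).
\]
The $(u+v)$-parts telescope exactly as the $(u-v)$-parts do. So the $\frac{1}{u+v}$ contributions you expected not to cancel do cancel in the coefficient of $f(v)$. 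The product of the two factors is $1$, hence $[Z(u),f(v)]\simeq 0$, and therefore $[Z(u),f(v)]=0$. That is the whole proof in the paper. Neither the rewriting $Z(u)=c(u-\ka_1)\,d_1(u)\,\tl d_1(\ka_1-u)$ nor an R-matrix fusion argument is needed.
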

\begin{proof}
Since $[d_i(u),d_j(v)]=0$ and $e(u)=-f(-u)$ by \eqref{b2dd} and \eqref{b2e=f}, respectively, it suffices to prove that $[d_1(u)d_2(u-\ka_1),f(v)]=0$. By ignoring the terms like $f(u)d_1(u)$ and $d_1(u)e(u)$ in \eqref{b2d1f} and \eqref{b2d2f} (as these series do not contribute if we expand them in the region $|u|\gg |v|$), we have
\begin{align*}
d_1(u)f(v)\simeq \frac{(u+v)(u-v-\ka_1)}{(u+v-\ka_1)(u-v)}f(v)d_1(u),\\
d_2(u)f(v)\simeq \frac{(u+v)(u-v+\ka_1)}{(u+v+\ka_1)(u-v)}f(v)d_2(u).
\end{align*}
Thus $d_1(u)d_2(u-\ka_1)f(v)\simeq f(v)d_1(u)d_2(u-\ka_1)$ and the statement follows.
\end{proof}

\subsection{Relations in the case $N=4$}\label{secN=4}
Let us consider the case $N=4$ with $\bm\ka=(\ka_1,\ka_2,\ka_3,\ka_4)$ such that $\ka_1=\ka_4$ and $\ka_2=\ka_3$. Thanks to Lemma \ref{shiftlem}, we have the rank-reduction homomorphism
\[
\psi_1^\bs:\scrX^{(\ka_2,\ka_3)}\to \scrX^\bs,\quad d_{i}(u)\to d_{i+1}(u),\quad e(u)\mapsto e_{2}(u),\quad f(u)\mapsto f_{2}(u).
\]
Thus, by Lemma \ref{efaiii}, Lemma \ref{b2ser} and Proposition \ref{propA}, we immediately have the following relations
\begin{align}
&[d_i(u),d_j(v)]=[d_1(u),e_2(v)]=[d_1(u),f_2(v)]=0,\label{ddb4}\\
&(u-v)[e_1(u),f_1(v)]=\ka_2\big(\tl d_1(u)d_2(u)-\tl d_1(v)d_2(v)\big),\label{e1f1b4}\\
&(u-v)[e_1(u),e_1(v)]=\ka_1(e_1(u)-e_1(v))^2,\label{e1e1b4}\\
&(u-v)[d_1(u),e_1(v)]=\ka_1d_1(u)(e_1(v)-e_1(u)),\label{d1e1b4}\\
&(u-v)[d_2(u),e_1(v)]=\ka_2d_2(u)(e_1(u)-e_1(v)),\label{d2e1b4}\\
&(u-v)[d_1(u),f_1(v)]=\ka_1(f_1(u)-f_1(v))d_1(u),\label{d1f1b4}\\
&[e_2(u),e_2(v)]=\frac{\ka_2}{u-v}\big(e_2(u)-e_2(v)\big)^2-\frac{\ka_2}{u+v}\big(\tl d_2(u)d_3(u)-\tl d_2(v)d_3(v)\big).\label{e2e2b4}
\end{align}

We reformulate \eqref{e1e1b4} when $|\alpha_1|=\bar 1$ is odd for later use.
\begin{lem}\label{odd0}
If $\ka_1\ne \ka_{2}$, then the relation \eqref{e1e1b4} is equivalent to $[e_1(u),e_1(v)]=0$.
\end{lem}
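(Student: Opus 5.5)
The plan is to treat $e_1(u)$ as an odd generating series and rewrite \eqref{e1e1b4} entirely in terms of the supercommutator. Since $\ka_1\ne\ka_2$, we have $|\alpha_1|=\bar 1$, so all coefficients $e_1^{(r)}$ are odd. Hence the bracket $[e_1(u),e_1(v)]$ is the anticommutator $e_1(u)e_1(v)+e_1(v)e_1(u)$. Set $A(u,v):=[e_1(u),e_1(v)]$. This series is symmetric in the two variables, $A(u,v)=A(v,u)$, and its diagonal specialization satisfies $A(u,u)=2e_1(u)^2$.

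The first step is to expand the square on the right of \eqref{e1e1b4}:
\[
(e_1(u)-e_1(v))^2=e_1(u)^2+e_1(v)^2-A(u,v)=\tfrac12 A(u,u)+\tfrac12 A(v,v)-A(u,v).
\]
Substituting this, \eqref{e1e1b4} becomes
\[
(u-v+\ka_1)\,A(u,v)=\tfrac{\ka_1}{2}\big(A(u,u)+A(v,v)\big).
\]
The right-hand side is symmetric in $u,v$. Interchanging $u$ and $v$ and using the symmetry of $A$ gives $(v-u+\ka_1)A(u,v)=\tfrac{\ka_1}{2}\big(A(u,u)+A(v,v)\big)$. Subtracting the two identities yields $2(u-v)A(u,v)=0$.

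Multiplication by $u-v$ is injective on $\scrX^\bs[[u^{-1},v^{-1}]]$. To see this, compare coefficients of $u^{-r}v^{-s}$ inductively, starting from the highest power of $u$. It follows that $A(u,v)=0$, that is, $[e_1(u),e_1(v)]=0$. For the converse, if $[e_1(u),e_1(v)]=0$, then taking $u=v$ (equivalently, reading off coefficients) gives $e_1(u)^2=\tfrac12 A(u,u)=0$. The expansion above then shows that the right side of \eqref{e1e1b4} equals $-\ka_1A(u,v)=0$, so both sides of \eqref{e1e1b4} vanish.

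The only point needing care, and the one I expect to be the main obstacle, is justifying the manipulations with the diagonal $A(u,u)$ and the cancellation of the factor $u-v$ at the level of formal series. Concretely, I would verify these coefficientwise. I would write $e_1(u)^2=\sum_{r,s}e_1^{(r)}e_1^{(s)}u^{-r-s}$ and use that $e_1^{(r)}e_1^{(s)}+e_1^{(s)}e_1^{(r)}=[e_1^{(r)},e_1^{(s)}]$ for odd elements. The equation $(u-v)A(u,v)=0$ then reads $A_{r+1,s}=A_{r,s+1}$ for all $r,s$, with the boundary terms $A_{0,s}=A_{r,0}=0$ (here $A_{r,s}$ denotes the coefficient of $u^{-r}v^{-s}$). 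Inducting on $r$ from these boundary terms gives $A_{r,s}=0$ for all $r,s$.
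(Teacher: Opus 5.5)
Your proof is correct and is essentially the paper's argument: the paper simply notes that, for odd $e_1$, the left side of \eqref{e1e1b4} is antisymmetric in $u,v$ while the right side is symmetric, so both sides vanish. Your swap-and-subtract step is that same observation, and the rewriting of $(e_1(u)-e_1(v))^2$ through $A(u,u)$ together with the coefficientwise cancellation of $u-v$ only make explicit what the paper leaves implicit, notably the converse direction.
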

\begin{proof}
It is straightforward since if $\ka_1\ne \ka_{2}$, then the LHS of \eqref{e1e1b4} is anti-symmetric in $u,v$ while the RHS of \eqref{e1e1b4} is symmetric in $u,v$.
\end{proof}

\begin{lem}\label{be1e2lem}
We have
\begin{align}
(u-v)[e_1(u),e_2(v)]=\ka_2\big(e_1(u)e_2(v)-e_1(v)e_2(v)-e_{13}(u)+e_{13}(v)\big),\label{be1e2}\\
(u-v)[f_2(v),f_1(u)]=\ka_2\big(f_2(v)f_1(u)-f_2(v)f_1(v)-f_{31}(u)+f_{31}(v)\big).\label{bf1f2}
\end{align}
\end{lem}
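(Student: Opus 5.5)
The identity \eqref{be1e2} involves only indices $1,2,3$ and the series $x_{1j}(u)$, $x_{2j}(v)$ with $j\in\{2,3\}$. I would derive it directly from the quaternary relation \eqref{bcom}, written in Gaussian form. The reflection terms only enter through $\delta_{kj'}$, $\delta_{il'}$ and $\delta_{ij'}$, with $j'=5-j$. I would choose the entries so that these terms disappear. Then \eqref{bcom} reduces to the super Yangian relation \eqref{Trel}, and the computation becomes the standard $\gl$ one, as in \cite{Gow2007gauss,Peng2016parabolic}.

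Concretely, take $(i,j,k,l)=(1,2,2,3)$ in \eqref{bcom}. Then $kj'=(2,3)$, $il'=(1,2)$ and $ij'=(1,3)$, so all three $\delta$'s vanish, since $j'=3\neq k$, $l'=2\neq i$, and $j'=3\neq i$. This gives
\[
(u-v)[x_{12}(u),x_{23}(v)]=\pm\big(x_{22}(u)x_{13}(v)-x_{22}(v)x_{13}(u)\big),
\]
with sign $(-1)^{|1||2|+|1||2|+|2||2|}=\ka_2$ up to parity conventions.

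Next substitute the Gauss decomposition \eqref{eq:sij-Gauss}:
\[
x_{12}=d_1e_1,\quad x_{13}=d_1e_{13},\quad x_{22}=d_2+f_1d_1e_1,\quad x_{23}=d_2e_2+f_1d_1e_{13}.
\]
To simplify, I would use the relations already available among the first two indices. These are \eqref{ddb4}, \eqref{d1e1b4}, \eqref{d2e1b4}, \eqref{d1f1b4}, and \eqref{e1f1b4}. I also need $[x_{11}(u),x_{23}(v)]$ and $[x_{12}(u),x_{13}(v)]$, again taken from \eqref{bcom} with vanishing reflection terms (check: $(1,1,2,3)$ and $(1,2,1,3)$ give $j'\in\{4,3\}$ and $l'=2$, none of which matches).

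An alternative, cleaner route is to avoid these substitutions and reduce to $\rY(\gl^{\bs_{[1,3]}})$. The elements $x_{ij}(u)$ with $1\le i\le 2$ and $1\le j\le 3$, together with $x_{3j}$ as needed, satisfy the Yangian relations whenever the reflection deltas vanish. The PBW ordering argument of Corollary \ref{cor:commu} then lets me import the identity $(u-v)[e_1(u),e_2(v)]=\ka_2\big(e_1(u)e_2(v)-e_1(v)e_2(v)-e_{13}(u)+e_{13}(v)\big)$ from the super Yangian of $\gl_3$ (cf. \cite[Prop.~2.20]{tsymbaliuk2020shuffle}, \cite{Peng2016parabolic}). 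I would check that each $(i,j,k,l)$ used in that derivation has $i,k\le 2$ and $j,l\le 3$, so that no index pair equals a primed pair ($1'=4$, $2'=3$). The pair $(2,3)$ is the dangerous one, since $2'=3$. This index bookkeeping is the main obstacle, and I would handle it by listing the required instances of \eqref{bcom} explicitly.

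Finally, \eqref{bf1f2} follows from \eqref{be1e2} by applying the anti-automorphism $\eta$ of Lemma \ref{lem:eta}. This sends $e_{ij}$ to $\pm f_{ji}$ and reverses products. Alternatively, one can use Lemma \ref{efaiii}. The remaining work is sign bookkeeping for $\eta$.
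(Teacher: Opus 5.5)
Your concrete route is the paper's proof: take $(i,j,k,l)=(1,2,2,3)$ in \eqref{bcom}, substitute the Gauss decomposition, reorder using \eqref{ddb4}, \eqref{d2e1b4}, \eqref{e1f1b4} and \eqref{d1f1b4}, reduce the remainder to the reflection-free relation for $[x_{12}(u),x_{13}(v)]$, and obtain \eqref{bf1f2} via $\eta$ or Lemma~\ref{efaiii}. Your alternative wholesale import from $\rY(\gl^{\bs_{[1,3]}})$ does not work as stated for $N=4$, because rows $\le 2$ and columns $\le 3$ do not avoid primed pairs (e.g.\ $[x_{22}(u),x_{13}(v)]$ has $\delta_{il'}=1$), so it is valid only along a derivation that never needs such swaps, and the concrete computation supplies exactly that.
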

\begin{proof}
By Lemma \ref{efaiii} or applying the anti-automorphism $\eta$ from Lemma \ref{lem:eta}, it suffices to show \eqref{be1e2}. By \eqref{bcom}, we have 
\[
(u-v)[x_{12}(u),x_{23}(v)]=\ka_2\big(x_{22}(u)x_{13}(v)-x_{22}(v)x_{13}(u)\big).
\]
Note that $|\alpha_2|=\bar 0$. In terms of Gaussian generators, we have
\beq\label{b4pf1}
\begin{split}
(u-v)&[d_1(u)e_1(u),d_2(v)e_2(v)+f_1(v)d_1(v)e_{13}(v)]\\=\, &\ka_2\big(d_2(u)+f_1(u)d_1(u)e_1(u)\big)d_1(v)e_{13}(v)-\ka_2\big(d_2(v)+f_1(v)d_1(v)e_1(v)\big)d_1(u)e_{13}(u).
\end{split}
\eeq
We shall transform the LHS of \eqref{b4pf1}. Expanding the commutator, the LHS of \eqref{b4pf1} becomes
\begin{align*}
(u-v)\Big(d_1(u)& e_1(u)d_2(v) e_2(v)-d_2(v) e_2(v)d_1(u) e_1(u)\\
+&\, d_1(u) e_1(u)f_1(v) d_1(v)e_{13}(v)-f_1(v)d_1(v)e_{13}(v)d_1(u)e_1(u)\Big).
\end{align*}
Permuting the products $e_1(u)d_2(v)$, $e_2(v)d_1(u)$, and $e_1(u)f_1(v)$ using \eqref{d2e1b4}, \eqref{ddb4}, \eqref{e1f1b4}, respectively, we have
\begin{align*}
&\,d_1(u) \big((u-v)d_2(v)e_1(u)-\ka_2d_2(v)(e_1(u)-e_1(v))\big) e_2(v)-(u-v)d_2(v) d_1(u)e_2(v) e_1(u)\\
+&\,d_1(u) \big((u-v)\ka_1\ka_2f_1(v)e_1(u)+\ka_2\tl d_1(u)d_2(u)-\ka_2\tl d_1(v)d_2(v)\big) d_1(v)e_{13}(v)\\
&\hskip8.2cm-(u-v)f_1(v)d_1(v)e_{13}(v)d_1(u)e_1(u),
\end{align*}
which simplifies further to
\begin{align*}
d_1(u)d_2(v)\big((u-v)[e_1(u),e_2(v)]-\ka_2(e_1(u)-e_1(v))e_2(v)-\ka_2e_{13}(v)\big)+\ka_2d_1(v)d_2(u)e_{13}(v)\\
+(u-v)\big(\ka_1\ka_2d_1(u)f_1(v)e_1(u)d_1(v)e_{13}(v)-f_1(v)d_1(v)e_{13}(v)d_1(u)e_1(u)\big).
\end{align*}
Therefore, it suffices to show that
\begin{align*}
(u-v)\big(&\,\ka_1\ka_2d_1(u)f_1(v)e_1(u)d_1(v)e_{13}(v)-f_1(v)d_1(v)e_{13}(v)d_1(u)e_1(u)\big)\\
=&\,\ka_2\big(f_1(u)d_1(u)e_1(u) d_1(v)e_{13}(v)- f_1(v)d_1(v)e_1(v) d_1(u)e_{13}(u)\big)
\end{align*}
Applying \eqref{d1f1b4}, i.e. $(u-v)d_1(u)f_1(v)-\ka_1f_1(u)d_1(u)=(u-v)f_1(v)d_1(u)-\ka_1f_1(v)d_1(u)$, it reduces to show
\beq\label{b4pf2}
(u-v)[d_1(u)e_1(u),d_1(v)e_{13}(v)]=\ka_1\big(d_1(u)e_1(u)d_1(v)e_{13}(v)-d_1(v)e_1(v)d_1(u)e_{13}(u)\big),
\eeq
which is equivalent to $(u-v)[x_{12}(u),x_{13}(v)]=\ka_1\big(x_{12}(u)x_{13}(v)-x_{12}(v)x_{13}(u)\big)$ and follows from \eqref{bcom}.
\end{proof}

\begin{lem}\label{lemhelp1}
If $\ka_1=\ka_2$ \emph{(}i.e. $|\alpha_1|=\bar 0$\emph{)}, then we have
\beq\label{e1e13}
[e_1(u),e_{13}(v)-e_1(v)e_2(v)]=-[e_1(u),e_2(v)]e_1(u).
\eeq
\end{lem}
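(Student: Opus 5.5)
The plan is to reduce \eqref{e1e13} to an explicit formula for $[e_1(u),e_{13}(v)]$, derived from the untwisted part of \eqref{bcom}, and then combine it with \eqref{be1e2} and \eqref{e1e1b4}. Note first that by \eqref{eq:def-tilde-e-f} we have $e_{13}(v)-e_1(v)e_2(v)=-\tl e_{13}(v)$. So the statement reads $[e_1(u),\tl e_{13}(v)]=[e_1(u),e_2(v)]e_1(u)$, which is the familiar type A identity one expects from $\rY(\gl_3)$. The difficulty is that the indices $1,2,3$ in $N=4$ are not covered by Proposition~\ref{propA}, because $3=2'$. We therefore only use those instances of \eqref{bcom} whose twisted terms vanish.

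\textbf{Step 1.} Check in \eqref{bcom} that for the pairs $(x_{11},x_{13})$ and $(x_{12},x_{13})$ all the Kronecker deltas $\delta_{kj'}$, $\delta_{il'}$, $\delta_{ij'}$ vanish. Hence these pairs satisfy exactly the super Yangian relations \eqref{Trel}, and \eqref{b4pf2} already records the second one. From $(u-v)[x_{11}(u),x_{13}(v)]=\ka_1\big(x_{11}(u)x_{13}(v)-x_{11}(v)x_{13}(u)\big)$ together with $x_{13}=d_1e_{13}$ and $[d_1(u),d_1(v)]=0$, derive
\[
(u-v)[d_1(u),e_{13}(v)]=\ka_1d_1(u)\big(e_{13}(v)-e_{13}(u)\big).
\]
This is exactly parallel to \eqref{d1e1b4}.

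\textbf{Step 2.} Substitute this formula and \eqref{d1e1b4} into \eqref{b4pf2}, moving all $d_1$'s to the left and cancelling $d_1(u)d_1(v)$. The aim is the expected relation
\[
(u-v)[e_1(u),e_{13}(v)]=\ka_1\big(e_1(u)-e_1(v)\big)\big(e_{13}(v)-e_{13}(u)\big),
\]
up to ordering corrections that I will track carefully. This is the type A relation familiar from \cite{Gow2007gauss,Peng2016parabolic}.

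\textbf{Step 3.} Expand $[e_1(u),e_1(v)e_2(v)]=[e_1(u),e_1(v)]e_2(v)+e_1(v)[e_1(u),e_2(v)]$. Here $e_1$ is even since $\ka_1=\ka_2$. Evaluate the first term by \eqref{e1e1b4} and the second by \eqref{be1e2}, using $\ka_1=\ka_2$ throughout. Subtract this from the relation of Step 2. The $e_{13}$-terms should combine as $\ka_2(e_{13}(v)-e_{13}(u))$ times a factor, and the $(u-v)^{-1}$ poles should cancel. The result should then be compared with $-[e_1(u),e_2(v)]e_1(u)$, which is rewritten via \eqref{be1e2} and reordered using \eqref{e1e1b4} once more.

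The main obstacle is Step 2 together with the final bookkeeping in Step 3: keeping the noncommutative orderings straight, e.g. $e_1(v)e_{13}(u)$ versus $e_{13}(u)e_1(v)$. If the direct comparison becomes messy, I would instead multiply both sides by $(u-v)$ and compare coefficients of $u^{-r}v^{-s}$. Alternatively, I would verify the equivalent form $[e_1(u),\tl e_{13}(v)]=[e_1(u),e_2(v)]e_1(u)$ directly from the relation $[x_{12}(u),\tl x_{13}(v)]$, using Proposition~\ref{Binv} to express $\tl x_{13}(v)$ as $c(v)^{-1}x_{24}(-v)$. The commutator $[x_{12},x_{24}]$ also has vanishing twisted deltas, since $1'=4$, $2'=3$ and $4'=1$. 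This lets us read it off as a Yangian-type relation, giving a second route to the same identity.
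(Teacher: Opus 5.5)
Your Steps 1--3 follow the paper's argument: Step 1 is \eqref{b4pf3}, Step 2 (rewriting \eqref{b4pf2} with \eqref{d1e1b4} and \eqref{b4pf3} and cancelling $d_1(u)d_1(v)$) is how the paper obtains \eqref{b4pf6}, and Step 3 is its reduction to \eqref{b4pf7} via \eqref{e1e1b4} and \eqref{be1e2}. Do not rely on the relation you anticipate in Step 2, though: the computation actually yields $(u-v)[e_1(u),e_{13}(v)]=\ka_1e_1(v)\big(e_{13}(v)-e_{13}(u)\big)-\ka_1\big(e_{13}(v)-e_{13}(u)\big)e_1(u)+\frac{1}{u-v}\big(e_1(u)-e_1(v)\big)\big(e_{13}(v)-e_{13}(u)\big)$, whose first two terms are, up to reordering, the \emph{negative} of your guess and whose $(u-v)^{-1}$ term is precisely the right-hand side of \eqref{b4pf7} that Step 3 must produce (also, in your backup route Proposition~\ref{Binv} gives $\tl x_{13}(v)=c(v)^{-1}x_{42}(-v)$, not $x_{24}(-v)$).
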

\begin{proof}
It follows from \eqref{bcom} that
\[
(u-v)[x_{11}(u),x_{13}(v)]=\ka_1\big(x_{11}(u)x_{13}(v)-x_{11}(v)x_{13}(u)\big),
\]
which implies that
\beq\label{b4pf3}
(u-v)[d_1(u),e_{13}(v)]=\ka_1d_1(u)(e_{13}(v)-e_{13}(u)).
\eeq
We have
\beq\label{b4pf4}
\begin{split}
\ka_1\big(d_1(u)e_1(u)d_1(v)e_{13}(v)-&\, d_1(v)e_1(v)d_1(u)e_{13}(u)\big)\\
\stackrel{\eqref{d1e1b4}}{=}&\, d_1(u)\big(\ka_1d_1(v)e_1(u)+\tfrac{1}{u-v}d_1(v)(e_1(u)-e_1(v))\big)e_{13}(v)\\
-&\,d_1(v)\big(\ka_1d_1(u)e_1(v)+\tfrac{1}{u-v}d_1(u)(e_1(u)-e_1(v))\big)e_{13}(u).
\end{split}
\eeq
On the other hand, we also have
\beq\label{b4pf5}
\begin{split}
\ka_1\big(d_1(u)e_1(u)& d_1(v)e_{13}(v)- d_1(v)e_1(v)d_1(u)e_{13}(u)\big)\\
&\, \stackrel{\eqref{b4pf2}}{=}(u-v)\big(d_1(u)e_1(u)d_1(v)e_{13}(v)-d_1(v)e_{13}(v)d_1(u)e_{1}(u)\big)\\
&\, \xlongequal{\eqref{d1e1b4}\eqref{b4pf3}}d_1(u)\big((u-v)d_1(v)e_1(u)+\ka_1d_1(v)(e_1(u)-e_1(v))\big)e_{13}(v)\\
& \qquad\qquad \, -d_1(v)\big((u-v)d_1(u)e_{13}(v)-\ka_1d_1(u)(e_{13}(v)-e_{13}(u))\big)e_1(u).
\end{split}
\eeq
Combining \eqref{b4pf4} and \eqref{b4pf5}, we obtain
\beq\label{b4pf6}
\begin{split}
(u-v)[e_1(u),e_{13}(v)]=&\,\frac{1}{u-v}\big(e_1(u)-e_1(v)\big)\big(e_{13}(v)-e_{13}(u)\big)\\&+\ka_1e_1(v)\big(e_{13}(v)-e_{13}(u)\big)-\ka_1\big(e_{13}(v)-e_{13}(u)\big)e_1(u).
\end{split}
\eeq
To prove \eqref{e1e13}, it suffices to show
\begin{align*}
(u-v)&\,[e_1(u),e_{13}(v)]\\
=&\,(u-v)\big([e_1(u),e_1(v)]e_2(v)+e_1(v)[e_1(u),e_2(v)]-[e_1(u),e_2(v)]e_1(u)\big)\\
\xlongequal{\eqref{e1e1b4}\eqref{be1e2}}&\, \ka_1e_1(u)[e_1(u),e_2(v)]-\ka_1[e_1(u),e_1(v)e_2(v)]\\
&+\ka_1e_1(v)\big(e_{13}(v)-e_{13}(u)\big)-\ka_1\big(e_{13}(v)-e_{13}(u)\big)e_1(u).
\end{align*}
Therefore, by \eqref{b4pf6}, it reduces to show
\beq\label{b4pf7}
\ka_1e_1(u)[e_1(u),e_2(v)]-\ka_1[e_1(u),e_1(v)e_2(v)]=\frac{1}{u-v}\big(e_1(u)-e_1(v)\big)\big(e_{13}(v)-e_{13}(u)\big).
\eeq
Rewrite $(u-v)\big(e_1(u)[e_1(u),e_2(v)]-[e_1(u),e_1(v)e_2(v)]\big)$ as
\[
(u-v)\big(e_1(u)[e_1(u),e_2(v)]-[e_1(u),e_1(v)]e_2(v)-e_1(v)[e_1(u),e_2(v)]\big),
\]
then \eqref{b4pf7} follows from \eqref{e1e1b4} and \eqref{be1e2}.
\end{proof}

\begin{lem}\label{sec:A-serre}
We have the following Serre relation,
\[
\big[e_1(u),[e_1(v),e_2(w)]\big]+\big[e_1(v),[e_1(u),e_2(w)]\big]=0.
\]
\end{lem}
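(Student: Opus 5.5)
We work in the setting of \S\ref{secN=4} with $N=4$, and split according to the parity of $\alpha_1$.

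\emph{Odd case} ($\ka_1\ne\ka_2$). By Lemma \ref{odd0} we have $[e_1(u),e_1(v)]=0$. We apply the super Jacobi identity to $\big[e_1(u),[e_1(v),e_2(w)]\big]$. Since $e_1(u)$ and $e_1(v)$ are odd, it equals $\big[[e_1(u),e_1(v)],e_2(w)\big]-\big[e_1(v),[e_1(u),e_2(w)]\big]$. The first term vanishes, so the desired sum is zero. We should double-check that the sign convention for brackets of two odd elements gives exactly this cancellation. Equivalently, $e_1(u)$ and $e_1(v)$ anticommute, and the bracket $[e_1(u),\cdot]$ is an odd derivation, so the symmetrized double bracket vanishes.

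\emph{Even case} ($\ka_1=\ka_2$). This is the main case, and we plan to follow the standard Yangian argument. Write $X(u,w):=[e_1(u),e_2(w)]$. Relation \eqref{be1e2} gives
\[
(u-w)X(u,w)=\ka_2\big(e_1(u)e_2(w)-e_1(w)e_2(w)-e_{13}(u)+e_{13}(w)\big).
\]
Set $g(w):=e_{13}(w)-e_1(w)e_2(w)$. Then $(u-w)X(u,w)=\ka_2\big(e_1(u)e_2(w)-e_{13}(u)+e_1(w)e_2(w)\big)+\ka_2 g(w)-\ka_2e_1(w)e_2(w)$. Reorganizing, $(u-w)X(u,w)=\ka_2\big(e_1(u)e_2(w)-e_{13}(u)\big)+\ka_2 g(w)$. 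Now apply $[e_1(v),\cdot\,]$ to $(u-w)X(u,w)$ and symmetrize in $u$ and $v$. The pieces to compute are the following.
\begin{itemize}
\item[(i)] $[e_1(v),e_1(u)e_2(w)]=[e_1(v),e_1(u)]e_2(w)+e_1(u)X(v,w)$, where the first factor is controlled by \eqref{e1e1b4}.
\item[(ii)] $[e_1(v),g(w)]=-X(w,v)\,e_1(v)$, by Lemma \ref{lemhelp1} with the roles of the variables relabelled.
\item[(iii)] $[e_1(v),e_{13}(u)]$, which we rewrite using Lemma \ref{lemhelp1} as $[e_1(v),g(u)+e_1(u)e_2(u)]$.
\end{itemize}
This yields an identity expressing $(u-w)[e_1(v),X(u,w)]$ in terms of quadratic expressions in $e_1$, $X$ and $e_2$. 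Adding the same identity with $u\leftrightarrow v$, we aim to show that the total $S(u,v,w)$ satisfies a relation of the form $(u-w)(v-w)S=\text{(terms that cancel by \eqref{e1e1b4} and \eqref{be1e2})}$. From this we conclude $S=0$, since multiplication by a nonzero polynomial is injective on formal series.

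An alternative route, which we would try first if the bookkeeping explodes, is to reduce to coefficients. Lemma \ref{sec:A-serre} is the generating-series form of $\Sym_{k_1,k_2}[e_1^{(k_1)},[e_1^{(k_2)},e_2^{(r)}]]=0$. By the standard argument of \cite{Levendorski93generators}, this follows from the case $k_1=k_2=r=1$ together with repeated application of $[d$-type elements$,\cdot\,]$, for example the coefficient $\zeta_1^{(2)}$-type element that shifts the mode of $e_1$ but not of $e_2$. We would use \eqref{d1e1b4}, \eqref{d2e1b4} and \eqref{ddb4}, and verify this shift property via $d_1$ commuting with $e_2$. The base case $[x_{12}^{(1)},[x_{12}^{(1)},x_{23}^{(1)}]]=\ka_2[x_{12}^{(1)},x_{13}^{(1)}]=0$ follows directly from \eqref{bcom}.

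The main obstacle is the bookkeeping in the even case: ensuring that the non-symmetric remainder terms cancel after symmetrization. In the coefficient approach, the corresponding obstacle is constructing a shift operator that raises the $e_1$ mode without affecting $e_2$. Here $\tau$ does not interfere, because index $1$ is away from the middle.
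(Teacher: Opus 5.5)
Your odd case is correct and matches the paper's argument. Lemma~\ref{odd0} gives $[e_1(u),e_1(v)]=0$, and the super Jacobi identity with two odd entries then gives $\big[e_1(u),[e_1(v),e_2(w)]\big]+\big[e_1(v),[e_1(u),e_2(w)]\big]=\big[[e_1(u),e_1(v)],e_2(w)\big]=0$.

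In the even case you use the same inputs as the paper, which simply refers to \cite[Lemma 6.3]{Peng2016parabolic} together with \eqref{e1e1b4}, \eqref{be1e2} and \eqref{e1e13}. However, the step you postpone as bookkeeping is where the content lies, and it does not close the way you predict. Carry out (i)--(iii) with $\ka_1=\ka_2$; note that (ii) should read $[e_1(v),g(w)]=-[e_1(v),e_2(w)]\,e_1(v)=-X(v,w)e_1(v)$. One gets
\[
(u-w)\big[e_1(v),[e_1(u),e_2(w)]\big]=\ka_1\Big([e_1(v),e_1(u)]\big(e_2(w)-e_2(u)\big)+e_1(u)Z-Z\,e_1(v)\Big),\qquad Z=\big[e_1(v),e_2(w)-e_2(u)\big].
\]
The mixed term $e_1(u)Z-Ze_1(v)$ cannot be reduced by \eqref{e1e1b4} and \eqref{be1e2}, because any reordering recreates double commutators. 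Write it as $(e_1(u)-e_1(v))Z+[e_1(v),Z]$ and symmetrize. All other terms then cancel, using also $(u-v)\big([e_1(u),e_2(v)]-[e_1(v),e_2(u)]\big)=-\ka_1(e_1(u)-e_1(v))(e_2(u)-e_2(v))$, which follows from \eqref{be1e2}. But a diagonal remainder survives:
\[
(u-w)(v-w)S(u,v,w)=\ka_1\Big((u-w)\big(Q(u,w)-Q(u,v)\big)+(v-w)\big(Q(v,w)-Q(v,u)\big)\Big),\qquad Q(u,w):=\big[e_1(u),[e_1(u),e_2(w)]\big].
\]
Since $Q(u,w)=\tfrac12 S(u,u,w)$ is itself an instance of the relation being proved, injectivity of multiplication by $(u-w)(v-w)$ gives nothing at this point.

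The missing idea is to remove this remainder by an argument that is not a polynomial cancellation. Specializing $v=u$ gives $(u-w-\ka_1)Q(u,w)=-\ka_1Q(u,u)$, whose right-hand side is independent of $w$. Write $Q(u,w)=\sum_{s\gge 1}Q_s(u)w^{-s}$ and compare coefficients of $w^{-s}$ for $s\gge1$; this yields $Q_{s+1}(u)=(u-\ka_1)Q_s(u)$. Every $Q_s(u)$ lies in $u^{-2}\scrX^\bs[[u^{-1}]]$, so $Q_1=(u-\ka_1)^{-s}Q_{s+1}\in u^{-s-2}\scrX^\bs[[u^{-1}]]$ for all $s$. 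Hence $Q=0$, and the displayed identity then gives $S=0$.

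Your fallback route has a comparable hole. First, $\zeta_1=\tl d_1 d_2$ does not commute with $e_2$; only $d_1$ does, by \eqref{ddb4}. Second, write $\mathbb S(k_1,k_2;r)=\Sym_{k_1,k_2}\big[e_1^{(k_1)},[e_1^{(k_2)},e_2^{(r)}]\big]$. First-order shift operators applied to the single base case $k_1=k_2=r=1$ only ever produce $\mathbb S(k_1+1,k_2;r)+\mathbb S(k_1,k_2+1;r)$ as a sum. So, for example, $\mathbb S(3,1;r)$ and $\mathbb S(2,2;r)$ are never separated, and you need a whole family of base cases, as in the proof of Proposition~\ref{prop:Serre}.
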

\begin{proof}
If $\ka_1=\ka_2$ (i.e. $|\alpha_1|=\bar 0$), then the proof is parallel to that of \cite[Lemma 6.3]{Peng2016parabolic} as the commutator relations used in the proofs are the same, see \eqref{e1e1b4}, \eqref{be1e2}, \eqref{e1e13}. If $\ka_1\ne \ka_2$ (i.e. $|\alpha_1|=\bar 1$), then the  relation follows from \eqref{e1e1b4}.
\end{proof}

\begin{rem}
Note that the calculations above involve only relations of nontwisted super Yangians of type A; cf. the proof of Corollary \ref{cor:commu}.
\end{rem}

\begin{lem}\label{x4serre0}
We have the following finite type Serre relation,
\[
\big[e_2^{(1)},[e_2^{(1)},e_1^{(1)}]\big]=e_1^{(1)}.
\]
\end{lem}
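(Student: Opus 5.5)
The plan is to reduce the identity to a computation with the first-order generators $x_{ij}^{(1)}$ alone. These satisfy a Lie superalgebra relation read off directly from the quaternary relation \eqref{bcom}.

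First, identify the Gaussian coefficients with matrix coefficients. By \eqref{eq:sij-Gauss}, $x_{12}(u)=d_1(u)e_1(u)$ and $x_{23}(u)=d_2(u)e_2(u)+f_1(u)d_1(u)e_{13}(u)$. Since $d_i(u)=1+O(u^{-1})$ and $e_{ij}(u),f_{ji}(u)=O(u^{-1})$, comparing $u^{-1}$-coefficients gives $e_1^{(1)}=x_{12}^{(1)}$ and $e_2^{(1)}=x_{23}^{(1)}$. So it suffices to show
\[
\big[x_{23}^{(1)},[x_{23}^{(1)},x_{12}^{(1)}]\big]=x_{12}^{(1)} .
\]

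Second, extract the degree-zero commutation relations. I take the coefficient of $u^{-1}v^{-1}$ in \eqref{bcom}.
\begin{itemize}
\item The term $\frac{1}{u-v}(\cdots)$ contributes the usual $\gl$-type terms $\delta_{kj}x_{il}^{(1)}-\delta_{il}x_{kj}^{(1)}$, with the sign $(-1)^{|i||j|+|i||k|+|j||k|}$.
\item In the term $\frac{1}{u+v}(\cdots)$, only the $u^0v^{-1}$ part of the bracket contributes. This yields the twisted terms $\delta_{kj'}x_{i'l}^{(1)}-\delta_{il'}x_{kj'}^{(1)}$.
\item The last term, divided by $u^2-v^2$, contributes nothing at order $u^{-1}v^{-1}$.
\end{itemize}
No constant terms appear, because the zeroth-order part of each bracket cancels or is annihilated by the Kronecker deltas. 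Hence the $x_{ij}^{(1)}$ span a Lie superalgebra. Matching with \eqref{isogr} (the degree-$0$ part of $\mathrm U(\gl^\bs[z]^\vartheta)$), I expect a homomorphism $x_{ij}^{(1)}\mapsto \ka_i(e_{ij}+\epsilon_{ij}e_{i'j'})$ with suitable signs $\epsilon_{ij}$. This is essentially the fixed-point subalgebra $(\gl^\bs)^\vartheta$. Since the identity to prove only involves these brackets, I may compute either directly with the degree-zero relations or inside $\gl^\bs$ via this realization.

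Third, carry out the computation, using $N=4$, $\ka_2=\ka_3$ and $\ka_1=\ka_4$, so that $x_{23}^{(1)}$ is even. The steps are:
\begin{itemize}
\item Compute $[x_{23}^{(1)},x_{12}^{(1)}]$. Here $k=j$ fails, while $i=l'$ holds because $2=3'$. This gives a combination of $x_{13}^{(1)}$ and $x_{22'}^{(1)}$-type terms, which collapse to $-\ka_2x_{13}^{(1)}$ (or its twisted partner $x_{42}$-type).
\item Bracket again with $x_{23}^{(1)}$. This produces $\ka_2^2x_{12}^{(1)}=x_{12}^{(1)}$.
\end{itemize}
The main obstacle is sign bookkeeping: the super signs $(-1)^{|i||j|+\cdots}$ and the twisted $\delta_{kj'}$, $\delta_{il'}$ terms have to combine so that the coefficient is exactly $+1$. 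I will cross-check the result against the $\gl^\bs$ realization $x_{12}^{(1)}\mapsto\ka_1(e_{12}-e_{43})$, $x_{23}^{(1)}\mapsto\ka_2(e_{23}-e_{32})$. There, $[A,B]=\ka_1\ka_2(-e_{13}\mp e_{42})$ and $[A,[A,B]]=\ka_1\ka_2^2(e_{12}-e_{43})$, which confirms the coefficient $\ka_2^2=1$.
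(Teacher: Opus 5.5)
Your reduction to first-order generators is valid, and it gives a correct proof once the computation is done properly. You correctly have $e_1^{(1)}=x_{12}^{(1)}$ and $e_2^{(1)}=x_{23}^{(1)}$. Your degree-zero relation $[x_{ij}^{(1)},x_{kl}^{(1)}]=\sigma\big(\delta_{kj}x_{il}^{(1)}-\delta_{il}x_{kj}^{(1)}+\delta_{kj'}x_{i'l}^{(1)}-\delta_{il'}x_{kj'}^{(1)}\big)$, with $\sigma=(-1)^{|i||j|+|i||k|+|j||k|}$, is also correct; the cleanest derivation multiplies \eqref{bcom} by $u^2-v^2$ and takes the $u^{1}v^{-1}$ coefficient.

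However, the sign bookkeeping in your sketch, which you yourself flag as the crux, is wrong in two places.

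First, for $[x_{23}^{(1)},x_{12}^{(1)}]$, i.e. $(i,j,k,l)=(2,3,1,2)$, the delta that fires is $\delta_{il}=\delta_{22}$. The twisted delta $\delta_{il'}=\delta_{23}$ vanishes. So with $\sigma=\ka_2$ you get $[x_{23}^{(1)},x_{12}^{(1)}]=-\ka_2x_{13}^{(1)}$, with no twisted contribution. The twisted term enters only at the second step, $(i,j,k,l)=(2,3,1,3)$, where $i=l'$ because $3'=2$. This gives $[x_{23}^{(1)},x_{13}^{(1)}]=-\ka_2x_{12}^{(1)}$, hence the double bracket is $\ka_2^2x_{12}^{(1)}=x_{12}^{(1)}$.

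Second, your $\gl^\bs$ cross-check is invalid. By \eqref{isogr} with $r=0$ (equivalently \eqref{image-quo}), $x_{ij}^{(1)}$ corresponds to $\ka_i(e_{ij}+e_{i'j'})$, with a plus sign for all $i,j$. Your elements $e_{12}-e_{43}$ and $e_{23}-e_{32}$ are $\theta$-fixed instead. The linear map $e_{ij}+e_{i'j'}\mapsto e_{ij}+(-1)^{i-j}e_{i'j'}$ is not a homomorphism.

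Your arithmetic in that model is also off. With $A=\ka_2(e_{23}-e_{32})$ and $B=\ka_1(e_{12}-e_{43})$ one gets $[A,B]=-\ka_1\ka_2(e_{13}+e_{42})$ and $[A,[A,B]]=-B$. So two errors cancelled to give the expected sign, and the check as written confirms nothing. In the correct model, $A=\ka_2(e_{23}+e_{32})$ and $B=\ka_1(e_{12}+e_{43})$, one gets $[A,B]=-\ka_1\ka_2(e_{13}+e_{42})$ and $[A,[A,B]]=B$, consistent with the lemma.

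With these corrections, your argument is a complete proof and a more elementary one than the paper's. The paper stays with Gaussian series:
\begin{itemize}
\item From \eqref{bcom} it gets $[x_{13}(u),x_{32}^{(1)}]=\ka_2x_{12}(u)$, coming from an untwisted term.
\item Using $[d_1(u),f_2(v)]=0$ from \eqref{ddb4}, this becomes $[e_{13}(u),f_2^{(1)}]=\ka_2e_1(u)$.
\item The twist then enters through $f_2^{(1)}=e_2^{(1)}$ from Lemma~\ref{efaiii}, i.e. $x_{32}^{(1)}=x_{23}^{(1)}$.
\item This is combined, implicitly, with the inner bracket $[e_2^{(1)},e_1^{(1)}]=-\ka_2e_{13}^{(1)}$, which is the $u^0v^{-1}$ coefficient of \eqref{be1e2}.
\end{itemize}
Your route needs none of the Gaussian commutation relations, and the twist appears directly as the $\delta_{il'}$ term. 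The paper's route yields the stronger all-modes identity $[e_{13}(u),e_2^{(1)}]=\ka_2e_1(u)$, although only its $u^{-1}$ coefficient is needed here.
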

\begin{proof}
Note that $|e_2^{(1)}|=\bar 0$. By \eqref{bcom}, we have
\[
(u-v)[x_{13}(u),x_{32}(v)]=\ka_2\big(x_{33}(u)x_{12}(v)-x_{33}(v)x_{12}(u)\big).
\]
Thus $[x_{13}(u),x_{32}^{(1)}]=\ka_2 x_{12}(u)$ which transforms to
\[
[d_1(u)e_{13}(u),f_2^{(1)}]=\ka_2d_1(u)e_1(u).
\]
By \eqref{ddb4}, we conclude that $[e_{13}(u),f_2^{(1)}]=\ka_2e_1(u)$. Finally, the desired relation follows from Lemma \ref{efaiii} as $e_2(u)=-f_2(-u)$ implies that $f_2^{(1)}=e_2^{(1)}$.
\end{proof}

Recall $b_{i,r}$ and $h_{i,r}$ for $i\in\{1,2,3\}$,  $r\in\bN$ from \eqref{beven}, \eqref{heven}, and \eqref{bhcom}.

\begin{prop}
The relations \eqref{qsconj0}--\eqref{qsconj8} hold in $\scrX^\bs$.
\end{prop}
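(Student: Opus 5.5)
We verify the relations for $N=4$, where $\tau$ swaps $1\leftrightarrow 3$ and fixes $2=\ell$, with $|\alpha_2|=\bar 0$. The approach is to sort the relations into three groups according to which indices occur, and to reduce each group to results already established.

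\emph{Step 1: relations already available.} Relation \eqref{qsconj0} for $h_{i,r}$ follows from \eqref{ddb4}, since all $d_i(u)$ commute. The identity $h_{\tau i,0}=-h_{i,0}$ follows from \eqref{i=tauih}. Relations among $h_1,b_1$ alone are the $\gl$-type relations of Proposition~\ref{propA}: here $c_{\tau 1,1}=c_{31}=0$, so we may use Lemma~\ref{lemalt}. For the pair $(h_2,b_2)$ we pull back Proposition~\ref{propx=2} through the rank-reduction homomorphism $\psi_1^\bs$ of Lemma~\ref{shiftlem}. One must check that the shifts $\tfrac{\varkappa-\varrho_2}{2}$ in \eqref{beven}--\eqref{heven} match those for $N=2$; for $N=2$ the shift is zero and $\varrho_2=\varkappa$, so they do.

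\emph{Step 2: mixed relations.} The remaining \eqref{qsconj2} and \eqref{qsconj3} involve the pairs $(1,2)$, $(2,1)$, $(1,3)$ and $(3,\cdot)$. Using \eqref{i=tauih}, \eqref{fi=tauie} and the anti-automorphism $\eta$ of \eqref{etahb}, the index $3$ can be traded for $1$. This leaves four cases.
\begin{itemize}
\item $[h_1,b_2]$: this holds because $c_{\tau1,2}=c_{12}$, and the required $d$--$e$ relations are \eqref{d1e1b4}, \eqref{d2e1b4} together with their images under $\psi_1^\bs$ and Lemma~\ref{efaiii}.
\item $[h_2,b_1]$: this is derived from \eqref{e1f1b4}--\eqref{d1f1b4} and the $N=2$ relations \eqref{b2d1f}--\eqref{b2d2f} for $d_2,d_3$.
\item $[b_1,b_2]$: this follows from Lemma~\ref{be1e2lem} after rewriting $e_{13}$.
\item $[b_1,b_3]$: by \eqref{fi=tauie}, $b_3$ is a shifted $e_1$, so this reduces to \eqref{e1f1b4}. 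This case produces the term $-2\delta_{\tau i,j}(-1)^r\ka_i h_{j,r+s+1}$ once the shifts are tracked.
\end{itemize}
In each case we convert to generating series and then extract coefficients with $\simeq$, as in Proposition~\ref{propx=2}. The Serre relations \eqref{qsconj4} reduce to the $r=s=0$ data plus the above, since $c_{13}=0$.

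\emph{Step 3: Serre relations.} For \eqref{qsconj9} with $i=1$, $j=2$ and $|\alpha_1|=\bar 0$, we use Lemma~\ref{sec:A-serre}, translated through the shift and the identity $b_1\propto f_1$ (applying $\eta$). The case $i=3$ follows by symmetry. Relation \eqref{qsconjnew} cannot occur for $N=4$. For \eqref{qsconj8} we invoke Proposition~\ref{prop:Serre}. It requires \eqref{qsconj0}--\eqref{qsconj3}, which are done in Steps 1--2, and the finite Serre relation \eqref{serre0}, which is Lemma~\ref{x4serre0} after rescaling by the factors $\sqrt{-1}$: the product of three $\sqrt{-1}$'s against one $\sqrt{-1}$ gives the sign $-1$.

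\emph{Main obstacle.} I expect the hardest part to be Step 2, specifically the $[h_2,b_1]$ relation and the $b_1$--$b_3$ relation. In both, the twisted term $\frac{c_{ij}\pm c_{\tau i,j}}{2}$ and the shifts $\tfrac{\varkappa-\varrho_i}{2}$ must combine exactly, and super signs appear when $|\alpha_1|=\bar1$. I would first write $h_2(u)$ through $d_2,d_3=c\,\tl d_2(-u)$ and commute each factor past $f_1(v)$ using \eqref{d1f1b4}-type formulas, keeping only terms that survive in the region $|u|\gg|v|$.
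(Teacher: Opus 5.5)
Your plan is correct and is essentially the paper's argument. The paper only says the proof follows the nonsuper computation \cite[Prop.~7.16]{lu2024drinfeld}, using the $N=4$ relations \eqref{ddb4}--\eqref{e2e2b4}, Lemmas~\ref{be1e2lem}--\ref{x4serre0}, the symmetries \eqref{i=tauih}, \eqref{fi=tauie} and $\eta$, and Proposition~\ref{prop:Serre} for \eqref{qsconj8}; this is exactly how you organize it.

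There is one citation slip, not a gap. For $[h_1,b_2]$ the inputs are $[d_1(u),f_2(v)]=0$ from \eqref{ddb4} and the $\psi_1^\bs$-image of \eqref{b2d1f}, not \eqref{d1e1b4}--\eqref{d2e1b4}. For $[h_2,b_1]$ the relations \eqref{b2d1f}--\eqref{b2d2f} concern $f_2$, not $f_1$; what you need is the type A relation for $[d_2(u),f_1(v)]$ together with $d_3(u)=c(u)\tl d_2(-u)$, which is what your final paragraph already uses.
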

\begin{proof}
The proof is similar to the proof of the nonsuper case \cite[Prop. 7.16]{lu2024drinfeld} by using the relations established in this subsection; see also Proposition \ref{X=3comp} below.
\end{proof}

\subsection{Relations in the case $N=3$}
In this case, we have $\bm\ka=(\ka_1,\ka_2,\ka_3)$ such that $\ka_1=\ka_3$. We start with listing relations between Gaussian generators $d_1(u)$, $d_2(u)$, $e_1(u)$, $f_1(u)$, cf. Lemma \ref{efaiii}.
\begin{lem}
We have 
\begin{align}
[d_i(u),d_j(v)]&=0,\label{ddb3}\\
[d_1(u),e_1(v)]&=\frac{\ka_1}{u-v}d_1(u)(e_1(v)-e_1(u)),\label{d1e1b3}\\
[d_1(u),f_1(v)]&=\frac{\ka_1}{u-v}(f_1(u)-f_1(v))d_1(u),\label{d1f1b3}\\
[e_1(u),e_1(v)]&=\frac{\ka_1}{u-v}(e_1(u)-e_1(v))^2,\label{e1e1b3}\\
[f_1(u),f_1(v)]&=-\frac{\ka_1}{u-v}(f_1(u)-f_1(v))^2,\label{f1f1b3}\\
[e_1(u),f_1(v)]&=\frac{\ka_2}{u-v}\big(\tl d_1(u)d_2(u)-\tl d_1(v)d_2(v)\big)+\frac{\ka_2}{u+v}\big(e_{13}(u)+e_1(u)f_1(v)+f_{31}(v)\big),\label{e1f1b3}\\
[d_2(u),e_1(v)]&=\frac{\ka_2}{u-v}d_2(u)(e_1(u)-e_1(v))-\frac{\ka_2}{u+v}(e_1(v)+f_2(u))d_2(u),\label{d2e1b3}\\
[d_2(u),f_1(v)]&=\frac{\ka_2}{u-v}(f_1(v)-f_1(u))d_2(u)+\frac{\ka_2}{u+v}d_2(u)(f_1(v)+e_2(u)).\label{d2f1b3}
\end{align}
\end{lem}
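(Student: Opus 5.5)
The plan is to derive each relation directly from the quaternary relation \eqref{bcom} specialized to indices in $\{1,2,3\}$ with $N=3$, so $1'=3$, $2'=2$, $3'=1$. We then rewrite the result through the Gauss decomposition \eqref{eq:sij-Gauss} and simplify with Lemma \ref{efaiii}, which gives $e_1(u)=-f_2(-u)$, $e_2(u)=-f_1(-u)$ and $d_3(u)=c(u)\tl d_1(-u)$ with $c(u)$ central. The relation \eqref{ddb3} comes from Lemma \ref{lemnew1}, since $[d_1(u),d_1(v)]=0$ follows from \eqref{bcom}. Commutativity of $d_2$ with $d_1$ can be read off from the $(1,1),(2,2)$ component, exactly as in the nonsuper case \cite{lu2024drinfeld}, with $d_3$ handled by Lemma \ref{efaiii}.

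For \eqref{d1e1b3}--\eqref{f1f1b3}, take $i,j,k,l\in\{1,2\}$ in \eqref{bcom}. All $\delta_{kj'}$, $\delta_{il'}$ and $\delta_{ij'}$ terms vanish except when an index equals $2$ with $2'=2$, so we check these cases carefully. For instance, $[x_{11}(u),x_{12}(v)]$ has $\delta_{kj'}=\delta_{1,3}=0$, $\delta_{il'}=\delta_{1,2'}=0$ and $\delta_{ij'}=\delta_{1,3}=0$. Hence $x_{11},x_{12},x_{21}$ satisfy the Yangian $\rY(\gl^{(\ka_1,\ka_2)})$ relations among themselves, and the standard Gaussian computations of \cite{Gow2007gauss,Peng2016parabolic} give \eqref{d1e1b3}, \eqref{d1f1b3} and \eqref{e1e1b3}. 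We obtain \eqref{f1f1b3} by applying the anti-automorphism $\eta$ of Lemma \ref{lem:eta}. Note that when $\ka_1\ne\ka_2$ the element $e_1$ is odd, so \eqref{e1e1b3} and \eqref{f1f1b3} are understood as supercommutators, consistent with Lemma \ref{odd0}.

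The genuinely twisted relations are \eqref{e1f1b3}, \eqref{d2e1b3} and \eqref{d2f1b3}. For \eqref{d2e1b3}, apply \eqref{bcom} to $[x_{22}(u),x_{12}(v)]$, or equivalently to $[x_{12}(u),x_{22}(v)]$, where $\delta_{il'}$ with $l=2$ produces the $\frac{1}{u+v}$ sum $\sum_a x_{ka'}x_{aj}$. Substituting $x_{22}=d_2+f_1d_1e_1$, $x_{12}=d_1e_1$ and $x_{21}=f_1d_1$, we use \eqref{d1e1b3}, \eqref{d1f1b3} and the already established $[e_1,e_1]$ relation to cancel the $d_1$-conjugations, as in the proof of Lemma \ref{be1e2lem}. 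We then identify the leftover sum as $d_2(u)$ times entries which, via $x_{23}=d_2e_2+f_1d_1e_{13}$ and $e_2(u)=-f_1(-u)$, collapse to $(e_1(v)+f_2(u))d_2(u)$. Relation \eqref{d2f1b3} then follows by applying $\eta$, using \eqref{etahb}-type sign bookkeeping. For \eqref{e1f1b3}, start from $[x_{12}(u),x_{21}(v)]$. Here $\delta_{kj'}=\delta_{22}=1$ and $\delta_{il'}=\delta_{1,3}=1$, so both $\frac{1}{u+v}$ sums appear, while $\delta_{ij'}=\delta_{12}=0$. Expand both sides in Gaussian generators, strip $d_1(u)$ on the left and $d_1(v)$ on the right using \eqref{d1e1b3} and \eqref{d1f1b3}, and recognize $\sum_a x_{1a'}(u)x_{a1}(v)$ in Gauss form. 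By \eqref{eq:def-tilde-e-f} and Lemma \ref{efaiii} ($\tl e_{13}=f_{31}(-u)$, etc.), this produces $d_1(u)\big(e_{13}(u)+e_1(u)f_1(v)+f_{31}(v)\big)d_1(v)$ after cancellations.

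The main obstacle is this last identification: showing the $\frac{1}{u+v}$ terms reorganize exactly into $e_{13}(u)+e_1(u)f_1(v)+f_{31}(v)$ sandwiched between $d_1(u)$ and $d_1(v)$. It involves commuting $d_1(v)$ past $e_{13}(u)$, which needs \eqref{b4pf3}-type relations valid here because $x_{11},x_{13}$ commute Yangian-style, and careful parity signs when $|\alpha_1|=\bar1$. I would first do the even case $\ka_1=\ka_2$ mirroring \cite[Lem.~7.x]{lu2024drinfeld}, then track the extra signs $(-1)^{|i||j|+\dots}$ for the odd case.
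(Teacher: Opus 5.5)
\textbf{Verdict.} The proposal has a genuine gap in the derivation of \eqref{d2e1b3}, and the odd case of \eqref{e1e1b3} is left without an argument.

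\textbf{The step for \eqref{d2e1b3}.}
\begin{itemize}
\item For $[x_{22}(u),x_{12}(v)]$ both $\delta_{il'}$ and $\delta_{ij'}$ equal $\delta_{22}=1$. So \eqref{bcom} also has a $\frac{1}{u^2-v^2}$ term, which you drop. The cleaner choice is $[x_{12}(u),x_{22}(v)]$, whose only twisted term is $\frac{\ka_2}{u+v}\sum_a x_{1a'}(u)x_{a2}(v)$.
\item More importantly, $x_{23}$ never occurs in this sum. By \eqref{eq:sij-Gauss},
\[
\sum_a x_{1a'}(u)x_{a2}(v)=d_1(u)\Big[\big(e_{13}(u)+e_1(u)f_1(v)+f_{31}(v)\big)d_1(v)e_1(v)+\big(e_1(u)+f_2(v)\big)d_2(v)\Big].
\]
Only the second summand survives in \eqref{d2e1b3}.
\item The first summand involves $e_{13}$ and $f_{31}$. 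It cannot be removed with \eqref{d1e1b3}, \eqref{d1f1b3}, the $[e_1,e_1]$ relation, or $e_2(u)=-f_1(-u)$.
\item It must be cancelled against the twisted term $\frac{\ka_2}{u+v}d_1(u)\big(e_{13}(u)+e_1(u)f_1(v)+f_{31}(v)\big)d_1(v)$ of the $[x_{12}(u),x_{21}(v)]$ relation. Concretely, write $x_{22}(v)=x_{21}(v)e_1(v)+d_2(v)$ and subtract $[x_{12}(u),x_{21}(v)]\,e_1(v)$.
\end{itemize}
So the missing input is \eqref{e1f1b3}, or its source relation, and it has to be proved before \eqref{d2e1b3}. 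With that input the brute-force computation does close.

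The paper takes a shorter route. From the $u^0$-coefficient it gets $[e_1^{(1)},x_{22}(v)]$. It then uses $[e_1^{(1)},x_{12}(v)]=0$ and the $u^{-1}$-coefficient of \eqref{e1f1b3} to obtain $[e_1^{(1)},d_2(v)]=\ka_2\big(d_2(v)e_1(v)+f_2(v)d_2(v)\big)$. Finally it restores the full $u$-dependence via $d_1(u)e_1(u)=\ka_1[d_1(u),e_1^{(1)}]$ and the Jacobi identity with $[d_1(u),d_2(v)]=0$.

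\textbf{The odd case of \eqref{e1e1b3}.} This is not just a reading of brackets as supercommutators. When $\ka_1\ne\ka_2$, \eqref{bcom} gives $(u-v)[x_{12}(u),x_{12}(v)]=\ka_1\big(x_{12}(u)x_{12}(v)-x_{12}(v)x_{12}(u)\big)$, where the left bracket is now an anticommutator. Inserting $x_{12}=d_1e_1$ and \eqref{d1e1b3} yields only
\[
\big((u-v)^2-1\big)[e_1(u),e_1(v)]=-\ka_1(u-v)\big(e_1(u)^2-e_1(v)^2\big)-\big(e_1(u)^2+e_1(v)^2\big).
\]
One still has to show $e_1^2=0$. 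The paper does this by induction, using $[e_1^{(1)},e_1(v)]=\ka_1e_1(v)^2$ and the element $\mathfrak d_{1,1}$ with $[\mathfrak d_{1,1},e_1^{(r)}]=\ka_1e_1^{(r+1)}$. Alternatively, specializing $v=u-1$ kills the left side and forces $e_1^2=0$.

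Deferring to Gow or Peng instead needs a transfer principle you have not justified. For $N=3$ there is no homomorphism $t_{ij}(u)\mapsto x_{ij}(u)$ ($i,j\le 2$) from $\rY(\gl^{(\ka_1,\ka_2)})$. Also, $x_{11},x_{12},x_{21}$ do not satisfy Yangian relations among themselves, since $[x_{12},x_{21}]$ is twisted, as you yourself use later. What is true is that the relations among $x_{11},x_{12}$ alone, and among $x_{11},x_{21}$ alone, are of Yangian type. Yangian identities can therefore be imported only after a PBW argument on that subalgebra, as in Corollary \ref{cor:commu}.

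\textbf{Minor points on \eqref{e1f1b3}.} Your route is the paper's. However, $\delta_{il'}=\delta_{13}=0$ there, so only one $\frac{1}{u+v}$ sum appears. The sandwich form $d_1(u)(\cdots)d_1(v)$ is immediate from \eqref{eq:sij-Gauss}, so the obstacle you flag is not one. The real step is \eqref{b3pf1}, which handles the $f_1d_1e_1$ part of $x_{22}$.
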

\begin{proof}
We verify the essential relations as the other relations follow from the essential ones by taking the anti-automorphism $\eta$; see Lemma \ref{lem:eta}.

The relation \eqref{ddb3} follows from Lemma \ref{lemnew1}. Then \eqref{d1e1b3} follows from \eqref{bcom} with $i=j=k=1$, $l=2$ and $[d_1(u),d_1(v)]=0$. Applying the anti-automorphism $\eta$ to \eqref{d1e1b3}, we obtain \eqref{d1f1b3}.

\mybox{Equation \eqref{e1f1b3}}. We first claim that
\beq\label{b3pf1}
\begin{split}
(u-v)\big([d_1(u)e_1(u),f_1(v)d_1(v)]&-d_1(u)[e_1(u),f_1(v)]d_1(v)\big)\\
&=\ka_2\big(f_1(u)d_1(u)e_1(u)d_1(v)-f_1(v)d_1(v)e_1(v)d_1(u)\big).
\end{split}
\eeq
This is equivalent to
\begin{align*}
\big((u-v)d_1(u)f_1(v)&-\ka_1f_1(u)d_1(u)\big)e_1(u)d_1(v)\\
&=f_1(v)d_1(v)\big((u-v)d_1(u)e_1(u)-\ka_1e_1(v)d_1(u)\big).
\end{align*}
Note that by \eqref{d1f1b3}, we have
\[
(u-v)d_1(u)f_1(v)-\ka_1f_1(u)d_1(u)=f_1(v)\big((u-v)d_1(u)-\ka_1d_1(u)\big).
\]
Hence, to prove \eqref{b3pf1}, it reduces to show
\[
\big((u-v)d_1(u)-\ka_1d_1(u)\big)e_1(u)d_1(v)=d_1(v)\big((u-v)d_1(u)e_1(u)-\ka_1e_1(v)d_1(u)\big),
\]          
that is
\[
(u-v)d_1(u)[d_1(v),e_1(u)]=\ka_1\big(d_1(v)e_1(v)d_1(u)-d_1(u)e_1(u)d_1(v)\big).
\]
Applying \eqref{d1e1b3} to the LHS, it transforms to
\[
d_1(v)[d_1(u),e_1(v)]=d_1(u)[d_1(v),e_1(u)]
\]
which follows directly from \eqref{d1e1b3} by applying it to both sides.

Let us come back to \eqref{e1f1b3}. By \eqref{bcom} with $i=l=1$ and $j=k=2$ in terms of Gaussian generators, we have
\begin{align*}
(u^2-v^2)&[d_1(u)e_1(u),f_1(v)d_1(v)]\\
=\, &(u+v)\ka_2\big(d_2(u)d_1(v)+f_1(u)d_1(u)e_1(u)d_1(v)-d_2(v)d_1(u)-f_1(v)d_1(v)e_1(v)d_1(u)\big)\\
&\qquad \qquad +(u-v)\ka_2\big(d_1(u)e_{13}(u)d_1(v)+d_1(u)e_1(u)f_1(v)d_1(v)+d_1(u)f_{31}(v)d_1(v)\big).
\end{align*}
Now using \eqref{b3pf1} for $[d_1(u)e_1(u),f_1(v)d_1(v)]$ and multiplying $\tl d_1(u)$, $\tl d_1(v)$ from the left and the right, respectively, one finds \eqref{e1f1b3}. 

\mybox{Equation \eqref{e1e1b3}}. By \eqref{bcom} with $i=k=1$ and $j=l=2$, we have $$[x_{12}(u),x_{12}(v)]=\ka_1(x_{12}(u)x_{12}(v)-x_{12}(v)x_{12}(u)).$$
There are two cases depending on the parity of $|\alpha_1|$. 

(1) If $\ka_1=\ka_2$, then we have $[x_{12}(u),x_{12}(v)]=0$ which implies that
\be 
d_1(u) e_1(u)d_1(v) e_1(v)=d_1(v)  e_1(v)d_1(u) e_1(u).
\ee 
Using \eqref{d1e1b3} to commute $d_1(u)$ and $e_1(v)$, we have
\begin{align*}
d_1(u)\Big(d_1(v)e_1(u)+&\frac{\ka_1}{u-v}d_1(v)(e_1(u)-e_1(v))\Big)e_1(v)\\
&=d_1(v)\Big(d_1(u)e_1(v)+\frac{\ka_1}{u-v}d_1(u)(e_1(u)-e_1(v))\Big)e_1(u).
\end{align*}
Canceling $d_1(u)d_1(v)$, we obtain \eqref{e1e1b3}.

(2) If $\ka_1\ne \ka_2$, then 
\beq\label{b3pf2}
[x_{12}^{(1)},x_{12}(v)]=0\Longrightarrow [e_1^{(1)},d_1(v)e_{1}(v)]=0.
\eeq
Therefore, we have
\beq\label{995}
d_1(v)[e_1^{(1)},e_1(v)]=[d_1(v),e_1^{(1)}]e_1(v).
\eeq
It follows from \eqref{d1e1b3} that 
\begin{align}
&[d_1^{(1)},e_1(v)]=\ka_1 e_1(v),\label{992}\\
&[d_1(u),e_1^{(1)}]=\ka_1d_1(u)e_1(u),\label{993}\\
&[d_1^{(2)},e_1(v)]-v[d_1^{(1)},e_1(v)]=\ka_1 d_1^{(1)}e_1(v)-\ka_1 e_1^{(1)}.\label{994}
\end{align}
Combining \eqref{995} and \eqref{993}, we have
\beq\label{996}
[e_1^{(1)},e_1(v)]=\ka_1e_1(v)^2.
\eeq
Set $\mathfrak{d}_{1,1}=d_1^{(2)}-\frac12 (d_{1}^{(1)})^2-\frac{1}{2}\ka_1d_1^{(1)}$, then it follows from \eqref{992} and \eqref{994} that
\beq\label{997}
[\mathfrak{d}_{1,1},e_1(v)]=\ka_1(ve_1(v)-e_1^{(1)})~\Longrightarrow ~[\mathfrak{d}_{1,1},e_1^{(r)}]=\ka_1e_1^{(r+1)}.
\eeq

By Lemma \ref{odd0}, it suffices to show that $[e_1(u),e_1(v)]=0$, i.e. $[e_1^{(r)},e_1^{(s)}]=0$ for all $r,s\in\bN$. Considering the coefficient of $v^{-1}$ in \eqref{996}, it is immediate that $[e_1^{(1)},e_1^{(1)}]=0$ or $(e_1^{(1)})^2=0$. Similarly,
\begin{align}
[e_1^{(1)},e_1^{(2)}]=\ka_1 (e_1^{(1)})^2=0,\quad [e_1^{(1)},e_1^{(3)}]=\ka_1 [e_1^{(1)},e_1^{(2)}]=0.\label{998}
\end{align}
Applying $[\mathfrak{o}_{1,1},\cdot\,]$ to the first equality of \eqref{998}, we obtain
\[
0=\big[\mathfrak{o}_{1,1},[e_1^{(1)},e_1^{(2)}]\big]=\ka_1[e_1^{(2)},e_1^{(2)}]+\ka_1[e_1^{(1)},e_1^{(3)}].
\]
Hence, by \eqref{998}, we have $[e_1^{(2)},e_1^{(2)}]=0$. Now we prove by induction on $r+s$ that $[e_1^{(r)},e_1^{(s)}]=0$ whose base case is proved above. By induction hypothesis and \eqref{996}, we have
\beq\label{999}
[e_1^{(1)},e_1^{(r+s)}]=\frac{\ka_1}{2}\sum_{i=1}^{r+s-1}[e_1^{(i)},e_1^{(r+s-i)}]=0.
\eeq
Applying $[\mathfrak{o}_{1,1},\cdot\,]$ to $[e_1^{(r)},e_1^{(s)}]=0$, we have
\beq\label{9910}
[e_1^{(r+1)},e_1^{(s)}]+[e_1^{(r)},e_1^{(s+1)}]=0.
\eeq
The desired statement follows from \eqref{999} and \eqref{9910}.

\mybox{Equation \eqref{d2e1b3}}. Taking the coefficients of $u$ in \eqref{bcom} with $i=1,j=k=l=2$ in terms of Gaussian generators, we find that
\beq\label{b3pf3}
[e_1^{(1)},d_2(v)+f_1(v)d_1(v)e_1(v)]=\ka_2\big(d_1(v)e_1(v)+f_2(v)d_2(v)+f_{31}(v)d_1(v)e_1(v)\big).
\eeq
It follows from \eqref{b3pf2} that 
\beq\label{b3pf4}
[e_1^{(1)},d_1(v)e_1(v)]=0.
\eeq
Note also that by \eqref{e1f1b3} we have
\beq\label{b3pf5}
[e_1^{(1)},f_1(v)]=\ka_2\big(1-\tl d_1(v)d_2(v)+f_{31}(v)\big).
\eeq
Combining \eqref{b3pf3}, \eqref{b3pf4}, and \eqref{b3pf5}, we conclude that
\beq\label{b3pf6}
[e_1^{(1)},d_2(v)]=\ka_2\big(d_2(v)e_1(v)+f_2(v)d_2(v)\big).
\eeq
By Lemma \ref{efaiii}, we have $f_2(v)=-e_1(-v)$. Therefore, we find that
\begin{align*}
d_1(u)[e_1(u),d_2(v)]&\stackrel{\eqref{ddb3}}{=}[d_1(u)e_1(u),d_2(v)]\stackrel{\eqref{993}}{=}\ka_1[d_1(u),[e_1^{(1)},d_2(v)]]\\
&\stackrel{\eqref{b3pf6}}{=}\ka_1\ka_2[d_1(u),d_2(v)e_1(v)+f_2(v)d_2(v)]\\
&\stackrel{\eqref{ddb3}}{=}\ka_1\ka_2d_2(v)[d_1(u),e_1(v)]+\ka_1\ka_2[d_1(u),f_2(v)]d_2(v)\\
&\stackrel{\eqref{d1e1b3}}{=}\frac{\ka_2}{u-v}d_2(v)d_1(u)\big(e_1(v)-e_1(u)\big)+\frac{\ka_2}{u+v}d_1(u)\big(e_1(u)+f_2(v)\big)d_2(v),
\end{align*}
completing the proof of \eqref{d2e1b3}.
\end{proof}

\begin{lem}\label{centralb3}
The coefficients of $d_1(u+\frac{\ka_1+\ka_2}{2})^{\ka_1}d_2(u)^{\ka_2}d_3(u-\frac{\ka_2+\ka_3}{2})^{\ka_3}$ are central elements in $\scrX^\bs$.
\end{lem}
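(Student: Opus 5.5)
Denote the series in the statement by $B(u)$. By \eqref{ddb3} all $d_i(u)$ commute with each other. By Lemma \ref{lem:de-gen}, $\scrX^\bs$ is generated by the $d_i(u)$ and $e_1(u),e_2(u)$, and Lemma \ref{efaiii} gives $e_2(u)=-f_1(-u)$. It is therefore enough to show $[B(u),e_1(v)]=0$ and $[B(u),f_1(v)]=0$. The second follows from the first by applying the anti-automorphism $\eta$ of Lemma \ref{lem:eta}, which fixes each $d_i(u)$ and hence $B(u)$. So the task reduces to proving that $B(u)$ commutes with $e_1(v)$.

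First we remove $d_3$. By Lemma \ref{efaiii}, $d_3(u)=c(u)\tl d_1(-u)$ with $c(u)$ central, and here $\ka_3=\ka_1$. Hence, up to a central factor, $B(u)$ equals
\[
d_1(u+\tfrac{\ka_1+\ka_2}{2})^{\ka_1}\,d_2(u)^{\ka_2}\,d_1(-u+\tfrac{\ka_1+\ka_2}{2})^{-\ka_1}.
\]
Next we work modulo $\simeq$: expand in $|u|\gg|v|$ and drop terms such as $d_1(u)e_1(u)$, $d_2(u)e_1(u)$ and $f_2(u)d_2(u)$, which do not contribute, exactly as in the proof of Lemma \ref{centralb2}. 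Relations \eqref{d1e1b3} and \eqref{d2e1b3} then become
\begin{align*}
d_1(u)e_1(v)&\simeq \frac{u-v+\ka_1}{u-v}\,e_1(v)d_1(u),\\
d_2(u)e_1(v)&\simeq \frac{(u-v-\ka_2)(u+v)}{(u-v)(u+v-\ka_2)}\,e_1(v)d_2(u).
\end{align*}
The $-\frac{\ka_2}{u+v}e_1(v)d_2(u)$ term of \eqref{d2e1b3} is kept, since it is not negligible. The extra factor $\frac{u+v}{u+v-\ka_2}$ comes from the $u+v$ pole, and it has to be cancelled by the $\tl d_1(-u+\cdot)$ factor. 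For that factor we invert the first relation and substitute $u\mapsto -u+c$. The resulting rational function in $u+v$ is still a valid $\simeq$ relation, because substitution and inversion preserve the dropped-term structure: the dropped terms only involve $d_1$ and $e_1$ evaluated at the same argument.

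Multiplying the scalar factors, with the shifts $a=\tfrac{\ka_1+\ka_2}{2}$, should give
\[
\Big(\tfrac{u+a-v+\ka_1}{u+a-v}\Big)^{\ka_1}\Big(\tfrac{(u-v-\ka_2)(u+v)}{(u-v)(u+v-\ka_2)}\Big)^{\ka_2}\Big(\tfrac{-u+a-v+\ka_1}{-u+a-v}\Big)^{-\ka_1}.
\]
Using $\ka_i^2=1$, the $(u-v)$ factors telescope because $a+\ka_1$, $0$, $-\ka_2$ and $a$ are matched, and the $(u+v)$ factors cancel similarly. The product should therefore equal $1$, giving $B(u)e_1(v)\simeq e_1(v)B(u)$, and hence equality of all coefficients. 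The case $\ka_1\neq\ka_2$, where $a=0$ and the first factor disappears in effect, should be checked separately in the same way.

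The main obstacle is the justification of $\simeq$ for the inverted and reflected series $\tl d_1(-u+a)$. The negative argument reverses the expansion region for the $u+v$ denominators, so one must check carefully that the dropped terms (now $\tl d_1(-u)e_1(-u)$-type terms) still contribute nothing to the coefficients of $u^{-r-1}v^{-s-1}$. I would handle this by working directly with \eqref{d2e1b3} and \eqref{d1e1b3} rewritten for $d_3$ via $\eta$ and Lemma \ref{efaiii}, mirroring Lemma \ref{centralb2}. If the naive cancellation fails, I would fall back on rewriting $d_3(u)e_1(v)$ through \eqref{bcom} with $x_{33}$, as in Lemma \ref{b2ser}.
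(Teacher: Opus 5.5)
Your reduction is the one the paper intends; its proof just says ``similar to Lemma~\ref{centralb2}''. The individual steps are all correct: the $d_i$ commute, $\scrX^\bs$ is generated by $d_i(u),e_1(u),e_2(u)=-f_1(-u)$, the anti-automorphism $\eta$ reduces $f_1$ to $e_1$, and $d_3(u)=c(u)\tl d_1(-u)$.

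\textbf{The gap.} The problem is the only substantive step. Your commutation factors are wrong, and the product you display is not $1$, so the telescoping you assert does not happen.
\begin{itemize}
\item From \eqref{d1e1b3}, dropping $d_1(u)e_1(u)$ gives $(u-v-\ka_1)\,d_1(u)e_1(v)\simeq(u-v)\,e_1(v)d_1(u)$. So the factor is $\frac{u-v}{u-v-\ka_1}$. Your $\frac{u-v+\ka_1}{u-v}$ agrees with this only to first order.
\item From \eqref{d2e1b3} one gets $\bigl(1+\frac{\ka_2}{u-v}\bigr)d_2(u)e_1(v)\simeq\bigl(1-\frac{\ka_2}{u+v}\bigr)e_1(v)d_2(u)$. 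So the $(u+v)$-factor is $\frac{u+v-\ka_2}{u+v}$, the inverse of yours.
\end{itemize}
With your factors, for $\ka_1=\ka_2=\ka$ (so $a=\ka$) the product is $\bigl(\frac{(u-v+2\ka)(u-v-\ka)}{(u-v+\ka)(u-v)}\cdot\frac{u+v}{u+v-2\ka}\bigr)^{\ka}$. For $\ka_2=-\ka_1$ its $(u+v)$-part is $\bigl(\frac{u+v+\ka_1}{u+v-\ka_1}\bigr)^{\ka_1}$. Neither is $1$. The whole content of the lemma is that the specific shifts $\pm\frac{\ka_1+\ka_2}{2}$ make this product trivial, so the proof is not complete as written.

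\textbf{The flagged obstacle.} The point you flag as the main obstacle is not one. Substituting $u\mapsto -u+a$ in the exact identity \eqref{d1e1b3} is legitimate. The dropped term then carries $\frac{1}{u+v-a}$, which in $|u|\gg|v|$ still has only nonnegative powers of $v$. Inverting a $\simeq$-relation by conjugating with $\tl d_1$ is also harmless.

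\textbf{The correct computation.} With the correct relations one gets
\[
B(u)e_1(v)\simeq\Big(\frac{u-v+a}{u-v+a-\ka_1}\Big)^{\ka_1}\Big(\frac{(u-v)(u+v-\ka_2)}{(u-v+\ka_2)(u+v)}\Big)^{\ka_2}\Big(\frac{u+v-a+\ka_1}{u+v-a}\Big)^{\ka_1}e_1(v)B(u),
\]
and this scalar is $1$ in both cases.
\begin{itemize}
\item For $\ka_1=\ka_2=\ka$, $a=\ka$: the factors are $\frac{u-v+\ka}{u-v}\cdot\frac{u-v}{u-v+\ka}\cdot\frac{u+v-\ka}{u+v}\cdot\frac{u+v}{u+v-\ka}$, all raised to the power $\ka$.
\item For $\ka_2=-\ka_1$, $a=0$: the $d_1$-factor $\bigl(\frac{u-v}{u-v-\ka_1}\bigr)^{\ka_1}$ does not ``disappear''. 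It cancels the $(u-v)$-part of the $d_2$-factor, while $\bigl(\frac{u+v+\ka_1}{u+v}\bigr)^{-\ka_1}$ cancels the $d_3$-factor.
\end{itemize}
Doing this check is what actually proves the lemma.
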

\begin{proof}
The proof is similar to that of Lemma \ref{centralb2}.
\end{proof}

Recall $b_{i,r}$ and $h_{i,r}$ for $i\in\{1,2\}$,  $r\in\bN$ from \eqref{bodd}, \eqref{hodd}, and \eqref{bhcom}.
\begin{prop}\label{X=3comp}
We have 
\begin{align}
&[h_{i,r},h_{j,s}]=0,\qquad h_{i,r}=(-1)^{r+1}h_{\tau i,r},\label{hhb3}\\
&[b_{i,r+1},b_{j,s}]  - [b_{i,r },b_{j,s+1 }]  =\frac{c_{ij}}{2}\{b_{i,r },b_{j,s}\}-2\delta_{\tau i, j} (-1)^{r}\ka_ih_{j,r+s+1 },\label{bbb3}\\
&[h_{i,r+2},b_{j,s}] - [h_{i,r},b_{j,s+2}]=\frac{c_{ij}-c_{i\tau j}}{2}\{h_{i,r+1},b_{j,s}\}\notag\\
&\qquad\qquad\qquad \qquad \qquad~~ \ \  +\frac{c_{ij}+c_{i\tau j}}{2} \{h_{i,r},b_{j,s+1}\}+\frac{c_{ij}c_{\tau i,j}}{4} [h_{i,r}, b_{j,s}].\label{hbb3}
\end{align}
Here in the last equality we allow $r\in\bZ$ with $h_{i,-1}=1$ and $h_{i,r}=0$ for $r<-1$.
\end{prop}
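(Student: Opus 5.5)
We translate the relations of the preceding lemma (for $N=3$, between $d_1,d_2,e_1,f_1$) into relations for the series $h_i(u),b_i(u)$, $i\in\{1,2\}$, defined in \eqref{bodd}--\eqref{hodd}. Then we extract coefficients, following the nonsuper argument of \cite[Prop. 7.16]{lu2024drinfeld}. Here $\ell=1$, so $\tau 1=2$, $c_{11}=c_{22}=\ka_1+\ka_2$ and $c_{12}=-\ka_2$. Also $\varrho_1=\ka_1$ and $\varkappa=\ka_1+\tfrac12\ka_2$, so the shift for $b_1$ is $\tfrac{\ka_2}{4}$. Hence $b_1(u)=\sqrt{-1}f_1(u+\tfrac{\ka_2}{4})$ and $h_1(u)=(1+\tfrac{\ka_2}{4u})\,\tl d_1(u+\tfrac{\ka_2}4)d_2(u+\tfrac{\ka_2}4)$. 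By Lemma \ref{efaiii} and \eqref{i=tauih}, \eqref{fi=tauie}, the series for $i=2$ are obtained from those for $i=1$ via $u\mapsto -u$. So it suffices to treat the relations with $i=1$ (and both $j=1,2$), using $e_1(u)=-f_2(-u)$ to rewrite everything through $f_1,f_2,d_1,d_2$.

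For \eqref{hhb3}: $[h_{i,r},h_{j,s}]=0$ is immediate from \eqref{ddb3}. The symmetry $h_{1,r}=(-1)^{r+1}h_{2,r}$ is \eqref{i=tauih} read on coefficients. For \eqref{bbb3} with $i=j$, \eqref{f1f1b3} gives $[b_1(u),b_1(v)]=\tfrac{\ka_1}{u-v}(b_1(u)-b_1(v))^2$, because the shift is common and $\sqrt{-1}^2=-1$. Multiplying by $u-v$ and taking coefficients gives $[b_{1,r+1},b_{1,s}]-[b_{1,r},b_{1,s+1}]=\tfrac{c_{11}}2\{b_{1,r},b_{1,s}\}$ with $c_{11}/2$ accounting for the odd case: when $\ka_1\neq\ka_2$ we have $c_{11}=0$ and $[e_1(u),e_1(v)]=0$ by the parity argument already given. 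For $i=1$, $j=2=\tau 1$, I use \eqref{e1f1b3}, substituting $e_1(u)=-f_2(-u)$. After the shifts, the $\tfrac{\ka_2}{u-v}$ term becomes a multiple of $h$ through the factor $(1+\tfrac{\ka_2}{4u})$; I expect this factor to be needed precisely here to absorb the discrepancy between the shifted arguments. The $\tfrac{\ka_2}{u+v}$ term involving $e_{13},f_{31},e_1f_1$ should be expressed via Lemma \ref{efaiii} ($\tl e_{13}(u)=f_{31}(-u)$, hence $e_{13}(u)=e_1(u)e_2(u)+f_{31}(-u)$) and then combined into $\{b_1,b_2\}$-type terms. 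The coefficient comparison then yields \eqref{bbb3} with the $-2(-1)^r\ka_1 h_{2,r+s+1}$ term.

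For \eqref{hbb3}, I first derive generating-function versions of $[d_k(u),b_j(v)]$ from \eqref{d1f1b3}, \eqref{d2f1b3}, and the $\eta$-images of \eqref{d1e1b3}, \eqref{d2e1b3}. Multiplying appropriately, as in Lemma \ref{b2s}, I obtain a relation of the form
\[
[h_1(u),b_j(v)]\simeq \frac{1}{u^2-v^2}\big(\alpha(v)h_1(u)b_j(v)+\beta(v)b_j(v)h_1(u)\big),
\]
with $\alpha,\beta$ linear in $v$. Here terms such as $f_1(u)d_1(u)$ and $d_2(u)e_2(u)$ are discarded under $\simeq$, since they carry no negative powers of $v$. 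Clearing the denominator $u^2-v^2$ and taking coefficients of $u^{-r-1}v^{-s-1}$ gives \eqref{hbb3}, with the constants matching $\tfrac{c_{1j}\mp c_{2j}}2$ and $\tfrac{c_{1j}c_{2j}}4$. The extension to $r\in\bZ$ follows because the dropped terms contribute only nonnegative powers of $v$, exactly as in Proposition \ref{propx=2}.

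The main obstacle will be the bookkeeping of shifts. The asymmetric shifts $\pm\tfrac{\ka_2}{4}$ and the prefactors $1\pm\tfrac{\ka_2}{4u}$ must conspire so that the products of rational functions arising from $d_1$ and $d_2$, namely $\frac{(u\pm v\mp\ka)}{(u\pm v)}$-type ratios, collapse to exactly the quadratic $(u^2-v^2)$-denominator form. I would verify this first for $j=1$ by directly multiplying the two commutation factors, checking the cancellation of the $\ka_1$-dependence (which must drop out since $c_{11}-c_{21}$ involves only $\ka_1+2\ka_2$), and only then handle $j=2$ by the symmetry $u\mapsto-u$.
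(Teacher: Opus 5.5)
Your overall route matches the paper's: translate the $N=3$ Gaussian relations into the shifted series $h_i,b_i$, expand for $|u|\gg|v|$, drop terms with no negative powers of $v$, and read off coefficients, allowing $r\in\bZ$ in \eqref{hbb3}. However, two concrete predictions in your sketch are wrong, and carrying them out as stated would not produce \eqref{hbb3} and \eqref{bbb3}.

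\emph{The shape of \eqref{hbb3}.} You aim for $(u^2-v^2)[h_1(u),b_j(v)]\simeq\alpha(v)h_1(u)b_j(v)+\beta(v)b_j(v)h_1(u)$ with $\alpha,\beta$ depending only on $v$. That is the $N=2$ pattern of Lemma~\ref{b2s}, where $\tau=\mathrm{id}$ kills $c_{ij}-c_{\tau i,j}$. Here $\tfrac{c_{1j}-c_{2j}}{2}=\pm\tfrac{\ka_1+2\ka_2}{2}\neq 0$, and the term $\tfrac{c_{1j}-c_{2j}}{2}\{h_{1,r+1},b_{j,s}\}$ is the coefficient of $u\{h_1(u),b_j(v)\}$. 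So $u$-dependence is unavoidable; your own remark that the constants match $\tfrac{c_{1j}\mp c_{2j}}{2}$ contradicts the $v$-only form. What \eqref{d1f1b3} and \eqref{d2f1b3} actually give is
\[
\tl d_1(u)d_2(u)f_1(v)\simeq\frac{(u+v)(u-v+\ka_2)}{(u+v-\ka_2)(u-v-\ka_1)}\,f_1(v)\,\tl d_1(u)d_2(u).
\]
The common shift by $\tfrac{\ka_2}{4}$ only moves $u+v$ to $u+v+\tfrac{\ka_2}{2}$. The factor $\tfrac{u-v+\ka_2}{u-v-\ka_1}$ equals $1$ if $|\alpha_1|=\bar 1$ and $\tfrac{u-v+\ka_2}{u-v-\ka_2}$ if $|\alpha_1|=\bar 0$; this is exactly the paper's case split. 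It yields, respectively, $(u+v)[h_1(u),b_1(v)]\simeq\tfrac{\ka_2}{2}\{h_1(u),b_1(v)\}$ and $(u^2-v^2)[h_1(u),b_1(v)]\simeq(\tfrac{3\ka_2}{2}u+\tfrac{\ka_2}{2}v)\{h_1(u),b_1(v)\}-\tfrac12[h_1(u),b_1(v)]$. The prefactor $1+\tfrac{\ka_2}{4u}$ plays no role here: it is a scalar series in $u$, and \eqref{hbb3}, read for all $r\in\bZ$, is linear in $h$. Also, $u\mapsto -u$ alone turns $(1,1)$ into $(2,1)$. To reach $(1,2)$ you also need $\eta$ from Lemma~\ref{lem:eta}, which sends $b_1(v)$ to a multiple of $b_2(-v)$, as the paper does.

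\emph{The case $(i,j)=(1,2)$ of \eqref{bbb3}.} The $e_{13}$ and $f_{31}$ terms of \eqref{e1f1b3} should not be rewritten through Lemma~\ref{efaiii}, and they cannot be ``combined into $\{b_1,b_2\}$'': $e_{13}(u)$ stays a series in the single variable $u$ however you express it. Once \eqref{e1f1b3} is multiplied by $u+v$, these terms are one-variable series and simply drop under $\simeq$. So does $\tl d_1(v)d_2(v)$ when the variable $v$ of $f_1$ is the large one. The anticommutator comes instead from the product term $e_1f_1$ combined with the shift. In the variables of $b_1,b_2$ the cleared factor is $u-v+\tfrac{\ka_2}{2}$, and $\tfrac{\ka_2}{2}[b_1,b_2]+\ka_1b_2b_1=\tfrac{\ka_2}{2}\{b_1,b_2\}$. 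Your instinct about the prefactor is correct here. With $v'=v-\tfrac{\ka_2}{4}$ one has $\tfrac{u-v+\ka_2/2}{u+v}\,\tl d_2(v')d_3(v')=\tfrac{4v}{4v-\ka_2}h_2(v)-\tfrac{2v}{u+v}h_2(v)$, and the first term depends on $v$ only.

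Finally, fix two sign slips:
\begin{itemize}
\item \eqref{f1f1b3} gives $[b_1(u),b_1(v)]=-\tfrac{\ka_1}{u-v}(b_1(u)-b_1(v))^2$; with your sign you would obtain $-\tfrac{c_{11}}{2}\{b_{1,r},b_{1,s}\}$.
\item \eqref{eq:def-tilde-e-f} gives $e_{13}(u)=e_1(u)e_2(u)-f_{31}(-u)$.
\end{itemize}
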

\begin{proof}
We give a brief proof; see the proof of \cite[Prop. 7.20]{lu2024drinfeld} for more detail.

The relation \eqref{hhb3} is clear from \eqref{i=tauih}, Lemma \ref{lemnew1}, and the definition of $h_{i,r}$. The relation \eqref{bbb3} for $i=j$ is obvious from \eqref{e1e1b3} and \eqref{f1f1b3}. Then we consider the case $(i,j)=(1,2)$. It follows from \eqref{e1f1b3} and $e_1(u)=-f_2(-u)$ (by Lemma \ref{efaiii}) that
\begin{align*}
(u-v)[f_1(u),f_2(v)]&\simeq -\frac{\ka_1(u-v)}{u+v}\tl d_2(v)d_3(v)-\ka_1f_2(v)f_1(u),
\end{align*}
in the region $|u|\gg |v|$. Here we dropped terms like $\tl d_1(u)d_2(u)$, $f_{31}(u)$, $e_{13}(-v)$ and used $\tl d_2(v)d_3(v)=\tl d_1(-v)d_2(-v)$ from Lemma \ref{efaiii}. Thus we have
\[
\Big(u-v+\frac{\ka_2}{2}\Big)[b_1(u),b_2(v)]\simeq\Big(\ka_1-\frac{2\ka_1v}{u+v}\Big)h_2(v)-\ka_1b_2(v)b_1(u),
\]
which implies further
\[
(u-v)[b_1(u),b_2(v)]\simeq -\frac{\ka_2}{2}\big\{b_1(u),b_2(v)\big\}-\frac{2\ka_1v}{u+v} h_2(v).
\]
By taking the coefficients of $u^{-r-1}v^{-s-1}$ with $r,s\in\bN$, we obtain the relation \eqref{bbb3} for $(i,j)=(1,2)$.

We consider the relation \eqref{hbb3}. Note that $h_1(u)=h_2(-u)$ by \eqref{i=tauih} and the anti-automorphism $\eta$ sends $b_1(u)$ to a scalar multiple of $b_2(-u)$. It suffices to consider the case $(i,j)=(1,1)$. It follows from \eqref{d1f1b3} and \eqref{d2f1b3} that
\beq\label{1helper}
\begin{split}
[\tl d_1(u)&d_2(u),f_1(v)]\\&=\frac{\ka_1+\ka_2}{u-v}\tl d_1(u)(f_1(v)-f_1(u))d_2(u)+\frac{\ka_2}{u+v}\tl d_1(u)d_2(u)(f_1(v)+e_2(u)).    
\end{split}
\eeq
There are two cases depending on the parity of $\alpha_1$. 

(1) If $\ka_1\ne \ka_2$, then $c_{11}=c_{22}=\ka_1+\ka_2=0$ and $c_{12}=c_{21}=-\ka_2$. Thus by \eqref{1helper} we have
\[
(u+v)[h_1(u),b_1(v)]\simeq\frac{\ka_2}{2}\{h_1(u),b_1(v)\}.
\]
Multiplying it by $(u-v)$ and taking the coefficients of $u^{-r-1}v^{-s-1}$ with $r\in\Z$ and $s\in\bN$, one finds the relation \eqref{hbb3} for $(i,j)=(1,1)$ for this case.

(2) If $\ka_1=\ka_2$, then by \eqref{1helper} we have
\beq\label{2hper}
[\tl d_1(u)d_2(u),f_1(v)]\simeq\frac{\ka_1+\ka_2}{u-v}\tl d_1(u)f_1(v)d_2(u)+\frac{\ka_2}{u+v}\tl d_1(u)d_2(u)f_1(v),
\eeq
where we dropped terms like $\tl d_1(u)f_1(u)d_2(u)$. By \eqref{d1f1b3}, we find
\[
\frac{1}{u-v}\tl d_1(u)f_1(v)\simeq \frac{1}{u-v-\ka_1}f_1(v)\tl d_1(u).
\]
Plugging it into \eqref{2hper}, we obtain
\[
[\tl d_1(u)d_2(u),f_1(v)]\simeq\frac{\ka_1+\ka_2}{u-v-\ka_1}f_1(v)\tl d_1(u)d_2(u)+\frac{\ka_2}{u+v}\tl d_1(u)d_2(u)f_1(v),
\]
Substituting $u\mapsto u+\frac{1}4\ka_2, v\mapsto v+\frac{1}{4}\ka_2$ and clearing the denominator, we obtain from \eqref{bodd}--\eqref{hodd} that
\begin{align*}\Big(u+v+\frac{\ka_2}{2}\Big)&\big(u-v-\ka_2\big)\big[h_1(u),b_1(v)\big]\\
\simeq\,&2\ka_2\Big(u+v+\frac{\ka_2}{2}\Big)b_1(v)h_1(u)+\ka_2\big(u-v-\ka_2\big)h_1(u)b_1(v),
\end{align*}
which gives rise to
\begin{align*}
\big(u^2-v^2\big)\big[h_1(u),b_1(v)\big]
\simeq \Big(\frac{3\ka_2}{2}u+\frac{\ka_2}{2}v\Big)\big\{h_1(u),b_1(v)\big\}-\frac{1}{2}[h_1(u),b_1(v)].                               
\end{align*}
Taking the coefficients of $u^{-r-1}v^{-s-1}$ with $r\gge -2,s\in\bN$, one finds the relation \eqref{hbb3} for $(i,j)=(1,1)$ if $\ka_1=\ka_2$.
\end{proof}

By Proposition \ref{prop-serre}, we need one more lemma for the Serre relations among generators of degree zero, which will be
sufficient to deduce more general Serre relation \eqref{qsconj8}.
\begin{lem}\label{fin-serre}
We have
\[
\big[b_{1,0},[b_{1,0},b_{2,0}]\big]=2(1+\ka_1\ka_2)b_{1,0},\quad \big[b_{2,0},[b_{2,0},b_{1,0}]\big]=2(1+\ka_1\ka_2)b_{2,0}.
\]
\end{lem}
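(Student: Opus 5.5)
Both identities concern only degree-zero generators of the $N=3$ twisted super Yangian, so the plan is to compute them directly in terms of the first R-matrix generators $x_{ij}^{(1)}$. By \eqref{bodd} and \eqref{bhcom}, $b_{1,0}$ is the leading coefficient of $\sqrt{-1}f_1(u)$, so $b_{1,0}=\sqrt{-1}f_1^{(1)}=\sqrt{-1}\,x_{21}^{(1)}$. Likewise $b_{2,0}=\sqrt{-1}f_2^{(1)}=\sqrt{-1}\,x_{32}^{(1)}$. The shift in the argument does not affect the leading coefficient. This identification uses the quasi-determinant formula \eqref{gd3} truncated at order $u^{-1}$.

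Next I compute the brackets using the $u^{-1}v^{-1}$ coefficient of \eqref{bcom}. Equivalently, I can use the classical limit \eqref{isogr}, since in degree zero the commutators of the $x^{(1)}_{ij}$ are exactly those of $\ka_i(e_{ij}+e_{i'j'})$ in $\gl^\bs$, and there are no correction terms in filtration degree $0$. With $N=3$ and $i'=4-i$, we have
\[
x_{21}^{(1)}\leftrightarrow \ka_2(e_{21}+e_{23}),\qquad x_{32}^{(1)}\leftrightarrow \ka_3(e_{32}+e_{12}).
\]
I then compute $[b_{1,0},b_{2,0}]=-[x_{21}^{(1)},x_{32}^{(1)}]$ as an element of $\gl^\bs$. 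Recall $\ka_1=\ka_3$ and that $e_{21}+e_{23}$ has parity $|\alpha_1|$. The result is a combination of $e_{22}-e_{11}$, $e_{22}-e_{33}$, $e_{13}$ and $e_{31}$, with signs $\ka_2\ka_3$ and $(-1)^{|1||2|}$-type factors.

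Bracketing once more with $e_{21}+e_{23}$ gives a multiple of $e_{21}+e_{23}$. The $e_{13},e_{31}$ terms contribute $e_{23}$ and $e_{21}$. The diagonal terms contribute through the values $\ka_1+\ka_2$, coming from the super bracket $[e_{21},e_{12}]=e_{22}-(-1)^{|\alpha_1|}e_{11}$. The total coefficient should assemble to $2(1+\ka_1\ka_2)$: it equals $4$ when $|\alpha_1|=\bar 0$, and vanishes when $\alpha_1$ is odd, consistent with $c_{11}=0$. The factors of $\sqrt{-1}$ give $(\sqrt{-1})^3/\sqrt{-1}=-1$, and this sign must be absorbed correctly.

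The second identity follows from the first by applying the anti-automorphism $\eta$. By \eqref{etahb}, $\eta$ sends $b_{1,0}$ to $\pm b_{2,0}$ up to the sign $(-1)^{|i||i+1|+|i|}$. Alternatively, I can use $e_1(u)=-f_2(-u)$ from Lemma \ref{efaiii} together with the symmetry of the computation under $i\mapsto i'$.

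The main obstacle is sign bookkeeping: the super signs in \eqref{bcom}, the parity-dependent signs in $\eta$, and the factor $\sqrt{-1}$ all enter. I would therefore verify the final coefficient separately in the two cases $\ka_1=\ka_2$ and $\ka_1\neq\ka_2$. In the odd case I would also check directly that $[b_{1,0},b_{1,0}]=0$, via \eqref{f1f1b3}, so that the double bracket indeed vanishes.
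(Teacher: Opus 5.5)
Your proposal is correct, but it argues differently from the paper.

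\textbf{The paper's route.} It stays within the Gaussian generators. It uses \eqref{e1f1b3} and $f_2^{(1)}=e_1^{(1)}$ to write $[b_{1,0},b_{2,0}]=-\ka_1\big(h_{1,0}-\tfrac14-f_{31}^{(1)}\big)$. It then obtains $[b_{1,0},f_{31}^{(1)}]=\ka_1 b_{1,0}$ from \eqref{bcom} and $[h_{1,0},b_{1,0}]=(\ka_1+2\ka_2)b_{1,0}$ from \eqref{hbb3}.

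\textbf{Your route.} You use only $b_{1,0}=\sqrt{-1}\,x_{21}^{(1)}$ and $b_{2,0}=\sqrt{-1}\,x_{32}^{(1)}$. The lowest-order part of \eqref{bcom} reproduces exactly the brackets of the elements $\ka_i(e_{ij}+e_{i'j'})$ of $\gl^\bs$. So the lemma becomes a $3\times 3$ supermatrix computation, valid already in $\scrX^\bs$.

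This goes through. With $A=e_{21}+e_{23}$ and $B=e_{32}+e_{12}$ one finds
\[
[A,B]=2e_{22}-\ka_1\ka_2\,(e_{11}+e_{33}+e_{13}+e_{31}),\qquad [A,[A,B]]=-2(1+\ka_1\ka_2)A.
\]
Combine this with the prefactor $\ka_2^2\ka_3=\ka_1$, the identity $\ka_1\ka_2(1+\ka_1\ka_2)=1+\ka_1\ka_2$, and $(\sqrt{-1})^3=-\sqrt{-1}$. This yields exactly the first relation. Note that in the odd case the diagonal part of $[A,B]$ involves $e_{22}+e_{11}$, not $e_{22}-e_{11}$ as your first description says; you correct this yourself later.

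\textbf{The second relation via $\eta$.} Your use of $\eta$ works. $\eta$ is a super anti-automorphism, so $\eta([x,y])=-[\eta(x),\eta(y)]$. It acts by $\eta(b_{1,0})=s_1b_{2,0}$ and $\eta(b_{2,0})=s_2b_{1,0}$, where $s_1=(-1)^{|1||2|+|1|}$ and $s_2=(-1)^{|1||2|+|2|}$. Applying it to the first identity gives the second with an extra factor $s_1s_2=\ka_1\ka_2$. This factor is harmless because $2(1+\ka_1\ka_2)$ vanishes whenever $\ka_1\neq\ka_2$.

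\textbf{Comparison.} Your route is more elementary and self-contained. It needs none of the higher Gaussian relations and makes the coefficient $2\big(1+(-1)^{|\alpha_1|}\big)$ transparent. The paper's route instead expresses $[b_{1,0},b_{2,0}]$ through $h_{1,0}$, in line with the Drinfeld-type relations, and reuses relations already established in that section.
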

\begin{proof}
We only prove the first relation. The other one is similar or can be obtained by taking the anti-automorphism $\eta$ from Lemma \ref{lem:eta}.

It follows from  \eqref{bodd}, \eqref{e1f1b3}, and Lemma \ref{efaiii} that
\[
[b_{1,0},b_{2,0}]=-[f_1^{(1)},f_2^{(1)}]=-[f_1^{(1)},e_1^{(1)}]=-\ka_1\Big(h_{1,0}-\frac{1}4-f_{31}^{(1)}\Big).
\]
On the other hand, setting $i=2,j=k=3,l=1$ in \eqref{bcom}, we find
\[
[x_{23}(u),x_{31}(v)]=\frac{\ka_1}{u-v}\big(x_{33}(u)x_{21}(v)-x_{33}(v)x_{21}(u)\big),
\]
which implies further
\begin{align*}
[b_{1,0},f_{31}^{(1)}]&=\sqrt{-1}\,[f_{1}^{(1)},f_{31}^{(1)}]=\sqrt{-1}\,[e_{2}^{(1)},f_{31}^{(1)}]\\&=\sqrt{-1}\,[x_{23}^{(1)},x_{31}^{(1)}]=\sqrt{-1}\,\ka_1x_{21}^{(1)} =\sqrt{-1}\,\ka_1f_1^{(1)}=\ka_1b_{1,0}.
\end{align*}
Since by \eqref{hbb3} that $[h_{1,0},b_{1,0}]=(\ka_1+2\ka_2)b_{1,0}$, the first equality follows.
\end{proof}

\subsection{Relations in the case $N=5$}To complete, we still need to verify certain Serre relations which reduce to the case of $\scrX^\bs$. This is very similar to Section \ref{secN=4}. It turns out that the calculations of this case will be exactly the same as the Yangians $\rY(\gl^\bs_{[m]})$.  

\begin{lem}\label{mmm}
We have the following Serre relations,
\begin{align*}
\big[e_1(u),[e_1(v),e_2(w)]\big]+\big[e_1(v),[e_1(u),e_2(w)]\big]=0,\\
\big[e_2(u),[e_2(v),e_1(w)]\big]+\big[e_2(v),[e_2(u),e_1(w)]\big]=0.
\end{align*}
\end{lem}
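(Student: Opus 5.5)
Here $N=5$, so $\ell=2$ and the relevant indices are $1,2,3$ with $3\lle \ell+1$. The idea is to show that everything needed only uses the upper-left $3\times 3$ block of $X(u)$, where the relations of $\scrX^\bs$ collapse to those of the super Yangian $\rY(\gl^{\bs_{[1,3]}})$. The cubic Serre relations there are known from Proposition~\ref{propA} (cf. \cite{Peng2016parabolic,tsymbaliuk2020shuffle}).

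First I would check which entries $e_1,e_2,e_{13}$ involve. By \eqref{gd1}--\eqref{gd2}, $e_1(u),e_2(u),e_{13}(u)$, and $d_1,d_2,d_3$ are quasi-determinants built from $x_{ab}(u)$ with $1\lle a\lle 3$ and $1\lle b\lle 3$. For $N=5$ the indices $a,b\in\{1,2,3\}$ have primes $a',b'\in\{3,4,5\}$. In \eqref{bcom}, the extra terms with $\delta_{kj'}$, $\delta_{il'}$, $\delta_{ij'}$ could therefore only appear when some index equals $3$ and is paired with $3'=3$. This differs from the clean situation of \eqref{ysbs-sub}, which the paper states only for $m\lle \ell=2$.

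So the key step is to handle the pairs in which the index $3$ meets itself. My plan is to avoid the full $3\times3$ block. Following the proof of Lemma~\ref{be1e2lem} and Lemma~\ref{lemhelp1}, I would list exactly which commutators of $x$'s are actually used:
\begin{itemize}
\item $[x_{12},x_{23}]$, $[x_{12},x_{13}]$, $[x_{11},x_{13}]$, $[x_{12},x_{12}]$, $[x_{11},x_{12}]$;
\item the relations involving $x_{22}$ and $x_{21}$ with column $3$.
\end{itemize}
In each of these, $j'=k$ or $i'=l$ or $i'=j$ would require an index pair $(1,5)$, $(2,4)$ or $(3,3)$. I would check that the only potentially dangerous case, $k=j=3$, never occurs: the column index $3$ only appears as $l$ or $j$ paired with rows $1,2$. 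Hence each used relation is verbatim the Yangian relation \eqref{Trel}. The relations \eqref{e1e1b4}, \eqref{d1e1b4}, \eqref{d2e1b4}, \eqref{d1f1b4}, \eqref{be1e2}, and \eqref{e1e13} (when $\ka_1=\ka_2$) then hold for $N=5$ with the same proofs. For the second Serre relation I would additionally need $[e_2(u),e_2(v)]=\frac{\ka_2}{u-v}(e_2(u)-e_2(v))^2$ and the analogue of \eqref{e1e13} with the roles swapped. These involve $[x_{23},x_{23}]$, where $j=l=3$ and $i=k=2$, so $k\ne j'$ and $i\ne l'$, and again no correction terms arise.

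With these in hand, I would argue as in Lemma~\ref{sec:A-serre}. If the relevant simple root ($\alpha_1$ for the first relation, $\alpha_2$ for the second) is even, I would repeat the argument of \cite[Lemma 6.3]{Peng2016parabolic} verbatim using the collected relations. If it is odd, Lemma~\ref{odd0} gives $[e_i(u),e_i(v)]=0$, and the symmetrized double commutator vanishes by the super Jacobi identity.

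The main obstacle is the bookkeeping in the second step. One has to be sure that the proof of \cite[Lemma 6.3]{Peng2016parabolic} really only uses commutators of $x_{ab}$ where no primed coincidence occurs. In particular, $x_{33}$ enters $d_3$ and $e_{2}$ through quasi-determinants. If a relation like $[x_{13},x_{33}]$ is needed, the $\delta_{il'}$ term with $i=1$, $l=3$ is absent, but one should double-check the $\delta_{kj'}$ term with $k=3$, $j=3$, which does appear. If it appears, I would instead apply the anti-automorphism $\eta$ together with Lemma~\ref{efaiii} to transfer the needed identity to an index configuration without coincidences.
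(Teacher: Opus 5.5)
Your proposal is correct and follows essentially the paper's route. The series $e_1,e_2,e_{13},d_1,d_2,f_1$ are all built from $x_{ab}(u)$ with $a\in\{1,2\}$ and $b\in\{1,2,3\}$, for which \eqref{bcom} has no correction terms and reduces to \eqref{Trel}, so the Serre relations come from $\rY(\gl^\bs_{[3]})$. The paper imports the Yangian identity directly instead of re-running Peng's computation case by case.

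The worry in your last paragraph is unfounded. By \eqref{gd2}, $e_2(u)=e_{23}(u)$ involves only rows $1,2$ and columns $1,2,3$; only $d_3$ contains $x_{33}$, and $d_3$ is never needed. So no relation with the coincidence $3'=3$ arises, and the fallback via $\eta$ is unnecessary.
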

\begin{proof}
The lemma above can be proved similarly to Corollary \ref{cor:commu} as follows. The series $e_1(u)$ is expressed in terms of $x_{11}(u)$ and $x_{12}(u)$ while $e_2(u)$ is expressed in terms of $x_{11}(u)$, $x_{13}(u)$, $x_{21}(u)$ and $x_{23}(u)$. Note that the commutator relations between these series $x_{ab}(u)$ are the same as in $\rY(\gl^\bs_{[3]})$ (with $x_{ab}(u)$ replaced by $t_{ab}(u)$); see \eqref{Trel} and \eqref{bcom}. Thus these Serre relations hold as the same Serre relations hold in $\rY(\gl^\bs_{[3]})$.
\end{proof}
\begin{rem}
Alternatively, one can prove the lemma following the strategy of \cite[Lem. 7.24]{lu2024drinfeld}.
\end{rem}

\begin{cor}
We have the following Serre relations in $\scrX^\bs$,
\begin{align*}
\big[b_i(u),[b_i(v),b_j(w)]\big]+\big[b_i(v),[b_i(u),b_j(w)]\big]=0,
\end{align*}
for the pairs $(i,j)=(1,2),(2,1), (3,4),(4,3)$.
\end{cor}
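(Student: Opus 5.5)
The plan is to derive these relations from Lemma~\ref{mmm} by translating the series $b_i$ back into the Gaussian series $e_j,f_j$. Here $N=5$ and $\tau i=5-i$. By \eqref{bodd}, $b_j(u)=\sqrt{-1}\,f_j(u+c_j)$ with $c_j=\tfrac{\varkappa-\varrho_j}{2}$, and by \eqref{fi=tauie}, $b_i(u)=-\sqrt{-1}\,e_{\tau i}(-u+c_{\tau i})$.

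\textbf{Pairs $(3,4)$ and $(4,3)$.} Here $\tau 3=2$ and $\tau 4=1$, so $b_3,b_4$ are constant multiples of $e_2,e_1$ evaluated at shifted, reflected arguments. The shifts depend on the index, but each series is a function of a single variable. Lemma~\ref{mmm} holds identically in the independent variables $u,v,w$, so we may substitute $u\mapsto -u+c$, $v\mapsto -v+c$, $w\mapsto -w+c'$ independently. The prefactors $(-\sqrt{-1})^3$ factor out of both terms. Hence the relation for $(i,j)=(3,4)$ follows from the first relation of Lemma~\ref{mmm}, and the relation for $(4,3)$ follows from the second.

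\textbf{Pairs $(1,2)$ and $(2,1)$.} Here we need the analogous Serre relations for $f_1,f_2$. The plan is to apply the anti-automorphism $\eta$ of Lemma~\ref{lem:eta}, which sends $e_j(u)$ to $\pm f_j(u)$ with a sign depending only on $j$. An anti-automorphism reverses products, so it maps a nested supercommutator $[x,[y,z]]$ to $\pm[[z,y],x]$ up to parity signs. The symmetrized sum over $u,v$ is preserved up to an overall sign, because that sign depends only on the parities of $x,y,z$, which are the same in both summands. Thus the $f$-Serre relations hold, and the $b$-relations for $(1,2),(2,1)$ follow after the same substitution argument.

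\textbf{Alternative and main obstacle.} One could instead use Lemma~\ref{efaiii}, $e_i(u)=-f_{\tau i}(-u)$, to turn the $e$-relations directly into relations for $f_4,f_3$, and then use $\eta$ on those. The only real care needed is the sign bookkeeping. One must check that the super signs produced by $\eta$ and by reversing the nested brackets agree across the two symmetrized terms, so that the sum still vanishes rather than becoming a difference. This holds because both terms involve the same triple of parities; for instance, if $|\alpha_i|=\bar 1$, the relation is already a consequence of \eqref{e1e1b4}-type identities.
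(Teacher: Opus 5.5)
Your proposal is correct and follows the paper's route. The pairs $(3,4),(4,3)$ come from Lemma~\ref{mmm} via \eqref{fi=tauie} (that is, Lemma~\ref{efaiii}), and the pairs $(1,2),(2,1)$ come from applying $\eta$ of Lemma~\ref{lem:eta} to Lemma~\ref{mmm} and then shifting arguments, which is legitimate because the identities hold coefficientwise. One small slip: since $b_3\propto e_2$ and $b_4\propto e_1$, the pair $(3,4)$ comes from the second relation of Lemma~\ref{mmm} and $(4,3)$ from the first, not the other way round; this does not affect the argument.
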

\begin{proof}
Follows from Lemma \ref{mmm} by applying the anti-automorphism $\eta$ and Lemma \ref{efaiii}.
\end{proof}

The Serre relations for the pairs $(i,j)=(2,3),(3,2)$ follow from the corresponding relations in $\scrX^\bs_{[2,2']}$ with the rank-reduction homomorphism $\psi_1^\bs$; see Lemma \ref{shiftlem}, Lemma \ref{fin-serre}, and Proposition \ref{prop:Serre}.

\bibliographystyle{amsalpha}
\bibliography{all}

\end{document}